\numberwithin{equation}{section}
\newtheorem{thm}{Theorem}[section]
\newtheorem{lemma}[thm]{Lemma}
\newtheorem{remark}[thm]{Remark}
\newtheorem{cor}[thm]{Corollary}
\newtheorem{prop}[thm]{Proposition}
\newtheorem{exam}[thm]{Example}
\newtheorem{defn}[thm]{Definition}
\newtheorem{question}[thm]{Question}
\newcommand\id{\mathop{\rm id}}
\newcommand\sspp{\mathop{\rm span}}
\newcommand\nph{\varphi}
\newcommand\linj{\mathop{\rm el}}
\newcommand\rinj{\mathop{\rm er}}
\newcommand\inj{\mathop{\rm e}}
\newcommand\comm{\mathop{\rm c}}
\newcommand{\cl}[1]{\mathcal{#1}}
\newcommand{\bb}[1]{\mathbb{#1}}
\begin{document}

\title{Quotients, Exactness and Nuclearity in the Operator System Category}

\author[A.~S.~Kavruk]{Ali S.~Kavruk}
\address{Department of Mathematics, University of Houston,
Houston, Texas 77204-3476, U.S.A.}
\email{kavruk@math.uh.edu}

\author[V.~I.~Paulsen]{Vern I.~Paulsen}
\address{Department of Mathematics, University of Houston,
Houston, Texas 77204-3476, U.S.A.}
\email{vern@math.uh.edu}

\author[I.~G.~Todorov]{Ivan G.~Todorov}
\address{Department of Pure Mathematics, Queen's University Belfast,
Belfast BT7 1NN, United Kingdom}
\email{i.todorov@qub.ac.uk}

\author[M.~Tomforde]{Mark Tomforde}
\address{Department of Mathematics, University of Houston, Houston,
  Texas 77204-3476, U.S.A.}
\email{tomforde@math.uh.edu}


\date{\today}

\thanks{The first and second authors were supported in part by NSF
  grant DMS-0600191. The fourth author was supported by NSA Grant H98230-09-1-0036.}

\begin{abstract}
We continue our study of tensor products in the operator system
category. We define operator system quotients and exactness in this
setting and refine the notion of nuclearity by studying operator
systems that preserve various pairs of tensor products. One of our
main goals is to relate these refinements of nuclearity to the Kirchberg conjecture. In
particular, we prove that the Kirchberg conjecture is equivalent to
the statement that every operator system that is (min,er)-nuclear is
also (el,c)-nuclear. We show that operator system quotients are not
always equal to the corresponding operator space quotients and then study
exactness of various operator system tensor products for the operator
system quotient. We prove that an operator system is exact for the min
tensor product if and only if it is (min,el)-nuclear. We give many
characterizations of operator systems that are (min,er)-nulcear,
(el,c)-nuclear, (min,el)-nulcear and (el,max)-nuclear. These
characterizations involve operator system analogues of various
properties from the theory of C*-algebras and operator spaces,
including the WEP and LLP.
\end{abstract}

\maketitle

\section{Introduction}

In this paper we continue our study of tensor products in the operator
system category that we started in \cite{kptt}. We exhibit analogues of many results that have
been obtained earlier for C*-algebras and operator spaces. One of our main objectives is
to relate operator system tensor products with the Kirchberg conjecture \cite{ki}; we achieve this
in Section \ref{s_llp} where we obtain equivalences of the conjecture with statements
about operator system tensor products.

We begin by constructing a quotient that is
appropriate for the category of operator systems and completely
positive maps. Thus, given an operator system $\cl S$ and a
kernel $J\subseteq \cl S$ of a completely positive map $\phi$ defined on $\cl S$, we
construct a quotient operator system $\cl S/J$ with the property that
$\phi$ factors through that quotient to a completely positive map.
Every operator system is also an
operator space and there is a definition of quotients of operator
spaces. Thus, $\cl S/J$ has two natural operator space structures:
firstly, $\cl S/J$ is the operator space that arises
as the quotient of two operator spaces, and secondly, it is also the operator
space, induced by our operator system quotient. We show that these two
operator space structures on $\cl S/J$ are, generally, not
boundedly isomorphic. We then develop the concept of an exact
operator system in this context and give a characterization of
exactness in terms of equality of certain tensor products.

Next, we study the weak expectation property (WEP) in the
operator system setting.
Recall that the weak expectation property for C*-algebras has a
characterization in terms of the equality of certain tensor products.
Namely, a C*-algebra $\cl A$ has the WEP if and only if
$\cl A \otimes_{\max} \cl B \subseteq I(\cl A) \otimes_{\max} \cl B$, for every C*-algebra $\cl B$
(here $I(\cl A)$ is the injective envelope of $\cl A$).
In our earlier work \cite{kptt}, we showed that the maximal tensor product on
the category of C*-algebras has two distinct natural extensions to the
category of operator systems.  Thus, we are lead to study multiple
``natural'' extensions of WEP to operator systems,
together with their relationships and characterizations in terms of the equality of various tensor
products.

In a similar vein, we show that some operator system analogues of the
local lifting property (LLP) are equivalent to equality of certain
operator system tensor products. Via these considerations we obtain
some equivalences of the Kirchberg Conjecture in terms of operator
system tensors. Among these, we show that Kirchberg's Conjecture is
equivalent to the fact that every operator system that satisfies an
operator system analogue of the LLP, which we call the OSLLP, satisfies an
operator system analogue of the WEP, that we call the {\em double commutant
expectation property.}


\section{Preliminaries}\label{s_prel}


In this section we establish the terminology and state the definitions that shall be used
throughout the paper.

A {\bf $*$-vector space} is a complex vector space $V$ together with
a map $^* : V \to V$ that is involutive (i.e., $(v^*)^* = v$ for
all $v \in V$) and conjugate linear (i.e., $(\lambda v + w)^* =
\overline{\lambda} v^* + w^*$ for all $\lambda \in \bb C$ and all $v,w
\in V$). If $V$ is a $*$-vector space, then we let $V_h = \{x \in V
: x^* = x \}$ and we call the elements of $V_h$ the {\bf hermitian}
elements of $V$. Note that $V_h$ is a real vector space.

An {\bf ordered $*$-vector space} is a pair $(V, V^+)$ consisting of a $*$-vector space $V$ and a subset $V^+ \subseteq V_h$ satisfying
the following two properties:

\medskip

(a) $V^+$ is a spanning cone in $V_h$;

(b) $V^+ \cap -V^+ = \{ 0 \}$.

\medskip

In any ordered $*$-vector space we may define a partial order $\geq$ on $V_h$
by letting $v \geq w$ (or, equivalently, $w \leq v$) if and only if $v - w \in V^+$.
Note that $v \in V^+$ if and only if $v
\geq 0.$ For this reason $V^+$ is called the cone of
{\bf positive} elements of $V$.

If $(V,V^+)$ is an ordered $*$-vector space, an element
$e \in V_h$ is called an {\bf order unit} for $V$ if
for all $v \in V_h$ there exists a real number $r > 0$
such that $re \geq v$. In fact, if $V^+$ is a cone and $(V,V^+)$ has an order unit, then
$V^+$ is automatically spanning, so the spanning condition is often
not included in (a).

If $(V, V^+)$ is an ordered $*$-vector space with an order unit $e$,
then we say that $e$ is an {\bf Archimedean order unit} if whenever
$v \in V$ and $re+v \geq 0$ for all real $r >0$, we have that $v \in V^+$.
In this case, we call the triple $(V, V^+, e)$ an {\bf Archimedean
ordered $*$-vector space} or an {\bf AOU space,} for short.
The {\bf state space} of $V$ is the set $S(V)$ of all linear maps $f : V\rightarrow\bb{C}$
such that $f(V^+)\subseteq [0,\infty)$ and $f(e) = 1$.

If $V$ is a $*$-vector space, we let $M_{m,n}(V)$ denote the set of
all $m \times n$ matrices with entries in $V$ and set $M_{n}(V) =
M_{n,n}(V)$.  The entry-wise
addition and scalar multiplication turn $M_{m,n}(V)$ into a complex
vector space. We set $M_{m,n} = M_{m,n}(\bb{C})$, $M_n = M_{n,n}$, and let
$\{E_{i,j} : 1\leq i\leq m, 1\leq j \leq n\}$ denote the canonical
matrix unit system of $M_{m,n}$. If $X = (x_{i,j})_{i,j} \in M_{l,m}$ is a scalar
matrix, then for any $A = (a_{i,j})_{i,j} \in M_{m,n}(V)$ we let
$XA$ be the element of $M_{l,n}(V)$ whose $i,j$-entry $(XA)_{i,j}$
equals $\sum_{k=1}^m x_{i,k} a_{k,j}$. We define multiplication by
scalar matrices on the left in a similar way. Furthermore, when
$m=n$, we define an involution on $M_n(V)$ by letting
$(a_{i,j})_{i,j}^* := (a_{j,i}^*)_{i,j}$. With respect to this
operation, $M_n(V)$ is a $*$-vector space. We let $M_n(V)_h$ be the
set of all hermitian elements of $M_n(V)$.

\begin{defn}
Let $V$ be a $*$-vector space.  We say that $\{ C_n \}_{n=1}^\infty$ is a \emph{matrix ordering} on $V$ if
\begin{enumerate}
\item $C_n$ is a cone in $M_n(V)_h$ for each $n\in \bb{N}$,
\item $C_n \cap -C_n = \{0 \}$ for each $n\in \bb{N}$, and
\item for each $n,m\in \bb{N}$ and $X \in M_{n,m}$
we have that $X^* C_n X \subseteq C_m$.
\end{enumerate}
In this case we call $(V, \{C_n \}_{n=1}^\infty )$ a \emph{ matrix
  ordered $*$-vector space.} We refer to condition (3) as the
  \emph{compatibility of the family
  $\{C_n\}_{n=1}^{\infty}$}.
\end{defn}

Note that conditions (1) and (2) show that $(M_n(V), C_n)$
is an ordered $*$-vector space for each $n\in \bb{N}$. As usual, when $A,B \in M_n(V)_h$, we
write $A \leq B$ if $B-A \in C_n$.

\begin{defn}\label{op-system-abstract-def}
Let $(V, \{ C_n \}_{n=1}^\infty)$ be a matrix ordered $*$-vector space.  For $e \in V_h$ let
$$e_n := \left( \begin{smallmatrix} e & & \\ & \ddots & \\ & & e \end{smallmatrix} \right)$$
be the corresponding diagonal matrix in $M_n(V)$.
We say that $e$ is a \emph{matrix order unit} for $V$
if $e_n$ is an order unit for $(M_n(V), C_n)$ for each $n$.
We say that $e$ is an \emph{Archimedean matrix order unit} if
$e_n$ is an Archimedean order unit for $(M_n(V), C_n)$ for each $n$.
An \emph{(abstract) operator system} is a triple $(V, \{C_n \}_{n=1}^\infty, e)$,
where $V$ is a complex $*$-vector space,
$\{C_n \}_{n=1}^\infty$ is a matrix ordering on $V$,
and $e \in  V_h$ is an Archimedean matrix order unit.
\end{defn}

The above definition of an operator system was first
introduced by Choi and Effros in \cite{ce}.
We note that the dual of an operator system is a matrix-ordered space in a canonical fashion \cite{ce}.
If $V $ and $V'$ are
vector spaces and $\phi : V \to V'$ is a linear map, then for each
$n\in \bb{N}$ the map $\phi$ induces a linear map $\phi^{(n)} : M_n(V)
\to M_n(V')$ given by $\phi^{(n)} ((v_{i,j})_{i,j}) :=
(\phi(v_{i,j}))_{i,j}$. If $(V, \{ C_n \}_{n=1}^\infty)$ and $(V',
\{ C_n' \}_{n=1}^\infty)$ are matrix ordered $*$-vector spaces, a
map $\phi : V \to V'$ is called {\bf completely positive}  if
$\phi^{(n)}(C_n) \subseteq C_n'$ for each $n\in \bb{N}$. Similarly, we
call a linear map $\phi : V \to V'$ a {\bf complete order
isomorphism} if $\phi$ is invertible and both $\phi$ and $\phi^{-1}$
are completely positive.
The following easy fact, whose proof we omit, will be used repeatedly in the paper.

\begin{lemma}\label{comp}
Let $\cl R_1,$ $\cl R_2$ and $\cl R_3$ be operator systems and $f : \cl
R_1\rightarrow \cl R_2$ and  $g : \cl R_2\rightarrow \cl R_3$ be unital
completely positive maps. If $g\circ f :\cl R_1\rightarrow \cl R_3$ is
a complete order isomorphism (onto its range) then $f$ is a complete order isomorphism (onto its range).
\end{lemma}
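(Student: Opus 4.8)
The plan is to realize $f^{-1}$ (defined on the range of $f$) as a composition of completely positive maps. First I would observe that since $g\circ f$ is injective, so is $f$, and hence $f$ is a bijection of $\cl R_1$ onto $f(\cl R_1)$. Because $f$ is unital it sends the unit of $\cl R_1$ to the unit of $\cl R_2$, and because $f$ is positive it is $*$-preserving; consequently $f(\cl R_1)$ is a $*$-closed subspace of $\cl R_2$ containing the unit, and, equipped with the matrix cones inherited from $\cl R_2$ (that is, $C_n\cap M_n(f(\cl R_1))$), it is an operator subsystem of $\cl R_2$. Likewise $(g\circ f)(\cl R_1)$ is an operator subsystem of $\cl R_3$; in fact, by hypothesis, $g\circ f$ is a complete order isomorphism of $\cl R_1$ onto it.

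The key point is the identity $f^{-1}=(g\circ f)^{-1}\circ\bigl(g|_{f(\cl R_1)}\bigr)$ as maps on $f(\cl R_1)$: if $y=f(x)\in f(\cl R_1)$ then $g(y)=(g\circ f)(x)$, so $(g\circ f)^{-1}(g(y))=x=f^{-1}(y)$. Now $g|_{f(\cl R_1)}\colon f(\cl R_1)\to (g\circ f)(\cl R_1)$ is the restriction of the completely positive map $g$ to an operator subsystem and is therefore completely positive, while $(g\circ f)^{-1}\colon (g\circ f)(\cl R_1)\to\cl R_1$ is completely positive since $g\circ f$ is, by hypothesis, a complete order isomorphism onto its range. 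A composition of completely positive maps is completely positive, so $f^{-1}$ is completely positive; since $f$ itself is completely positive, $f$ is a complete order isomorphism onto its range.

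The argument is really bookkeeping, so I do not expect a genuine obstacle; the only steps demanding a moment's care are the two standard facts used implicitly: that $f(\cl R_1)$ with the inherited matrix ordering is again an operator system — so that the phrase ``complete order isomorphism onto its range'' is meaningful for $f$ — and that the restriction of a completely positive map to an operator subsystem is completely positive. Both follow directly from the definitions in Section~\ref{s_prel}: the inherited cones satisfy axioms (1)--(3) of a matrix ordering and $f(e)$ is an Archimedean matrix order unit for them (each verification is inherited verbatim from the corresponding property in $\cl R_2$), and complete positivity of a restriction is immediate from the condition $\phi^{(n)}(C_n)\subseteq C_n'$.
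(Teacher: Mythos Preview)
Your argument is correct and is exactly the standard one: factor $f^{-1}$ as $(g\circ f)^{-1}\circ g|_{f(\cl R_1)}$ and observe that both factors are completely positive. The paper in fact omits the proof entirely, calling it an ``easy fact,'' so there is nothing further to compare.
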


Let $\cl B(H)$ be the space of all bounded linear operators
acting on a Hilbert space $H$.
The direct sum of $n$ copies of $H$ is denoted by $H^n$ and its elements are written as column vectors.
A {\bf concrete operator system}
$\cl S$ is a subspace of $\cl B (H)$ such that $\cl S = \cl S^*$ and
$I \in \cl S$. (Here, and in the sequel, we let $I$ denote the
identity operator.) As is the case for many classes of subspaces
(and subalgebras) of $\cl B (H)$, there is an abstract
characterization of concrete operator systems.
If $\cl S \subseteq \cl B (H)$
is a concrete operator system, then we observe that $\cl S$ is a
$*$-vector space with respect to the adjoint operation, $\cl S$ inherits an order structure from $\cl B(H)$,
and has $I$ as an Archimedean order unit. Moreover, since $\cl
S \subseteq \cl B(H)$, we have that $M_n(\cl S) \subseteq M_n( \cl B(H)) \equiv \cl B (H^n)$ and hence $M_n(\cl S)$ inherits
an involution and an order
structure from $\cl B ( H^n)$ and has the $n \times n$ diagonal matrix
$$\begin{pmatrix} I & & \\ & \ddots & \\ & & I \end{pmatrix}$$ as an
Archimedean order unit.  In other words, $\cl S$ is
an abstract operator system in the sense of Definition
\ref{op-system-abstract-def}. The following result of Choi and
Effros \cite[Theorem~4.4]{ce} shows that the converse is also true.
For an alternative proof of the result, we refer the reader to
\cite[Theorem~13.1]{pa}.

\begin{thm}[Choi-Effros]
\label{th-choieffros}
Every concrete operator system $\cl S$ is an abstract operator system.
Conversely, if $(V, \{C_n \}_{n=1}^\infty, e)$ is an abstract operator system, then there
exists a Hilbert space $ H$, a concrete operator system
$\cl S \subseteq \cl B(H)$, and a complete order
isomorphism $\phi : V \to \cl S$ with $\phi(e) = I$.
\end{thm}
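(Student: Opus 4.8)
The forward implication is essentially the content of the paragraph preceding the statement: if $\cl S\subseteq\cl B(H)$ is a concrete operator system then each $M_n(\cl S)\subseteq\cl B(H^n)$ carries the cone $C_n=M_n(\cl S)\cap\cl B(H^n)^+$, and the three matrix‑ordering axioms together with the Archimedean property of $I_n$ are inherited from $\cl B(H^n)$ — compatibility from the fact that $X^*TX\geq 0$ whenever $T\geq 0$, and the Archimedean property from norm‑closedness of $\cl B(H^n)^+$. So I would concentrate on the converse, for which the plan is the matrix‑state (GNS‑type) construction.

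The crux is the following lemma recovering the cones from unital completely positive maps into matrix algebras: for every $n$ and every $a\in M_n(V)_h$,
$$ a\in C_n \iff \varphi^{(n)}(a)\geq 0 \text{ in } M_n(M_k)=M_{nk}\text{ for every } k\in\bb N \text{ and every unital completely positive } \varphi:V\to M_k. $$
Here ``$\Rightarrow$'' is immediate from the definition of complete positivity. For ``$\Leftarrow$'' I argue contrapositively. If $a\notin C_n$, then since $e$ is an Archimedean matrix order unit the triple $(M_n(V),C_n,e_n)$ is an AOU space in which $C_n$ is closed for the order‑unit norm, so a Hahn--Banach separation argument produces a state $g$ of $(M_n(V),C_n,e_n)$ with $g(a)<0$. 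Define $\varphi_g:V\to M_n$ by $[\varphi_g(v)]_{i,j}=g(v\otimes E_{i,j})$, where $v\otimes E_{i,j}\in M_n(V)$ has $v$ in the $(i,j)$ entry and $0$ elsewhere. Using only the compatibility axiom (conjugating $C_m$ by a rectangular scalar matrix $X\in M_{m,n}$) together with positivity of $g$ on $C_n$, one checks that $\varphi_g$ is completely positive; and writing $\eta=\sum_k e_k\otimes e_k$ one computes $\langle\varphi_g^{(n)}(a)\eta,\eta\rangle=g(a)<0$, so $\varphi_g^{(n)}(a)$ is not positive. The map $\varphi_g$ is not unital, but a standard Archimedean perturbation repairs this: replace $\varphi_g$ by $\varphi_g+\varepsilon\psi_0$ for a fixed unital completely positive $\psi_0:V\to M_n$ and small $\varepsilon>0$ (``not positive'' is an open condition, so it persists), then conjugate by $I_n\otimes D^{-1}$ where $D=\big((\varphi_g+\varepsilon\psi_0)(e)\big)^{1/2}$ is invertible; the result is a unital completely positive $\varphi:V\to M_n$ whose value at $a$ is a congruence of the previous one, hence still not positive. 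This proves the lemma.

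With the lemma in hand, let $\{\varphi_\lambda:V\to M_{n_\lambda}\}_{\lambda\in\Lambda}$ be the family of all unital completely positive maps of $V$ into a matrix algebra, put $H=\bigoplus_{\lambda\in\Lambda}\bb C^{n_\lambda}$, and let $\phi:=\bigoplus_{\lambda\in\Lambda}\varphi_\lambda:V\to\cl B(H)$. Then $\phi$ is unital; it is completely positive as a direct sum of completely positive maps; it is $*$‑preserving, since any completely positive map of $*$‑vector spaces carries hermitian elements to hermitian ones (as $V_h=V^+-V^+$), so $\phi(V)$ is self‑adjoint; and it is injective, because already the states of $V$ (the case $n_\lambda=1$) separate points: for $v\in V_h\setminus\{0\}$, properness of $V^+$ together with the $n=1$ case of the separation argument applied to $v$ or to $-v$ gives a state not vanishing at $v$, and for general $v$ one splits into hermitian and skew parts. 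Hence $\cl S:=\phi(V)\subseteq\cl B(H)$ is a concrete operator system with $\phi(e)=I\in\cl S$. Finally, if $\phi^{(n)}(a)\geq 0$ in $\cl B(H^n)$ then $\varphi_\lambda^{(n)}(a)\geq 0$ for every $\lambda$, so $a\in C_n$ by the lemma; thus $\phi^{-1}:\cl S\to V$ is completely positive, and $\phi$ is a complete order isomorphism onto $\cl S$ with $\phi(e)=I$.

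The main obstacle is the lemma, and within it the verification that $\varphi_g$ is \emph{completely} positive and not merely positive: that is exactly the point where the compatibility axiom of a matrix ordering enters essentially. The passage from the non‑unital $\varphi_g$ to a genuinely unital map — needed so that the direct sum $\phi$ is unital and $\phi(V)$ is thus a concrete operator system — is the other delicate point, handled by the Archimedean perturbation above. Everything else is Hahn--Banach separation (relying on the Archimedean hypothesis to make the cones $C_n$ closed) and routine manipulation of direct sums.
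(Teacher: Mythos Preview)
The paper does not supply a proof of this theorem; it is quoted as the Choi--Effros representation theorem with references to \cite{ce} and \cite{pa}. Your argument is a correct outline of the classical matrix-state proof found in those references: the separation lemma recovering $C_n$ from unital completely positive maps into matrix algebras, followed by the direct-sum embedding $\phi=\bigoplus_\lambda\varphi_\lambda$, is exactly the Choi--Effros strategy, and your Archimedean perturbation to unitalize the separating map $\varphi_g$ is a standard and valid device. One cosmetic point: the direct sum should be indexed by a \emph{set} of unital completely positive maps into matrix algebras (for each $k$, the maps $V\to M_k$ already form a set), not literally by ``all'' such maps; this is harmless but worth saying.
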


Thanks to the above theorem, we can identify abstract and concrete
operator systems and refer to them simply as operator systems.
To avoid excessive notation, we will generally refer to an operator
system as simply a set $\cl S$ with the understanding that $e$ is the
order unit and $M_n(\cl S)^+$ is the cone of positive elements
in $M_n(\cl S)$. We note that any unital C*-algebra (and all C*-algebras in the
present paper will be assumed to be unital) is also an operator system in a canonical way.

There is a similar theory for arbitrary subspaces $X \subseteq \cl
B(H)$, called {\bf concrete operator spaces}. The
identification $M_n(\cl B(H)) \equiv \cl B(H^n)$ endows each $M_n(X)
\subseteq M_n(\cl B(H))$ with a norm; the family of norms obtained
in this way
satisfies certain compatibility axioms called {\it
Ruan's axioms}, and
is called an operator space structure. Ruan's Theorem identifies the vector spaces
satisfying Ruan's axioms with the concrete operator spaces. Sources
for the details include \cite{er2} and \cite{pa}.
If $\cl S$ is an abstract operator system and
$\phi : \cl S\rightarrow \cl B(H)$ is a complete order isomorphism onto its range,
then the concrete operator space structure on $\phi(\cl S)$ can be pulled back and will thus yield
an operator space structure on $\cl S$.
Moreover,
this operator space structure does not depend on the particular inclusion $\phi$.
Thus, the operator system structure on $\cl S$ {\em induces} an operator space structure on $\cl S$.

We close this section with some notions and notation concerning operator system tensor products \cite{kptt}.
Let
$(\cl S, \{ P_n\}_{n=1}^{\infty}, e_1)$ and $ (\cl T,\{
Q_n\}_{n=1}^{\infty}, e_2)$ be operator systems.
We equip the algebraic tensor product $\cl S\otimes\cl T$ with the involution given by
$(x\otimes y)^* = x^* \otimes y^*$.
By an {\bf operator system structure
on $\cl S \otimes \cl T$} we mean a family $\tau = \{C_n\}_{n=1}^{\infty}$
of cones, where $C_n \subseteq M_n(\cl S \otimes \cl T)_{h}$, satisfying:

\begin{itemize}
\item[(T1)] $(\cl S \otimes \cl T,\{C_n\}_{n=1}^{\infty}, e_1 \otimes e_2)$ is
an operator system denoted $\cl S \otimes_{\tau} \cl T$,
\item[(T2)] $P_n \otimes Q_m \subseteq C_{nm},$ for all $n,m \in \bb
N$, and
\item[(T3)] if $\phi:\cl S \to M_n$  and $\psi:\cl T \to M_m$ are unital completely
positive maps, then $\phi \otimes \psi:\cl S \otimes_{\tau} \cl T \to M_{mn}$ is a unital completely
positive map.
\end{itemize}
To simplify notation we shall generally write $C_n = M_n(\cl S \otimes_{\tau} \cl T)^+.$

If we let $\cl O$ denote the category whose objects are operator
systems and whose morphisms are the unital completely positive maps,
then by an {\bf operator system tensor product,} we mean a
mapping  $\tau: \cl O \times \cl O \to \cl O,$
such that for every pair of operator systems $\cl S$ and $\cl T,$
 $\tau(\cl S, \cl T)$ is an operator system structure on $\cl
S \otimes \cl T,$ denoted $\cl S\otimes_{\tau}\cl T$.
We call an operator system tensor product $\tau$ {\bf functorial,}
if the following property is satisfied:

\begin{itemize}
\item[(T4)] for any four operator systems $\cl S_1,\cl S_2, \cl T_1$, and $\cl T_2$, we have that if $\phi : \cl S_1\rightarrow \cl S_2$ and $\psi : \cl T_1\rightarrow \cl
T_2$ are unital completely positive maps, then the linear map $\phi\otimes\psi : \cl S_1\otimes\cl
T_1\rightarrow\cl S_2\otimes\cl T_2$ is (unital and) completely positive.
\end{itemize}

We will use extensively throughout the exposition several of the tensor products studied in \cite{kptt}.
Let $\cl S$ and $\cl T$ be operator systems. Their {\bf minimal} tensor product $\cl S\otimes_{\min}\cl T$ arises
from the embedding $\cl S\otimes \cl T\subseteq \cl B(H\otimes K)$, where $\cl S \subseteq \cl B(H)$ and $\cl T\subseteq \cl B(K)$
are any embeddings given by the Choi-Effros Theorem.
The {\bf maximal} tensor product $\cl S\otimes_{\max}\cl T$
is characterized by the property that every jointly completely positive bilinear map
$\rho$ from $\cl S\times \cl T$ into an operator system $\cl R$ linearizes to a completely positive
linear map from $\cl S\otimes_{\max}\cl T$ into $\cl R$ (see \cite[Definitions 5.4 and 5.6]{kptt}). Finally, to describe
the {\bf commuting} tensor product $\cl S\otimes_{\comm}\cl T$, note that
any pair $\phi : \cl S\rightarrow \cl B(H)$ and $\psi : \cl T\rightarrow \cl B(H)$ of completely positive maps
with commuting ranges gives rise to a completely positive map $\phi\cdot\psi : \cl S\otimes\cl T\rightarrow \cl B(H)$ given by
$(\phi\cdot\psi)(x\otimes y) = \phi(x)\psi(y)$.
The positive cone $M_n(\cl S\otimes_{\comm}\cl T)^+$
consists by definition of those elements $u\in M_n(\cl S\otimes\cl T)$ for which
$(\phi\cdot\psi)^{(n)}(u) \geq 0$ for all pairs of completely positive maps $\phi : \cl S\rightarrow \cl B(H)$
and $\psi : \cl T\rightarrow \cl B(H)$ with commuting ranges, and all Hilbert spaces $H$.

If $\cl S$ is an operator system, there exists a (unique)
C*-algebra $C_{u}^*(\cl S)$ generated by $\cl S$, called the universal (or maximal)
C*-algebra of $\cl S$, satisfying the following property:
whenever $\cl A$ is a C*-algebra and $\phi: \cl S \to \cl A$ is a unital
completely positive map, there exists a *-homomorphism $\rho: C_u^*(\cl S) \to \cl A$ extending $\phi$.
To construct this C*-algebra, one starts with the free $*$-algebra
$$\cl F(\cl S) = \cl S \oplus (\cl S \otimes \cl S) \oplus
(\cl S \otimes \cl S \otimes \cl S) \oplus \cdots.$$ Each unital
completely positive map $\phi: \cl S \to \cl B(H)$ gives rise to a
$*$-homomorphism $\pi_{\phi}: \cl F(\cl S) \to \cl B(H)$ by setting
\[\pi_{\phi}(s_1 \otimes \cdots \otimes s_n) = \phi(s_1) \cdots \phi(s_n)\]
and extending linearly to the tensor product and then to the direct sum.
For $u\in \cl F(\cl S)$, one sets $\|u\|_{\cl F(\cl S)} = \sup\|\pi_{\phi}(u)\|$,
where the supremum is taken over all unital completely positive maps $\phi$ as above,
and defines $C^*_u(\cl S)$ to be the completion of $\cl F(\cl S)$ with respect to $\|\cdot\|_{\cl F(\cl S)}$.
We will identify $\cl S$ with its image in $C^*_u(\cl S)$, and thus consider
it as an operator subsystem of $C^*_u(\cl S)$.


The following lemma will be used repeatedly later; part (1) is
\cite[Theorem 6.7]{kptt}, part (2) is \cite[Theorem 6.4]{kptt},
while part (3) can be proved similarly to (2).

\begin{lemma}\label{l_cintom}
Let $\cl S$ and $\cl T$ be operator systems.
\begin{enumerate}
\item $\cl S\otimes_{\comm}\cl A = \cl S\otimes_{\max}\cl A$ for every C*-algebra $\cl A$;

\item $\cl S\otimes_{\comm} \cl T$ coincides with the operator system arising from the inclusion of $\cl S\otimes\cl T$ into
$C^*_u(\cl S)\otimes_{\max} C^*_u(\cl T)$;

\item $\cl S\otimes_{\comm} \cl T$ coincides with the operator system arising from the inclusion of $\cl S\otimes\cl T$ into
$\cl S\otimes_{\comm} C^*_u(\cl T)$.
\end{enumerate}
\end{lemma}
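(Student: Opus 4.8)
The plan is to regard parts (1) and (2) as quoted from \cite{kptt} and to establish part (3) by imitating the proof of (2). Throughout I work with the description of the commuting tensor product in terms of its cones: for operator systems $\cl S,\cl T$, an element $u\in M_n(\cl S\otimes\cl T)$ lies in $M_n(\cl S\otimes_{\comm}\cl T)^+$ precisely when $(\phi\cdot\psi)^{(n)}(u)\geq 0$ for every Hilbert space $H$ and every pair of completely positive maps $\phi:\cl S\to\cl B(H)$, $\psi:\cl T\to\cl B(H)$ with commuting ranges. A routine reduction -- rescaling and then perturbing $\psi(e_2)$ to an invertible element while keeping the ranges of $\phi$ and $\psi$ commuting, exactly as in the proof of (2) -- shows that it is enough to test against pairs in which $\psi$ (and, if one wishes, $\phi$) is unital; I use this freely. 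For orientation, recall that (2) is proved by promoting a unital pair $(\phi,\psi)$ with commuting ranges to $*$-homomorphisms $\tilde\phi:C^*_u(\cl S)\to\cl B(H)$ and $\tilde\psi:C^*_u(\cl T)\to\cl B(H)$ via the universal property of the universal C*-algebras; since $\tilde\phi(C^*_u(\cl S))$ and $\tilde\psi(C^*_u(\cl T))$ are the C*-algebras generated by the commuting self-adjoint sets $\phi(\cl S)$ and $\psi(\cl T)$, they commute, so $\tilde\phi$ and $\tilde\psi$ assemble into a $*$-homomorphism on $C^*_u(\cl S)\otimes_{\max}C^*_u(\cl T)$ restricting to $\phi\cdot\psi$ on $\cl S\otimes\cl T$, and running the argument in both directions identifies the cones.

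For (3) I leave the left factor alone and promote only the right one. Write $\cl U$ for the operator system structure on $\cl S\otimes\cl T$ inherited from the inclusion $\cl S\otimes\cl T\subseteq\cl S\otimes_{\comm}C^*_u(\cl T)$; since the involutions and order units ($e_1\otimes e_2$ in both cases) plainly agree, it suffices to prove $M_n(\cl U)^+=M_n(\cl S\otimes_{\comm}\cl T)^+$ for every $n$. For $M_n(\cl S\otimes_{\comm}\cl T)^+\subseteq M_n(\cl U)^+$: if $u\geq 0$ in $\cl S\otimes_{\comm}\cl T$ and $\phi:\cl S\to\cl B(H)$, $\rho:C^*_u(\cl T)\to\cl B(H)$ are completely positive with commuting ranges, then $\phi$ and $\rho|_{\cl T}$ are completely positive with commuting ranges, so on $\cl S\otimes\cl T$ one has $\phi\cdot\rho=\phi\cdot(\rho|_{\cl T})$ and hence $(\phi\cdot\rho)^{(n)}(u)=(\phi\cdot(\rho|_{\cl T}))^{(n)}(u)\geq 0$; thus $u$ is positive in $\cl S\otimes_{\comm}C^*_u(\cl T)$, i.e.\ $u\in M_n(\cl U)^+$. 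For the reverse inclusion, let $u\in M_n(\cl S\otimes\cl T)$ be positive in $\cl S\otimes_{\comm}C^*_u(\cl T)$ and let $\phi:\cl S\to\cl B(H)$, $\psi:\cl T\to\cl B(H)$ be a unital pair with commuting ranges; I must show $(\phi\cdot\psi)^{(n)}(u)\geq 0$. The one new point is this: since $\psi(\cl T)$ commutes with the self-adjoint set $\phi(\cl S)$, the map $\psi$ carries $\cl T$ unitally and completely positively into the von Neumann algebra $\cl N:=\phi(\cl S)'$, so the universal property of $C^*_u(\cl T)$ -- applied with $\cl N$ itself as the target C*-algebra -- produces a unital $*$-homomorphism $\tilde\psi:C^*_u(\cl T)\to\cl N$ extending $\psi$. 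Then $\tilde\psi(C^*_u(\cl T))\subseteq\cl N=\phi(\cl S)'$, so $\phi$ and $\tilde\psi$ have commuting ranges; positivity of $u$ in $\cl S\otimes_{\comm}C^*_u(\cl T)$ yields $(\phi\cdot\tilde\psi)^{(n)}(u)\geq 0$, and since $u\in M_n(\cl S\otimes\cl T)$ and $\tilde\psi|_{\cl T}=\psi$ this is exactly $(\phi\cdot\psi)^{(n)}(u)\geq 0$. As the pair was arbitrary, $u\in M_n(\cl S\otimes_{\comm}\cl T)^+$.

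The main thing to get right is the unital reduction together with the observation that $\psi$ may be viewed as a unital completely positive -- hence, by universality, $*$-homomorphic -- map into the commutant $\phi(\cl S)'$ rather than merely into $\cl B(H)$; everything after that is bookkeeping, and it is precisely the analogue of the step in (2) where $\phi$ and $\psi$ are promoted to $*$-homomorphisms on $C^*_u(\cl S)$ and $C^*_u(\cl T)$. As a consistency check, part (1) gives $\cl S\otimes_{\comm}C^*_u(\cl T)=\cl S\otimes_{\max}C^*_u(\cl T)$, so (3) could equivalently be phrased with $\otimes_{\max}$ on the right-hand side.
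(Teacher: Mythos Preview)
Your proposal is correct and follows exactly the route the paper indicates: the paper's own ``proof'' consists only of the remark that parts (1) and (2) are \cite[Theorems 6.7 and 6.4]{kptt} and that part (3) ``can be proved similarly to (2),'' so you have in fact supplied the details the paper omits. Your key step --- viewing the unital completely positive map $\psi:\cl T\to\cl B(H)$ as taking values in the C*-algebra $\phi(\cl S)'$ and then invoking the universal property of $C^*_u(\cl T)$ with that commutant as target --- is precisely the one-sided analogue of the promotion used in the proof of (2), and the rest is bookkeeping as you say.
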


If $\tau$ is an operator system tensor product and $\cl S$ and $\cl T$ are operator systems,
we denote by $\cl S\hat{\otimes}_{\tau}\cl T$ the completion of $\cl S\otimes_{\tau}\cl T$
in the norm $\|\cdot\|_{\tau}$ induced by its operator system structure.
If $\cl U\subseteq \cl S\otimes_{\tau}\cl T$, we will denote by $\overline{\cl U}^{\tau}$ the closure
of $\cl U$ in the topology of the norm $\|\cdot\|_{\tau}$.
If $\tau$ is moreover a functorial operator system tensor product, $\cl S,\cl S_1$, $\cl T,\cl T_1$ are operator systems and
$\phi : \cl S\rightarrow \cl S_1$ and $\psi : \cl T\rightarrow \cl T_1$ are unital completely positive mappings,
the mapping $\phi \otimes\psi : \cl S\otimes_{\tau}\cl S_1\rightarrow \cl T\otimes_{\tau}\cl T_1$ is completely contractive
and hence extends to a (completely contractive)
mapping from $\cl S\hat{\otimes}_{\tau}\cl S_1$ into $\cl T\hat{\otimes}_{\tau}\cl T_1$, which will be denoted in
the same way.


\section{Quotients of Operator Systems}\label{s_qos}

In this section, we introduce operator system quotients and establish
some of their properties.
Let $\cl S$ and $\cl T$ be operator systems and $\phi: \cl S \to \cl T$ be a non-zero completely positive map.
Then the kernel of $\phi$, denoted $\ker\phi,$ is a closed,  non-unital $*$-subspace of
$\cl S$, that is, a closed subspace of $\cl S$
that does not contain the unit $e$ of $\cl S$ and has the property that if
an element $x$ of $\cl S$ belongs to it, then so does $x^*.$
However, these properties do not characterize kernels of completely positive maps.
Indeed, any completely positive map $\phi : M_n\rightarrow \cl T$ with
$\phi(E_{1,1}) = 0$ must, by the Cauchy-Schwarz inequality, satisfy $\phi(E_{1,j}) = \phi(E_{i,1}) =0$ for every $i,j = 1,\dots,n$.
Thus, we have that the span $J \subseteq M_n$ of the matrix unit $E_{1,1}$ is closed, non-unital and selfadjoint, but
it is not a kernel of a completely positive map on $M_n$.
Before we introduce the notion of a quotient in the operator system category, it will thus be
convenient to have a characterization of kernels of completely positive maps.
The following result is immediate:

\begin{prop}\label{eq-lem}
Let $\cl S$ be an operator system and let $J \subseteq \cl S$ be a subspace. Then the following are equivalent:
\begin{enumerate}

\item there exists an operator system $\cl T$ and a unital completely positive map $\phi : \cl S\rightarrow \cl T$
such that $J = \ker\phi$;

\item
there exist operator systems $\cl T_{\alpha}$, and
unital completely positive maps $\phi_{\alpha} : \cl S\rightarrow \cl T_{\alpha}$, $\alpha\in \bb{A}$,
such that $J = \bigcap_{\alpha\in \bb{A}}\ker\phi_{\alpha}$;

\item
there exists an operator system $\cl T$ and a non-zero completely positive map $\phi : \cl S\rightarrow \cl T$
such that $J = \ker\phi$;

\item there exists an operator system $\cl T$ and a non-zero positive map $\phi : \cl S\rightarrow \cl T$
such that $J = \ker\phi$;

\item there exists a collection $\{f_{\alpha}\}_{\alpha\in \bb{A}}$ of states of $\cl S$ such that $J
= \bigcap_{\alpha\in \bb{A}} \ker f_{\alpha}$.\\
\end{enumerate}
In all of these cases, $J$ is closed, non-unital, $*$-invariant, and is equal to the intersection of the kernels of all the states that vanish on $J$.
\end{prop}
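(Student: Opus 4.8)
The plan is to establish the cycle of implications $(1)\Rightarrow(2)\Rightarrow(5)\Rightarrow(1)$ together with $(1)\Rightarrow(3)\Rightarrow(4)\Rightarrow(1)$, and to extract the final ``in all these cases'' conclusion as a by-product of the route through $(5)$.

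The easy part is the chain of weakenings. For $(1)\Rightarrow(2)$ take the single-element family $\{\phi\}$. For $(1)\Rightarrow(3)$ note that a unital completely positive map is in particular non-zero and completely positive, and $(3)\Rightarrow(4)$ is immediate since a completely positive map is positive and remains non-zero. Thus the whole content is concentrated in closing the cycle, i.e.\ in producing, from a subspace $J$ given only as an intersection of state kernels (or as the kernel of a merely positive map), an operator system $\cl T$ and a \emph{unital completely positive} map with kernel exactly $J$.

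The key construction is the following. Suppose first $J=\bigcap_{\alpha}\ker f_\alpha$ for states $f_\alpha\in S(\cl S)$. Form the (possibly very large) direct sum $\cl T=\bigoplus_{\alpha}\bb C$, regarded as a commutative C*-algebra and hence an operator system with coordinatewise order and unit, and define $\phi:\cl S\to\cl T$ by $\phi(x)=(f_\alpha(x))_\alpha$. Each $f_\alpha$, being a state, is unital and completely positive (states on operator systems are automatically completely positive, since they are positive maps into $\bb C=M_1$, and positivity into the scalars forces complete positivity), so $\phi$ is unital and completely positive, and by construction $\ker\phi=\bigcap_\alpha\ker f_\alpha=J$. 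This gives $(5)\Rightarrow(1)$. To get $(2)\Rightarrow(5)$, and symmetrically $(4)\Rightarrow(5)$, I would show that the kernel of any non-zero positive (in particular, unital completely positive) map $\psi:\cl S\to\cl R$ is an intersection of state kernels: compose $\psi$ with the states of $\cl R$ to get a family of states $g\circ\psi$ on $\cl S$; since $\cl R$ is an operator system its state space separates positive elements (the Archimedean condition together with the order-unit structure guarantees that $S(\cl R)$ is separating, by a Hahn–Banach/Krein argument), so an element $x\in\cl S_h$ lies in $\ker\psi$ iff $g(\psi(x))=0$ for all $g\in S(\cl R)$. A short argument passing from hermitian elements to general $x=x_1+ix_2$ (using $*$-invariance, which in turn one checks holds for kernels of positive maps since $\psi(x^*)=\psi(x)^*$) then yields $\ker\psi=\bigcap_{g}\ker(g\circ\psi)$. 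Chaining $(2)\Rightarrow(5)\Rightarrow(1)$ and $(4)\Rightarrow(5)\Rightarrow(1)$ closes both cycles.

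The main obstacle is the subtlety in the step ``states separate points'': one must be careful that a positive map $\psi:\cl S\to\cl R$ need not be bounded or unital a priori, so one cannot simply invoke a norm-duality statement; the right tool is that for an operator system $\cl R$ with Archimedean order unit $e_{\cl R}$, if $r\in\cl R_h$ and $g(r)=0$ for all states $g$, then $\pm r\le \varepsilon e_{\cl R}$ for every $\varepsilon>0$ (otherwise a supporting state separates), and Archimedeanness forces $r=0$. Everything else — closedness of $J$ (it is an intersection of kernels of the $\bb C$-valued, hence bounded, functionals $f_\alpha$), non-unitality (if $e\in J$ then $f_\alpha(e)=0$ contradicting $f_\alpha(e)=1$), and $*$-invariance — then follows directly from the representation $J=\bigcap_\alpha\ker f_\alpha$, which also immediately gives the final assertion that $J$ equals the intersection of the kernels of \emph{all} states vanishing on it (that intersection contains $J$ trivially and is contained in $\bigcap_\alpha\ker f_\alpha=J$ since each $f_\alpha$ is among those states).
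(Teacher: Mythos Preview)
Your argument is correct. The paper itself does not prove this proposition at all: it is stated with the preface ``The following result is immediate'' and left without proof, so there is nothing to compare your route against. Your cycle $(1)\Rightarrow(2)\Rightarrow(5)\Rightarrow(1)$ together with $(1)\Rightarrow(3)\Rightarrow(4)\Rightarrow(5)$ is exactly the natural way to unpack what the authors regard as obvious, and the direct-sum construction $\phi(x)=(f_\alpha(x))_\alpha\in\bigoplus_\alpha\bb C$ for $(5)\Rightarrow(1)$ is the expected move.

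One small imprecision worth tightening in the $(4)\Rightarrow(5)$ step: when $\psi$ is merely a non-zero positive map, the compositions $g\circ\psi$ with states $g$ of $\cl R$ are positive linear functionals on $\cl S$ but not literally states, since $\psi$ need not be unital. You clearly see the issue (you flag that $\psi$ need not be unital), but you should say explicitly that each non-zero $g\circ\psi$ satisfies $(g\circ\psi)(e)>0$ (since a positive functional on an AOU space with $f(e)=0$ is identically zero), and hence can be normalized to a state with the same kernel; the zero ones can be discarded from the intersection. With that one line added, the argument is complete.
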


\begin{defn}\label{d_kern} Given an operator system $\cl S,$ we call $J \subseteq \cl S$ a {\bf kernel,} provided that it satisfies any of the equivalent conditions of Proposition \ref{eq-lem}.
\end{defn}

\noindent {\bf Remarks (i)}
A subspace $J \subseteq \cl S$ is called an {\em order ideal} if $q \in J$ and $0 \le p \le q,$ imply that $p \in J.$
It is easy to see that every kernel is an order ideal.
The example given before Proposition \ref{eq-lem} shows that an order ideal need not be a kernel.

\bigskip

\noindent {\bf (ii)}
Let
$\cl S$ be an operator system, $J\subseteq \cl S$ be any subspace which is invariant under the involution
and does not contain the unit $e$ of $\cl S$, and $q : \cl S\rightarrow \cl S/J$ be the natural map onto the algebraic quotient.
The space $\cl S/J$ is a $*$-vector space with respect to the involution $(x+J)^* = x^*+J$.
Let $D_n = D_n(\cl S/J) = q^{(n)}(M_n(\cl S)^+)$, $n\in \bb{N}$; in other words,
$$
D_n =\{ (x_{ij}+J)\in M_n(\cl S/J): \mbox{ there exists } y
_{ij} \in  J \mbox{ with } (x_{ij}+y_{ij})\in M_n(\cl S)^+\}.
$$
For every $n\in \bb{N}$, the set $D_n$ is a cone in $M_n(\cl S/J)_{h}$ and the collection
$\{D_n\}_{n\in \bb{N}}$ is compatible. However, it is often the case
that $D_n \cap (-D_n)$ contains non-zero elements. It is easy to show that $e+J$ is a
matrix order unit for $\cl S/J$.
It follows that the quotient map $q: \cl S \to \cl S/J$ is a unital completely positive map with kernel equal to $J.$
Thus, we see that any non-zero non-unital selfadjoint subspace of any operator system is the kernel of a completely positive
map into a space resembling a matrix ordered space with a matrix order unit.

When $J \subseteq \cl S$ is an order ideal, then one can show that
$D_n \cap (-D_n) = (0)$ and that $\cl S/J$ is in fact a matrix-ordered
space with matrix order unit $e+J.$
It follows that the conditions in Proposition \ref{eq-lem} can not be relaxed
by requiring that the ranges of the maps be merely matrix ordered
spaces with a matrix order unit instead of operator systems.

We thus have that when $J$ is an order ideal and
$e+J$ is an Archimedean matrix order unit for $\cl S/J$, then
the matrix ordered space $\cl S/J$ is an operator system.
This shows that in the example before Proposition \ref{eq-lem}, where $J$ is the span of
the matrix unit $E_{1,1}$ in $M_n$, then $M_n/J$ is a matrix-ordered
space and the element $I_n +J$ is a matrix order unit of $M_n/J,$ but
it is not Archimedean.

\bigskip

Let $(Q,\{D_n\}_{n=1}^{\infty},1)$ be any matrix ordered space with matrix order unit
$1$, which is not assumed to be Archimedean. It was shown in \cite{pt} and \cite{ptt} that
$Q$ gives rise to an operator system through an
\textit{Archimedeanization process}, which we wish to apply to construct an operator system $\cl S/J,$ when $J$ is a kernel in an
operator system $\cl S$.
The Archimedeanization process involves two steps. In the first step, one identifies a subspace $N \subseteq Q$ that must be quotiented out,
and in the second step, one determines an operator system structure on the quotient space $Q/N$.

To define the subspace $N$, first recall that a seminorm on a $*$-vector space is called
a $*$-seminorm if $\|x\| = \|x^*\|$, for every $x\in Q$.
For an $x\in Q_h$, set $\| x \|_{o} = \inf\{ \lambda\geq  0: \lambda 1 \pm x\in D_1 \}$;
then $\|\cdot\|_{o}$ is a seminorm on the real vector space $Q_{h}$, called the {\em order seminorm} in \cite{pt}.
It has a minimal $\|\cdot\|_{m}$ and a maximal $\|\cdot\|_{M}$ $*$-seminorm extension to $Q$ in
the sense that for every $*$-seminorm $\|\cdot \|$ which coincides with $\|\cdot\|_{o}$ on $Q_{h}$, one has $\|\cdot\|_{m}\leq \|\cdot\| \leq  \|\cdot\|_{M}$. Moreover, $\|\cdot\|_{M} \leq 2\|\cdot\|_{m}$, and hence all  $*$-seminorms extending the order seminorm
from $Q_h$ to $Q$ are uniformly equivalent. We set $N =
\{x\in Q: \| x \|= 0 \}$, where $\|\cdot\|$ is any such seminorm. It was shown in
\cite[Proposition 4.9]{pt} that $N$ coincides with the intersection of the kernels of all states of $Q.$

The following lemma gives another useful characterization of kernels.


\begin{lemma}\label{q-lem}
Let $J$ be a closed, non-unital order ideal of an operator system $\cl S$.
Then the order seminorm on $\cl S/J$ is a norm if and only if $J$ is kernel.
\end{lemma}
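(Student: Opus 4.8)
The plan is to prove both implications directly from the characterizations assembled in Proposition \ref{eq-lem} and the discussion of the Archimedeanization process. Throughout, write $q:\cl S\to\cl S/J$ for the quotient map, $\{D_n\}$ for the induced cones $D_n=q^{(n)}(M_n(\cl S)^+)$, and $\|\cdot\|_o$ for the order seminorm on $(\cl S/J)_h$ associated to $D_1$ and the matrix order unit $e+J$. Since $J$ is assumed to be a closed, non-unital order ideal, we already know from Remark (ii) that $(\cl S/J,\{D_n\},e+J)$ is a matrix ordered $*$-vector space with matrix order unit $e+J$ (the conditions $D_n\cap(-D_n)=(0)$ hold precisely because $J$ is an order ideal), so the only thing standing between $\cl S/J$ and being an operator system is the Archimedean property of the order unit; and $e+J$ is Archimedean at level $n=1$ exactly when $N:=\{z\in\cl S/J:\|z\|_o=0\}$ is trivial, i.e.\ when $\|\cdot\|_o$ is a norm.

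For the implication ``$J$ a kernel $\Rightarrow$ order seminorm is a norm'': assume $J$ is a kernel, so by Proposition \ref{eq-lem}(5) there is a family $\{f_\alpha\}_{\alpha\in\bb A}$ of states of $\cl S$ with $J=\bigcap_\alpha\ker f_\alpha$. Each $f_\alpha$ descends to a state $\tilde f_\alpha$ of $\cl S/J$ (it is well-defined since $f_\alpha$ vanishes on $J$, positive on $D_1$ since $f_\alpha$ is positive on $M_1(\cl S)^+$ and $D_1=q(M_1(\cl S)^+)$, and unital since $f_\alpha(e)=1$). Now suppose $z=x+J\in(\cl S/J)_h$ has $\|z\|_o=0$; then for every $\lambda>0$ we have $\lambda(e+J)\pm z\in D_1$, so $\tilde f_\alpha(\lambda(e+J)\pm z)\ge 0$, i.e.\ $|\tilde f_\alpha(z)|\le\lambda$ for all $\lambda>0$, whence $\tilde f_\alpha(z)=0$, i.e.\ $f_\alpha(x)=0$, for every $\alpha$. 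Thus $x\in\bigcap_\alpha\ker f_\alpha=J$, so $z=0$. Hence $\|\cdot\|_o$ is a norm. (Alternatively one can quote that $N$ equals the intersection of the kernels of all states of $Q=\cl S/J$, as recorded from \cite[Proposition 4.9]{pt}, and identify those states with the states of $\cl S$ vanishing on $J$.)

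For the converse, assume $\|\cdot\|_o$ is a norm on $(\cl S/J)_h$, equivalently $N=(0)$. Since $J$ is an order ideal and $e+J$ is a matrix order unit, $(\cl S/J,\{D_n\},e+J)$ is a matrix ordered space with matrix order unit. Applying the Archimedeanization process of \cite{pt},\cite{ptt} to it: the subspace to be quotiented out is exactly $N$ (the null space of the order seminorm, equivalently of any $*$-seminorm extending it, as described just before the lemma), and $N=(0)$ by hypothesis; so no quotient is taken, and the second step merely enlarges the cones $D_n$ to their Archimedean closures $\overline{D_n}$ to produce an operator system $\cl S/J$ with underlying space $\cl S/J$, cones $\overline{D_n}\supseteq D_n$, and the same order unit $e+J$. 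The quotient map $q:\cl S\to\cl S/J$ is unital and completely positive (since $q^{(n)}(M_n(\cl S)^+)=D_n\subseteq\overline{D_n}$), and its kernel is $J$: if $q(x)=0$ then $x\in J$ by definition of the algebraic quotient. Thus $J$ is the kernel of a unital completely positive map into an operator system, so $J$ is a kernel by Proposition \ref{eq-lem}(1).

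The step I expect to require the most care is the converse: one must be sure that for an order ideal $J$ the induced matrix ordering $\{D_n\}$ genuinely satisfies all the matrix ordered $*$-vector space axioms (which Remark (ii) asserts but whose verification uses the order ideal property to get $D_n\cap(-D_n)=(0)$), and that the Archimedeanization process, when the null space is already trivial, leaves the underlying vector space and the order unit untouched while only completing the cones — so that $J$ remains the kernel of the (still unital, still completely positive) quotient map. Everything else is a short computation with states and the definition of the order seminorm.
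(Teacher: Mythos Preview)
Your proof is correct. The direction ``$J$ a kernel $\Rightarrow$ order seminorm is a norm'' matches the paper's argument essentially verbatim: descend the states $f_\alpha$ through $q$ and use that their common kernel in $\cl S/J$ is zero (the paper phrases this via \cite[Proposition 4.9]{pt}, which you mention as the alternative).

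For the converse, you take a genuinely different route from the paper. The paper argues via states: since the order seminorm is a norm, \cite[Proposition 4.9]{pt} gives $\bigcap_{\alpha}\ker t_\alpha=\{0\}$ over all states $t_\alpha$ of $\cl S/J$; composing each $t_\alpha$ with $q$ yields states $s_\alpha$ of $\cl S$ with $\bigcap_\alpha\ker s_\alpha=q^{-1}(0)=J$, so $J$ is a kernel by condition (5) of Proposition~\ref{eq-lem}. You instead invoke the Archimedeanization process: with $N=(0)$ the underlying space is unchanged, the cones are enlarged to their Archimedean closures, and $q$ becomes a unital completely positive map into an honest operator system with kernel $J$, giving condition (1) of Proposition~\ref{eq-lem}. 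Your approach is more constructive and in fact anticipates exactly what the paper does next in Proposition~\ref{p_quo}; the paper's approach is lighter, needing only the scalar-level result \cite[Proposition 4.9]{pt} rather than the full matricial Archimedeanization machinery of \cite{ptt}. Your caution about the converse is well placed but not a gap: the fact that $N=(0)$ suffices (no quotient needed, only cone completion) is precisely what \cite[Proposition 3.16]{ptt} provides.
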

\begin{proof} If the order seminorm on $\cl S/J$ is a norm then, by \cite[Proposition 4.9]{pt},
the intersection of the kernels of all states on $\cl S/J$ is $\{0\}.$
If we let $q: \cl S \to \cl S/J$ be the quotient map,
then each state on $\cl S/J$ induces a state on $\cl S$
by composition with $q.$ If we let $s_{\alpha}: \cl S \to \bb C, \alpha \in \bb{A}$, denote the
family of states obtained in this manner then, clearly, $J = \bigcap_{\alpha\in \bb{A}} \ker s_{\alpha}$.
By Proposition~\ref{eq-lem}, $J$ is a kernel.

Conversely, if $J$ is a kernel then, by Proposition~\ref{eq-lem},
there exists a family $\{s_{\alpha}: \alpha \in \bb{A} \}$
of states of $\cl S$ such that $J = \bigcap_{\alpha\in \bb{A}} \ker s_{\alpha}$.
For each $\alpha\in \bb{A}$, let $t_{\alpha} : S/J\rightarrow \bb{C}$ be the functional given by
$t_{\alpha}(q(x)) = s_{\alpha}(x)$, $x\in \cl S$.
It follows from the definition of the order structure of $\cl S/J$ that $t_{\alpha}$ is a state, for
each $\alpha\in \bb{A}$.
Moreover, $\bigcap_{\alpha\in \bb{A}} \ker t_{\alpha} = \{q(0)\}$;
in other words,
$0+J$ is the only vector of $\cl S/J$ annihilated by the order seminorm.
It follows by \cite[Proposition 4.9]{pt} that the order semi-norm is a norm.
\end{proof}




\begin{prop}\label{p_quo}  Let $\cl S$ be an operator system and let $J \subseteq \cl S$ be a kernel. If we define a family of matrix cones on $\cl S/J$ by setting
\[ C_n(\cl S/J) = \{(x_{i,j} + J) \in M_n(\cl S/J) : \forall \epsilon > 0 \ \exists k_{i,j} \in J \text{ such that } \epsilon 1 \otimes I_n +(x_{i,j} + k_{i,j} ) \in M_n(\cl S)^+ \}, \]
then $(\cl S/J, \{ C_n \}_{n=1}^{\infty})$ is a matrix ordered $*$-vector space with an Archimedean matrix unit $1+J$,
and the quotient map $q: \cl S \to \cl S/J$ is completely positive.
\end{prop}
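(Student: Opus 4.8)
The plan is to verify, in order, the three axioms of a matrix ordering for $\{C_n\}$, then that $1+J$ is a matrix order unit, then that it is Archimedean, and finally that $q$ is completely positive. The last point is immediate: choosing all $k_{i,j}=0$, every element of $q^{(n)}(M_n(\cl S)^+)$ lies in $C_n$ for trivial reasons, so $q$ is completely positive. In fact it is worth observing at the outset that, writing $D_n:=q^{(n)}(M_n(\cl S)^+)$ and $(1+J)_n:=1\otimes I_n+J$, the cone $C_n$ is precisely the Archimedean closure of $D_n$ with respect to $(1+J)_n$; this is the level-$n$ instance of the Archimedeanization recalled before the statement.

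The routine parts come first. One checks $C_n\subseteq M_n(\cl S/J)_h$: if $(x_{i,j}+J)\in C_n$, then for each $\epsilon>0$ the matrix $\epsilon\,1\otimes I_n+(x_{i,j}+k_{i,j})$ is selfadjoint in $M_n(\cl S)$, which forces $x_{i,j}-x_{j,i}^*\in J$, i.e.\ $(x_{i,j}+J)^*=(x_{i,j}+J)$. That $C_n$ is a cone is clear for positive scalars, and for $u,v\in C_n$ and $\epsilon>0$ one chooses witnesses for $u$ and for $v$ with tolerance $\epsilon/2$ and adds. For compatibility, let $u=(x_{i,j}+J)\in C_n$ and $X\in M_{n,m}$; if $X\neq 0$, given $\epsilon>0$ pick $k_{i,j}\in J$ with $\tfrac{\epsilon}{\|X\|^2}\,1\otimes I_n+(x_{i,j}+k_{i,j})\in M_n(\cl S)^+$, conjugate by $X$ using the compatibility of the cones of $\cl S$, and absorb the term $\tfrac{\epsilon}{\|X\|^2}(X^*X)\otimes 1\le \epsilon\,1\otimes I_m$ (a scalar-matrix inequality which holds in $M_m(\cl S)$ since $\|X\|^2 I_m-X^*X\ge 0$ in $M_m$); the entries of $X^*(k_{i,j})X$ remain in $J$ because $J$ is a subspace. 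The case $X=0$ is trivial since $0\in C_n$.

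The crux — and the only place where the hypothesis that $J$ is a \emph{kernel} is used, as opposed to merely a non-unital selfadjoint order ideal — is the axiom $C_n\cap(-C_n)=\{0\}$. Here I would invoke Proposition~\ref{eq-lem}: write $J=\bigcap_{\alpha\in\bb A}\ker f_\alpha$ for a family $\{f_\alpha\}$ of states of $\cl S$. Each $f_\alpha$ is a unital positive functional, hence completely positive, so $f_\alpha^{(n)}(M_n(\cl S)^+)\subseteq M_n^+$. Let $u=(x_{i,j}+J)\in C_n\cap(-C_n)$. For every $\epsilon>0$ there are $k_{i,j},k_{i,j}'\in J$ with $\epsilon\,1\otimes I_n+(x_{i,j}+k_{i,j})\in M_n(\cl S)^+$ and $\epsilon\,1\otimes I_n-(x_{i,j}+k_{i,j}')\in M_n(\cl S)^+$. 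Applying $f_\alpha^{(n)}$ and using $f_\alpha(J)=\{0\}$ shows that $A_\alpha:=(f_\alpha(x_{i,j}))_{i,j}$ is a hermitian scalar matrix with $-\epsilon I_n\le A_\alpha\le \epsilon I_n$ in $M_n$ for every $\epsilon>0$, hence $A_\alpha=0$. Thus $f_\alpha(x_{i,j})=0$ for all $i,j$ and all $\alpha$, so $x_{i,j}\in\bigcap_\alpha\ker f_\alpha=J$, i.e.\ $u=0$.

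It remains to treat the unit. For the matrix order unit property, given $v\in M_n(\cl S/J)_h$ choose a selfadjoint lift $(x_{i,j})\in M_n(\cl S)_h$ (take the selfadjoint part of an arbitrary lift); since $1\otimes I_n$ is an order unit for $M_n(\cl S)$, there is $r>0$ with $r\,1\otimes I_n-(x_{i,j})\in M_n(\cl S)^+$, and applying $q^{(n)}$ gives $r(1+J)_n-v\in D_n\subseteq C_n$. For the Archimedean property, suppose $v=(x_{i,j}+J)$ satisfies $r(1+J)_n+v\in C_n$ for every $r>0$ (so in particular $v$ is hermitian). Fixing $\delta>0$ and applying the hypothesis with $r=\delta/2$, then unwinding membership in $C_n$ with tolerance $\delta/2$, yields $k_{i,j}\in J$ with $\delta\,1\otimes I_n+(x_{i,j}+k_{i,j})\in M_n(\cl S)^+$; since $\delta>0$ is arbitrary, this says exactly that $v\in C_n$. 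This closes all the verifications, the only genuine obstacle being the positivity axiom $C_n\cap(-C_n)=\{0\}$, which is precisely where the passage to a kernel is essential.
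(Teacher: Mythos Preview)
Your proof is correct. Your approach differs from the paper's in that the paper argues indirectly: it first observes (via Remark~(ii) preceding the proposition) that $(\cl S/J,\{D_n\},1+J)$ is already a matrix ordered $*$-vector space with matrix order unit, then invokes the Archimedeanization machinery of \cite[Proposition~3.16]{ptt} together with Lemma~\ref{q-lem} (the order seminorm on $\cl S/J$ is a norm precisely when $J$ is a kernel) to conclude that the null space $N$ arising in the Archimedeanization is trivial, so that passing to the cones $C_n$ yields an operator system. You instead verify every axiom directly and, for the crucial step $C_n\cap(-C_n)=\{0\}$, apply the state characterization of kernels from Proposition~\ref{eq-lem} at each matrix level via the complete positivity of states. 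Both arguments rest on the same underlying fact --- the states annihilating $J$ separate the points of $\cl S/J$ --- but yours is self-contained and avoids the external citation to \cite{ptt}, at the cost of rechecking the routine cone and compatibility axioms that the Archimedeanization result packages up once and for all.
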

\begin{proof}
The vector space $\cl S/J$, equipped with the family $\{D_n(\cl S/J)\}_{n=1}^{\infty}$ of matrix cones, is a
matrix ordered $*$-vector space with a matrix order unit $1 + J$.
This result now follows from \cite[Proposition 3.16]{ptt}
and Lemma \ref{q-lem}.
\end{proof}

\begin{defn}\label{d_quoos} Let $\cl S$ be an operator system and $J \subseteq \cl S$ be a kernel.
We call the operator system $(\cl S/J, \{ C_n \}_{n=1}^{\infty}, 1+J)$ defined in
Proposition \ref{p_quo} the {\bf quotient operator system.}
We call the kernel $J$ {\bf order proximinal} if
$C_1(\cl S/J) = D_1(\cl S/J)$ and {\bf completely order proximinal}
if $C_n(\cl S/J) = D_n(\cl S/J)$ for all $n\in \bb{N}.$
\end{defn}

The following proposition characterizes operator system quotients in terms of a universal property.

\begin{prop}\label{p_unive} Let $\cl S$ and $\cl T$ be operator systems and let $J$   be a kernel in $\cl S.$ If $\varphi: \cl S \rightarrow \cl T$ is a unital completely positive map with $J \subseteq  \ker(\varphi),$ then the map $\tilde{\varphi}:\cl S/J \rightarrow \cl T$ given by  $\tilde{\varphi}(x+J) = \varphi(x)$ is unital and completely positive. Moreover, if $\cl R$ is an operator system and $\psi:\cl S \to \cl R$ is a unital completely positive map, with the property that whenever $\cl T$ is an operator system and $\varphi:\cl S \to \cl T$ is a unital completely positive map with $J \subseteq \ker(\varphi),$ there exists a unique unital completely positive map $\hat{\varphi}: \cl R \to \cl T$ such that $\hat{\varphi} \circ \psi = \varphi,$ then there exists a complete order isomorphism $\gamma:\cl R \to\cl S/J$ such that $\gamma\circ \psi = q.$
\end{prop}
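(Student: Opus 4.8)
The plan is to treat the two assertions separately: the first is a short computation using the Archimedean property of $\cl T$ at each matrix level, and the second is the standard ``uniqueness of a universal object'' argument, in which we also record that $(\cl S/J,q)$ satisfies the very universal property being discussed.

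For the first assertion, note that $\tilde\varphi$ is well defined and linear because $J\subseteq\ker\varphi$, and that it is plainly unital and self-adjoint. To see it is completely positive, fix $n$ and pick $(x_{i,j}+J)\in M_n(\cl S/J)^+=C_n(\cl S/J)$. By the description of $C_n(\cl S/J)$ in Proposition \ref{p_quo}, for every $\epsilon>0$ there are $k_{i,j}\in J$ with $\epsilon\,1\otimes I_n+(x_{i,j}+k_{i,j})\in M_n(\cl S)^+$. Applying the completely positive map $\varphi^{(n)}$, and using $\varphi(k_{i,j})=0$ together with unitality of $\varphi$, we get that $\epsilon\,1_{\cl T}\otimes I_n+\tilde\varphi^{(n)}\!\big((x_{i,j}+J)\big)$ lies in $M_n(\cl T)^+$ for every $\epsilon>0$; since $1_{\cl T}$ is an Archimedean matrix order unit for $\cl T$, it follows that $\tilde\varphi^{(n)}\!\big((x_{i,j}+J)\big)\in M_n(\cl T)^+$. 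Hence $\tilde\varphi$ is unital and completely positive, and, since $q$ is surjective, $\tilde\varphi$ is the unique unital completely positive map satisfying $\tilde\varphi\circ q=\varphi$.

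For the second assertion, first observe that, by Proposition \ref{p_quo} and the first assertion (together with the uniqueness just recorded), the pair $(\cl S/J,q)$ itself satisfies the universal property that is hypothesized of $(\cl R,\psi)$. Feeding the map $q$ (which has $J\subseteq\ker q$) into the universal property of $(\cl R,\psi)$ produces a unital completely positive $\gamma\colon\cl R\to\cl S/J$ with $\gamma\circ\psi=q$, whence $\ker\psi\subseteq\ker q=J$; we also use, as must be part of the hypothesis (that it cannot be omitted is seen by taking $\cl R=\cl S$, $\psi=\id_{\cl S}$), that $J\subseteq\ker\psi$, so that $\psi$ is an admissible test map for the universal property of $\cl S/J$. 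Feeding $\psi$ into the universal property of $(\cl S/J,q)$ then produces a unital completely positive $\delta\colon\cl S/J\to\cl R$ with $\delta\circ q=\psi$. Now $(\gamma\circ\delta)\circ q=\gamma\circ\psi=q=\id_{\cl S/J}\circ q$, so the uniqueness clause of the universal property of $(\cl S/J,q)$, applied with test map $q$, gives $\gamma\circ\delta=\id_{\cl S/J}$; symmetrically $(\delta\circ\gamma)\circ\psi=\delta\circ q=\psi=\id_{\cl R}\circ\psi$ gives $\delta\circ\gamma=\id_{\cl R}$ by the uniqueness clause of the universal property of $(\cl R,\psi)$. Thus $\gamma$ is a unital completely positive bijection with unital completely positive inverse $\delta$, i.e.\ a complete order isomorphism, and $\gamma\circ\psi=q$ by construction.

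The only genuinely delicate point is the passage $\epsilon\to 0$ in the first assertion: it is exactly there that the Archimedean hypothesis on $\cl T$ is used, and it cannot be dropped, as the matrix-ordered space $M_n/\operatorname{span}(E_{1,1})$ discussed before Proposition \ref{eq-lem} shows. The second assertion is then a formal diagram chase once $(\cl S/J,q)$ has been identified as satisfying the universal property; the one point demanding care there is simply that $\psi$ must annihilate $J$ before it can legitimately be played against the universal property of $\cl S/J$.
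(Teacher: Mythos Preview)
Your proof is correct and follows essentially the same route as the paper: for the first assertion you use the Archimedean property of $1_{\cl T}$ exactly as the paper does, and for the second you spell out the ``elementary diagram chasing'' the paper merely invokes. Your observation that the hypothesis $J\subseteq\ker\psi$ is actually needed (with the counterexample $\cl R=\cl S$, $\psi=\id_{\cl S}$ when $J\ne 0$) is a valid point that the paper glosses over; the intended reading is clearly that $\psi$ is a candidate quotient map, hence annihilates $J$, but as literally stated the proposition omits this.
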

\begin{proof}
If $(x_{ij}+J)\in C_n(\cl S/J),$ then for every $\epsilon >0,$ we have that $\epsilon 1_{\cl T} \otimes I_n + (\varphi(x_{i,j})) \in M_n(\cl T)^+.$ Since $1_{\cl T}$ is an Archimedean matrix order unit for $\cl T,$ it follows that $(\varphi(x_{i,j})) \in M_n(\cl T)^+.$ Thus, $\tilde{\varphi}$ is a unital completely positive map.

The remaining claims follow by elementary diagram chasing.
\end{proof}

By an {\it ideal} in a C*-algebra, we shall always mean a closed, two-sided ideal.
If $\cl I$ is an ideal in a unital C*-algebra $\cl A,$ and $\cl A/\cl
I$ is the quotient C*-algebra, i.e., the C*-quotient,
then $\cl I$ is the kernel of the canonical quotient map $\pi : \cl A\rightarrow \cl A/\cl I$, and is hence a kernel
in the sense of Definition \ref{d_kern}.
Moreover, it is easily seen that $\cl A/\cl I$ is completely order isomorphic to the quotient operator system of $\cl A$ by $\cl I$
as given in Definition \ref{d_quoos}.
In particular, $D_n(\cl A/\cl I) = C_n(\cl A/\cl I),$ so ideals in C*-algebras are completely order proximinal kernels.

If $\cl S \subseteq \cl A$ is an operator system in $\cl A,$ then $J= \cl I \cap \cl S$ is the kernel of the C*-quotient map $\pi,$ restricted to $\cl S.$ Thus, $J \subseteq \cl S$ is a kernel and, by Proposition \ref{p_unive},
the restriction of $\pi$ to $\cl S$ induces an injective, unital completely positive map
$\psi: \cl S/J \to \cl A/\cl I.$
We note that $\psi$ is not always a complete order isomorphism onto its range. For an example,
let $\cl K$ be the C*-algebra of compact operators on an infinite dimensional Hilbert space $H$,
$\cl K_1 = \{\lambda I + a : a\in \cl K, \lambda \in \bb{C}\}$, and $\cl A = \cl K_1\oplus \cl K_1$
($\ell^{\infty}$-direct sum). Let $\cl I = 0\oplus \cl K_1$, so that $\cl A/\cl I \simeq \cl K_1\oplus 0$.
Let
$$\cl S = \left\{\left(\begin{matrix}
\lambda I + a & 0\\ 0 & \lambda I - a\end{matrix} \right) \in \cl A \ : \ a\in \cl K, \lambda\in \bb{C}\right\}.$$
We have that $\cl S$ is an operator subsystem of $\cl A$ and that $\cl S\cap \cl I = \{0\}$. Suppose that
$x = \left(\smallmatrix\lambda I + a & 0\\ 0 & \lambda I - a\endsmallmatrix\right) \in \cl S$. We have that
$x \in \cl S^+$ if and only if $-\lambda I \leq a \leq \lambda I$, while
$x + \cl I \in (\cl A/\cl I)^+$ if and only if $-\lambda I \leq a$. Clearly, these two conditions are not equivalent, and hence
the map $\psi : \cl S\rightarrow \cl A/\cl I$ is not a complete order isomorphism onto its range.

In Proposition \ref{simplecase} we will formulate sufficient conditions which ensure that
the map $\psi : \cl S/J\rightarrow \cl A/\cl I$ is a complete order isomorphism onto its range.
We now show instead that
for every kernel $J$ in an operator system $\cl S$,
one can find a C*-algebra $\cl A$ with $\cl S\subseteq \cl A$ and an ideal $\cl I \subseteq \cl A$
with $\cl I\cap \cl S = J$ such that the induced map $\cl S/J \rightarrow \cl A
/\cl I$ is a complete order isomorphism onto its range.

\begin{thm}\label{idealisom} Let $\cl S$ be an operator system, let $J$ be a kernel in $\cl S$ and let $\langle J \rangle$ denote the two-sided ideal in $C_u^*(\cl S)$ generated by $J.$ Then $J = \langle J \rangle \cap \cl S$ and the induced map from $\cl S/J$ to $C_u^*(\cl S)/\langle J \rangle$ is a complete order isomorphism onto its range.
\end{thm}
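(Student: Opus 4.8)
The plan is to realize both quotients inside universal C*-algebras and then conclude by a single application of Lemma \ref{comp}. Write $q : \cl S \to \cl S/J$ for the quotient map, which is unital and completely positive by Proposition \ref{p_quo}; since $J$ is a kernel, $\cl S/J$ is a genuine operator system (this is precisely where the hypothesis that $J$ is a kernel, and not merely an order ideal, enters), so it has a universal C*-algebra $C_u^*(\cl S/J)$. Viewing $q$ as a unital completely positive map into $C_u^*(\cl S/J)$ and applying the universal property of $C_u^*(\cl S)$, I obtain a $*$-homomorphism $\rho : C_u^*(\cl S) \to C_u^*(\cl S/J)$ extending $q$. Since $\rho|_{\cl S} = q$ vanishes on $J$ and $\ker\rho$ is a closed two-sided ideal of $C_u^*(\cl S)$, it follows that $\langle J\rangle \subseteq \ker\rho$.

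The equality $J = \langle J\rangle \cap \cl S$ is then immediate. The inclusion $J \subseteq \langle J\rangle \cap \cl S$ is trivial, and conversely if $x \in \langle J\rangle \cap \cl S$ then $q(x) = \rho(x) = 0$, so $x \in \ker q = J$.

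For the second assertion, let $\pi : C_u^*(\cl S) \to C_u^*(\cl S)/\langle J\rangle$ be the quotient $*$-homomorphism. Because $\langle J\rangle \subseteq \ker\rho$, the map $\rho$ factors through $\pi$: there is a $*$-homomorphism $\tilde\rho : C_u^*(\cl S)/\langle J\rangle \to C_u^*(\cl S/J)$ with $\tilde\rho\circ\pi = \rho$. On the other hand, $\pi|_{\cl S}$ is unital completely positive and annihilates $J$, so by Proposition \ref{p_unive} it induces a unital completely positive map $\bar q : \cl S/J \to C_u^*(\cl S)/\langle J\rangle$ given by $\bar q(x+J) = \pi(x)$; this is exactly the induced map in the statement, and it is injective by the first assertion. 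Composing, for $x \in \cl S$ one computes $\tilde\rho(\bar q(x+J)) = \tilde\rho(\pi(x)) = \rho(x) = q(x) = x+J$, so $\tilde\rho\circ\bar q : \cl S/J \to C_u^*(\cl S/J)$ is the canonical embedding of $\cl S/J$ as an operator subsystem of its universal C*-algebra, hence a complete order isomorphism onto its range. Since $\bar q$ and $\tilde\rho$ are both unital completely positive, Lemma \ref{comp} (applied with $\cl R_1 = \cl S/J$, $\cl R_2 = C_u^*(\cl S)/\langle J\rangle$, $\cl R_3 = C_u^*(\cl S/J)$, $f = \bar q$, $g = \tilde\rho$) gives that $\bar q$ is a complete order isomorphism onto its range, which is the claim.

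I do not anticipate a real obstacle. The only points requiring care are the two just flagged: that $\cl S/J$ (and therefore $C_u^*(\cl S/J)$) is honestly an operator system, which is guaranteed by Proposition \ref{p_quo} once $J$ is a kernel, and the identification of the composite $\tilde\rho\circ\bar q$ with the canonical complete order embedding $\cl S/J \hookrightarrow C_u^*(\cl S/J)$ rather than with some arbitrary unital completely positive map. With those verified, Lemma \ref{comp} does all the remaining work, and no explicit manipulation of the cones $C_n(\cl S/J)$ or of positive elements in the C*-quotient is needed.
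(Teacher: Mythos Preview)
Your proof is correct and follows essentially the same strategy as the paper's: construct a $*$-homomorphism $\rho$ on $C_u^*(\cl S)$ extending $q$, use $\langle J\rangle\subseteq\ker\rho$ to get $J=\langle J\rangle\cap\cl S$, and then apply Lemma~\ref{comp} to a factorization of a known complete order embedding through $C_u^*(\cl S)/\langle J\rangle$. The only difference is cosmetic: the paper embeds $\cl S/J$ in an arbitrary C*-algebra $\cl A$, sets $\cl I=\ker\rho$, and applies Lemma~\ref{comp} twice (first for $C_u^*(\cl S)/\cl I$, then for $C_u^*(\cl S)/\langle J\rangle$), whereas you take $\cl A=C_u^*(\cl S/J)$ from the outset and collapse this into a single application of Lemma~\ref{comp}.
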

\begin{proof} Since $\cl S/J$ is an abstract operator system, we may embed it as an operator subsystem of some C*-algebra $\cl A.$ Let $q:\cl S \to \cl S/J$ be the quotient map. Then by the universal property of $C^*_u(\cl S)$, there exists a $*$-homomorphism $\rho:C_u^*(\cl S) \to \cl A$ extending $q.$ Let $\cl I = \ker\rho$ and note that $\cl I \cap \cl S = J.$

The canonical map $\cl S\rightarrow C_u^*(\cl S)/\cl I$ is unital and completely positive; its kernel is $J$ and hence,
by Proposition \ref{p_unive},
it induces a unital completely positive map $\nph : \cl S/J\rightarrow C_u^*(\cl S)/\cl I$. On the other hand, there is a canonical
quotient map $\psi : C_u^*(\cl S)/\cl I\rightarrow \cl A$ (recall that $\cl I = \ker \rho$). The composition $\psi\circ \nph$ is
the inclusion map and hence, by Lemma \ref{comp}, $\nph$ is a  complete order isomorphism onto its range.

Since $J \subseteq \cl I,$ we have that
$\langle J \rangle \subseteq \cl I,$ and hence $J \subseteq \langle J \rangle \cap \cl S \subseteq \cl I \cap \cl S = J$;
thus, $J = \langle J \rangle \cap \cl S.$
By Proposition \ref{p_unive},
the map $\nph_1 : \cl S/J\rightarrow C_u^*(\cl S)/\langle J \rangle$ given by $\nph_1(x + J) = x + \langle J \rangle$ is
unital and completely positive.
Since $\langle J \rangle \subseteq \cl I,$ there exists a canonical $*$-epimorphism $\psi_1 :
C_u^*(\cl S)/\langle J \rangle \rightarrow C_u^*(\cl S)/\cl I.$
However, $\psi_1\circ \nph_1 = \nph$ is, by the previous paragraph, a complete order isomorphism onto its range.
It follows from Lemma \ref{comp} that $\nph_1$ must be a complete order isomorphism onto its range.
\end{proof}

\begin{cor} Let $\cl S$ be an operator system and let $J \subseteq \cl S.$ Then $J$ is a kernel if and only if $J$ is the intersection of a closed two-sided ideal in $C_u^*(\cl S)$ with $\cl S.$
\end{cor}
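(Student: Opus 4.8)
The plan is to deduce this corollary directly from Theorem \ref{idealisom} together with the elementary observations recorded just before it. The statement is an ``if and only if,'' so I would handle the two directions separately.

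For the forward direction, suppose $J \subseteq \cl S$ is a kernel. Then Theorem \ref{idealisom} applies verbatim: taking $\cl A = C_u^*(\cl S)$ and $\cl I = \langle J \rangle$, the two-sided ideal in $C_u^*(\cl S)$ generated by $J$, we have $J = \langle J \rangle \cap \cl S$. This is precisely the assertion that $J$ is the intersection of a closed two-sided ideal in $C_u^*(\cl S)$ with $\cl S$, so nothing further is needed here.

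For the reverse direction, suppose $J = \cl I \cap \cl S$ for some closed two-sided ideal $\cl I$ in $C_u^*(\cl S)$. I would invoke the discussion preceding Theorem \ref{idealisom}: since $\cl I$ is an ideal in the unital C*-algebra $C_u^*(\cl S)$, it is the kernel of the canonical quotient $*$-homomorphism $\pi : C_u^*(\cl S) \to C_u^*(\cl S)/\cl I$, and in particular a kernel in the sense of Definition \ref{d_kern}. The restriction $\pi|_{\cl S} : \cl S \to C_u^*(\cl S)/\cl I$ is then a unital completely positive map whose kernel is exactly $\cl I \cap \cl S = J$. By the equivalence of conditions (1) and (3) (or simply (1) itself) in Proposition \ref{eq-lem}, this shows $J$ is a kernel of a unital completely positive map on $\cl S$, hence a kernel in the sense of Definition \ref{d_kern}.

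There is essentially no obstacle here; the corollary is a clean packaging of Theorem \ref{idealisom} (for one direction) and the routine fact that C*-algebra ideals restrict to operator-system kernels (for the other). The only point that deserves a word is that $C_u^*(\cl S)$ is the natural ambient C*-algebra to use — for an arbitrary C*-algebra $\cl A \supseteq \cl S$ the forward direction is \emph{false} by the $\cl K_1 \oplus \cl K_1$ example given earlier — so the statement must be, and is, phrased specifically in terms of $C_u^*(\cl S)$.
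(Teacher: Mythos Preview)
Your proof is correct and follows exactly the same approach as the paper's: the forward direction is Theorem~\ref{idealisom}, and the converse is the observation that $J = \cl I \cap \cl S$ is the kernel of the restriction to $\cl S$ of the quotient map $C_u^*(\cl S) \to C_u^*(\cl S)/\cl I$. One small quibble: your closing remark cites the $\cl K_1 \oplus \cl K_1$ example as showing the forward direction fails for an arbitrary ambient C*-algebra, but that example actually illustrates a different phenomenon (failure of the induced quotient map to be a complete order embedding); a cleaner witness to your point would be $\cl S = $ diagonal matrices inside $\cl A = M_2$, where $\cl A$ is simple but $\cl S$ has nontrivial kernels.
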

\begin{proof} Suppose that $J$ is a kernel. Then, by Theorem \ref{idealisom}, we have $J= \langle J \rangle \cap \cl S.$
Conversely, if $J = \cl I \cap \cl S,$ for some ideal $\cl I\subseteq C_u^*(\cl S)$, then $J$ is the
kernel of the restriction to $\cl S$ of the quotient map from $C_u^*(\cl S)$ to $C_u^*(\cl S)/ \cl I$.
\end{proof}

If $\cl S$ is an operator system and $J\subseteq \cl S$ is a kernel,
the canonical map $\cl S/J \rightarrow C^*_u(\cl S)/\langle J\rangle$ gives rise, by the universal property of
the maximal C*-algebra, to a $*$-homomorphism $\pi : C_u^*(\cl S/J) \to C_u^*(\cl S)/\langle J \rangle$
satisfying $\pi(x+J) = x +\langle J \rangle$, for all $x \in \cl S.$

\begin{cor}\label{c_uniio} Let $\cl S$ be an operator system and let $J\subseteq \cl S$ be a kernel.
The map $\pi: C_u^*(\cl S/J) \to C_u^*(\cl S)/\langle J \rangle$ is a $*$-isomorphism.
\end{cor}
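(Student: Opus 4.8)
The plan is to construct an explicit inverse for $\pi$ using the universal property of $C_u^*(\cdot)$ twice, and then to verify that the two composites are identity maps by checking them on generating sets. First I would observe that the quotient map $q : \cl S \to \cl S/J$ is unital and completely positive, and that $\cl S/J$ sits inside $C_u^*(\cl S/J)$ as an operator subsystem; hence the composition $\cl S \xrightarrow{q} \cl S/J \hookrightarrow C_u^*(\cl S/J)$ is a unital completely positive map from $\cl S$ into a C*-algebra. By the universal property of $C_u^*(\cl S)$, it extends to a $*$-homomorphism $\rho : C_u^*(\cl S) \to C_u^*(\cl S/J)$ with $\rho(x) = x + J$ for all $x \in \cl S$.

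Next I would note that $\rho(x) = x + J = 0$ for every $x \in J$, so $J \subseteq \ker\rho$; since $\ker\rho$ is a closed two-sided ideal, we get $\langle J \rangle \subseteq \ker\rho$, and therefore $\rho$ factors through a $*$-homomorphism $\bar\rho : C_u^*(\cl S)/\langle J \rangle \to C_u^*(\cl S/J)$ satisfying $\bar\rho(x + \langle J \rangle) = x + J$ for all $x \in \cl S$. This $\bar\rho$ is the candidate inverse of $\pi$.

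It then remains to check $\bar\rho \circ \pi = \id$ and $\pi \circ \bar\rho = \id$. For the first composite, $(\bar\rho \circ \pi)(x + J) = \bar\rho(x + \langle J \rangle) = x + J$ for all $x \in \cl S$; since $\cl S/J$ generates $C_u^*(\cl S/J)$ as a C*-algebra and $\bar\rho \circ \pi$ is a $*$-homomorphism agreeing with the identity on $\cl S/J$, it is the identity on all of $C_u^*(\cl S/J)$. For the second composite, $(\pi \circ \bar\rho)(x + \langle J \rangle) = \pi(x + J) = x + \langle J \rangle$ for all $x \in \cl S$; since $\cl S$ generates $C_u^*(\cl S)$, its image generates $C_u^*(\cl S)/\langle J \rangle$, so $\pi \circ \bar\rho$ is the identity there as well. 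Hence $\pi$ is a $*$-isomorphism.

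I do not anticipate a genuine obstacle here: the argument is a double application of the universal property of the maximal C*-algebra together with the first isomorphism theorem for C*-algebras and the fact that $*$-homomorphisms are determined on a generating set. The only point requiring a modicum of care is keeping straight which generating set one is using at each stage — $\cl S/J$ inside $C_u^*(\cl S/J)$ versus (the image of) $\cl S$ inside $C_u^*(\cl S)/\langle J \rangle$ — and confirming that $\pi$, as defined just before the statement, really does send $x + J$ to $x + \langle J \rangle$, which is exactly how it was constructed from the canonical map $\cl S/J \to C_u^*(\cl S)/\langle J \rangle$.
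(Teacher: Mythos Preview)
Your proof is correct and follows essentially the same route as the paper's: construct the inverse $\bar\rho$ by extending the composite $\cl S \xrightarrow{q} \cl S/J \hookrightarrow C_u^*(\cl S/J)$ via the universal property of $C_u^*(\cl S)$, factor through $\langle J\rangle$, and then verify the two composites are identities on the respective generating sets. The paper's argument is slightly terser (it observes $\rho$ is surjective and that the generators match, concluding $\pi$ and $\rho$ are mutual inverses), but the content is the same.
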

\begin{proof}
Let $q : \cl S\rightarrow \cl S/J$ be the quotient map and $\iota : \cl S/J\rightarrow C^*_u(\cl S/J)$ be the
canonical inclusion.
By the universal property of $C^*_u(\cl S/J)$, the map $\iota\circ q$ has an extension to a $*$-homomorphism
$\rho_0 : C^*_u(\cl S)\rightarrow C^*_u(\cl S/J)$ whose kernel contains $\langle J \rangle$.
Hence, $\rho_0$ induces a surjective $*$-homomorphism $\rho: C_u^*(\cl S)/\langle J \rangle \to C_u^*(\cl S/J).$
Since $\rho_0$ extends $\iota\circ q$, we have that $\rho(x+\langle J \rangle) = x+J$ for all $x \in \cl S$.
Since $\{x + \langle J \rangle : x\in \cl S\}$ (resp. $\{x + J : x\in \cl S\}$) generates
$C_u^*(\cl S)/\langle J \rangle$ (resp. $C_u^*(\cl S/J)$), we conclude that $\pi$ and $\rho$ are mutual inverses.
\end{proof}


\section{Comparison with the Operator Space Quotient}

Recall that if $X$ is an operator space and $Y$ is a closed subspace
of $X$ then the quotient $X/Y$ has a canonical operator space structure
given by assigning to $M_n(X/Y)$ the quotient norm arising from
the identification $M_n(X/Y) = M_n(X)/M_n(Y)$, that is, by setting
$$
\|(x_{ij}+Y)\|_{n} = \inf \{ \|(x_{ij}+y_{ij})\|_{n}:\;y_{ij}\in Y \}, \ \ \ (x_{ij}+Y)\in M_n(X/Y).
$$
Furthermore, if $X_1$ and $X_2$ are operator spaces,
$\phi:X_1\rightarrow X_2$ is a completely contractive map and $Y
\subseteq \ker\phi$ is a closed subspace, then the map $\tilde{\phi}:X_1/Y \rightarrow X_2$ given
by $\tilde{\phi}(x+Y)=\phi(x)$ is still completely contractive.


Suppose that $\cl S$ is an operator system and $J \subseteq \cl S$ is a kernel.
Then $\cl S/J$ can be equipped with two natural operator space
structures. Since $J$ is a closed subspace of $\cl S$, we have the {\em
quotient operator space structure} described in the previous paragraph.
On the other hand, by the results in Section \ref{s_qos},
$\cl S/J$ possesses a {\em quotient operator system structure}
which in turn induces an operator space structure as described in Section \ref{s_prel}.

We will prove shortly that the matrix norms on $\cl S/J$ obtained in
these two different fashions are in general not equal.
We will use the notation $\|\cdot\|_{osy}^{(n)}$ (respectively, $\|\cdot\|_{osp}^{(n)}$)
for the operator system quotient (respectively, operator space quotient) norm on $M_n(\cl S/J)$.
The norms $\|\cdot\|_{osy}$ and $\|\cdot\|_{osp}$ stand for $\|\cdot\|_{osy}^{(1)}$ and $\|\cdot\|_{osp}^{(1)}$, respectively.

\begin{prop}\label{p_ncha}
Let $\cl S$ be an operator system and let $J$ be a kernel in $S.$ Given $(x_{i,j}) \in M_n(\cl S),$ we have that \[\|(x_{i,j} +J) \|_{osp}^{(n)} = \sup \{ \|(\phi(x_{i,j}))\| \ : \ \phi: \cl S
\to \cl B(H), \phi(J) =\{0\}, \phi \text{ completely contractive } \},\]
and
\[ \|(x_{i,j} + J)\|_{osy}^{(n)} = \sup \{ \|(\phi(x_{i,j})\| \ : \ \phi: \cl
S \to \cl B(H), \phi(J) = \{0\}, \phi \text{ unital, completely positive
} \}, \]
where in each case $H$ runs through all Hilbert spaces.
\end{prop}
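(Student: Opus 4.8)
The plan is to establish each of the two formulas by a standard duality-and-universality argument, treating the operator space case first and then adapting it to the operator system case. For the operator space quotient norm, recall that by Ruan's theorem any operator space embeds completely isometrically into some $\cl B(H)$, and that the quotient operator space structure on $\cl S/J$ is characterized by the universal property recalled just before the statement: a completely contractive map on $\cl S$ that kills $J$ factors through a completely contractive map on $\cl S/J$. Thus, for the ``$\geq$'' direction, I would take any completely contractive $\phi:\cl S\to\cl B(H)$ with $\phi(J)=\{0\}$, factor it as $\tilde\phi\circ q$ with $\tilde\phi:\cl S/J\to\cl B(H)$ completely contractive, and observe that $\|(\phi(x_{i,j}))\| = \|\tilde\phi^{(n)}((x_{i,j}+J))\| \leq \|(x_{i,j}+J)\|_{osp}^{(n)}$. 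For the reverse inequality, embed $\cl S/J$ completely isometrically into some $\cl B(H)$ via a map $\iota$; then $\phi := \iota\circ q:\cl S\to\cl B(H)$ is completely contractive, kills $J$, and achieves the supremum, since $\|(\phi(x_{i,j}))\| = \|\iota^{(n)}((x_{i,j}+J))\| = \|(x_{i,j}+J)\|_{osp}^{(n)}$. This gives equality.

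For the operator system quotient norm, the argument is parallel but uses the results of Section \ref{s_qos}. By Proposition \ref{p_quo}, $q:\cl S\to\cl S/J$ is unital completely positive, and by Proposition \ref{p_unive} any unital completely positive $\phi:\cl S\to\cl B(H)$ with $J\subseteq\ker\phi$ factors as $\tilde\phi\circ q$ with $\tilde\phi:\cl S/J\to\cl B(H)$ unital completely positive. Since a unital completely positive map between operator systems is automatically completely contractive (with respect to the induced operator space structures), we get $\|(\phi(x_{i,j}))\| = \|\tilde\phi^{(n)}((x_{i,j}+J))\| \leq \|(x_{i,j}+J)\|_{osy}^{(n)}$, which is the ``$\geq$'' direction of the second formula. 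For the reverse inequality, use the Choi--Effros theorem to embed the operator system $\cl S/J$ completely order isomorphically and unitally into some $\cl B(H)$; the induced operator space structure on $\cl S/J$, by definition (Section \ref{s_prel}), is exactly the one pulled back along this embedding, so the embedding $\iota:\cl S/J\to\cl B(H)$ is completely isometric for $\|\cdot\|_{osy}$. Then $\phi := \iota\circ q$ is unital completely positive, kills $J$, and $\|(\phi(x_{i,j}))\| = \|(x_{i,j}+J)\|_{osy}^{(n)}$, giving equality.

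The only point requiring a small amount of care — and the closest thing to an obstacle — is the reverse inequality in each case: one must know that there exists a single ``test map'' that recovers the full quotient norm. In both cases this follows from the existence of a completely isometric embedding of the quotient object (operator space, respectively operator system) into some $\cl B(H)$, which is Ruan's theorem in the first case and the Choi--Effros theorem in the second, combined with the fact (stated in Section \ref{s_prel}) that the operator space structure induced by an operator system structure is independent of the chosen embedding and is precisely the pulled-back one. Everything else is a routine unwinding of the universal properties already established, so no serious computation is involved.
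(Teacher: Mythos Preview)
Your proposal is correct and follows essentially the same approach as the paper's proof: both inequalities in each formula are obtained by factoring a test map through the quotient (using the universal property of the operator space quotient, respectively Proposition~\ref{p_unive}) for one direction, and by invoking a concrete completely isometric (respectively unital complete order) embedding of the quotient into some $\cl B(H)$ via Ruan's theorem (respectively the Choi--Effros theorem) for the other. The paper's argument is slightly more compressed but structurally identical.
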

\begin{proof}
The first claim is rather well known.  We include the proof for
completeness.

Suppose that $\phi : \cl S\rightarrow \cl B(H)$ is a completely contractive map with $\phi(J) = \{0\}$.
Then $\phi$
induces a completely contractive map $\tilde{\phi}$
from the operator space quotient $\cl S/J$ into $\cl B(H)$ in the natural way.
Thus, the right hand side of the first identity does not exceed its left hand side.
The equality follows since the
(abstract) operator space quotient has a completely isometric representation into $\cl B(H)$ for some $H,$
by the Ruan representation theorem for operator spaces.

Similarly, in the second case, a unital completely positive map $\phi : \cl S\rightarrow \cl B(H)$
with $\phi(J) = \{0\}$ induces a unital completely positive (and hence completely contractive) map from the
operator system $\cl S/J$ into $\cl B(H)$.
Thus, the right hand side of the equality again does not exceed the left hand
side. The equality follows since the (abstract) operator system $\cl S/J$
has a unital complete order representation into $\cl B(H)$ for some
Hilbert space $H$, by the Choi-Effros representation theorem for operator
systems.
\end{proof}

\begin{cor}\label{normineq}
Let $\cl S$ be an operator system and let $J$ be a kernel in $\cl S.$ Then $\|
\cdot\|_{osy}^{(n)} \leq \|\cdot\|_{osp}^{(n)}$ for every $n$.
\end{cor}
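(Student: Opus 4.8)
The plan is to deduce the inequality $\|\cdot\|_{osy}^{(n)} \leq \|\cdot\|_{osp}^{(n)}$ directly from the two supremum formulas established in Proposition \ref{p_ncha}. The point is simply that every unital completely positive map is completely contractive, so the class of maps over which the supremum defining $\|\cdot\|_{osy}^{(n)}$ is taken is contained in the class of maps over which the supremum defining $\|\cdot\|_{osp}^{(n)}$ is taken.

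Concretely, I would argue as follows. Fix $(x_{i,j}) \in M_n(\cl S)$. Let $\phi : \cl S \to \cl B(H)$ be any unital completely positive map with $\phi(J) = \{0\}$. It is a standard fact that a unital completely positive map between operator systems is completely contractive (indeed, unital and positive implies contractive on self-adjoint elements, and the matrix version follows by applying this to $M_n(\cl S) \subseteq M_n(\cl B(H))$ with the unital completely positive map $\phi^{(n)}$). Hence $\phi$ is a completely contractive map with $\phi(J) = \{0\}$, so $\|(\phi(x_{i,j}))\|$ is one of the quantities appearing in the supremum defining $\|(x_{i,j}+J)\|_{osp}^{(n)}$. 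Therefore
\[
\|(\phi(x_{i,j}))\| \leq \|(x_{i,j}+J)\|_{osp}^{(n)}.
\]
Taking the supremum over all such unital completely positive $\phi$ and invoking the second formula of Proposition \ref{p_ncha} gives $\|(x_{i,j}+J)\|_{osy}^{(n)} \leq \|(x_{i,j}+J)\|_{osp}^{(n)}$, as desired.

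There is no real obstacle here; the only thing one needs beyond Proposition \ref{p_ncha} is the elementary fact that unital completely positive maps are completely contractive, which is part of the standard background (it appears, for instance, in \cite{pa}). If one wished to avoid even quoting that, one could alternatively observe directly that the right-hand side of the $\|\cdot\|_{osy}^{(n)}$ formula is a supremum over a subcollection of the maps in the right-hand side of the $\|\cdot\|_{osp}^{(n)}$ formula — after first replacing the unital completely positive maps by their (automatically completely contractive) selves. Either way, the corollary is an immediate consequence of the characterizations just proved, and I would present it in a couple of lines.
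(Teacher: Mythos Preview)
Your argument is correct and is essentially identical to the paper's own proof: both deduce the inequality from Proposition~\ref{p_ncha} by noting that the unital completely positive maps vanishing on $J$ form a subfamily of the completely contractive maps vanishing on $J$, so the supremum defining $\|\cdot\|_{osy}^{(n)}$ is taken over a smaller collection.
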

\begin{proof}
The statement follows from Proposition \ref{p_ncha} and the fact that the unital, completely positive maps that vanish on
$J$ form a subfamily of the family of completely contractive maps that vanish on $J.$
\end{proof}

The following lemma will enable us to construct certain examples of operator system
and operator space quotients.

\begin{lemma}\label{ex-lemma}
Let $\cl S$ be an operator system and $y$ be a self-adjoint element of $\cl S
$ which is neither positive nor negative. Then $\sspp(y) = \{\lambda y : \lambda\in \bb{C}\}$ is the kernel
of a unital completely positive map on $\cl S.$
\end{lemma}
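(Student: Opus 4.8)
The plan is to produce a unital positive map $\phi$ on $\cl S$ whose kernel is exactly $\sspp(y)$, and then invoke Proposition~\ref{eq-lem} (the implication (4)$\Rightarrow$(1)) to conclude that $\sspp(y)$ is a kernel. Since $y$ is self-adjoint and is neither positive nor negative, there exist states $f, g \in S(\cl S)$ with $f(y) > 0$ and $g(y) < 0$; this is a standard separation fact, since $y \notin \cl S^+$ means (by Hahn--Banach separation of the point $y$ from the closed cone $\cl S^+$, using that $e$ is an Archimedean order unit so $\cl S^+$ is closed in the order-norm topology and the states separate it from its complement) there is a state vanishing... more precisely, $y \notin \cl S^+$ gives a state $g$ with $g(y) < 0$, and $-y \notin \cl S^+$ gives a state $f$ with $f(y) > 0$.

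First I would normalize: after rescaling, assume $f(y) = 1$ and $g(y) = -1$. Now set $\phi : \cl S \to \bb{C}^2$ (with $\bb{C}^2$ regarded as the operator system $\ell^\infty_2$, or equivalently as the diagonal subalgebra of $M_2$) by
\[
\phi(x) = \bigl(\tfrac{1}{2}(f(x) + g(x)) + \tfrac{1}{2}(f(x) - g(x)),\ \ \tfrac{1}{2}(f(x)+g(x)) - \tfrac{1}{2}(f(x)-g(x))\bigr) = (f(x),\ g(x)).
\]
Wait — that is just $\phi(x) = (f(x), g(x))$, which is unital and positive (indeed completely positive, being the direct sum of two states into $M_1 \oplus M_1$), but its kernel is $\ker f \cap \ker g$, typically much larger than $\sspp(y)$. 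So the naive two-state map is wrong. The correct construction is different: I want the kernel to be one-dimensional, so the range should be large. The right idea is to build $\phi : \cl S \to \cl S$ (or into a suitable quotient-type operator system) that is the identity modulo $\sspp(y)$; equivalently, to realize $\sspp(y)$ directly as $\ker f_\alpha$ intersected over a family of states via Proposition~\ref{eq-lem}(5). So instead I would show: $\sspp(y)$ equals the intersection of the kernels of all states $h$ on $\cl S$ with $h(y) = 0$. One inclusion is trivial. For the reverse, suppose $x \in \cl S$ lies in every such kernel but $x \notin \sspp(y)$; I must derive a contradiction by producing a state $h$ with $h(y) = 0$ and $h(x) \neq 0$. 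This is the main obstacle — a $2$-dimensional (over $\bb{R}$, in the self-adjoint part) separation argument on the weak-* compact convex state space $S(\cl S)$.

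To carry this out I would work in $\cl S_h$ and argue: the hyperplane $\{h : h(y) = 0\}$ in the state space is nonempty (convex combination of $f$ and $g$: take $h_0 = \tfrac{1}{2}f + \tfrac{1}{2}g$ adjusted — more carefully, $t f + (1-t)g$ with $t = \tfrac{1}{2}$ gives $h_0(y) = 0$ since $f(y)=1$, $g(y)=-1$). It is a weak-* compact convex subset $K \subseteq S(\cl S)$. If every element of $K$ annihilates $x$, then $x$ lies in the annihilator of the linear span of $K - K$; I would show this span, together with $y$, spans the annihilator of... hmm, cleaner: the map $\cl S \to \bb{C}$, $h \mapsto$ evaluation, and the fact that $S(\cl S)$ spans $\cl S^*$ (the order unit makes the states separate points), so the affine functions on $S(\cl S)$ restricted to $K$ are $\{$evaluations at elements of $\cl S\} / (\text{multiples of } y \text{ and constants})$. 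If $x$ vanishes on all of $K$ while the constants do not, then $x$ must be a scalar multiple of $y$ modulo constants — and since $x \in \ker h_0$ and $h_0(e) = 1 \neq 0$, the constant part is forced to be zero, giving $x \in \sspp(y)$, the desired contradiction. Thus $\sspp(y) = \bigcap_{h(y)=0} \ker h$, which by Proposition~\ref{eq-lem}(5) shows $\sspp(y)$ is a kernel; to get the unital completely positive map explicitly one may then take $\phi = q$, the quotient map $\cl S \to \cl S/\sspp(y)$, which Proposition~\ref{p_quo} guarantees is unital and completely positive with kernel $\sspp(y)$.

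I expect the genuine difficulty to lie in the separation step identifying $\bigcap_{h(y)=0}\ker h$ with $\sspp(y)$ precisely — in particular, in correctly handling the role of the order unit $e$ so that no extra constants sneak into the intersection, and in verifying that the functions on $\cl S$ vanishing on the slice $\{h \in S(\cl S) : h(y) = 0\}$ are exactly $\sspp(y)$ and not a larger space; this uses that $f$ and $g$ witness $y$ taking both signs, so the slice is a genuine hyperplane section of the full state space rather than a degenerate one. Everything else — unitality, complete positivity of $q$, closedness of $\sspp(y)$ — is immediate from the machinery already set up in Section~\ref{s_qos}.
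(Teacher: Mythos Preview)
Your strategy---show $\sspp(y)=\bigcap\{\ker h : h\in S(\cl S),\ h(y)=0\}$ and then invoke Proposition~\ref{eq-lem}(5)---is correct and genuinely different from the paper's. However, the separation step you flag as the difficulty is not actually argued in your sketch: the ``affine span / modulo constants'' reasoning is too vague to constitute a proof, and it is not clear from what you wrote why the slice $K$ is large enough. Here is a clean way to complete it. Fix states $f,g$ with $f(y)=1$, $g(y)=-1$, and let $x=x^*$ satisfy $h(x)=0$ for every state $h$ with $h(y)=0$. For any state $h$ with $h(y)=\alpha>0$, the element $k=(1+\alpha)^{-1}(h+\alpha g)$ is again a state and satisfies $k(y)=0$, so $k(x)=0$ and therefore $h(x)=-\alpha\,g(x)$; symmetrically, if $\alpha<0$ then $h(x)=\alpha\,f(x)$. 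Applying the first identity to $h=f$ gives $f(x)=-g(x)$; setting $c=f(x)$ one obtains $h(x)=c\,h(y)$ for \emph{every} state $h$, whence $x=cy$ since states separate points. The non-hermitian case follows by treating real and imaginary parts.

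The paper's own proof is entirely different. It passes to an ambient C*-algebra $\cl A\supseteq\cl S$, forms the ordered quotient $(\cl A/J,D_1)$, and proves directly that $e+J$ is Archimedean: given $x=x^*$ with $\epsilon(e+J)+(x+J)\in D_1$ for all $\epsilon>0$, one studies the nested closed sets $X_\epsilon=\{\alpha\in\bb R:\epsilon e+x+\alpha y\ge 0\}$ and shows $X_1$ is bounded by multiplying both sides of $e+x+\alpha y\ge 0$ by the Jordan components $y_1,y_2$ of $y=y_1-y_2$; compactness then gives a common $\alpha_0\in\bigcap_\epsilon X_\epsilon$, so $x+J\in D_1$. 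The $\OMIN$ operator system structure on $\cl A/J$ then makes the quotient map unital completely positive. Your argument stays purely at the AOU level and never needs the C*-algebraic Jordan decomposition or any multiplicative structure; the paper's argument is more concrete but leans essentially on multiplication in $\cl A$ to get the boundedness of $X_1$.
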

\begin{proof}
We may assume that $\cl S = \cl A$ is a C*-algebra.
For simplicity, set $J=\sspp(y)$.
We equip $\cl A/J$ with the cone $D_1 = D_1(\cl A/J)$; it is easy to observe that
$D_1 \cap (-D_1) = \{0\}$.
It hence suffices to show that $e+J$
is an Archimedean order unit for the ordered vector space $\cl A/J$.
Indeed, in this case, if we equip the AOU space $\cl A/J$
with its minimal operator system structure ${\rm OMIN}(\cl A/J)$ (see \cite[Definition 3.1]{ptt}); then
the quotient map $q : \cl A \rightarrow \cl A/J$, which is unital and positive, will be (unital and)
completely positive from $\cl A$ to
${\rm OMIN}(\cl A/J)$ by \cite[Theorem 3.4]{ptt}. This map still has kernel $J$ and so
$J$ is the kernel of a unital completely positive map.

It remains to show that $e+J$ is an Archimedean order unit for $\cl A/J$. Let
$x+J \in \cl A/J$ be such that
$$
\epsilon (e+J) + (x+J) \in D_1 \mbox{ for every } \epsilon\geq 0.
$$

Clearly, we may assume that $x=x^*$. The above condition
is equivalent to the condition $(\epsilon e+x)+J\in D_1$, for every $ \epsilon \geq 0.$
Thus, for every $\epsilon \geq 0$, there exists an element $\alpha_{\epsilon} \in \bb{C}$
such that $\epsilon e+x + \alpha_{\epsilon} y\geq 0$ in $\cl S$.
Clearly, $\alpha_{\epsilon}$ has to be real number.

Let $X_{\epsilon} = \{ \alpha\in \mathbb{R}:  \epsilon e+x + \alpha
y\geq 0  \}$. Then $X_\epsilon$ is a non-empty closed set for every $\epsilon > 0$ and
$X_\epsilon \subseteq X_{\delta}$ whenever $\epsilon\leq \delta$.
We will show that, for $\epsilon = 1$, $X_1$ is bounded which implies that $X_\epsilon$
is a decreasing net of non-empty compact sets and consequently has non-empty intersection.

Let $y=y_1-y_2$ be the Jordan
decomposition of $y$, that is, $y_1$ and $y_2$ are positive and $y_1y_2=0$.
Suppose that $e+x+\alpha y\geq 0$ and multiply both sides by $y_1$ from right and left; then
$y_1^2 + y_1xy_1+\alpha y_1^3 \geq 0$ or, equivalently, $\alpha
y_1^3 \geq -y_1^2 - y_1xy_1  $. This condition implies the existence of a lower bound for $\alpha$.
Similarly, by multiplying both sides by $y_2$ we obtain an upper bound for $\alpha$.
It follows that $X_1$ is a bounded set, and hence there exists
$\alpha_0\in \cap_{\epsilon > 0} X_\epsilon$. We then have
$\epsilon e + x +\alpha_0 y \geq 0$ for every $\epsilon>0$.
Thus, $x +\alpha_0 y \geq 0$ and consequently $x+J\in D_1$.
\end{proof}

The example given in the first paragraph of Section \ref{s_qos} shows that Lemma \ref{ex-lemma} does not hold if $y$ is positive (or negative).

We have seen that the operator space quotient norm is greater than the
induced operator system quotient norm. We will now show that even for finite
dimensional C*-algebras, the operator system quotient norm can, for some kernels, be much smaller than
the operator space quotient norm.
These calculations will lead to
examples for which the two norms on $\cl S/J$ are not equivalent.

\begin{exam}\label{ex_44}
{\rm Consider the C*-algebra $l_4^{\infty}$ and let $y_n=(-1,0,n,2n)$, where $n\in\bb{N}$.
Since $y_n$ is self-adjoint and neither positive nor
negative, by Lemma~\ref{ex-lemma}, $J_n = \sspp(y_n)$ is the kernel of
a unital completely positive map. Let $x=(0,1,n+1,0)$; then
$x+J_n\geq 0$ and so $\|x+J_n\|_{osy}=\inf\{ \lambda>0:\lambda
(e+J_n)-(x+J_n)\geq 0 \}$. It follows that $\|x+J_n\|_{osy}=1$.
On the other hand, it is not difficult to show that
$\|x+J_n\|_{osp}=\frac{2}{3}(n+1)$.}
\end{exam}

By using Example \ref{ex_44} and infinite direct sums, one can obtain
an example of an operator system $\cl S$ and a kernel $J\subseteq \cl S$ for
which the operator space quotient norm $\| \cdot \|_{osp}$ and the induced norm on the
operator system quotient $\|\cdot\|_{osy}$ are not equivalent.  In fact, one can achieve
more, as the following result shows.

\begin{prop}\label{p_noneqi}
There exists a C*-algebra $\cl A$ and a kernel $J$ in $\cl A,$ such that the induced norm $\|\cdot\|_{osy}$ on the operator system quotient $\cl A/J$ is not complete. If $q: \cl A \to C^*_u(\cl A)/\langle J \rangle,$ denotes the quotient map, then the range
$q(\cl A)$ of $q$ is not closed.
\end{prop}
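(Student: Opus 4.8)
The strategy is to build $\cl A$ as an infinite $\ell^\infty$-direct sum of the finite-dimensional algebras from Example~\ref{ex_44}, exploiting the fact that the gap between $\|\cdot\|_{osy}$ and $\|\cdot\|_{osp}$ there grows without bound. Concretely, I would set $\cl A = \bigoplus_{n\in\bb N} l_4^\infty$ (the $c_0$- or $\ell^\infty$-direct sum — one has to decide which; the $\ell^\infty$-sum is a unital C*-algebra and the cleanest choice) and take $J = \overline{\bigoplus_n J_n}$, where $J_n = \sspp(y_n)\subseteq l_4^\infty$ with $y_n = (-1,0,n,2n)$ as in Example~\ref{ex_44}. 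First I would verify that $J$ is a kernel in $\cl A$: each $J_n$ is the kernel of a unital completely positive map $\phi_n : l_4^\infty\to\cl T_n$ by Lemma~\ref{ex-lemma}, and the product map $\phi = \oplus_n\phi_n : \cl A\to\prod_n\cl T_n$ (suitably interpreted so that the target is an operator system, e.g. by composing with unital embeddings $\cl T_n\hookrightarrow\cl B(H_n)$ and taking the $\ell^\infty$-sum inside $\cl B(\oplus H_n)$) is unital and completely positive with $\ker\phi = J$. Alternatively, one invokes Proposition~\ref{eq-lem}(5): $J$ is the common kernel of the states that factor through the coordinate projections and then through states of $l_4^\infty$ vanishing on $J_n$.

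Next comes the core estimate. Using the elements $x_n = (0,1,n+1,0)\in l_4^\infty$ from Example~\ref{ex_44}, for each $n$ let $e_n\in\cl A$ denote the element that equals the unit $(1,1,1,1)$ in the $n$-th coordinate and $0$ elsewhere; note $e_n+J$ is a nonzero element of $\cl A/J$ of $\|\cdot\|_{osy}$-norm at most $1$ (in fact equal to $1$). The key point is that the operator system quotient norm is ``local'' enough that $\|x_n + J\|_{osy}$, computed inside $\cl A/J$, still equals $1$ — because the coordinate projection $\cl A\to l_4^\infty$ onto the $n$-th summand is unital completely positive and carries $J$ into $J_n$, so it induces a unital completely positive, hence contractive, map $\cl A/J\to (l_4^\infty)/J_n$, giving $\|x_n+J_n\|_{osy}\le\|x_n+J\|_{osy}$; conversely the positivity witness $x_n+J_n\ge 0$ lifts, so $\|x_n+J\|_{osy}\le 1$. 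Meanwhile, I claim $\|x_n + J\|_{osp}$ is bounded below by a quantity growing like $n$: any completely contractive $\phi:\cl A\to\cl B(H)$ killing $J$ restricts, via the $n$-th coordinate embedding $l_4^\infty\hookrightarrow\cl A$ (which is completely isometric and sends $J_n$ into $J$), to a completely contractive map killing $J_n$, and Proposition~\ref{p_ncha} together with the Example~\ref{ex_44} computation $\|x_n+J_n\|_{osp} = \tfrac23(n+1)$ must be reconciled with this; the correct statement is that the embedding realizes $\|x_n+J\|_{osp}\ge \|x_n+J_n\|_{osp} = \tfrac23(n+1)$. Hence in $\cl A/J$ the two norms satisfy $\|x_n+J\|_{osy}=1$ while $\|x_n+J\|_{osp}\ge\tfrac23(n+1)\to\infty$, so they are not equivalent.

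To get incompleteness of $\|\cdot\|_{osy}$, form the series $\sum_n 2^{-n}(x_n + J)$ of elements of the operator space quotient normalized in $\|\cdot\|_{osp}$ — i.e. replace $x_n$ by $x_n/\|x_n+J\|_{osp}$ so the partial sums are Cauchy in $\|\cdot\|_{osp}$ and a fortiori in $\|\cdot\|_{osy}$ by Corollary~\ref{normineq}. Since the operator space quotient is complete, these partial sums converge in $\|\cdot\|_{osp}$ to some element $\xi\in \cl A/J$, which then is also the $\|\cdot\|_{osy}$-limit. But I would show that no rescaling makes this work against $\|\cdot\|_{osy}$: more precisely, pick coefficients $c_n>0$ with $\sum_n c_n \|x_n+J\|_{osp}=\infty$ but $\sum_n c_n\|x_n+J\|_{osy} = \sum_n c_n<\infty$ — e.g. $c_n = 1/n^2$ — so that $\sum_n c_n(x_n+J)$ is $\|\cdot\|_{osy}$-Cauchy; if its $\|\cdot\|_{osy}$-limit were some $\xi\in\cl A/J$, I would derive a contradiction by applying coordinate projections, which are $\|\cdot\|_{osy}$-contractive onto $(l_4^\infty)/J_n$ (a finite-dimensional space where $\|\cdot\|_{osy}\sim\|\cdot\|_{osp}$), to conclude $\xi$ has $n$-th coordinate $c_n(x_n+J_n)$, forcing $\|\xi\|_{osp}=\infty$, impossible. (One must be slightly careful since $\cl A/J$ as a set is the algebraic quotient, but its completion in either norm can be analyzed coordinatewise.) The final sentence of the proposition — that $q(\cl A)$ is not closed in $C_u^*(\cl A)/\langle J\rangle$ — then follows because by Theorem~\ref{idealisom} the norm on $q(\cl A)\cong\cl A/J$ inherited from the C*-algebra $C_u^*(\cl A)/\langle J\rangle$ is precisely $\|\cdot\|_{osy}$ (it is a complete order isomorphism onto its range, and the C*-norm on an operator system agrees with the order-unit norm), so if $q(\cl A)$ were closed it would be complete in $\|\cdot\|_{osy}$, contradicting what we just proved.

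\textbf{Main obstacle.} The delicate point is establishing that the operator system quotient norm of $x_n+J$ computed \emph{globally} in $\cl A/J$ really stays bounded (equal to $1$) rather than being inflated by the other summands — this requires the ``locality'' of $\|\cdot\|_{osy}$ under coordinate projections, which is where unitality and complete positivity of the projections $\cl A\to l_4^\infty$ are essential, and one should double-check that these projections do map $J$ into $J_n$ (they do, since $J=\overline{\oplus J_n}$). The second subtle point is the contradiction argument for incompleteness: one cannot simply say ``the limit would have $n$-th coordinate $c_n(x_n+J_n)$'' without knowing the completion of $(\cl A/J,\|\cdot\|_{osy})$ embeds compatibly into the product of the completions of the $(l_4^\infty)/J_n$ — but this follows from functoriality of the coordinate projections and Corollary~\ref{normineq}. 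I expect the cleanest writeup to phrase everything through the operator space quotient (which is genuinely complete and well-behaved) as the ambient space, and track the two norms on it.
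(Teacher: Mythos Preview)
Your construction --- the $\ell^\infty$-direct sum $\cl A=\bigoplus_n l_4^\infty$ with $J=\bigoplus_n J_n$ --- and your verification that the norms are inequivalent are exactly what the paper does. The difference is in how you pass from ``inequivalent'' to ``incomplete'', and here your argument has a genuine gap.

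Your plan is to build a $\|\cdot\|_{osy}$-Cauchy series $\sum_n c_n(\tilde x_n+J)$ (where $\tilde x_n\in\cl A$ is $x_n$ in slot $n$ and zero elsewhere) with no limit in $\cl A/J$, choosing $c_n=1/n^2$ so that $\sum c_n<\infty$ but $\sum c_n\|x_n+J_n\|_{osp}=\infty$. The trouble is that this last divergence does \emph{not} force $\|\xi\|_{osp}=\infty$: the operator space quotient norm on $\cl A/J$ is a \emph{supremum} over coordinates, not a sum. Concretely, with $c_n=1/n^2$ the element $a=\bigoplus_n c_n x_n$ lies in $\cl A$ (since $\|c_n x_n\|_\infty=(n+1)/n^2\to 0$), and one checks directly that the partial sums converge to $a+J$ in $\|\cdot\|_{osp}$, hence also in $\|\cdot\|_{osy}$. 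So your Cauchy sequence \emph{does} converge and no contradiction arises. Your approach can be repaired by taking instead, say, $c_n=1/\sqrt{n}$: then the partial sums are still $\|\cdot\|_{osy}$-Cauchy (one computes $\|\sum_{N<n\le M} c_n\tilde x_n + J\|_{osy}=\max_{N<n\le M} c_n\to 0$), but now any putative limit $a+J$ would require $a_n - c_n x_n\in J_n$, forcing the third coordinate of $a_n$ to grow like $\sqrt{n}$, so $a\notin\cl A$.

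The paper bypasses all of this with a one-line appeal to the open mapping theorem: $(\cl A/J,\|\cdot\|_{osp})$ is complete (since $J$ is closed), and $\|\cdot\|_{osy}\le\|\cdot\|_{osp}$ by Corollary~\ref{normineq}; if $(\cl A/J,\|\cdot\|_{osy})$ were also complete, the identity map between them would be a continuous bijection of Banach spaces, hence a homeomorphism, contradicting the inequivalence you already established. This is considerably cleaner than constructing an explicit non-convergent Cauchy sequence. Your treatment of the final sentence via Theorem~\ref{idealisom} is correct and matches the paper.
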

\begin{proof} Let $\cl A_n = l_4^{\infty}$ and let $J_n \subseteq \cl A_n$ be the
kernel from Example \ref{ex_44}, $n\in \bb{N}$.
Let $\cl A= \oplus_{n \in \bb N} \cl
A_n$ and $J = \oplus_{n \in \bb N} J_n$ ($\ell^{\infty}$-direct sums).
Since $J_n$ is a kernel
in $\cl A_n$, there exists Hilbert spaces $H_n$ and completely
positive maps, $\phi_n: \cl A_n \to \cl B(H_n)$ with $J_n =
\ker\phi_n.$
Thus, if $H = \oplus_{n \in \bb N} H_n$ and $\phi: \cl A \to \cl B(H)$
is the map given by $\phi(\oplus_{n\in \bb{N}} x_n) = \oplus_{n\in \bb{N}} \phi_n(x_n)$,
then $J = \ker\phi.$

Since $J$ is a closed subspace, $\cl A/J$ with the usual operator
space quotient norm is complete. By Corollary \ref{normineq}, $\|\cdot\|_{osy}
\le \| \cdot\|_{osp}.$ Thus, if $\cl A/J$ were complete in
$\|\cdot\|_{osy},$ then the two norms would be equivalent. However, if
we let $a_m = \sum_{n \in \bb{N}} x_{m,n} \in \cl A,$ be the element given
by $x_{m,n} = 0 , m \ne n$ and $x_{m,m} = (0, \frac{1}{m+1},1,0),$
then by Example \ref{ex_44}, $\|a_m +J \|_{osy} = \frac{1}{m+1},$
while $\|a_m +J \|_{osp} = \frac{2}{3}.$

Hence, the norms are not comparable and thus $(\cl A/J,\|\cdot\|_{osy})$ is not complete.
The last statement follows from the fact that the operator system quotient
$\cl A/J$ is completely order isomorphic to $q(\cl A),$ by Theorem~\ref{idealisom}.
\end{proof}

We point out that in Proposition \ref{p_noneqi} the identity map
between the
operator system and the operator space quotients not only fails to be a
completely bounded isomorphism, but the identity is not even a bounded
isomorphism.
This observation leads to the following question:

\begin{question} Let $\cl S$ be an operator system and $J\subseteq \cl S$ be a kernel.
If the norms $\|\cdot\|_{osp}$ and $\|\cdot \|_{osy}$ are equivalent, are they completely boundedly equivalent?
\end{question}

On the other hand, the operator system and operator space quotients can often coincide, and this has ramifications for the order structure. Recall that if $X$ is a normed space and $Y \subseteq X$ is a closed subspace, then $Y$ is called {\em proximinal} provided that for every $x \in X,$ there exists
$y \in Y$ such that $\|x+y\| =\|x+Y\|,$ where the second quantity is
the norm of the element $x + Y$ of $X/Y$. When $X$ is an operator
space, the subspace $Y$ is called {\em completely proximinal} provided
that $M_n(Y)$ is proximinal in $M_n(X)$ for all $n.$

\begin{defn}
A kernel $J\subseteq \cl S$ is {\bf completely
  biproximinal} provided that:
\begin{enumerate}
\item the operator space quotient and the operator system quotient
  $\cl S/J$ are completely isometric,
\item $J$ is completely proximinal as a closed subspace of the
  operator space $\cl S,$
\item $J$ is completely order proximinal,
\item for any $(s_{i,j}+J) \in M_n(\cl S/J)^+$ there exists $(p_{i,j})
  \in M_n(\cl S)^+$ with $p_{i,j}+J = s_{i,j} + J$ and $\|(p_{i,j})\|
  = \|(s_{i,j} + J)\|_{M_n(\cl S/J)}.$\end{enumerate}
\end{defn}

\begin{prop}\label{charorderprox} Let $J$ be a completely proximinal
  kernel in $\cl S$ and assume that the operator space and operator
  system quotients $\cl S/J$ are completely isometric. Then $J$ is completely biproximinal.
\end{prop}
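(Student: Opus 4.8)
The plan is to observe that conditions (1) and (2) of the definition of completely biproximinal are precisely the hypotheses, and that condition (4) implies condition (3); so it suffices to prove (4). Indeed, granting (4), if $u = (s_{i,j}+J)\in M_n(\cl S/J)^+ = C_n(\cl S/J)$, then $u = q^{(n)}(p)$ for some $p\in M_n(\cl S)^+$, so $C_n(\cl S/J)\subseteq D_n(\cl S/J) = q^{(n)}(M_n(\cl S)^+)$; since $D_n(\cl S/J)\subseteq C_n(\cl S/J)$ always holds (a positive lift is, a fortiori, a lift that is $\geq -\epsilon(1\otimes I_n)$ for every $\epsilon>0$), we obtain $C_n(\cl S/J)=D_n(\cl S/J)$ for all $n$, which is (3).

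To prove (4), fix $u\in M_n(\cl S/J)^+$ and set $r = \|u\|_{M_n(\cl S/J)}$; by hypothesis (1) this is unambiguous, as the operator space and operator system quotient norms agree at every matrix level. The key device is to translate $u$ by $\tfrac r2(1\otimes I_n)$, which converts the two simultaneous constraints on the desired lift — positivity and norm $\leq r$ — into the single constraint ``norm $\leq \tfrac r2$''. Concretely: since $0\leq u$ and $\|u\| = r$, we have $0\leq u\leq r(1\otimes I_n)$ in $M_n(\cl S/J)$, hence $\|u - \tfrac r2(1\otimes I_n)\|_{osy}^{(n)}\leq \tfrac r2$, and therefore, by (1), $\|u-\tfrac r2(1\otimes I_n)\|_{osp}^{(n)}\leq\tfrac r2$. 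By (2), $M_n(J)$ is proximinal in $M_n(\cl S)$, so there is $z\in M_n(\cl S)$ with $q^{(n)}(z) = u - \tfrac r2(1\otimes I_n)$ and $\|z\|_{M_n(\cl S)}\leq\tfrac r2$; replacing $z$ by $\tfrac12(z+z^*)$ — harmless, since $u-\tfrac r2(1\otimes I_n)$ is self-adjoint and the involution on $M_n(\cl S)$ is isometric — we may assume $z=z^*$.

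Then $p := z + \tfrac r2(1\otimes I_n)\in M_n(\cl S)$ is a lift of $u$, and from $z=z^*$, $\|z\|\leq\tfrac r2$ we get $-\tfrac r2(1\otimes I_n)\leq z\leq\tfrac r2(1\otimes I_n)$ in $M_n(\cl S)$, hence $0\leq p\leq r(1\otimes I_n)$; in particular $p\in M_n(\cl S)^+$ and $\|p\|\leq r$. Conversely $\|p\|\geq\|q^{(n)}(p)\|_{osp}^{(n)} = \|u\| = r$, so $\|p\| = r$, which establishes (4) and with it (3); as (1) and (2) are assumed, $J$ is completely biproximinal. The substantive point — and the only real obstacle — is discovering the translation by $\tfrac r2(1\otimes I_n)$; once that is in hand the remainder is bookkeeping, the only care needed being in the reduction of (3) to (4) and in checking that the self-adjoint lift $z$ may still be chosen with $\|z\|\leq\tfrac r2$, so that $p$ lands in the positive cone with the correct norm.
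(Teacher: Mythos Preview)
Your proof is correct and follows essentially the same approach as the paper's: translate the positive element by $\tfrac r2$ times the unit to reduce the problem to finding a self-adjoint lift of norm $\leq \tfrac r2$, then use proximinality and symmetrize. The paper carries this out for $n=1$ and states that the general case is similar, while you work directly at the matrix level and separate out the observation that (4) implies (3); these are cosmetic differences.
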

\begin{proof} We only consider the case $n=1$; the proofs of the remaining cases are similar. We first prove that $J$ is order proximinal. Let $s+J \in C_1(\cl S/J)$ and let $t = \|s+J\|_{osy}$; then the element $h = s - (t/2)e+J$
is hermitian and $\|h\|_{osy}= t/2.$ Since $J$ is proximinal, there exists
$x \in \cl S$ with $\|x\| = t/2$ and $x+J = h.$  Replacing $x$ by $(x+
x^*)/2,$ if necessary, we may assume that $x= x^*.$  We have that $x +
(t/2)e \in \cl S^+$ and $x+(t/2)e +J = s+J$. Thus, $s+J \in D_1(\cl S/J)$
and so $C_1(\cl S/J) = D_1(\cl S/J)$ and we have shown that $J$ is
order proximinal. Also note that $\| x+(t/2)e \| \leq \|x\|+ t/2 = t$
and, since $x+(t/2)e +J = s+J$, clearly $\| x+(t/2)e \| \geq \|s+J\| =
t$. Thus, $\| x+(t/2)e \| = t$ which shows that $J$ satisfies the
fourth property of the definition.
\end{proof}

The question about the completely bounded equivalence of the two
quotient norms can perhaps be best seen from the viewpoint of {\em relative decomposability.}

\begin{defn} Let $\cl S$ and $\cl T$ be operator systems and let $J$ be a kernel in $\cl S.$ A completely bounded map $\phi: \cl S \to \cl T$ with $\phi(J) = \{0\}$ is said to be {\bf J-decomposable} if there exist   completely positive maps, $\phi_j: \cl S \to \cl T,$ with   $\phi_j(J)=\{0\}, j=1,2,3,4$ such that $\phi= (\phi_1 - \phi_2) + i(\phi_3 - \phi_4).$ We let $D_J(\cl S, \cl T)$ denote the set of J-decomposable maps
from $\cl S$ into $\cl T$.
\end{defn}

In analogy with the work of Haagerup \cite{ha}, we can see that $\phi \in D_J(\cl S, \cl T)$
if and only if there exists completely positive
$\psi_j: \cl S \to \cl T, j=1,2$, with $\psi_1(J)= \psi_2(J) = \{0\}$, such that the map $\Phi: \cl S \to M_2(\cl T)$ given by
\[ \Phi(x) = \begin{pmatrix} \psi_1(x) & \phi(x)\\ \phi(x^*)^* &
  \psi_2(x) \end{pmatrix} \]
is completely positive. We define
\[ \|\phi\|_{J\mbox{-}dec} = \inf \max \{ \|\psi_1\|, \|\psi_2\| \} \]
where the infimum is taken over all completely positive maps $\psi_1$ and $\psi_2$ annihilating $J$
with the map $\Phi$ defined as above being completely positive.

\begin{thm}\label{th_Jdec}
Let $\cl S$ be an operator system and $J$ be a kernel in $\cl S.$  Then the following are equivalent:
\begin{enumerate}
\item for every Hilbert space $H,$ every completely bounded map from $\cl S$ into $\cl B(H)$ that
  vanishes on $J$ is $J$-decomposable,


\item there exists a constant $C$ such that for all $n$ and
all $(x_{i,j}) \in M_n(\cl S),$ we have that $\|(x_{i,j} +J) \|_{osp}
\le C \|(x_{i,j} +J) \|_{osy}.$
\end{enumerate}
Moreover, in these cases the least constant satisfying (2) is equal to
the least constant satisfying $\|\phi\|_{J\mbox{-}dec} \le C \|\phi\|_{cb}$ for all
completely bounded maps $\phi : \cl S\rightarrow \cl B(H)$ vanishing on $J$.
\end{thm}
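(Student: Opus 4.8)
The plan is to prove the two implications separately, exploiting the duality between the operator‐space quotient norm and the operator‐system quotient norm provided by Proposition \ref{p_ncha}, together with a matrix‐amplification trick that reduces the statement about a norm inequality at all levels $n$ to a statement about J‑decomposability of maps into $\cl B(H)$.

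\smallskip

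\textbf{From (1) to (2) and the constant estimate.} Suppose every completely bounded map from $\cl S$ into any $\cl B(H)$ that vanishes on $J$ is $J$‑decomposable. The first step is to upgrade this to a \emph{uniform} bound, i.e. to produce a constant $C$ with $\|\phi\|_{J\text{-}dec}\le C\|\phi\|_{cb}$ for all such $\phi$; I would do this by a closed‑graph / Baire‑category argument on the Banach space of completely bounded maps vanishing on $J$ (with the cb‑norm), observing that $D_J(\cl S,\cl B(H))$ is the increasing union of the sets where $\|\phi\|_{J\text{-}dec}\le m$, each of which is closed, so one of them has nonempty interior — this is exactly the analogue of the Haagerup argument in \cite{ha}. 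Actually, the cleanest route is: the set of $J$‑decomposable maps is by hypothesis \emph{all} cb‑maps vanishing on $J$; the functional $\phi\mapsto\|\phi\|_{J\text{-}dec}$ is a norm on this space dominating $\|\cdot\|_{cb}$, and one checks it makes the space complete, whence the two norms are equivalent by the open mapping theorem. Once the uniform constant $C$ is in hand, given $(x_{i,j})\in M_n(\cl S)$ and an arbitrary completely contractive $\phi:\cl S\to\cl B(H)$ with $\phi(J)=\{0\}$, decompose $\phi$ (with $\|\psi_1\|,\|\psi_2\|\le C$) via the positive map $\Phi:\cl S\to M_2(\cl B(H))$; applying $\Phi^{(n)}$ to the positive element $\operatorname{diag}$-type $2n\times 2n$ matrix built from $(x_{i,j})$ and using that the $\psi_k$ are ucp (after rescaling) produces, via Proposition \ref{p_ncha}, the bound $\|(\phi(x_{i,j}))\|\le C\|(x_{i,j}+J)\|_{osy}$; taking the supremum over $\phi$ and using the first formula in Proposition \ref{p_ncha} yields $\|(x_{i,j}+J)\|_{osp}\le C\|(x_{i,j}+J)\|_{osy}$. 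Care is needed to track that the constant obtained is exactly the $J$‑dec constant, not a worse multiple — the amplification must be done so no factor of $2$ creeps in.

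\smallskip

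\textbf{From (2) to (1).} Conversely, assume $\|(x_{i,j}+J)\|_{osp}\le C\|(x_{i,j}+J)\|_{osy}$ for all $n$ and all matrices. Let $\phi:\cl S\to\cl B(H)$ be completely bounded with $\phi(J)=\{0\}$; normalize $\|\phi\|_{cb}=1$. Then $\phi$ factors through the operator \emph{space} quotient $\cl S/J$ as a complete contraction $\tilde\phi$, and by hypothesis the formal identity from the operator‑system quotient to the operator‑space quotient is completely bounded with cb‑norm at most $C$; composing, $\phi$ factors as a cb map of cb‑norm $\le C$ on the operator‑system quotient $\cl S/J$. Now $\cl S/J$ is a bona fide operator system, so I invoke Wittstock‑type decomposition (or Haagerup's theorem, \cite{ha}) for cb maps on operator systems: a cb map on an operator system into $\cl B(H)$ is decomposable, with decomposable norm controlled by the cb‑norm. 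This expresses $\tilde\phi$ (hence $\phi$) as a combination $(\phi_1-\phi_2)+i(\phi_3-\phi_4)$ of completely positive maps on $\cl S/J$; pulling back along the quotient map $q:\cl S\to\cl S/J$, which is ucp, gives completely positive maps on $\cl S$ vanishing on $J$, so $\phi\in D_J(\cl S,\cl B(H))$. Running the norm estimates through this chain also shows $\|\phi\|_{J\text{-}dec}\le C\|\phi\|_{cb}$ with the same $C$, which combined with the previous paragraph gives the "moreover" equality of least constants.

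\smallskip

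\textbf{Main obstacle.} The delicate point is the bookkeeping of constants so that the \emph{least} constant in (2) equals the \emph{least} constant for $\|\phi\|_{J\text{-}dec}\le C\|\phi\|_{cb}$, with no loss. In one direction this forces the amplification/Paulsen‑type $2\times2$ manipulation in the passage (1)$\Rightarrow$(2) to be arranged so that the completely positive "diagonal compressions" are honestly unital (after scaling) and contribute exactly $\|\psi_k\|$ to the supremum in Proposition \ref{p_ncha}; in the other direction it requires that the decomposition theorem for cb maps on the operator system $\cl S/J$ be invoked in its sharp form (decomposable norm $=$ cb‑norm for maps into $\cl B(H)$), and that pulling back along the ucp quotient map $q$ does not inflate the decomposable norm. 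Verifying that $q^\ast$ of a decomposition achieving $\|\tilde\phi\|_{dec}$ on $\cl S/J$ is a legitimate $J$‑decomposition of $\phi$ of the same norm is the crux, and is where Proposition \ref{p_unive} (the universal property of the quotient) is used implicitly: completely positive maps on $\cl S$ killing $J$ are precisely completely positive maps on $\cl S/J$ composed with $q$.
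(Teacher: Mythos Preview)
Your direction (2)$\Rightarrow$(1) is essentially the paper's argument: factor through the operator system quotient, apply Wittstock's decomposition in sharp form, and pull back along the ucp quotient map. The observation that completely positive maps on $\cl S$ killing $J$ correspond bijectively to completely positive maps on $(\cl S/J)_{osy}$ is exactly what is needed, and you identify it correctly.

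Your direction (1)$\Rightarrow$(2) is a genuine detour from the paper. You propose first proving that (1) forces a uniform bound $\|\phi\|_{J\text{-}dec}\le C_1\|\phi\|_{cb}$ via an open-mapping or Baire-category argument on $D_J(\cl S,\cl B(H))$, and then deriving the norm inequality from that. The paper bypasses this entirely: it chooses a completely isometric Ruan representation $\gamma:(\cl S/J)_{osp}\to\cl B(H)$ and applies hypothesis (1) to the \emph{single} map $\gamma\circ q$. Writing $\gamma\circ q$ as a combination of completely positive maps vanishing on $J$, each factors through $(\cl S/J)_{osy}$, so the identity $(\cl S/J)_{osy}\to(\cl S/J)_{osp}$ is exhibited as a decomposable (hence cb) map, and one reads off $C_2\le\|\gamma\circ q\|_{J\text{-}dec}$ directly. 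No uniform constant needs to exist beforehand; its existence drops out of the (2)$\Rightarrow$(1) direction, after which the inequality $C_2\le C_1$ follows from $\|\gamma\circ q\|_{cb}\le 1$.

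Your route can be made to work, but it carries a real unverified step: you must show that $D_J(\cl S,\cl B(H))$ is complete in the $J$-dec norm before invoking the open mapping theorem, and you have not done this (it requires a weak$^*$-compactness extraction for the diagonal cp maps $\psi_1,\psi_2$, which is not entirely routine). Moreover your ``amplification'' passage from $C_1$ to $C_2$ is left vague; what actually makes it sharp is the standard inequality $\|\phi\|_{cb}\le\max(\|\psi_1\|,\|\psi_2\|)$ for the off-diagonal of a cp $2\times2$ block map, applied on $(\cl S/J)_{osy}$ --- once you state that, the bound $C_2\le C_1$ is immediate and no factor of $2$ appears. So your outline is salvageable, but the paper's single-map trick is both shorter and avoids the functional-analytic completeness issue you would otherwise have to address.
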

\begin{proof}
Let $(\cl S/J)_{osp}$ be the operator space quotient,
let $q: \cl S \to (\cl S/J)_{osp}$ be the quotient map and let
$\gamma: (\cl S/J)_{osp} \to \cl B(H)$ be a completely isometric representation
whose existence is guaranteed by the Ruan representation theorem \cite{ru}.
Let $C_2$ denote the least constant satisfying (2), assuming that such a constant exists, and let
$C_1$ denote the least constant satisfying $\|\phi\|_{J\mbox{-}dec} \le C_1 \|\phi\|_{cb},$ assuming such a constant exists.

Assuming that (1) holds, we have $\gamma \circ q= (\phi_1 - \phi_2) + i(\phi_3 - \phi_4)$ where each each $\phi_j$ is completely positive with $\phi_j(J) = \{0\}.$ Hence, $\phi_j$ induces a completely positive map
$\tilde{\phi_j}: (\cl S/J)_{osy} \to \cl B(H)$, $j = 1,2,3,4$, such that
$(\tilde{\phi_1} - \tilde{\phi_2}) + i(\tilde{\phi_3} - \tilde{\phi_4}): (\cl S/J)_{osy} \to (\cl S/J)_{osp}$
is the identity map. Hence, the identity map is completely bounded and (2) follows.
Moreover, the constant in (2) is at most $\|\gamma \circ q\|_{J\mbox{-}dec}.$
Thus, $C_2 \le C_1.$

Conversely, if (2) holds, then the identity map
$\iota:(\cl S/J)_{osy} \to (\cl S/J)_{osp}$ is completely bounded.
Suppose that $\phi:\cl S \to \cl B(H)$ is completely bounded and vanishes on $J,$
and let $\tilde{\phi} : (\cl S/J)_{osp} \rightarrow \cl B(H)$ be the quotient map.

Then $\tilde{\phi}\circ\iota: (\cl S/J)_{osy} \to \cl B(H)$ is completely bounded and $\|\tilde{\phi}\circ\iota\|_{cb} \le C_2\|\phi\|_{cb}.$ By Wittstock's decomposition theorem \cite{wi}, $\tilde{\phi}\circ\iota$
can be decomposed as a sum of completely positive maps on $(\cl S/J)_{osy}$
which implies that the original map $\phi$ is $J$-decomposable.

Moreover, by the version of Wittstock's decomposition theorem given in \cite{pa}, we have that there exist completely positive maps, $\psi_j: (\cl S/J)_{osy} \to \cl B(H),$ $j=1,2$ with $\|\psi_j\| \le C_2 \|\phi\|_{cb}$ such that the map $\Phi: (\cl S/J)_{osy} \to M_2(\cl B(H))$ defined as above is completely positive. From this it follows that $\|\phi\|_{J\mbox{-}dec} \le C_2\|\phi\|_{cb},$ and so $C_1 \le C_2$ and the proof is complete.
\end{proof}


\section{Exactness}

Kirchberg \cite{k} recognized the importance of exact C*-algebras,
which now play a fundamental role in general C*-algebra theory.
Exactness was later studied from an operator space perspective and
developed further. We refer the reader to \cite[Chapter~17]{p} for the details on these topics.
Briefly, given two complete operator spaces $X$ and $Y,$ we let $X
\hat{\otimes}_{\min} Y$ denote the completion of their minimal
(called also spatial) tensor product (see \cite[Section 2.1]{p} for the definitions).  An
operator space $X$ is called {\bf exact} if for every (not necessarily unital) C*-algebra $\cl B$
and ideal $\cl I$ in $\cl B,$  the sequence
\begin{equation}\label{eq_exa}
0 \to \cl I \hat{\otimes}_{\min} X \to \cl B \hat{\otimes}_{\min} X \to (\cl B/\cl I) \hat{\otimes}_{\min} X \to 0
\end{equation}
is exact.
Here the first map is the inclusion, and the second map is $q\otimes\id$, where $q : \cl B\rightarrow \cl B/\cl I$ is
the quotient map.
Note that if we used the incomplete tensor products instead of the complete ones,
then this sequence would be trivially exact for all choices of $\cl B$ and $\cl I$.

When $X$ is a C*-algebra, then the quotient map is always surjective,
that is, the sequence is right exact, and hence $X$ is exact if and only if
$\ker (q\otimes\id) = \cl I \hat{\otimes}_{\min} X,$
that is, if the sequence is left exact.
(Recall that $\cl I \hat{\otimes}_{\min} X$ coincides with the closure of $\cl I \otimes X$ in
$\cl B \hat{\otimes}_{\min} X$, see e.g. \cite{p}.)
Note that the kernel of $q\otimes\id$ always contains $\cl I \hat{\otimes}_{\min} X.$

When $X$ is only assumed to be an operator space, both left and right exactness of this
sequence are genuine issues. However, there is always a well-defined map
$$T_X: (\cl B \hat{\otimes}_{\min} X)/(\cl I \hat{\otimes}_{\min} X) \to (\cl B/\cl I)
\hat{\otimes}_{\min} X,$$ and the exactness of the sequence (\ref{eq_exa}) is equivalent to
$T_X$ being a Banach space isomorphism for every choice of $\cl B$ and $\cl I.$
When $X$ is exact, the supremum of
$\|T_X^{-1}\|_{cb}$ over all C*-algebras $\cl B$ and ideals $\cl I\subseteq \cl B$ is
called the exactness constant of $X$ and denoted ${\rm ex}(X).$ Thus, $X$ is a 1-exact
operator space, that is, ${\rm ex}(X) = 1,$ provided that $T_X$ is a completely isometric isomorphism
for all choices of $\cl B$ and $\cl I$.

In this section, we wish to study the exactness of (\ref{eq_exa}) when
$X$ is replaced by an operator system and $\cl B$ is assumed to be a
unital C*-algebra. Observe that if $\cl I\subseteq \cl B$
is an ideal then the completed operator space tensor product $\cl I \hat{\otimes}_{\min} X$
is the same as the closure $\cl I \bar{\otimes} X$ in $\cl B \hat{\otimes}_{\min} X$. In the following a bar over the tensor product $\bar{\otimes}$
represents the closure of the algebraic tensor product in the larger space.
In the operator system setting some additional questions arise.
Firstly, if $\cl I$ is an ideal in a unital C*-algebra $\cl B$ and $\cl S$ is an operator system,
then it is not apparent that $\cl I \bar{\otimes} \cl S$ is a kernel in the operator system
$\cl B \hat{\otimes}_{\min} \cl S.$ Secondly, if $\cl I \bar{\otimes} \cl S$ is a kernel then
the operator system quotient $(\cl B \hat{\otimes}_{\min} \cl S)/(\cl I \bar{\otimes} \cl S)$
could possibly differ from the operator space quotient. However,
both of these potential difficulties can be overcome using a result from \cite{p}.

\begin{thm}\label{equ-quo}
Let $\cl A$ and $\cl B$ be unital C*-algebras, let $\cl I$ be an ideal in $\cl B$ and let $\cl S \subseteq \cl A$ be an operator system.  Then $\cl I \bar{\otimes} \cl S$ is a kernel
in $\cl B \hat{\otimes}_{\min} \cl S,$ the operator system and operator space quotients $(\cl B \hat{\otimes}_{\min} \cl S)/(\cl I \bar{\otimes} \cl S)$ are completely isometric, and the induced map $(\cl B \hat{\otimes}_{\min} \cl S)/(\cl I \bar{\otimes} \cl S) \to (\cl B  \hat{\otimes}_{\min} \cl A)/(\cl I \hat{\otimes}_{\min} \cl A)$ of the operator system quotient into the C*-quotient is a unital complete order isomorphism onto its range.
\end{thm}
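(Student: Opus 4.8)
The plan is to do everything inside the C*-algebra $\cl D := \cl B \hat{\otimes}_{\min} \cl A$. By the injectivity of the minimal operator system tensor product \cite{kptt}, the inclusion $\cl S \subseteq \cl A$ induces a complete order embedding $\cl R := \cl B \hat{\otimes}_{\min} \cl S \hookrightarrow \cl D$, so $\cl R$ is an operator subsystem of $\cl D$; similarly $(\cl B/\cl I)\hat{\otimes}_{\min}\cl S$ embeds in $(\cl B/\cl I)\hat{\otimes}_{\min}\cl A$. Since $\cl I$ is an ideal in $\cl B$, the space $\cl J := \cl I\bar{\otimes}\cl A = \cl I\hat{\otimes}_{\min}\cl A$ is a genuine closed two-sided ideal of $\cl D$, and $(\cl B\hat{\otimes}_{\min}\cl A)/(\cl I\hat{\otimes}_{\min}\cl A)$ is exactly the C*-quotient $\cl D/\cl J$. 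Let $\pi:\cl D\to\cl D/\cl J$ be the quotient $*$-homomorphism; its restriction $\pi|_{\cl R}$ is unital completely positive.

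The engine of the argument is an approximate-unit ``absorption'' lemma, which is essentially the result from \cite{p} referred to above. Fix an approximate unit $(e_\lambda)$ for the C*-algebra $\cl I$ and set $g_\lambda := (1_{\cl B} - e_\lambda)^{1/2}\otimes 1_{\cl A}\in \cl B\otimes 1\subseteq \cl D$, so $\|g_\lambda\|\le 1$. Checking first on elementary tensors and then extending by continuity, one has: (i) $g_\lambda\,\cl R\,g_\lambda\subseteq\cl R$, since conjugation by an element of $\cl B\otimes 1$ preserves $\cl R$; (ii) $x - g_\lambda x g_\lambda\in\cl I\bar{\otimes}\cl S$ for every $x\in\cl R$, because $1_{\cl B} - (1_{\cl B}-e_\lambda)^{1/2}\in\cl I$; and (iii) $\|g_\lambda v g_\lambda\|\to 0$ for every $v\in\cl J$, using $\|i^*(1_{\cl B}-e_\lambda)i\|\to 0$ for $i\in\cl I$. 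The first assertion of the theorem is then immediate: clearly $\cl I\bar{\otimes}\cl S\subseteq \cl R\cap\cl J=\ker(\pi|_{\cl R})$, while if $u\in\cl R\cap\cl J$ then $u-g_\lambda u g_\lambda\in\cl I\bar{\otimes}\cl S$ by (ii) and $g_\lambda u g_\lambda\to 0$ by (iii), so $u\in\overline{\cl I\bar{\otimes}\cl S}=\cl I\bar{\otimes}\cl S$; hence $\cl I\bar{\otimes}\cl S=\ker(\pi|_{\cl R})$ is a kernel in $\cl R$.

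For the remaining assertions let $q:\cl R\to\cl R/(\cl I\bar{\otimes}\cl S)$ be the quotient map and let $\bar{\psi}:\cl R/(\cl I\bar{\otimes}\cl S)\to\cl D/\cl J$ be the unital completely positive injection induced by $\pi|_{\cl R}$ via Proposition \ref{p_unive}. To see $\bar{\psi}$ is a complete order isomorphism onto its range I would show it reflects positivity: if $X=(x_{i,j})\in M_n(\cl R)$ with $\pi^{(n)}(X)\ge 0$, then (using the first assertion) we may assume $X=X^*$, lift $\pi^{(n)}(X)$ to the positive element $Y=X_+\in M_n(\cl D)$, for which $X-Y\in M_n(\cl J)$, and conjugate by $G_\lambda:=I_n\otimes g_\lambda$: then $G_\lambda Y G_\lambda\ge 0$ while $G_\lambda(X-Y)G_\lambda\to 0$ by (iii), so $G_\lambda X G_\lambda+\varepsilon_\lambda(I_n\otimes 1)\ge 0$ in $M_n(\cl D)$, hence in $M_n(\cl R)$, with $\varepsilon_\lambda\to 0$. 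By (i), $G_\lambda X G_\lambda\in M_n(\cl R)$, and by (ii) it is congruent to $X$ modulo $M_n(\cl I\bar{\otimes}\cl S)$; applying the completely positive map $q$ gives $(x_{i,j}+J)+\varepsilon_\lambda(I_n\otimes(1+J))\in C_n(\cl R/(\cl I\bar{\otimes}\cl S))$ for all $\lambda$, and since $1+J$ is an Archimedean matrix order unit (Proposition \ref{p_quo}) we conclude $(x_{i,j}+J)\in C_n$. Thus $\bar{\psi}$ is a complete order isomorphism onto its range, an operator subsystem of the C*-algebra $\cl D/\cl J$, which is the third assertion. Since a unital complete order isomorphism is a complete isometry, the operator system quotient norm on $M_n(\cl R/(\cl I\bar{\otimes}\cl S))$ equals $\inf\{\|X+V\|_{M_n(\cl D)}:V\in M_n(\cl J)\}$; together with Corollary \ref{normineq} ($\|\cdot\|_{osy}\le\|\cdot\|_{osp}$) and the reverse estimate obtained by conjugating a near-optimal $X+V$ by $G_\lambda$ and invoking (i)--(iii) once more, this yields $\|\cdot\|_{osy}^{(n)}=\|\cdot\|_{osp}^{(n)}$, which is the second assertion.

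The main obstacle, and the reason the approximate unit must be chosen inside $\cl I\subseteq\cl B$ and tensored with $1_{\cl A}$, is to keep every conjugation and limit inside the operator subsystem $\cl R$ of $\cl D$: conjugating by an arbitrary approximate unit of the ideal $\cl J$ in $\cl D$ would leave $\cl R$, whereas $g_\lambda\in\cl B\otimes 1$ normalizes $\cl R$ and displaces its elements only within $\cl I\bar{\otimes}\cl S$. The rest is bookkeeping with two standard C*-algebraic facts — that the positive cone of a C*-quotient is the image of the positive cone, and that the quotient norm is the infimum over cosets — combined with the injectivity of the minimal tensor product, which is what lets one transport norms and cones freely between $\cl R$, $M_n(\cl R)$ and $\cl D$.
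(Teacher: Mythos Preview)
Your argument is correct. The approximate-unit conjugation $x\mapsto g_\lambda x g_\lambda$ with $g_\lambda=(1_{\cl B}-e_\lambda)^{1/2}\otimes 1_{\cl A}$ does everything you claim: properties (i)--(iii) hold by the elementary-tensor computations you indicate together with uniform boundedness, and the choice $Y=X_+$ is a legitimate positive lift because $\pi^{(n)}(X_+)-\pi^{(n)}(X_-)$ is the Jordan decomposition of $\pi^{(n)}(X)\ge 0$, forcing $X_-\in M_n(\cl J)$.

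Your route, however, is different from the paper's. The paper's proof of this theorem is a short squeeze: it quotes \cite[Lemma~2.4.8]{p} as a black box to get that the \emph{operator space} quotient $(\cl B\hat{\otimes}_{\min}\cl S)/(\cl I\bar{\otimes}\cl S)$ embeds completely isometrically into the C*-quotient $(\cl B\hat{\otimes}_{\min}\cl A)/(\cl I\hat{\otimes}_{\min}\cl A)$; then, since the natural map from the operator \emph{system} quotient into the same C*-quotient is unital completely positive (hence completely contractive) with the same image, one gets $\|\cdot\|_{osp}\le\|\cdot\|_{osy}$, and Corollary~\ref{normineq} gives the reverse. The complete order embedding follows because a unital complete isometry is a complete order isomorphism onto its range. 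By contrast, you bypass Pisier's lemma entirely and redo the hard step by hand with the approximate unit, proving the complete order embedding first and deducing the isometry afterwards. This is precisely the technique the paper develops later, in the more general Proposition~\ref{simplecase} and Proposition~\ref{p_514}, where the key hypothesis is exactly your observation that $(1\otimes e_\lambda)$ multiplies $\cl R$ into $\cl I\bar{\otimes}\cl S$. So your proof is self-contained and anticipates the paper's general machinery, at the cost of being longer than simply invoking \cite{p}.
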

\begin{proof} By \cite[Lemma~2.4.8]{p}, the induced map $\iota$ from the operator space quotient $Q(\cl S) =(\cl B \hat{\otimes}_{\min} \cl S)/(\cl I \bar{\otimes} \cl S)$ into the operator space quotient $Q(\cl A) = (\cl B \hat{\otimes}_{\min} \cl A)/(\cl I \hat{\otimes}_{\min} \cl A)$ is a complete isometry. But $Q(\cl A)$ is also a C*-algebra and the natural map $\phi$ from $\cl B \hat{\otimes}_{\min} \cl S$ to $Q(\cl A)$ is a unital completely positive map.
Thus, $\ker\phi = \cl I \bar{\otimes} \cl S$ (we omit the proof of this easy fact since
a more general argument will be given in the proof of Proposition \ref{p_514}),
and the induced map from the quotient operator system $[(\cl B \hat{\otimes}_{\min} \cl S)/(\cl I \bar{\otimes} \cl S)]_{osy}$ into $Q(\cl A)$ is a unital completely positive map whose image is $Q(\cl S).$  This shows that the operator system quotient
matrix norms are larger than the corresponding operator space quotient matrix norms and hence by
Corollary~\ref{normineq}  they are equal.

Finally, since $\iota : Q(\cl S) \rightarrow Q(\cl A)$ is a unital complete isometry, it is also a complete order embedding.
\end{proof}

Theorems \ref{th_Jdec} and \ref{equ-quo} yield the following corollary.

\begin{cor}  Let $\cl A$ and $\cl B$ be unital C*-algebras, let $\cl I$
  be an ideal in $\cl B,$ let $\cl S \subseteq \cl A$ be an
  operator system and let $H$ be a Hilbert space. Then every
  completely bounded map from $\cl B \hat{\otimes}_{\min} \cl S$
into $\cl B(H)$ that vanishes on $\cl I \hat{\otimes}_{\min} \cl S$ is
$\cl I \hat{\otimes}_{\min} \cl S$-decomposable with decomposition
constant 1.
\end{cor}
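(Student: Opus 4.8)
The plan is to deduce this corollary directly by combining Theorem~\ref{th_Jdec} with Theorem~\ref{equ-quo}, since the corollary is essentially the case $\cl S\mapsto \cl B\hat{\otimes}_{\min}\cl S$, $J\mapsto \cl I\hat{\otimes}_{\min}\cl S$ of the equivalence ``(2) $\Rightarrow$ (1) with constant $1$'' in Theorem~\ref{th_Jdec}. First I would set $\cl R = \cl B\hat{\otimes}_{\min}\cl S$ and $J = \cl I\hat{\otimes}_{\min}\cl S = \cl I\bar{\otimes}\cl S$. By Theorem~\ref{equ-quo}, $J$ is a kernel in $\cl R$, so the hypotheses of Theorem~\ref{th_Jdec} are met; moreover Theorem~\ref{equ-quo} asserts that the operator system and operator space quotients of $\cl R/J$ are completely isometric, which is exactly the statement that $\|\cdot\|_{osp}^{(n)} = \|\cdot\|_{osy}^{(n)}$ on $M_n(\cl R/J)$ for every $n$. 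Hence condition~(2) of Theorem~\ref{th_Jdec} holds for $\cl R$ and $J$ with constant $C = 1$ (indeed the least such constant is $1$).

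Next I would invoke the ``moreover'' clause of Theorem~\ref{th_Jdec}: the least constant satisfying (2) equals the least constant $C_1$ satisfying $\|\phi\|_{J\text{-}dec}\le C_1\|\phi\|_{cb}$ over all completely bounded $\phi:\cl R\rightarrow \cl B(H)$ vanishing on $J$. Since the least constant in (2) is $1$, we get $C_1 = 1$, i.e. every completely bounded $\phi:\cl B\hat{\otimes}_{\min}\cl S\rightarrow \cl B(H)$ that vanishes on $\cl I\hat{\otimes}_{\min}\cl S$ satisfies $\|\phi\|_{J\text{-}dec}\le \|\phi\|_{cb}$. In particular such a $\phi$ is $J$-decomposable (this is the implication (2) $\Rightarrow$ (1) of Theorem~\ref{th_Jdec}), and the bound $\|\phi\|_{J\text{-}dec}\le \|\phi\|_{cb}$ is precisely the assertion that it is $\cl I\hat{\otimes}_{\min}\cl S$-decomposable with decomposition constant $1$.

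One small point I would be careful about is the identification $\cl I\hat{\otimes}_{\min}\cl S = \cl I\bar{\otimes}\cl S$ as a subspace of $\cl B\hat{\otimes}_{\min}\cl S$, which is noted in the text preceding Theorem~\ref{equ-quo} (for an ideal $\cl I$ in a unital C*-algebra the completed minimal operator space tensor product agrees with the closure of the algebraic tensor product); this is what lets me feed $J$ into Theorem~\ref{th_Jdec} and read off the conclusion in the stated form. I do not anticipate a genuine obstacle here — the entire content of the corollary is packaged in the two cited theorems — so the ``proof'' is really just the bookkeeping of matching up the hypotheses: verifying that $J$ is a kernel, that the two quotient norms coincide so the constant in Theorem~\ref{th_Jdec}(2) is $1$, and then quoting the ``moreover'' clause to transfer the constant $1$ to the decomposition norm. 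If anything needs a word of justification it is the claim that the least constant in (2) is exactly $1$ rather than merely at most $1$, but since $\|\phi\|_{cb}\le \|\phi\|_{J\text{-}dec}$ always (the map $\Phi$ being completely positive forces $\|\phi\|_{cb}\le\max\{\|\psi_1\|,\|\psi_2\|\}$ by a standard $2\times 2$ matrix argument), equality of the constants with value $1$ is forced, and this is all I need.
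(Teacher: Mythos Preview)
Your proposal is correct and follows exactly the route the paper intends: the corollary is stated immediately after Theorems~\ref{th_Jdec} and~\ref{equ-quo} and is obtained precisely by combining them, with Theorem~\ref{equ-quo} supplying that $\cl I\hat{\otimes}_{\min}\cl S$ is a kernel with coinciding operator system and operator space quotient norms (so $C=1$ in condition~(2)), and the ``moreover'' clause of Theorem~\ref{th_Jdec} then giving the decomposition constant~$1$.
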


A second result from \cite{p} is related to proximinality.

\begin{prop}\label{p_comprox}
Let $\cl S$ be an operator system, $\cl B$ be a unital C*-algebra and
let $\cl I$ be an ideal in $\cl B.$ Then $\cl I \hat{\otimes}_{\min}
\cl S$ is a completely biproximinal kernel in $\cl B \hat{\otimes}_{\min} \cl S.$
\end{prop}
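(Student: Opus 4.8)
The plan is to reduce the statement to Proposition~\ref{charorderprox}, which asserts that a completely proximinal kernel whose operator space and operator system quotients are completely isometric is automatically completely biproximinal. Thus it suffices to verify two things: (i) $\cl I\hat{\otimes}_{\min}\cl S$ is a kernel in $\cl B\hat{\otimes}_{\min}\cl S$ and the operator space and operator system quotients $(\cl B\hat{\otimes}_{\min}\cl S)/(\cl I\hat{\otimes}_{\min}\cl S)$ agree completely isometrically; and (ii) $\cl I\hat{\otimes}_{\min}\cl S$ is completely proximinal as a closed subspace of the operator space $\cl B\hat{\otimes}_{\min}\cl S$. Input (i) is exactly Theorem~\ref{equ-quo}: embedding $\cl S$ in a unital C*-algebra $\cl A$ (e.g. $\cl A = C_u^*(\cl S)$), that theorem already tells us $\cl I\bar{\otimes}\cl S = \cl I\hat{\otimes}_{\min}\cl S$ is a kernel and that the two quotient structures coincide. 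So the only genuine work is to establish (ii).

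For (ii) I would first handle the case $n=1$. The mechanism is that a closed two-sided ideal $\cl I$ of a C*-algebra $\cl B$ is an M-ideal, and the relevant result from \cite{p} shows that $\cl I\hat{\otimes}_{\min}\cl S$ is then a proximinal (indeed, M-ideal) subspace of $\cl B\hat{\otimes}_{\min}\cl S$: every element of $\cl B\hat{\otimes}_{\min}\cl S$ has a best approximant in $\cl I\hat{\otimes}_{\min}\cl S$. To promote this to complete proximinality, I would run the same argument at each matrix level via the canonical ``shuffle'' complete isometries $M_n(\cl B\hat{\otimes}_{\min}\cl S)\cong\cl B\hat{\otimes}_{\min}M_n(\cl S)$ and $M_n(\cl I\hat{\otimes}_{\min}\cl S)\cong\cl I\hat{\otimes}_{\min}M_n(\cl S)$. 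Since $M_n(\cl S)$ is again an operator system and $\cl I$ is still an ideal of $\cl B$, the $n=1$ statement applied to the pair $(\cl B, M_n(\cl S))$ gives that $M_n(\cl I\hat{\otimes}_{\min}\cl S)$ is proximinal in $M_n(\cl B\hat{\otimes}_{\min}\cl S)$. Hence $\cl I\hat{\otimes}_{\min}\cl S$ is completely proximinal, and Proposition~\ref{charorderprox} then yields all four clauses in the definition of complete biproximinality.

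The one delicate point is isolating and correctly invoking the statement in \cite{p} that provides proximinality of $\cl I\hat{\otimes}_{\min}\cl S$ inside $\cl B\hat{\otimes}_{\min}\cl S$ for an \emph{arbitrary} operator space $\cl S$ (and checking that it is the genuinely complete version, or that the $n=1$ version plus the shuffle identifications suffices, as above). Everything after that is formal: the matricial upgrade is purely the shuffle isomorphisms, and the agreement of the operator space and operator system quotients, as well as the kernel property, is already delivered by Theorem~\ref{equ-quo}. So the proof amounts to citing the proximinality result, spelling out the matrix-level reduction, and then quoting Proposition~\ref{charorderprox}.
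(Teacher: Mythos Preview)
Your proposal is correct and follows essentially the same route as the paper: the paper cites \cite[Lemma~2.4.7]{p} for complete proximinality of $\cl I\hat{\otimes}_{\min}\cl S$ in $\cl B\hat{\otimes}_{\min}\cl S$, invokes Theorem~\ref{equ-quo} for the coincidence of the operator space and operator system quotients, and then concludes via Proposition~\ref{charorderprox}. The only cosmetic difference is that the paper cites the Pisier lemma directly for \emph{complete} proximinality, whereas you reduce to the $n=1$ case and upgrade via the shuffle isomorphisms $M_n(\cl B\hat{\otimes}_{\min}\cl S)\cong \cl B\hat{\otimes}_{\min}M_n(\cl S)$; both are fine.
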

\begin{proof} By \cite[Lemma~2.4.7]{p}, $\cl I \hat{\otimes}_{\min} \cl S$ is completely proximinal in $\cl B \hat{\otimes}_{\min} \cl S.$ By
Theorem~\ref{equ-quo}, the operator space and operator system quotients coincide, so the result follows by Proposition~\ref{charorderprox}.
\end{proof}

\begin{defn}\label{d_lrex} We call an operator system $\cl S$ {\bf left exact,} if for every unital C*-algebra $\cl B$ and every ideal $\cl I\subseteq \cl B,$ we have that $\cl I \hat{\otimes}_{\min} \cl S$ is the kernel of the   map $q \otimes id_{\cl S}: \cl B \hat{\otimes}_{\min} \cl S \to   (\cl B/\cl I) \hat{\otimes}_{\min} \cl S,$ where $q : \cl B \to \cl B/\cl I$ is the quotient map. We call $\cl S$ {\bf right exact} provided that $q \otimes \id_{\cl S}$ is surjective for every such pair $\cl I$, $\cl B$. We call $\cl S$ {\bf exact} if it is both left and right exact. We call $\cl S$ {\bf 1-exact} if it is exact and the induced map from the operator system quotient, \[\frac{\cl B \hat{\otimes}_{\min} \cl S}{\cl I \hat{\otimes}_{\min} \cl S} \longrightarrow (\cl B/\cl I) \hat{\otimes}_{\min} \cl S,\] is a complete order isomorphism.
\end{defn}

Every C*-algebra is right exact, since in that case the quotient map
is a $*$-homomorphism with dense range and is hence surjective. Thus, a
C*-algebra is exact if and only if it is left exact. Moreover, since
$*$-isomorphisms are complete order isomorphisms, a C*-algebra is exact
if and only if it is 1-exact.

If in the definition of exactness we replace ideals in C*-algebras and
C*-quotients by kernels in operator systems and operator system quotients,
then even the operator system $\bb{C}$ of complex numbers would fail to be exact.
Indeed, the map from $\cl S = \cl S \hat{\otimes}_{\min} \bb C$
to the {\it complete} operator system $(\cl S/J)_{osy} \hat{\otimes}_{\min} \bb C$ can only be surjective
provided that $(\cl S/J)_{osy}$ is already complete in its norm.  But we saw in Proposition \ref{p_noneqi}
that this does not hold in general.

We begin with a few elementary observations.

\begin{prop} An operator system is exact (respectively, 1-exact) as an operator system
if and only if it is exact (respectively, 1-exact) as an operator space.
\end{prop}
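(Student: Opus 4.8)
The plan is to show that the two notions—exactness as an operator system and exactness as an operator space—coincide by carefully unwinding the definitions and observing that all the relevant objects and maps are the same in both categories. The key point is that for a unital C*-algebra $\cl B$ and an ideal $\cl I \subseteq \cl B$, the minimal operator system tensor product $\cl B \hat{\otimes}_{\min} \cl S$ and the minimal operator space tensor product agree: both arise from the spatial tensor product of concrete representations, and the operator system minimal tensor product is defined precisely via the embedding $\cl S \otimes \cl T \subseteq \cl B(H \otimes K)$, which is the same embedding used for the operator space minimal tensor product. So the completed tensor products $\cl I \hat{\otimes}_{\min} \cl S$, $\cl B \hat{\otimes}_{\min} \cl S$ and $(\cl B/\cl I) \hat{\otimes}_{\min} \cl S$ are literally the same normed spaces whether we think of $\cl S$ as an operator system or an operator space, and the map $q \otimes \id_{\cl S}$ is the same map.

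First I would record this identification of the minimal tensor products and of the map $q \otimes \id_{\cl S}$. Then right exactness (surjectivity of $q \otimes \id_{\cl S}$) is manifestly the same condition in both settings, since it refers only to the map between the same completed spaces. For left exactness, the operator system definition asks that $\cl I \hat{\otimes}_{\min} \cl S$ be the kernel of $q \otimes \id_{\cl S}$ as an operator system kernel; but $\cl I \hat{\otimes}_{\min}\cl S$ is a closed subspace, and ``$J = \ker(q \otimes \id_{\cl S})$'' as a set is exactly the condition appearing in the operator space definition of exactness (the map $T_X$ being injective). By Theorem~\ref{equ-quo}, $\cl I \hat{\otimes}_{\min} \cl S$ is automatically a kernel in $\cl B \hat{\otimes}_{\min}\cl S$, so there is no extra content in the operator system version; the two left-exactness conditions coincide. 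Hence exactness as an operator system is equivalent to exactness as an operator space.

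For the 1-exact case, the extra requirement in the operator system definition is that the induced map from the operator system quotient $(\cl B \hat{\otimes}_{\min}\cl S)/(\cl I \hat{\otimes}_{\min}\cl S)$ to $(\cl B/\cl I)\hat{\otimes}_{\min}\cl S$ be a complete order isomorphism, whereas the operator space 1-exactness requires that $T_X$ be a complete isometry. Here I would invoke Theorem~\ref{equ-quo} again: the operator system quotient and the operator space quotient of $\cl B \hat{\otimes}_{\min}\cl S$ by $\cl I \hat{\otimes}_{\min}\cl S$ are completely isometric (indeed, completely order isomorphic), so the operator system map $T_X$ is a complete order isomorphism if and only if it is a complete isometry onto its range; and since $(\cl B/\cl I)\hat{\otimes}_{\min}\cl S$ is a C*-algebra tensored with an operator system sitting inside a C*-algebra, a unital complete isometry onto its range is automatically a complete order embedding. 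Thus the two 1-exactness conditions also match.

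I expect the main obstacle to be purely bookkeeping: making sure that the operator space minimal tensor product completion and the operator system minimal tensor product completion are genuinely the same normed space with the same matrix norms (so that ``kernel'' in one sense corresponds to the set-theoretic kernel in the other), and correctly citing Theorem~\ref{equ-quo} to collapse the distinction between operator system and operator space quotients. There are no hard estimates here—everything reduces to the observation that $\min$ is the same in both categories together with the proximinality/quotient-coincidence results already established, in particular Theorem~\ref{equ-quo}.
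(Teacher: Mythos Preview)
Your argument correctly shows that, for a \emph{unital} C*-algebra $\cl B$, the operator system and operator space versions of the exactness sequence are literally the same, and Theorem~\ref{equ-quo} collapses the distinction between the two quotients. But you have overlooked the one genuine discrepancy between the two definitions: operator space exactness is required to hold for \emph{all} C*-algebras $\cl B$, unital or not, whereas Definition~\ref{d_lrex} of operator system exactness only tests against unital $\cl B$. So you have proved ``operator space exact $\Rightarrow$ operator system exact'' (trivially, since the latter tests against fewer algebras), together with the statement that the two notions agree when restricted to unital $\cl B$; but you have not shown the converse implication.

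The paper's proof isolates exactly this point and handles it by a unitization argument: given a non-unital $\cl B$ with ideal $\cl I$, one passes to the unitization $\cl B_1$, notes that $\cl I$ is still an ideal in $\cl B_1$ with $\cl B/\cl I$ sitting as a codimension-one ideal in $\cl B_1/\cl I$, and then a $3\times 3$ diagram chase (with the rows $\cl I$, $\cl B \subset \cl B_1$, and $\cl B/\cl I \subset \cl B_1/\cl I$, each tensored with $\cl S$) transfers exactness from the unital column to the non-unital one. Your treatment of the 1-exact case via Theorem~\ref{equ-quo} and the ``unital complete isometry $\Leftrightarrow$ complete order isomorphism'' principle is correct, but it too is incomplete without first knowing that plain exactness has been established for non-unital $\cl B$.
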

\begin{proof} By Theorem \ref{equ-quo},
the only difference between the two definitions,
is that in the operator space definition, the C*-algebras are not required to be unital.
Thus, if $\cl S$ is exact as an operator space, then it is exact as an operator system.

Conversely, assume that $\cl S$ is exact as an operator system, let
$\cl B$ be a non-unital C*-algebra and let $\cl I\subseteq \cl
B$ be an ideal. Let $\cl B_1$ denote the unitization of $\cl B.$ Then $\cl I$ is
an ideal in $\cl B_1$ and $\cl B/\cl I$ is an ideal in $\cl
B_1/\cl I$ of co-dimension one.

Consider the following commutative diagram:

\[ \begin{array}{ccccccccc}
0 & \rightarrow & \cl I \hat{\otimes}_{\min} \cl S & \rightarrow & \cl
I \hat{\otimes}_{\min} \cl S & \rightarrow & 0 & \rightarrow & 0\\
 & & \downarrow & & \downarrow & & \downarrow & & \\
0 & \rightarrow & \cl B \hat{\otimes}_{\min} \cl S & \rightarrow & \cl
B_1 \hat{\otimes}_{\min} \cl S & \rightarrow & \bb C
\hat{\otimes}_{\min} \cl S & \rightarrow & 0 \\
  & & \downarrow & & \downarrow & & \downarrow & & \\
0 & \rightarrow & (\cl B/\cl I) \hat{\otimes}_{\min} \cl S &
\rightarrow & (\cl B_1/\cl I) \hat{\otimes}_{\min} \cl S & \rightarrow
& \bb C \hat{\otimes}_{\min} \cl S & \rightarrow & 0.
\end{array} \]

Since $\cl S$ is an exact operator system, all three rows and the last
two columns are exact. From these facts and a simple diagram chase it
follows that the first column is exact. This shows that $\cl S$ is an
exact operator space.

The statement concerning 1-exactness follows from the fact that a unital map is a
complete order isomorphism if and only if it is a complete isometry.
\end{proof}

We now wish to relate 1-exactness to a type of nuclearity introduced in \cite{kptt}.
We recall that, given two functorial operator system tensor products
$\alpha$ and $\beta$ on the category of operator systems,
an operator system $\cl S$ is called {\bf $(\alpha, \beta)$-nuclear} if $\cl S \otimes_{\alpha} \cl T = \cl S \otimes_{\beta} \cl T$ as matrix ordered spaces for every operator system $\cl T;$ that is, if the identity map on $\cl S\otimes\cl T$ is a complete order isomorphism between the
operator systems $\cl S \otimes_{\alpha} \cl T$ and $\cl S \otimes_{\beta} \cl T$.
We will say that $\cl S$ is {\bf C*-$(\alpha, \beta)$-nuclear} if $\cl S \otimes_{\alpha} \cl A = \cl S \otimes_{\beta} \cl A$ as matrix ordered spaces for every unital C*-algebra $\cl A.$  Finally, given operator systems $\cl S \subseteq \cl S_1$ and $\cl T \subseteq \cl T_1,$ we will write
\[ \cl S \otimes_{\alpha} \cl T \subseteq_{coi} \cl S_1 \otimes_{\beta} \cl T_1 \]
if the inclusion map $\iota : \cl S \otimes_{\alpha} \cl T \to \cl S_1 \otimes_{\beta} \cl T_1$ is a complete order isomorphism onto its range. In this notation, the functorial tensor product of operator systems $\linj$ introduced in \cite[Section 7]{kptt} is defined by
the relation $\cl S \otimes_{\linj} \cl T \subseteq_{coi} I(\cl S) \otimes_{\max} \cl T,$
where $I(\cl S)$ denotes the injective envelope of $\cl S.$

\begin{lemma}\label{cminel lem}
Let $\cl S \subseteq \cl B(H)$ and $\cl T$ be operator systems. Then $\cl S \otimes_{\linj} \cl T
\subseteq_{coi} \cl S \otimes_{\linj} C^*_u(\cl T)$ and $\cl S \otimes_{\linj} \cl T \subseteq_{coi} \cl B(H) \otimes_{\max} \cl T.$
\end{lemma}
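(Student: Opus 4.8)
The plan is to reduce both assertions to the defining relation $\cl S\otimes_{\linj}\cl T\subseteq_{coi} I(\cl S)\otimes_{\max}\cl T$, combined with Lemma~\ref{l_cintom} and the classical fact that the injective envelope $I(\cl S)$, carrying a Choi--Effros multiplication, is a unital C*-algebra. Throughout, the ``natural map'' between two algebraic tensor products is the identity on the underlying vector space, and I would use freely that a composition of complete order isomorphisms onto their ranges is again one, and that Lemma~\ref{comp} lets one peel such a composition apart from the left. The first thing I would establish is the auxiliary inclusion $I(\cl S)\otimes_{\max}\cl T\subseteq_{coi} I(\cl S)\otimes_{\max}C^*_u(\cl T)$: since $I(\cl S)$ is a C*-algebra, Lemma~\ref{l_cintom}(1) (using symmetry of $\comm$ and $\max$) gives both $I(\cl S)\otimes_{\max}\cl T=I(\cl S)\otimes_{\comm}\cl T$ and $I(\cl S)\otimes_{\comm}C^*_u(\cl T)=I(\cl S)\otimes_{\max}C^*_u(\cl T)$, while Lemma~\ref{l_cintom}(3), applied with $\cl S$ replaced by $I(\cl S)$, says precisely that $I(\cl S)\otimes_{\comm}\cl T$ carries the operator system structure inherited from the inclusion of $I(\cl S)\otimes\cl T$ into $I(\cl S)\otimes_{\comm}C^*_u(\cl T)$; chaining these three identities yields the claim.

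For the first assertion I would factor the natural inclusion $\cl S\otimes\cl T\hookrightarrow I(\cl S)\otimes C^*_u(\cl T)$ in two ways. Viewed as a map $\cl S\otimes_{\linj}\cl T\to I(\cl S)\otimes_{\max}C^*_u(\cl T)$ it is the composition $\cl S\otimes_{\linj}\cl T\subseteq_{coi} I(\cl S)\otimes_{\max}\cl T\subseteq_{coi} I(\cl S)\otimes_{\max}C^*_u(\cl T)$ (the definition of $\linj$ followed by the auxiliary inclusion), hence a complete order isomorphism onto its range. On the other hand it equals $k\circ j$, where $j:\cl S\otimes_{\linj}\cl T\to\cl S\otimes_{\linj}C^*_u(\cl T)$ is the natural map, which is unital and completely positive by functoriality of $\linj$, and $k:\cl S\otimes_{\linj}C^*_u(\cl T)\to I(\cl S)\otimes_{\max}C^*_u(\cl T)$ is a complete order isomorphism onto its range by the definition of $\linj$. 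Since $k\circ j$ is a complete order isomorphism onto its range, Lemma~\ref{comp} forces $j$ to be one as well, which is exactly $\cl S\otimes_{\linj}\cl T\subseteq_{coi}\cl S\otimes_{\linj}C^*_u(\cl T)$.

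For the second assertion I would use the structure theory of the injective envelope: starting from $\cl S\subseteq\cl B(H)$, we may take $I(\cl S)$ to be the range of a unital completely positive idempotent $E:\cl B(H)\to\cl B(H)$ that restricts to the identity on $\cl S$, so that $\cl S\subseteq I(\cl S)\subseteq\cl B(H)$ and $E$ restricts to the identity on $I(\cl S)$. By functoriality of $\max$, the inclusion $\iota:I(\cl S)\hookrightarrow\cl B(H)$ and $E$ induce unital completely positive maps $\iota\otimes\id_{\cl T}:I(\cl S)\otimes_{\max}\cl T\to\cl B(H)\otimes_{\max}\cl T$ and $E\otimes\id_{\cl T}:\cl B(H)\otimes_{\max}\cl T\to I(\cl S)\otimes_{\max}\cl T$ whose composition in that order is the identity on $I(\cl S)\otimes\cl T$; by Lemma~\ref{comp}, $\iota\otimes\id_{\cl T}$ is a complete order isomorphism onto its range, i.e. $I(\cl S)\otimes_{\max}\cl T\subseteq_{coi}\cl B(H)\otimes_{\max}\cl T$. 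Composing with $\cl S\otimes_{\linj}\cl T\subseteq_{coi} I(\cl S)\otimes_{\max}\cl T$ gives $\cl S\otimes_{\linj}\cl T\subseteq_{coi}\cl B(H)\otimes_{\max}\cl T$.

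No single step is deep here; the things to get right are the legitimacy of the reductions. The main obstacle will be ensuring that the ingredients are invoked in the admissible generality: that $I(\cl S)$ genuinely is a unital C*-algebra so that Lemma~\ref{l_cintom} applies, that $I(\cl S)$ can be placed between $\cl S$ and $\cl B(H)$ as the range of a completely positive projection of $\cl B(H)$, and that the two applications of Lemma~\ref{l_cintom}(1) have one factor an arbitrary operator system and the other the relevant C*-algebra. Once these are in place, everything else is routine bookkeeping with functoriality of $\max$ and $\linj$ and with Lemma~\ref{comp}.
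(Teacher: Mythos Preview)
Your proof is correct. For the first inclusion your argument is essentially identical to the paper's: both establish $\cl S\otimes_{\linj}\cl T\subseteq_{coi} I(\cl S)\otimes_{\max}C^*_u(\cl T)$ via Lemma~\ref{l_cintom} and the definition of $\linj$, then factor through $\cl S\otimes_{\linj}C^*_u(\cl T)$ and peel off with Lemma~\ref{comp}.

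For the second inclusion your route differs slightly. The paper uses the \emph{left injectivity} of $\linj$ (established in \cite{kptt}) to get $\cl S\otimes_{\linj}\cl T\subseteq_{coi}\cl B(H)\otimes_{\linj}\cl T$ in one stroke, and then observes $\cl B(H)\otimes_{\linj}\cl T=\cl B(H)\otimes_{\max}\cl T$ since $\cl B(H)$ is its own injective envelope. You instead realize $I(\cl S)$ concretely inside $\cl B(H)$ as the range of a ucp idempotent and use the retract argument with functoriality of $\max$ to get $I(\cl S)\otimes_{\max}\cl T\subseteq_{coi}\cl B(H)\otimes_{\max}\cl T$. Both are short and valid; the paper's version is a one-liner given left injectivity, while yours is more self-contained in that it unpacks that property by hand for this special case.
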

\begin{proof}
By Lemma \ref{l_cintom},
$\cl B\otimes_{\comm}\cl T\subseteq_{coi} \cl B\otimes_{\max} C^*_u(\cl T)$, for every unital C*-algebra $\cl B$.
Using \cite[Theorem 6.7]{kptt}, we thus have
$$\cl S \otimes_{\linj} \cl T \subseteq_{coi} I(\cl S) \otimes_{\max} \cl T = I(\cl S) \otimes_c \cl T \subseteq_{coi}
I(\cl S) \otimes_{\max} C^*_u(\cl T). $$
Thus,
$$
\cl S \otimes_{\linj} \cl T \longrightarrow \cl S \otimes_{\linj} C^*_u(\cl T) \longrightarrow
I(\cl S) \otimes_{\max} C^*_u(\cl T)
$$
is a sequence of unital completely positive maps whose composition is a complete order isomorphism.
By Lemma \ref{comp}, the first map in the sequence is a complete order isomorphism.

To show the second inclusion, note that
$$
\cl S \otimes_{\linj} \cl T \subseteq_{coi} \cl B(H) \otimes_{\linj} \cl T = \cl B(H) \otimes_{\max} \cl T
$$
where the first inclusion follows from left injectivity of $\linj$ and second equality follows from the fact that
$\cl B(H)$ is injective.
\end{proof}

\begin{thm}\label{th_1exact}
The following properties of an operator system $\cl S$ are equivalent:
\begin{enumerate}
\item $\cl S$ is $(\min,\linj)$-nuclear;

\item $\cl S$ is C*-$(\min,\linj)$-nuclear;

\item $\cl S$ is 1-exact.
\end{enumerate}
\end{thm}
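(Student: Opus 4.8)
The plan is to prove the cycle $(1)\Rightarrow(2)\Rightarrow(3)\Rightarrow(1)$, with $(1)\Leftrightarrow(2)$ being soft and the passage between nuclearity against C*-algebras and $1$-exactness carrying the weight. Implication $(1)\Rightarrow(2)$ is immediate, since every unital C*-algebra is an operator system. For $(2)\Rightarrow(1)$, let $\cl T$ be an arbitrary operator system. The identity map $\cl S\otimes_{\linj}\cl T\to\cl S\otimes_{\min}\cl T$ is always unital completely positive, so it suffices to check the reverse, namely that every $\min$-positive matrix over $\cl S\otimes\cl T$ is $\linj$-positive. Injectivity of the minimal tensor product gives $\cl S\otimes_{\min}\cl T\subseteq_{coi}\cl S\otimes_{\min}C_u^*(\cl T)$, and Lemma \ref{cminel lem} gives $\cl S\otimes_{\linj}\cl T\subseteq_{coi}\cl S\otimes_{\linj}C_u^*(\cl T)$; hence a $\min$-positive element of $M_n(\cl S\otimes\cl T)$ is $\min$-positive in $M_n(\cl S\otimes C_u^*(\cl T))$, is $\linj$-positive there by C*-$(\min,\linj)$-nuclearity, and is therefore $\linj$-positive back in $M_n(\cl S\otimes\cl T)$ by the coi inclusion, so $\cl S\otimes_{\min}\cl T=\cl S\otimes_{\linj}\cl T$.

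For $(2)\Rightarrow(3)$, fix a unital C*-algebra $\cl B$, an ideal $\cl I\subseteq\cl B$, and put $\cl Q=\cl B/\cl I$; I want to verify that $\cl I\bar\otimes\cl S$ is the kernel of $q\otimes\id_{\cl S}\colon\cl B\hat\otimes_{\min}\cl S\to\cl Q\hat\otimes_{\min}\cl S$, that this map is onto, and that the induced unital completely positive map $\Theta$ (from Proposition \ref{p_unive}) of the operator system quotient onto $\cl Q\hat\otimes_{\min}\cl S$ is a complete order isomorphism. Embed $\cl S\subseteq\cl B(H)$. By Lemma \ref{cminel lem} together with C*-$(\min,\linj)$-nuclearity, the inclusions $\cl S\otimes\cl B\hookrightarrow\cl B(H)\hat\otimes_{\max}\cl B$ and $\cl S\otimes\cl Q\hookrightarrow\cl B(H)\hat\otimes_{\max}\cl Q$ are complete order embeddings carrying, respectively, the $\cl B\hat\otimes_{\min}\cl S$ and $\cl Q\hat\otimes_{\min}\cl S$ structures. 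Since the maximal C*-tensor product is compatible with C*-quotients, the quotient $*$-homomorphism $\pi\colon\cl B(H)\hat\otimes_{\max}\cl B\to\cl B(H)\hat\otimes_{\max}\cl Q$ has kernel the closed ideal generated by $1\otimes\cl I$, and $\pi$ restricts on $\cl S\otimes\cl B$ to $q\otimes\id_{\cl S}$. One then identifies $(\cl B\hat\otimes_{\min}\cl S)\cap\ker\pi$ with $\cl I\bar\otimes\cl S$ --- here using that $\cl I\bar\otimes\cl S$ is a kernel in $\cl B\hat\otimes_{\min}\cl S$ (Theorem \ref{equ-quo}) and completely biproximinal there (Proposition \ref{p_comprox}), so that the operator system quotient coincides with the operator space quotient and is complete --- and reads off that $\Theta$ is a bijective complete order isomorphism onto $\cl Q\hat\otimes_{\min}\cl S$; that is, $\cl S$ is $1$-exact.

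For $(3)\Rightarrow(1)$, one argues in the reverse direction: by injectivity of $\min$ and Lemma \ref{cminel lem} (as in $(2)\Rightarrow(1)$) it again suffices to treat $\cl T=\cl A$ a unital C*-algebra, and since the completely positive identity $\cl S\otimes_{\linj}\cl A\to\cl S\otimes_{\min}\cl A$ is automatic, one must show that a $\min$-positive element of $M_n(\cl S\otimes\cl A)$ is positive in $M_n(I(\cl S)\hat\otimes_{\max}\cl A)$. Realizing $\cl A=\cl B/\cl I$ (for instance $\cl B=C_u^*(\cl A)$ with $\cl I$ the kernel of the canonical surjection) and embedding $\cl S\subseteq\cl B(H)$, one transports such an element through the complete order isomorphism between $(\cl B\hat\otimes_{\min}\cl S)/(\cl I\bar\otimes\cl S)$ and $\cl A\hat\otimes_{\min}\cl S$ furnished by $1$-exactness, and then along the maps of Theorem \ref{equ-quo} and the injective-envelope description of $\linj$ in Lemma \ref{cminel lem} (again using compatibility of $\otimes_{\max}$ with C*-quotients), to conclude that it is $\linj$-positive; this yields $\cl S\otimes_{\min}\cl A=\cl S\otimes_{\linj}\cl A$.

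The step I expect to be the genuine obstacle is the comparison, in the C*-algebra case, of the operator system quotient $(\cl B\hat\otimes_{\min}\cl S)/(\cl I\bar\otimes\cl S)$ --- whose order structure is manufactured from the minimal tensor norm --- with the $\linj$ structure on $\cl S\otimes(\cl B/\cl I)$ --- which is manufactured from the injective envelope $I(\cl S)$ and the maximal tensor norm. Neither is literally an operator space quotient of the other, so the matching must be routed carefully through $\cl B(H)\hat\otimes_{\max}\cl B$ and $I(\cl S)\hat\otimes_{\max}\cl B$, invoking Theorem \ref{equ-quo} and Proposition \ref{p_comprox} to guarantee that operator system and operator space quotients coincide and that the relevant subspaces are (completely) proximinal, so that both order and norm data transfer correctly; it is precisely here that the hypothesis --- nuclearity, respectively $1$-exactness --- is consumed, being exactly what forces the $\min$- and $\linj$-induced structures on $\cl S\otimes\cl B$ to agree. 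The remaining ingredients --- the universal property of operator system quotients, injectivity of $\min$ and of $I(\cl S)$, Lemma \ref{cminel lem}, and compatibility of the maximal C*-tensor product with quotients --- are routine.
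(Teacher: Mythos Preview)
Your argument for $(1)\Leftrightarrow(2)$ is correct and matches the paper's, and your $(2)\Rightarrow(3)$ is essentially the paper's argument: one uses $\cl S\otimes_{\min}(\cl B/\cl I)=\cl S\otimes_{\linj}(\cl B/\cl I)\subseteq_{coi}\cl B(H)\otimes_{\max}(\cl B/\cl I)=(\cl B(H)\hat\otimes_{\max}\cl B)/(\cl B(H)\bar\otimes\cl I)$, maps this completely positively to $(\cl B(H)\hat\otimes_{\min}\cl B)/(\cl B(H)\bar\otimes\cl I)$, and lands back in $(\cl S\hat\otimes_{\min}\cl B)/(\cl S\bar\otimes\cl I)$ via Theorem~\ref{equ-quo}. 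This produces the completely positive inverse to the canonical map and hence $1$-exactness.

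The genuine gap is in $(3)\Rightarrow(1)$ (equivalently $(3)\Rightarrow(2)$). Your chain runs
\[
\cl S\hat\otimes_{\min}\cl A \;\cong\; \frac{\cl S\hat\otimes_{\min}\cl B}{\cl S\bar\otimes\cl I}\;\subseteq_{coi}\;\frac{\cl B(H)\hat\otimes_{\min}\cl B}{\cl B(H)\bar\otimes\cl I}
\]
by $1$-exactness and Theorem~\ref{equ-quo}, while the $\linj$ side sits inside
\[
\cl S\otimes_{\linj}\cl A\;\subseteq_{coi}\;\cl B(H)\hat\otimes_{\max}\cl A\;=\;\frac{\cl B(H)\hat\otimes_{\max}\cl B}{\cl B(H)\bar\otimes\cl I}.
\]
To conclude $\min=\linj$ on $\cl S\otimes\cl A$ you must pass from the $\min$-quotient on the left to the $\max$-quotient on the right, and the only natural map goes the \emph{wrong way} ($\max\to\min$). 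Nothing in your toolkit (Theorem~\ref{equ-quo}, Proposition~\ref{p_comprox}, Lemma~\ref{cminel lem}, max-exactness) reverses this; with $\cl B=C_u^*(\cl A)$ there is no reason for $\cl B(H)\otimes_{\min}\cl B=\cl B(H)\otimes_{\max}\cl B$, so the step simply fails. The paper closes this gap by choosing $\cl B=C^*(F)$ for a free group $F$ (every unital $\cl A$ is such a quotient) and invoking Kirchberg's theorem $\cl B(H)\otimes_{\min}C^*(F)=\cl B(H)\otimes_{\max}C^*(F)$, which makes the two ambient quotients literally equal. That use of Kirchberg is not incidental: it is precisely the missing bridge from the $\min$ world to the $\max$ world that $1$-exactness alone cannot supply.
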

\begin{proof}
(1)$\Rightarrow$(2) is trivial.

\smallskip

(2)$\Rightarrow$(3) Let $\cl I\subseteq \cl A$ be an ideal. Recall that, by Theorem \ref{equ-quo}, $ \cl S\hat{\otimes}_{\min} \cl I =  {\cl S\bar{\otimes} \cl I}  \subseteq \cl S \hat{\otimes}_{\min} \cl A $ is a kernel and the natural map
$$
\frac{\cl S \hat{\otimes}_{\min} \cl A }{\cl S \bar{\otimes} \cl I} \rightarrow  \cl S \hat{\otimes}_{\min}
(\cl A/\cl I)
$$
is (unital and) completely positive. It hence
suffices to show that this map has a completely positive inverse.
Note that if $\cl S \subseteq \cl B(H)$ then by Theorem~\ref{equ-quo}, Lemma \ref{cminel lem},
the Remark on p. 285 of \cite{p} and the assumption,
we have the following chain of completely positive maps:
$$
\begin{array}{rcl}
\cl S \hat{\otimes}_{\min} (\cl A/\cl I) = \cl S \hat{\otimes}_{\linj} (\cl A/\cl
I) & \subseteq_{coi} & \cl B(H) \hat{\otimes}_{\max}
\cl A/\cl I\\ & = & \frac{\cl B(H) \hat{\otimes}_{\max} \cl A}{\cl B(H)\bar{\otimes} \cl I}
\longrightarrow
\frac{\cl B(H) \hat{\otimes}_{\min} \cl A }{\cl B(H)\bar{\otimes} \cl I}
\ \mbox{}_{coi}\supseteq \frac{\cl S \hat{\otimes}_{\min} \cl A }{\cl S\bar{\otimes} \cl I}.
\end{array}
$$
The composition of these maps gives the desired completely positive inverse.

\smallskip

(3)$\Rightarrow$(2)
Given a unital C*-algebra $\cl B$, there exists a free group $F$ such that the full C*-algebra $\cl C = C^*(F)$ has an ideal
$\cl I$ with $\cl B \cong \cl C / \cl I$. Assume that $\cl S\subseteq \cl B(H)$.
Using Lemma \ref{cminel lem} and the Remark on p. 285 of \cite{p}, we have
$$
\cl S \hat{\otimes}_{\linj} \cl B = \cl S \hat{\otimes}_{\linj} (\cl C/\cl I) \subseteq_{coi} \cl B(H)
\hat{\otimes}_{\max} (\cl C/\cl I) = \frac{\cl B(H) \hat{\otimes}_{\max} \cl C }{\cl B
(H) \bar{\otimes} \cl I}.
$$
Also, by Theorem \ref{equ-quo},
$$
\cl S \hat{\otimes}_{\min} \cl B = \cl S \hat{\otimes}_{\min} (\cl C/\cl I)
=
\frac{\cl S \hat{\otimes}_{\min} \cl C }{\cl S \bar{\otimes} \cl I}   \subseteq_{coi}
\frac{\cl B(H) \hat{\otimes}_{\min} \cl C }{\cl B(H) \bar{\otimes} \cl I}.
$$
By Kirchberg's Theorem \cite{ki} (see Theorem \ref{th_kir}), the C*-algebras on the right hand sides are *-isomorphic; hence
$\cl S \hat{\otimes}_{\linj} \cl B = \cl S \hat{\otimes}_{\min} \cl B$ and $\cl S$ is C*-$(\min,\linj)$-nuclear.

\smallskip

(2)$\Rightarrow$(1)
By Lemma \ref{cminel lem}, for any operator system $\cl T$ we have that
$$
\cl S \otimes_{\linj} \cl T \subseteq_{coi} \cl S \otimes_{\linj} C^*_u(\cl T)
$$
and, by the injectivity of $\min$,
$$
\cl S \otimes_{\min} \cl T \subseteq_{coi} \cl S \otimes_{\min} C^*_u(\cl T).
$$
By assumption, $\cl S \otimes_{\linj} C^*_u(\cl T) = \cl S \otimes_{\min} C^*_u(\cl T)$ and the conclusion follows.
\end{proof}

Exactness, as a property of a C*-algebra, is well known to pass to
C*-subalgebras. The following is an operator system analog of this result.

\begin{cor}
Let $\cl S$ be an operator system. If $\cl S$ is 1-exact, then every operator subsystem of $\cl S$ is 1-exact. Conversely, if every finite dimensional operator subsystem of $\cl S$ is 1-exact, then $\cl S$ is 1-exact.
\end{cor}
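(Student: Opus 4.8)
The plan is to handle the two implications separately, exploiting the characterization of 1-exactness as $(\min,\linj)$-nuclearity from Theorem \ref{th_1exact} and the behavior of the tensor products $\min$ and $\linj$ under inclusions.

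For the first (and easier) direction, suppose $\cl S$ is 1-exact and let $\cl S_0 \subseteq \cl S$ be an operator subsystem. I would show $\cl S_0$ is $(\min,\linj)$-nuclear directly. The key facts are: $\min$ is injective, so for any operator system $\cl T$ we have $\cl S_0 \otimes_{\min} \cl T \subseteq_{coi} \cl S \otimes_{\min} \cl T$; and the tensor product $\linj$ is injective in its first variable as well, so $\cl S_0 \otimes_{\linj} \cl T \subseteq_{coi} \cl S \otimes_{\linj} \cl T$. (The left-injectivity of $\linj$ is used in the proof of Lemma \ref{cminel lem}, so I would cite it as available; alternatively one notes that $I(\cl S_0)$ can be taken inside $I(\cl S)$ and argues through $I(\cl S)\otimes_{\max}\cl T$.) Since $\cl S$ is $(\min,\linj)$-nuclear, the identity map on $\cl S\otimes\cl T$ is a complete order isomorphism between $\cl S\otimes_{\min}\cl T$ and $\cl S\otimes_{\linj}\cl T$. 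Restricting this isomorphism to $\cl S_0\otimes\cl T$ and using the two $\subseteq_{coi}$ inclusions just noted, we conclude that the identity on $\cl S_0\otimes\cl T$ is a complete order isomorphism between $\cl S_0\otimes_{\min}\cl T$ and $\cl S_0\otimes_{\linj}\cl T$, i.e., $\cl S_0$ is $(\min,\linj)$-nuclear, hence 1-exact by Theorem \ref{th_1exact}.

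For the converse, assume every finite dimensional operator subsystem of $\cl S$ is 1-exact; I want to show $\cl S$ is $(\min,\linj)$-nuclear. Fix an operator system $\cl T$. The inequality $\cl S\otimes_{\linj}\cl T \to \cl S\otimes_{\min}\cl T$ being completely positive is automatic (this is the general ordering $\min \le \linj$ from \cite{kptt}), so the content is that a positive element at the $\min$ level is positive at the $\linj$ level. I would take $u \in M_n(\cl S\otimes\cl T)$ with $u \ge 0$ in $M_n(\cl S\otimes_{\min}\cl T)$ and show $u + \epsilon(1\otimes 1)_n \ge 0$ in $M_n(\cl S\otimes_{\linj}\cl T)$ for every $\epsilon>0$, which suffices since the order unit is Archimedean. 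The point is that $u$ lives in $M_n(\cl S_0 \otimes \cl T)$ for some finite dimensional operator subsystem $\cl S_0 \subseteq \cl S$ (spanned by finitely many coordinates of $u$, together with the unit). Since $\min$ is injective, $u \ge 0$ in $M_n(\cl S_0\otimes_{\min}\cl T)$; since $\cl S_0$ is 1-exact, i.e. $(\min,\linj)$-nuclear, $u \ge 0$ in $M_n(\cl S_0\otimes_{\linj}\cl T)$; and by left-injectivity of $\linj$ again, $\cl S_0\otimes_{\linj}\cl T \subseteq_{coi} \cl S\otimes_{\linj}\cl T$, so $u \ge 0$ in $M_n(\cl S\otimes_{\linj}\cl T)$. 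Taking the supremum over finite dimensional subsystems gives $(\min,\linj)$-nuclearity of $\cl S$, hence 1-exactness.

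The main obstacle is the repeated appeal to left-injectivity of the tensor product $\linj$ — the assertion that $\cl S_0 \subseteq \cl S$ implies $\cl S_0 \otimes_{\linj} \cl T \subseteq_{coi} \cl S \otimes_{\linj} \cl T$. This is precisely the "left injectivity of $\linj$'' invoked in the proof of Lemma \ref{cminel lem}, so I would rely on that; if one wanted to avoid citing it, one would instead argue through injective envelopes, choosing $I(\cl S_0)$ as an operator subsystem of $I(\cl S)$ and using injectivity of $\min$ together with the identification $\cl S_0\otimes_{\linj}\cl T \subseteq_{coi} I(\cl S_0)\otimes_{\max}\cl T \subseteq_{coi} I(\cl S)\otimes_{\max}\cl T$. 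The only other subtlety is the finite dimensional reduction in the converse, which is routine once one observes that positivity of a single matrix $u$ only involves finitely many elements of $\cl S$.
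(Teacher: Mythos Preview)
Your proof is correct and follows essentially the same approach as the paper: both reduce to the characterization of 1-exactness as $(\min,\linj)$-nuclearity from Theorem~\ref{th_1exact} and then exploit that both $\min$ and $\linj$ are left injective functorial tensor products. The paper's proof is simply a one-line citation of these facts, whereas you spell out the finite-dimensional reduction argument explicitly; your worry about left-injectivity of $\linj$ is unnecessary, as this is established in \cite{kptt} and used freely throughout the paper.
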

\begin{proof}
The statements follow from the 1-exactness criteria in Theorem \ref{th_1exact}, together with the fact
that both $\min$ and $\linj$ are left injective functorial operator system tensor products.
\end{proof}

There is another connection between exact operator spaces and exact
operator systems that we would like to point out. Recall that every
operator space $X$ gives rise \cite{pa} to a canonical operator system
$\cl S_X = \left\{\begin{pmatrix}
\lambda & x\\ y^* & \mu \end{pmatrix} : \lambda,\mu\in \bb{C}, x,y\in X\right\}.$

\begin{prop} An operator space $X$ is exact (respectively, 1-exact)
if and only if $\cl S_X$ is an exact (respectively, 1-exact) operator system.
\end{prop}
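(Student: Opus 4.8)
The plan is to relate the exactness of the operator space $X$ to that of the operator system $\cl S_X$ by exploiting the functoriality of the tensor products involved and the fact that $X$ sits inside $\cl S_X$ as a corner, in a way that is compatible with minimal tensor products. The key structural fact I would use is the standard one (see \cite{pa}) that for any operator space $Y$ and any C*-algebra (or operator space) $\cl B$, there is a natural completely isometric identification $\cl S_{Y} \hat{\otimes}_{\min} \cl B \cong \cl S_{Y \hat{\otimes}_{\min} \cl B}$, or at the very least that $\cl S_X \hat{\otimes}_{\min} \cl B$ contains $X \hat{\otimes}_{\min} \cl B$ as the natural off-diagonal corner completely isometrically, and similarly after quotienting. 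Granting this, a quotient map $q : \cl B \to \cl B/\cl I$ induces the map $q \otimes \id$ on the level of $\cl S_X$-tensors which, restricted to the corner, is exactly $q \otimes \id_X$ on $X$-tensors; and the diagonal entries are handled trivially because $\bb C$ is 1-exact (indeed $\bb C \hat{\otimes}_{\min} \cl B = \cl B$).

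First I would make precise the corner decomposition: an element of $\cl S_X \hat{\otimes}_{\min} \cl B$ decomposes, via the four matrix positions of $\cl S_X = \left(\begin{smallmatrix} \bb C & X \\ X^* & \bb C\end{smallmatrix}\right)$, into a $(1,1)$-entry in $\cl B$, a $(2,2)$-entry in $\cl B$, a $(1,2)$-entry in $X \hat{\otimes}_{\min}\cl B$, and a $(2,1)$-entry in $\overline{X}\hat{\otimes}_{\min}\cl B$, and this is compatible with $q\otimes\id$ and with the inclusion $\cl I\hat\otimes_{\min}\cl S_X \hookrightarrow \cl B\hat\otimes_{\min}\cl S_X$. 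From this, left exactness of $\cl S_X$ (i.e.\ $\ker(q\otimes\id_{\cl S_X}) = \cl I\hat{\otimes}_{\min}\cl S_X$) is equivalent, entry by entry, to left exactness of $\cl B$ on the corner, which is exactly left exactness of $X$; the two diagonal entries contribute nothing since they live in $\cl B$ itself where the sequence is trivially left exact. Right exactness of $\cl S_X$ (surjectivity of $q\otimes\id_{\cl S_X}$ onto $(\cl B/\cl I)\hat{\otimes}_{\min}\cl S_X$) similarly reduces, on the corner, to surjectivity of $q\otimes\id_X : X\hat{\otimes}_{\min}\cl B \to X\hat{\otimes}_{\min}(\cl B/\cl I)$, i.e.\ right exactness of $X$; and diagonally to surjectivity of $q$ itself, which always holds. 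Hence $\cl S_X$ is exact iff $X$ is exact.

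For the 1-exact refinement, I would use Theorem \ref{th_1exact}: since $\cl S_X$ is an operator system, $\cl S_X$ is 1-exact iff it is $(\min,\linj)$-nuclear. On the operator space side, $X$ is 1-exact iff $\mathrm{ex}(X)=1$, i.e.\ $T_X$ is a complete isometry for all $\cl B,\cl I$. One would push the corner identification through, now keeping track of the induced operator system matrix norms: the operator system quotient $(\cl B\hat\otimes_{\min}\cl S_X)/(\cl I\hat\otimes_{\min}\cl S_X)$ agrees with the operator space quotient by Theorem \ref{equ-quo}, and its off-diagonal corner is the operator space quotient $(\cl B\hat\otimes_{\min}X)/(\cl I\hat\otimes_{\min}X)$; the induced map into $(\cl B/\cl I)\hat\otimes_{\min}\cl S_X$ is a complete order isomorphism iff its corner, namely $T_X$, is a complete isometry. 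Thus 1-exactness of $\cl S_X$ is equivalent to 1-exactness of $X$.

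\textbf{The main obstacle} I expect is the bookkeeping around the identification $\cl S_X \hat{\otimes}_{\min}\cl B \cong \cl S_{X\hat{\otimes}_{\min}\cl B}$ and its behavior under quotients: one must check not merely that $X\hat\otimes_{\min}\cl B$ embeds completely isometrically as a corner, but that this embedding intertwines $q\otimes\id$ with $q\otimes\id$, intertwines the two inclusions of the ideal-tensor, and — for the 1-exact part — that the corner of the operator \emph{system} quotient matches the operator \emph{space} quotient of the corner. The first of these is essentially the functoriality of $\cl S_{(-)}$ together with the fact that $\min$ is injective and functorial, so the embeddings and quotient maps are all the canonical ones; the last uses Theorem \ref{equ-quo} to collapse the distinction between the two quotients. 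A secondary point is the non-unitality subtlety: in the operator space definition $\cl B$ need not be unital, but one can reduce to the unital case exactly as in the proof that operator space exactness coincides with operator system exactness, or simply unitize and use that $\cl S_X$ is an operator system so the earlier proposition applies. Once these identifications are set up carefully, the proof is a direct entry-by-entry comparison and a citation of Theorem \ref{th_1exact} and Theorem \ref{equ-quo}.
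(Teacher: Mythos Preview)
Your approach is essentially the same as the paper's: both arguments rest on the identification $\cl B \hat{\otimes}_{\min} \cl S_X \cong \begin{pmatrix} \cl B & \cl B \hat{\otimes}_{\min} X \\ \cl B \hat{\otimes}_{\min} X^* & \cl B \end{pmatrix}$ and then compare exactness entry by entry, with the diagonal entries trivial. The paper's proof is much terser---it simply cites this identification together with the fact that $X$ is (1-)exact iff $X^*$ is---so your detour through Theorem~\ref{th_1exact} is unnecessary (you end up arguing directly with $T_{\cl S_X}$ versus $T_X$ anyway), but the substance is the same.
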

\begin{proof} The statement follows
from the identification $\cl B \hat{\otimes}_{\min} \cl S_X = \begin{pmatrix} \cl B \hat{\otimes}_{\min} \bb C & \cl B \hat{\otimes}_{\min} X \\ \cl B \hat{\otimes}_{\min} X^* & \cl B \hat{\otimes}_{\min} \bb C \end{pmatrix}$, which holds for every (not necessarily unital) C*-algebra $\cl B$, and the fact that $X$ is exact (respectively, 1-exact) if and only if $X^*$ is exact (receptively, 1-exact).
\end{proof}


Our definition and study of exactness has focused,
as in the operator space case, on the minimal tensor product. But it is also useful to consider exactness of the same sequence for some of the other functorial tensor products on operator systems that were introduced in \cite{kptt}. In the following we will show that when $\min$ is replaced by $\linj$ or $\max$
then every operator system is automatically exact.

Let $\cl I$ be an ideal in a C*-algebra $\cl A$. Then there is a net
$\{e_\alpha\}$ of positive elements in the closed unit ball of $\cl I$ such that
$$
\|e_{\alpha} b - b \| \rightarrow 0 \mbox{ for every } b\in \cl I
\mbox{ and } \|e_{\alpha} a - a e_{\alpha} \| \rightarrow 0 \mbox{ for every } a\in \cl A.
$$
Such a net always exists and is called a \textit{quasi-central
  approximate unit} for the ideal $\cl I$ in the C*-algebra $\cl A$ (see e.g. \cite{krd}).
In the following lemma we list some of the properties of $\{e_{\alpha}\}$. The
proof is essentially contained in \cite[Section 2.4]{p} but we include it for the convenience of the reader.

\begin{lemma} {\label{maxx}}
Let $\cl I$ be an ideal of a C*-algebra $\cl A$ and
$\{e_\alpha\}$ be a quasi-central approximate unit for $\cl I$. Then for any $a,b \in \cl A$ we have
\begin{enumerate}
 \item $\lim_{\alpha}\| e_\alpha a + (1-e_\alpha) b \| \leq
\max(\|a\|,\|b+\cl I\|_{\cl A/\cl I})$.

\item $\lim_{\alpha}\|a-e_{\alpha}a\| = \|a+\cl I\|_{\cl A/\cl I} = d(a,\cl I)$.
\end{enumerate}
\end{lemma}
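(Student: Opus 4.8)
The statement collects two standard facts about quasi-central approximate units, and the plan is to prove (2) first since (1) will lean on the same estimates. Throughout, I write $\|\cdot\|$ for the norm of $\cl A$ and identify $\cl A/\cl I$ with its quotient C*-algebra, so that $\|a + \cl I\|_{\cl A/\cl I} = \inf_{c\in\cl I}\|a - c\| = d(a,\cl I)$ is the genuine distance; the second equality in (2) is then just the definition of the quotient norm together with closedness of $\cl I$, so the real content is $\lim_\alpha \|a - e_\alpha a\| = d(a,\cl I)$.

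For the inequality $\limsup_\alpha\|a - e_\alpha a\| \le d(a,\cl I)$, the plan is: fix $c\in\cl I$ and write $a - e_\alpha a = (1-e_\alpha)(a - c) + (1-e_\alpha)c$. Since $0 \le e_\alpha \le 1$ we have $\|1 - e_\alpha\| \le 1$, so $\|(1-e_\alpha)(a-c)\| \le \|a - c\|$; and since $c\in\cl I$ and $\{e_\alpha\}$ is an approximate unit for $\cl I$, $\|(1-e_\alpha)c\| = \|c - e_\alpha c\| \to 0$. Hence $\limsup_\alpha\|a - e_\alpha a\| \le \|a - c\|$, and taking the infimum over $c\in\cl I$ gives $\limsup_\alpha\|a - e_\alpha a\| \le d(a,\cl I)$. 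For the reverse inequality $\liminf_\alpha\|a - e_\alpha a\| \ge d(a,\cl I)$, the plan is simply to observe that $e_\alpha a \in \cl I$ for every $\alpha$ (as $e_\alpha\in\cl I$ and $\cl I$ is an ideal), so $\|a - e_\alpha a\| \ge d(a,\cl I)$ for every $\alpha$. Combining the two bounds yields the limit in (2).

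For (1), the plan is to start from the algebraic identity
\[
e_\alpha a + (1-e_\alpha)b = e_\alpha(a - b) + b,
\]
but this form is not quite convenient, so instead I would write, for an arbitrary $c\in\cl I$,
\[
e_\alpha a + (1-e_\alpha)b = e_\alpha a + (1-e_\alpha)(b - c) + (1-e_\alpha)c,
\]
and control the three summands. The last term tends to $0$ by the approximate-unit property since $c\in\cl I$. For the first two terms together, the key tool is the elementary C*-fact that if $0\le e\le 1$ then for any $x,y$ one has $\|ex + (1-e)y\| \le \max(\|x\|,\|y\|)$ — indeed $ex + (1-e)y = e^{1/2}(e^{1/2}x) + (1-e)^{1/2}((1-e)^{1/2}y)$ and a Cauchy–Schwarz/$2\times 2$-matrix argument gives the bound, or one simply notes $\|ex+(1-e)y\|\le \|e\|\,\|x\| + \|1-e\|\,\|y\|$ is too weak and instead uses that $\begin{pmatrix} e^{1/2} & (1-e)^{1/2}\end{pmatrix}$ has norm $1$. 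Applying this with $e = e_\alpha$, $x = a$, $y = b - c$ gives $\|e_\alpha a + (1-e_\alpha)(b-c)\| \le \max(\|a\|, \|b - c\|)$. Therefore
\[
\limsup_\alpha \|e_\alpha a + (1-e_\alpha)b\| \le \max(\|a\|, \|b - c\|),
\]
and taking the infimum over $c\in\cl I$ turns $\|b-c\|$ into $\|b+\cl I\|_{\cl A/\cl I}$, giving (1).

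**Expected main obstacle.** The one genuinely non-routine ingredient is the norm inequality $\|ex + (1-e)y\| \le \max(\|x\|,\|y\|)$ for $0 \le e \le 1$; everything else is bookkeeping with the approximate-unit defining property and the fact that $\cl I$ is an ideal. I would prove this inequality by factoring through the row contraction $\bigl(e^{1/2}\ \ (1-e)^{1/2}\bigr)$, whose norm is $1$ because $e + (1-e) = 1$, acting on the column $\bigl(\begin{smallmatrix} e^{1/2}x \\ (1-e)^{1/2}y\end{smallmatrix}\bigr)$ whose norm is at most $\max(\|x\|,\|y\|)$ (since $\|e^{1/2}x\|\le\|x\|$ and $\|(1-e)^{1/2}y\|\le\|y\|$, and the column norm is the max of... more precisely one bounds $\|e^{1/2}x\|^2 + \|(1-e)^{1/2}y\|^2$, which requires a touch more care — so the cleanest route is actually the $2\times2$ operator-matrix computation, or citing \cite[Section 2.4]{p} directly as the paper suggests). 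Note also that the quasi-centrality hypothesis (commutation of $e_\alpha$ with elements of $\cl A$) is \emph{not} needed for this lemma — only the approximate-unit property for $\cl I$ — so I would not invoke it here.
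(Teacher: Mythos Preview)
Your argument for (2) is correct and standard; it uses only the approximate-unit property of $\{e_\alpha\}$ and the fact that $\cl I$ is an ideal, exactly as you say.

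The argument for (1), however, has a genuine gap. The inequality you rely on,
\[
\|ex + (1-e)y\| \le \max(\|x\|,\|y\|) \qquad (0 \le e \le 1),
\]
is \emph{false} in a general C*-algebra. In $M_2$, take $e = E_{11}$, $x = E_{11}+E_{12}$ and $y = E_{21}+E_{22}$: then $ex+(1-e)y$ is the all-ones matrix, of norm $2$, while $\|x\|=\|y\|=\sqrt{2}$. The row--column factorization you sketch does give $ex+(1-e)y = \bigl(e^{1/2}\ \ (1-e)^{1/2}\bigr)\bigl(\begin{smallmatrix} e^{1/2}x \\ (1-e)^{1/2}y\end{smallmatrix}\bigr)$ with the row of norm $1$, but the column has norm $\|x^*ex + y^*(1-e)y\|^{1/2}$, and there is no reason for this to be bounded by $\max(\|x\|,\|y\|)$; in the example above it equals $2$.

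What \emph{is} true is the two-sided inequality
\[
\|e^{1/2}xe^{1/2} + (1-e)^{1/2}y(1-e)^{1/2}\| \le \max(\|x\|,\|y\|),
\]
obtained from the factorization $\bigl(e^{1/2}\ \ (1-e)^{1/2}\bigr)\bigl(\begin{smallmatrix} x & 0 \\ 0 & y\end{smallmatrix}\bigr)\bigl(\begin{smallmatrix} e^{1/2} \\ (1-e)^{1/2}\end{smallmatrix}\bigr)$ through the diagonal matrix of norm $\max(\|x\|,\|y\|)$. This is the route the paper (following \cite[Lemma~2.4.4]{p}) takes: one first obtains
\[
\lim_\alpha \bigl\|e_\alpha^{1/2} a\, e_\alpha^{1/2} + (1-e_\alpha)^{1/2} b\, (1-e_\alpha)^{1/2}\bigr\| \le \max\bigl(\|a\|,\|b+\cl I\|_{\cl A/\cl I}\bigr),
\]
and then passes to the one-sided form $e_\alpha a + (1-e_\alpha)b$ using quasi-centrality, namely $\|e_\alpha^{1/2}a - a\, e_\alpha^{1/2}\|\to 0$ and $\|(1-e_\alpha)^{1/2}b - b\,(1-e_\alpha)^{1/2}\|\to 0$, which force $e_\alpha^{1/2}a\, e_\alpha^{1/2} - e_\alpha a \to 0$ and $(1-e_\alpha)^{1/2}b\,(1-e_\alpha)^{1/2} - (1-e_\alpha)b \to 0$. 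So your final remark is also wrong: quasi-centrality is exactly what bridges the (true) two-sided estimate and the one-sided statement (1), and cannot be dispensed with.
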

\begin{proof}
A proof of (2) is given in \cite[Lemma 2.4.4]{p}. Using the same lemma, we have
$$
\lim_{\alpha}\| e_\alpha^{1/2} a e_\alpha^{1/2}+ (1-e_\alpha)^{1/2} b
(1-e_\alpha)^{1/2}\| \leq \max(\|a\|,\|b+ \cl I\|_{\cl A/\cl I}).
$$
Also $\lim_{\alpha} \|e_{\alpha}^{1/2}a-a e_{\alpha}^{1/2}\| = \lim_{\alpha} \|(1-e_\alpha)^{1/2}a-a(1-e_\alpha)^{1/2}\| = 0$.
(1) follows by combining this equality with the displayed inequality.
\end{proof}

Now let $X$ be an operator subspace of a C*-algebra $\cl A$, $\cl I$ be an ideal of $\cl A$
and $Y = X \cap \cl I$. Note that
$Y$ is the kernel of the quotient map $\cl A \rightarrow \cl A/\cl I$ when restricted to $X$. Consequently,
the induced map
$$
X/Y \longrightarrow \cl A / \cl I
$$
is injective and completely contractive. We will consider the following special case:

\begin{lemma}{\label{gen}}
Let $\{e_\alpha\}$ be a quasi-central approximate unit for
$\cl I\subseteq\cl A$ and let $X\subseteq \cl A$ be an operator space.
If $e_{\alpha}x\in Y$ for every $x\in X$ and for every $\alpha$,
then the induced map $X/Y \rightarrow \cl A / \cl I $ is a complete isometry. Moreover, $Y$ is a completely proximinal
subspace of $X$.
\end{lemma}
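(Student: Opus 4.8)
The plan is to reduce both assertions to the case $n=1$, derive the isometry statement from a short limiting argument, and extract the (complete) proximinality from the fact that $Y$ is an M-ideal in $X$. For the reduction, fix $n$ and apply the $n=1$ assertions to the C*-algebra $M_n(\cl A)$, its ideal $M_n(\cl I)$, the operator subspace $M_n(X)$, and the subspace $M_n(X)\cap M_n(\cl I)=M_n(Y)$. Here $\{e_\alpha\otimes I_n\}$ is a quasi-central approximate unit for $M_n(\cl I)$ in $M_n(\cl A)$, and the hypothesis amplifies: $(e_\alpha\otimes I_n)(x_{ij})=(e_\alpha x_{ij})$ lies in $M_n(Y)$ for every $(x_{ij})\in M_n(X)$. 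So it suffices to treat $n=1$.

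For the isometry, recall that the induced map $X/Y\to\cl A/\cl I$ is a well-defined contraction, so $\|x+\cl I\|_{\cl A/\cl I}\le\|x+Y\|_{X/Y}$ for every $x\in X$. Conversely, since $e_\alpha x\in Y$ we have $x-e_\alpha x\in x+Y$, whence $\|x+Y\|_{X/Y}\le\|x-e_\alpha x\|$ for every $\alpha$; passing to the limit over $\alpha$ and using Lemma \ref{maxx}(2) gives $\|x+Y\|_{X/Y}\le\lim_\alpha\|x-e_\alpha x\|=\|x+\cl I\|_{\cl A/\cl I}$. Thus the induced map is isometric at level $1$, hence a complete isometry by the reduction above.

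For the proximinality I would show that $Y$ is an M-ideal in $X$; since M-ideals in Banach spaces are proximinal (Harmand, Werner and Werner), and the identical argument shows $M_n(Y)$ is an M-ideal in $M_n(X)$, this yields complete proximinality. By the restricted three-ball characterization of M-ideals it is enough to check: given $x$ in the unit ball of $X$, elements $j_1,j_2,j_3$ in the unit ball of $Y$, and $\epsilon>0$, there is $j$ in the unit ball of $Y$ with $\|x+j_k-j\|\le 1+\epsilon$ for $k=1,2,3$. Take $j=e_\alpha x$, which belongs to $Y$ by hypothesis and satisfies $\|e_\alpha x\|\le\|x\|\le 1$. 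Then $x+j_k-j=(1-e_\alpha)x+j_k=\bigl(e_\alpha j_k+(1-e_\alpha)x\bigr)+(1-e_\alpha)j_k$; the last summand tends to $0$ because $j_k\in\cl I$, while by Lemma \ref{maxx}(1) we have $\limsup_\alpha\bigl\|e_\alpha j_k+(1-e_\alpha)x\bigr\|\le\max(\|j_k\|,\|x+\cl I\|_{\cl A/\cl I})\le 1$. Since $k$ ranges over a finite set, a single sufficiently large $\alpha$ works for all three inequalities, and the verification is complete.

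The genuine difficulty is the proximinality: approximate best approximants such as $e_\alpha x$ need not converge in norm, so the exact nearest point cannot be written down directly; routing through the M-ideal property is precisely what allows the classical telescoping argument (built on the three-ball property) to produce it. The remaining points---that the hypothesis and Lemma \ref{maxx} survive matrix amplification, and that the induced map on $M_n$ is the amplification of the one on $X/Y$---are routine.
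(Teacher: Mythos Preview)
Your proof is correct. For the isometry, your argument is essentially the paper's but more direct: you invoke Lemma~\ref{maxx}(2) to obtain $\|x+Y\|\le\lim_\alpha\|x-e_\alpha x\|=\|x+\cl I\|$ immediately, whereas the paper reproves this inequality inline via an auxiliary $y\in\cl I$ with $\|x-y\|<1$ and the estimate $\|x-e_\alpha x\|\le\|(1-e_\alpha)(x-y)\|+\|y-e_\alpha y\|$.

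For the proximinality the two arguments genuinely diverge. The paper gives a self-contained iterative construction: starting from $x$, it sets
\[
x_1=\frac{d(x,Y)}{\|x\|}\,e_\alpha x+(1-e_\alpha)x,
\]
checks that $x_1\in x+Y$, that $\|x_1-x\|\le\|x\|-d(x,Y)$, and (via Lemma~\ref{maxx}(1)) that $\|x_1\|$ can be made within $\epsilon$ of $d(x,Y)$; iterating with $\epsilon_n=2^{-n}$ produces a Cauchy sequence in $x+Y$ converging to an exact nearest point. You instead verify the restricted three-ball property with the same test element $j=e_\alpha x$ and the same estimate from Lemma~\ref{maxx}(1), and then invoke the Harmand--Werner--Werner theorem that M-ideals are proximinal. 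Your route is more conceptual and actually establishes a stronger structural fact ($Y$ is an M-ideal in $X$), at the price of importing a nontrivial external result; the paper's route is elementary and explicit. Both amplify to matrices in the same way, via the quasi-central approximate unit $\{e_\alpha\otimes I_n\}$ for $M_n(\cl I)\subseteq M_n(\cl A)$.
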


\begin{proof}
To see that the induced map is an isometry we need to show that $d(x,Y) = d(x,\cl I)$ for every
$x\in X$. Assume that $d(x,\cl I)<1$; then there is an element $y\in \cl I$
such that $\|x-y\|<1$. Note that
$$
d(x,Y) \leq \|x - \underbrace{e_\alpha x}_{\in Y}\| \leq \|\underbrace{x - e_\alpha x - y + e_\alpha y}_{(1-e_\alpha)(x-y)}\| + \|y - e_\alpha y\|  < 1+ \|y - e_\alpha y\|.
$$
Since the last term converges to one,
we obtain that if $d(x,\cl I)<1$ then $d(x,Y)\leq 1$.
This clearly shows that $d(x,\cl I)\geq d(x,Y)$. Since the converse
inequality is trivial,
we have $d(x,\cl I) = d(x,Y)$.
For the matricial level norms the same proof works if one observes that $\{diag(e_\alpha)\}$ is
a quasi-central approximate unit for $M_n(\cl I) \subset M_n(\cl A)$ and $M_n(X) \cap M_n(\cl I) = M_n(X\cap \cl I)$.

Next we show that  $Y \subseteq X$ is completely proximinal. To see that it is proximinal, fix $x$ in $X$, and
let $\epsilon > 0$. Denote by $\bar{z}$ the image of an element $z\in X$ in $X/Y$ under the quotient map.
We will show that there exists an element $x_1\in X$ with $\bar{x_1} = \bar{x}$ such that
\begin{equation}\label{eq_two}
\|x_1\| - d(x_1,Y) < \epsilon \;\; \mbox{ and } \;\; \|x_1-x\| \leq \|x\| - d(x,Y).
\end{equation}
Let
$$
x_1 = \frac{d(x,Y)}{\|x\|} e_\alpha x + (1-e_\alpha) x.
$$
Since $e_{\alpha} x\in Y$ for every $\alpha$, we have that $x_1\in X$ and
$x_1 + Y = x + Y$. Moreover, the second inequality in (\ref{eq_two})
follows from the fact that $\|e_\alpha\|\leq 1$.
On the other hand, by Lemma \ref{maxx}, $\|x_1\|$ converges to $d(x,Y)$ along $\alpha$, and
hence the first inequality in (\ref{eq_two}) for large $\alpha$.

Finally, given $x$ and $\epsilon = 1/2$ we can define $x_1$; for $x_1$ and $\epsilon = 1/4$ we can define $x_2$, and
in general, for $x_n$ and $\epsilon = 1/2^{n+1}$ we can define $x_{n+1}$.
In this manner, we obtain a Cauchy sequence in $X$ whose the limit point has desired property.
This shows that $Y$ is proximinal; the proof of complete proximinality is similar.
\end{proof}

If $\cl I \subset \cl A$ is an ideal and $\cl S \subset \cl A$ is an operator system then
$J = \cl S \cap \cl I$ is a kernel in $\cl S$. In fact $J$ is the kernel of the quotient map
$q:\cl A \rightarrow \cl A/\cl I$ when restricted to $\cl S$. Thus, the induced map
$$
\cl S / J \longrightarrow \cl A/\cl I.
$$
is unital, injective and completely positive.
By the example given before Theorem \ref{idealisom}, we know that this map, in general,
is not a complete order isomorphism.
In the next proposition we give sufficient conditions for this to happen.

\begin{prop}{\label{simplecase}}
Let $\cl I$ be an ideal of a C*-algebra $\cl A$,
$\{e_\alpha\}$ be a quasi-central approximate unit for $\cl I$ and
$\cl S\subseteq \cl A$ be an operator system. If $e_{\alpha}s$ is in $J = \cl S \cap \cl I$ for
every $s \in \cl S$ and for every $\alpha$,
then the induced map $ \cl S / J \rightarrow \cl A / \cl I $ is a (unital) complete order isomorphism.
The operator space and operator system quotients $\cl S / J$ are
completely isometric and $J$ is completely biproximinal kernel in $\cl S$.
\end{prop}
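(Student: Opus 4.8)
The plan is to verify the non-obvious half of the first assertion. We already know the induced map $\theta\colon\cl S/J\to\cl A/\cl I$, $\theta(s+J)=s+\cl I$, to be unital, injective and completely positive, so it remains only to show that $\theta$ is a complete order \emph{embedding}, i.e.\ that whenever $(s_{ij})\in M_n(\cl S)$ satisfies $(s_{ij})+M_n(\cl I)\in M_n(\cl A/\cl I)^{+}$ we have $(s_{ij})+J\in C_n(\cl S/J)$. First I would manufacture the correction terms demanded by the definition of $C_n(\cl S/J)$ out of the quasi-central approximate unit: for a fixed index $\alpha$ put $k_{ij}^{\alpha}=-e_\alpha s_{ij}-s_{ij}e_\alpha+e_\alpha s_{ij}e_\alpha$, so that $s_{ij}+k_{ij}^{\alpha}=(1-e_\alpha)s_{ij}(1-e_\alpha)$. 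One checks $k_{ij}^{\alpha}\in J$ by applying the hypothesis $e_\alpha\cl S\subseteq J$ to $s_{ij}$, to $s_{ij}^{*}\in\cl S$ (using that $J$ is self-adjoint), and finally to the element $s_{ij}e_\alpha\in J\subseteq\cl S$.

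For the estimate I would pick a positive lift $(p_{ij})\in M_n(\cl A)^{+}$ of $(s_{ij})+M_n(\cl I)$ — such a lift exists since the quotient $*$-homomorphism $M_n(\cl A)\to M_n(\cl A)/M_n(\cl I)\cong M_n(\cl A/\cl I)$ carries the positive cone onto the positive cone — and write $(r_{ij})=(p_{ij})-(s_{ij})\in M_n(\cl I)$. With $E_\alpha=\diag(e_\alpha,\dots,e_\alpha)\in M_n(\cl A)$ one has
\[(1-E_\alpha)(s_{ij})(1-E_\alpha)=(1-E_\alpha)(p_{ij})(1-E_\alpha)-(1-E_\alpha)(r_{ij})(1-E_\alpha),\]
where the first summand is positive and, since $\{E_\alpha\}$ is an approximate unit for $M_n(\cl I)$ and $(r_{ij})\in M_n(\cl I)$, the norm of the second summand tends to $0$. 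Hence, given $\epsilon>0$, for $\alpha$ large enough the element $\epsilon\, 1\otimes I_n+(s_{ij}+k_{ij}^{\alpha})=\epsilon\, 1\otimes I_n+(1-E_\alpha)(s_{ij})(1-E_\alpha)$ is positive in $M_n(\cl A)$, and therefore in $M_n(\cl S)$ since positivity there is inherited from $M_n(\cl A)$; as $(k_{ij}^{\alpha})\in M_n(J)$, this is precisely the statement $(s_{ij})+J\in C_n(\cl S/J)$. So $\theta$ is a complete order isomorphism onto its range, and, being unital, a complete isometry onto its range. The main obstacle, to the extent there is one, is the bookkeeping that keeps the correction terms inside $J$ and not merely inside $\cl I$; everything else is a routine approximate-unit argument.

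For the remaining assertions I would invoke Lemma~\ref{gen} with $X=\cl S$ and $Y=J$: its hypothesis is exactly the standing assumption $e_\alpha\cl S\subseteq J$, and it yields that the operator \emph{space} quotient $\cl S/J$ embeds completely isometrically into $\cl A/\cl I$ and that $J$ is completely proximinal in $\cl S$. Since the operator \emph{system} quotient $\cl S/J$ also embeds completely isometrically into $\cl A/\cl I$ — via the complete isometry $\theta$ above, which is implemented by the same underlying map $s+J\mapsto s+\cl I$ — the identity map from the operator space quotient of $\cl S/J$ onto its operator system quotient is a complete isometry, so the two structures coincide. Finally, $J$ is a kernel (it is the kernel of the restriction to $\cl S$ of the C*-quotient map $\cl A\to\cl A/\cl I$), it is completely proximinal, and its operator space and operator system quotients agree, so Proposition~\ref{charorderprox} applies and shows that $J$ is completely biproximinal.
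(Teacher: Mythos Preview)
Your argument is correct, but it takes a different route from the paper for the main step. The paper never constructs the correction elements $k_{ij}^{\alpha}$ explicitly; instead it first invokes Lemma~\ref{gen} to get that the \emph{operator space} quotient $(\cl S/J)_{osp}$ sits completely isometrically inside $\cl A/\cl I$, and then considers the chain
\[
(\cl S/J)_{osp}\longrightarrow (\cl S/J)_{osy}\longrightarrow \cl A/\cl I
\]
of completely contractive maps (the first by Corollary~\ref{normineq}, the second by Proposition~\ref{p_unive}) whose composition is the complete isometry just obtained. This forces the first arrow to be completely isometric, hence the second as well; since the second arrow is unital, it is therefore a complete order isomorphism onto its range. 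Everything else---proximinality via Lemma~\ref{gen} and biproximinality via Proposition~\ref{charorderprox}---is handled exactly as you do.

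So the paper deduces the order statement from the metric one via the ``unital complete isometry $\Rightarrow$ complete order isomorphism'' principle, whereas you attack the cones head-on by manufacturing $k_{ij}^{\alpha}=-e_\alpha s_{ij}-s_{ij}e_\alpha+e_\alpha s_{ij}e_\alpha\in J$ so that $s_{ij}+k_{ij}^{\alpha}=(1-e_\alpha)s_{ij}(1-e_\alpha)$, and then use a positive lift and the approximate-unit property to land in $C_n(\cl S/J)$. Your approach is more hands-on and makes the order structure visible (indeed, it essentially displays why $J$ is completely order proximinal up to~$\epsilon$), while the paper's factorization trick is shorter and avoids any new computation beyond what is already in Lemma~\ref{gen}.
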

\begin{proof}
We start by showing that $(\cl S / J)_{osp}$ and $\cl S / J$ are completely isometric. By Lemma~\ref{gen},
$(\cl S / J)_{osp}\subseteq \cl A/I$ completely isometrically. Since
$$
(\cl S / J)_{osp} \longrightarrow \cl S / J \longrightarrow \cl A/\cl I
$$
is a sequence of completely contractive maps with a completely isometric
composition, the first map has to be a complete isometry. This means that the induced map $\cl S / J \rightarrow \cl A/\cl I$,
which is unital, is also a complete isometry and consequently it is a
complete order isomorphism. Thus the norms coincide on $(\cl S /
J)_{osp}$ and $\cl S / J.$ By Lemma \ref{gen}, $J$ is a completely
proximinal subspace of $\cl S$. Hence, by
Proposition~\ref{charorderprox} $J$ is a completely biproximinal
kernel in $\cl S.$
\end{proof}

The proof of the next lemma is standard and we leave it as an exercise.

\begin{lemma}\label{l_qcut}
Let $\cl I$ be an ideal of a C*-algebra $\cl A$,
$\{e_\alpha\}$ be a quasi-central approximate unit for
$\cl I$ and $\cl B$ be a C*-algebra.
If $\cl B \otimes_{\tau} \cl A$ is a
completed C*-algebra tensor product then $\{1\otimes e_{\alpha}\}_{\alpha}$ is a
quasi-central approximate unit for $\cl B \bar{\otimes} \cl I \subseteq \cl B \otimes_{\tau} \cl A$.
\end{lemma}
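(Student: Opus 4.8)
The plan is to check the two defining properties of a quasi-central approximate unit directly, reducing in each case from the dense $*$-subalgebra $\cl B\otimes\cl I$ (respectively $\cl B\otimes\cl A$) to its closure by a standard $\epsilon/3$ argument. First I would record the routine preliminaries: $\cl B\otimes\cl I$ is a two-sided ideal of $\cl B\otimes\cl A$, so its closure $\cl B\bar{\otimes}\cl I$ is a closed two-sided ideal of $\cl B\otimes_{\tau}\cl A$; each $1\otimes e_{\alpha}$ is positive (being an elementary tensor of positive elements), has norm $\|e_{\alpha}\|\le 1$, and lies in $\cl B\otimes\cl I\subseteq\cl B\bar{\otimes}\cl I$; and, since $\|1\otimes e_{\alpha}\|\le 1$, left and right multiplication by $1\otimes e_{\alpha}$ are contractions on $\cl B\otimes_{\tau}\cl A$. (If $\cl B$ is not unital one reads $1\otimes e_{\alpha}$ inside the multiplier algebra of $\cl B\otimes_{\tau}\cl A$; only the products $(1\otimes e_{\alpha})w$ and $w(1\otimes e_{\alpha})$, which lie in $\cl B\otimes_{\tau}\cl A$, enter the argument.)

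For the approximate-unit property, I would first note that for $b\in\cl B$ and $c\in\cl I$,
\[ \|(1\otimes e_{\alpha})(b\otimes c)-b\otimes c\| = \|b\otimes(e_{\alpha}c - c)\| = \|b\|\,\|e_{\alpha}c - c\| \longrightarrow 0, \]
whence $\|(1\otimes e_{\alpha})v - v\|\to 0$ for every $v\in\cl B\otimes\cl I$ by linearity. For a general $z\in\cl B\bar{\otimes}\cl I$ and $\epsilon>0$, I would pick $v\in\cl B\otimes\cl I$ with $\|z-v\|<\epsilon/3$ and estimate
\[ \|(1\otimes e_{\alpha})z - z\| \le \|(1\otimes e_{\alpha})(z-v)\| + \|(1\otimes e_{\alpha})v - v\| + \|v-z\| < \tfrac{2\epsilon}{3} + \|(1\otimes e_{\alpha})v - v\|, \]
so that $\limsup_{\alpha}\|(1\otimes e_{\alpha})z - z\|\le\epsilon$; since $\epsilon$ is arbitrary this limit is $0$.

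Quasi-centrality is handled in exactly the same way, now starting from the commutator identity on an elementary tensor $b\otimes a$ (with $b\in\cl B$, $a\in\cl A$),
\[ \|(1\otimes e_{\alpha})(b\otimes a) - (b\otimes a)(1\otimes e_{\alpha})\| = \|b\otimes(e_{\alpha}a - a e_{\alpha})\| = \|b\|\,\|e_{\alpha}a - a e_{\alpha}\| \longrightarrow 0 \]
by quasi-centrality of $\{e_{\alpha}\}$ in $\cl A$, extending by linearity to $\cl B\otimes\cl A$, and then approximating an arbitrary $w\in\cl B\otimes_{\tau}\cl A$ by $v\in\cl B\otimes\cl A$ with $\|w-v\|<\epsilon/3$ and bounding $\|(1\otimes e_{\alpha})(w-v)\|+\|(w-v)(1\otimes e_{\alpha})\|\le 2\|w-v\|$. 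There is essentially no obstacle here: the whole argument rests only on the uniform bound $\|1\otimes e_{\alpha}\|\le 1$ together with the approximate-unit and quasi-centrality properties of $\{e_{\alpha}\}$ on elementary tensors, which is why the lemma is left as an exercise; the one point to keep an eye on is the (harmless) matter of how to interpret $1\otimes e_{\alpha}$ when $\cl B$ lacks a unit.
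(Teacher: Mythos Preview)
Your proposal is correct and is precisely the standard argument the paper has in mind: the paper states that ``the proof of the next lemma is standard and we leave it as an exercise,'' and your verification of the approximate-unit and quasi-centrality conditions on elementary tensors, followed by an $\epsilon/3$ density argument, is exactly that exercise. One minor note: in this paper all C*-algebras are assumed unital (see Section~\ref{s_prel}), so your caveat about interpreting $1\otimes e_{\alpha}$ in the multiplier algebra when $\cl B$ is non-unital is unnecessary here, though of course harmless.
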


\begin{prop}\label{p_514}
Let $\cl A$ and $\cl B$ be unital C*-algebras, $\cl S \subseteq \cl B$ be an operator system and
$\cl I \subseteq \cl A$ be an ideal. Suppose that $\cl B \otimes_{\tau} \cl A$ is a C*-algebra tensor product and $\cl S \hat{\otimes}_{\tau_0} \cl A$ be the completed operator system tensor product arising from the inclusion
$\cl S\otimes\cl A \subseteq \cl B \otimes_{\tau} \cl A$. Then
$$
\cl S \bar{\otimes} \cl I = (\cl S \hat{\otimes}_{\tau_0} \cl A )\cap
(\cl B\bar{\otimes}\cl I),
$$
that is, $\cl S \bar{\otimes} \cl I$ is the kernel of the quotient map
$\cl B \otimes_{\tau} \cl A \rightarrow (\cl B \otimes_{\tau} \cl A) /
(\cl B\bar{\otimes} \cl I)$ when restricted to $\cl S \hat{\otimes}_{\tau_0} \cl A$.
Moreover, $\cl S \bar{\otimes} \cl I \subset   \cl S \hat{\otimes}_{\tau_0} \cl A$
is a completely biproximinal kernel and we have a unital completely order isomorphic inclusion
$$
\frac{\cl S \otimes_{\tau_0} \cl A}{\cl S \bar{\otimes} \cl I} \subseteq \frac{\cl B \otimes_{\tau} \cl A}{\cl B\bar{\otimes}\cl I}.
$$
\end{prop}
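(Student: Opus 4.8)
The plan is to obtain the proposition as a direct application of Proposition~\ref{simplecase}, taking the ambient C*-algebra to be $\cl B \otimes_{\tau} \cl A$ and the ambient ideal to be $\cl B \bar{\otimes} \cl I$. First I would fix a quasi-central approximate unit $\{e_{\alpha}\}$ for $\cl I$ in $\cl A$; by Lemma~\ref{l_qcut} the net $\{1 \otimes e_{\alpha}\}$ is then a quasi-central approximate unit for the ideal $\cl B \bar{\otimes} \cl I$ in the C*-algebra $\cl B \otimes_{\tau} \cl A$, which is unital since $\cl A$ and $\cl B$ are. Because $\cl S$ contains the unit of $\cl B$, the completion $\cl S \hat{\otimes}_{\tau_0} \cl A$ is, by the definition of $\tau_0$, a unital operator subsystem of $\cl B \otimes_{\tau} \cl A$, and the general remarks preceding Proposition~\ref{simplecase} identify $J := (\cl S \hat{\otimes}_{\tau_0} \cl A) \cap (\cl B \bar{\otimes} \cl I)$ with the kernel of the quotient map $\cl B \otimes_{\tau} \cl A \to (\cl B \otimes_{\tau} \cl A)/(\cl B \bar{\otimes} \cl I)$ restricted to $\cl S \hat{\otimes}_{\tau_0} \cl A$.

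The one hypothesis that has to be checked before Proposition~\ref{simplecase} can be invoked is that $(1 \otimes e_{\alpha})u \in J$ for every $u \in \cl S \hat{\otimes}_{\tau_0} \cl A$ and every $\alpha$. For $u$ in the algebraic tensor product $\cl S \otimes \cl A$, writing $u = \sum_i s_i \otimes a_i$ gives $(1 \otimes e_{\alpha})u = \sum_i s_i \otimes e_{\alpha}a_i \in \cl S \otimes \cl I$ since $\cl I$ is an ideal; as $\cl S \otimes \cl I \subseteq \cl S \otimes \cl A \subseteq \cl S \hat{\otimes}_{\tau_0} \cl A$ and $\cl S \otimes \cl I \subseteq \cl B \otimes \cl I \subseteq \cl B \bar{\otimes} \cl I$, this element lies in $\cl S \bar{\otimes} \cl I \subseteq J$. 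Left multiplication by $1 \otimes e_{\alpha}$ is norm-continuous and $\cl S \bar{\otimes} \cl I$ is closed, so the same conclusion holds for every $u$ in the closure $\cl S \hat{\otimes}_{\tau_0} \cl A$. This in particular shows $\cl S \bar{\otimes} \cl I \subseteq J$; for the reverse inclusion, given $u \in J$, Lemma~\ref{maxx}(2) applied to $\{1 \otimes e_{\alpha}\}$ and $\cl B \bar{\otimes} \cl I$ gives $\|u - (1 \otimes e_{\alpha})u\| \to d(u, \cl B \bar{\otimes} \cl I) = 0$, so $u = \lim_{\alpha}(1 \otimes e_{\alpha})u$ lies in the closed set $\cl S \bar{\otimes} \cl I$. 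Hence $J = \cl S \bar{\otimes} \cl I$, which establishes the first assertion of the proposition.

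With the hypothesis verified, Proposition~\ref{simplecase} applied to the operator subsystem $\cl S \hat{\otimes}_{\tau_0} \cl A$ of $\cl B \otimes_{\tau} \cl A$ and the ideal $\cl B \bar{\otimes} \cl I$ immediately yields the remaining claims: $\cl S \bar{\otimes} \cl I = J$ is a completely biproximinal kernel in $\cl S \hat{\otimes}_{\tau_0} \cl A$, the operator space and operator system quotients of $\cl S \hat{\otimes}_{\tau_0} \cl A$ by $\cl S \bar{\otimes} \cl I$ coincide, and the induced unital map $(\cl S \hat{\otimes}_{\tau_0} \cl A)/(\cl S \bar{\otimes} \cl I) \to (\cl B \otimes_{\tau} \cl A)/(\cl B \bar{\otimes} \cl I)$ is a complete order isomorphism onto its range; restricting this map to the dense operator subsystem coming from $\cl S \otimes_{\tau_0} \cl A$ gives the displayed completely order isomorphic inclusion. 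I do not expect a genuine obstacle: the statement is arranged so that Lemmas~\ref{l_qcut} and~\ref{maxx} together with Proposition~\ref{simplecase} do all the work, and the only points needing care are (a) the density-and-continuity argument passing from $\cl S \otimes \cl A$ to its completion when checking $(1 \otimes e_{\alpha})u \in \cl S \bar{\otimes} \cl I$, and (b) matching up correctly which object plays the role of ``$\cl S$'', ``$\cl A$'' and ``$\cl I$'' in the invocation of Proposition~\ref{simplecase}.
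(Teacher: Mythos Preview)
Your proposal is correct and follows essentially the same approach as the paper: both arguments use Lemma~\ref{l_qcut} to lift a quasi-central approximate unit to $\{1\otimes e_\alpha\}$, establish the equality $\cl S \bar{\otimes} \cl I = (\cl S \hat{\otimes}_{\tau_0} \cl A)\cap(\cl B\bar{\otimes}\cl I)$ via this approximate unit, and then invoke Proposition~\ref{simplecase}. Your verification of the intersection equality is in fact slightly cleaner than the paper's---you first show $(1\otimes e_\alpha)u\in\cl S\bar{\otimes}\cl I$ for every $u$ in the completed tensor product by density and continuity, then conclude immediately for $u\in J$ via $(1\otimes e_\alpha)u\to u$, whereas the paper uses a more involved triangle-inequality estimate with an auxiliary approximating sequence $\{x_n\}$.
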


\begin{proof}
Let $\{e_\alpha\}$ be a quasi-central approximate unit for
$\cl I\subset\cl A$; by Lemma \ref{l_qcut}, $\{1\otimes e_{\alpha}\}_{\alpha}$ is a quasi-central approximate unit for
$\cl B \bar{\otimes} \cl I \subseteq \cl B \otimes_{\tau} \cl A$. We first show that
$\cl S \bar{\otimes} \cl I = (\cl S \hat{\otimes}_{\tau_0} \cl A )\cap (\cl B\bar{\otimes}\cl I)$.
Clearly, $\cl S \bar{\otimes} \cl I$ is contained in the intersection on the right.
Conversely, let $w\in (\cl S \hat{\otimes}_{\tau_0} \cl A )\cap (\cl B\bar{\otimes}I)$.
In particular, $w$ is an element of the ideal $ \cl B\bar{\otimes}\cl I $ and hence
$(1\otimes e_{\alpha})w\rightarrow w$. Since $w\in \cl S \hat{\otimes}_{\tau_0} \cl A$,
it can be approximated by a sequence $\{x_n\}\subset \cl S \otimes \cl A$. We observe that, by Lemma \ref{maxx}, $\lim_{\alpha} \|x_n-(1\otimes e_\alpha) x_n\| = \|\bar{x_n}\|$, (which tends to 0 as $n\rightarrow \infty$ since $\lim x_n = w \in \cl I$). Note that for any $\alpha$ and $n$, $(1\otimes e_{\alpha}) x_n$ belongs to $\cl S \otimes \cl I$.
Now, given $n$, choose $\alpha = \alpha_n$ large enough so that
$\|(1\otimes e_{\alpha})w - w\|\leq 1/n$ and $\|x_n-(1\otimes e_\alpha) x_n\| \leq \|\bar{x_n}\|+1/n$. Then
$$
\|w- (1\otimes e_{\alpha_n}) x_n \| \leq \| w - (1\otimes e_{\alpha_n})w   \| + \| (1\otimes e_{\alpha_n}) w - (1\otimes e_{\alpha_n})x_n \| + \|(1\otimes e_{\alpha_n})x_n - x_n\|.
$$
The term on the right hand side does not exceed
$1/n + \| w-x_n \| + \|\bar{x_n}\|+1/n$ which tends 0 as $n \rightarrow \infty$. The identity now follows.

The remaining conclusions of the proposition follow from Proposition
\ref{simplecase} applied to the C*-algebra $\cl B \otimes_{\tau} \cl
A$, its ideal $\cl B \bar{\otimes} \cl I$ and the operator subsystem
$\cl S \hat{\otimes}_{\tau_0} \cl A$.
\end{proof}

\begin{cor}
Let $\cl I$ be an ideal in a C*-algebra $\cl A$ and let $\cl S$ be an operator system. Then for any $\tau \in \{\min,\inj,\linj,\rinj,\comm=\max\}$
$\cl S \bar{\otimes} \cl I$ is
a completely biproximinal kernel in $\cl S \hat{\otimes}_{\tau} \cl A$.
\end{cor}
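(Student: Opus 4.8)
The plan is to reduce all five cases to Proposition~\ref{p_514}. That proposition shows: if $\cl B$ is a unital C*-algebra containing $\cl S$ as an operator subsystem, $\cl B\otimes_{\sigma}\cl A$ is a C*-algebra tensor product, and the operator system structure that $\cl B\otimes_{\sigma}\cl A$ induces on the subspace $\cl S\otimes\cl A$ is precisely $\cl S\otimes_{\tau}\cl A$, then $\cl S\bar{\otimes}\cl I$ is a completely biproximinal kernel in $\cl S\hat{\otimes}_{\tau}\cl A$ for every ideal $\cl I\subseteq\cl A$. So it suffices to produce, for each $\tau$, such a pair $(\cl B,\sigma)$. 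Two cases are immediate. For $\tau=\min$ I take $\cl S\subseteq\cl B(H)$, $\cl B=\cl B(H)$ and $\sigma$ the spatial C*-norm: injectivity of the operator system minimal tensor product \cite{kptt} makes the inclusion $\cl S\otimes\cl A\subseteq\cl B(H)\otimes_{\min}\cl A$ induce $\cl S\otimes_{\min}\cl A$. For $\tau=\linj$ the definition of $\linj$ gives $\cl S\otimes_{\linj}\cl A\subseteq_{coi}I(\cl S)\otimes_{\max}\cl A$, and since $I(\cl S)$ and $\cl A$ are C*-algebras this is their C*-algebraic maximal tensor product (Lemma~\ref{l_cintom}(1)); so $\cl B=I(\cl S)$ and $\sigma$ the maximal C*-norm work.

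The case $\tau=\comm=\max$ needs one extra fact, that $\cl S\otimes_{\max}\cl A\subseteq_{coi}C^*_u(\cl S)\otimes_{\max}\cl A$ with the right-hand side the C*-algebraic maximal tensor product; granting it, $\cl B=C^*_u(\cl S)$ and $\sigma$ the maximal C*-norm finish the case. I would prove this fact by examining the canonical unital completely positive maps
\[
\cl S\otimes_{\max}\cl A\ \longrightarrow\ C^*_u(\cl S)\otimes_{\max}\cl A\ \longrightarrow\ C^*_u(\cl S)\otimes_{\max}C^*_u(\cl A),
\]
where the first comes from functoriality of $\max$ and the second from the embedding $\cl A\hookrightarrow C^*_u(\cl A)$. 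On $\cl S\otimes\cl A$ the composite is the canonical embedding of $\cl S\otimes_{\comm}\cl A$ into $C^*_u(\cl S)\otimes_{\max}C^*_u(\cl A)$, a complete order isomorphism onto its range by parts (1) and (2) of Lemma~\ref{l_cintom}; the second map is the canonical embedding of $C^*_u(\cl S)\otimes_{\comm}\cl A$ into $C^*_u(\cl S)\otimes_{\comm}C^*_u(\cl A)$, a complete order isomorphism onto its range by parts (1) and (3) of Lemma~\ref{l_cintom}. Lemma~\ref{comp} then forces the first map to be a complete order isomorphism onto its range.

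For $\tau=\rinj$ and $\tau=\inj$ I would fix a faithful representation $\cl A\subseteq\cl B(K)$, so that $\cl A$ is a unital C*-subalgebra of the injective C*-algebra $\cl B(K)$. For $\rinj$, right injectivity of $\rinj$ together with injectivity of $\cl B(K)$ gives $\cl S\otimes_{\rinj}\cl A\subseteq_{coi}\cl S\otimes_{\rinj}\cl B(K)=\cl S\otimes_{\max}\cl B(K)$, and the previous paragraph upgrades this to $\cl S\otimes_{\rinj}\cl A\subseteq_{coi}C^*_u(\cl S)\otimes_{\max}\cl B(K)$; I then take $\cl B=C^*_u(\cl S)$ and let $\cl B\otimes_{\sigma}\cl A$ be the unital C*-subalgebra of $C^*_u(\cl S)\otimes_{\max}\cl B(K)$ generated by $\cl B\otimes\cl A$ (a genuine C*-algebra tensor product, because $\cl A\subseteq\cl B(K)$ is a C*-subalgebra), whose restriction to $\cl S\otimes\cl A$ is $\cl S\otimes_{\rinj}\cl A$. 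For $\inj$ the same scheme works with $\cl B=\cl B(H)$ where $\cl S\subseteq\cl B(H)$: since $\inj$ is injective in both variables and $\cl B(H)\otimes_{\inj}\cl B(K)=\cl B(H)\otimes_{\max}\cl B(K)$ (both factors being injective), one gets $\cl S\otimes_{\inj}\cl A\subseteq_{coi}\cl B(H)\otimes_{\max}\cl B(K)$, and $\cl B\otimes_{\sigma}\cl A$ is taken to be the C*-subalgebra generated by $\cl B(H)\otimes\cl A$. In each case Proposition~\ref{p_514} then applies.

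I expect the main obstacle to be the assertion $\cl S\otimes_{\max}\cl A\subseteq_{coi}C^*_u(\cl S)\otimes_{\max}\cl A$, i.e.\ that passing from $\cl S$ to its universal C*-algebra leaves the maximal operator system tensor product with a C*-algebra unchanged, since it is the one genuinely new input and is used both directly and through the $\rinj$ reduction. Everything else is bookkeeping with the injectivity properties of $\min,\inj,\linj,\rinj$ together with the observation that $\cl A$ always sits inside its injective ambient algebra as a C*-subalgebra, which is exactly what makes the generated subalgebras C*-algebra tensor products of $\cl A$ in the sense required by Proposition~\ref{p_514}.
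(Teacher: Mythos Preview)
Your proposal is correct and follows exactly the paper's approach: the paper's proof is the single sentence ``The statement follows from Proposition~\ref{p_514} and the fact that all of the mentioned operator system tensor products are induced by C*-algebra tensor products,'' and what you have done is supply, case by case, the ambient C*-algebra and C*-tensor norm that witness this (the paper's next corollary in fact records the $\min$, $\linj$, and $\max$ cases in the same way you do). Your identification of $\cl S\otimes_{\max}\cl A\subseteq_{coi}C^*_u(\cl S)\otimes_{\max}\cl A$ as the key ingredient is apt, and your derivation of it from Lemma~\ref{l_cintom} and Lemma~\ref{comp} is correct.
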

\begin{proof}
The statement follows from Proposition \ref{p_514} and the fact that
all of the mentioned operator system tensor products are induced by
C*-algebra tensor products.
\end{proof}

Given C*-algebras $\cl A$ and $ \cl B$ and ideal $\cl I \subseteq \cl A$ we have
the inclusion $\cl B \otimes_{\max} \cl I \subset \cl B \otimes_{\max} \cl A$.

\begin{cor}
Let $\cl S$ be an operator system and $\cl I$ be an ideal in a
C*-algebra $\cl A$. Then the following operator system and operator
space quotients coincide and all of the inclusions are completely isometric:
\begin{align*}
(1)\;\;\;\;\; &\frac{\cl S \hat{\otimes}_{\min} \cl A}{\cl S \bar{\otimes}\cl I} \subseteq
\frac{\cl B \hat{\otimes}_{\min} \cl A}{ \cl B{\hat{\otimes}_{\min}}\cl I} \mbox{ for any C*-algebra } \cl B \supseteq \cl S, \\
(2)\;\;\;\;\; & \frac{\cl S \hat{\otimes}_{\linj} \cl A}{\cl S \bar{\otimes} \cl I} \subseteq \frac{I(\cl S) \hat{\otimes}_{\max} \cl A}{I(\cl S){\hat{\otimes}_{\max}}\cl I} \mbox{ where } I(\cl S) \mbox{ is the injective envelope of } \cl S, \\
(3)\;\;\;\;\; & \frac{\cl S \hat{\otimes}_{\max} \cl A}{\cl S \bar{\otimes} \cl I} \subseteq \frac{C_u^*(\cl S) \hat{\otimes}_{\max} \cl A}{ C_u^*(\cl S) {\hat{\otimes}_{\max}}\cl I} \mbox{ where } C_u^*(\cl S) \mbox{ is the universal C*-algebra of } \cl S.
\end{align*}
\end{cor}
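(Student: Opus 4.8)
The plan is to derive all three assertions from Proposition~\ref{p_514}; the only real content is to recognize each of the operator system tensor products $\otimes_{\min}$, $\otimes_{\linj}$ and $\otimes_{\max}$ on $\cl S\otimes\cl A$ as the structure $\tau_0$ that Proposition~\ref{p_514} induces on $\cl S\otimes\cl A$ for a well-chosen C*-tensor product $\tau$ on a well-chosen unital C*-algebra $\cl B$ with $\cl S\subseteq\cl B$. For (1), I would take $\cl B$ to be any unital C*-algebra containing $\cl S$ (replacing $\cl B$ by the unital C*-algebra it generates together with the unit of $\cl S$ if necessary, which does not change the right-hand quotient) and $\tau$ the minimal C*-tensor product; by the spatial, injective nature of the minimal operator system tensor product, the structure induced on $\cl S\otimes\cl A$ by $\cl S\otimes\cl A\subseteq\cl B\otimes_{\min}\cl A$ is exactly $\otimes_{\min}$. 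For (2), I would take $\cl B=I(\cl S)$, equipped with the unital C*-algebra structure in which $\cl S$ is an operator subsystem, and $\tau=\max$; by the very definition of $\linj$ recalled before Lemma~\ref{cminel lem}, namely $\cl S\otimes_{\linj}\cl A\subseteq_{coi}I(\cl S)\otimes_{\max}\cl A$, here $\tau_0=\otimes_{\linj}$. For (3), I would take $\cl B=C^*_u(\cl S)$ and $\tau=\max$; since $\cl A$ is a C*-algebra, $\cl S\otimes_{\max}\cl A=\cl S\otimes_{\comm}\cl A$ by Lemma~\ref{l_cintom}(1), and by Lemma~\ref{l_cintom}(1) and (3) together with the commutativity of the C*-maximal tensor product, $\cl S\otimes_{\comm}\cl A$ is precisely the operator system arising from $\cl S\otimes\cl A\subseteq C^*_u(\cl S)\otimes_{\max}\cl A$, so again $\tau_0=\otimes_{\max}$.

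With these three identifications in hand, Proposition~\ref{p_514} applies verbatim in each case and yields two things: that $\cl S\bar{\otimes}\cl I$ is a completely biproximinal kernel in $\cl S\hat{\otimes}_{\tau_0}\cl A$, and that the induced map $\frac{\cl S\otimes_{\tau_0}\cl A}{\cl S\bar{\otimes}\cl I}\to\frac{\cl B\otimes_{\tau}\cl A}{\cl B\bar{\otimes}\cl I}$ is a unital complete order isomorphism onto its range. From the first of these I would conclude, using only the definition of completely biproximinal, that the operator space quotient and the operator system quotient of $\cl S\hat{\otimes}_{\tau_0}\cl A$ by $\cl S\bar{\otimes}\cl I$ coincide completely isometrically. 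From the second I would conclude that the inclusion is completely isometric, since a unital complete order isomorphism onto its range is automatically a complete isometry. It then remains only to identify $\cl B\bar{\otimes}\cl I$, the closure of $\cl B\otimes\cl I$ inside the completed C*-tensor product $\cl B\otimes_{\tau}\cl A$, with the completed C*-tensor product $\cl B\hat{\otimes}_{\tau}\cl I$ (the standard fact that $\cl B\otimes\cl I$ is dense in the ideal $\cl B\hat{\otimes}_{\tau}\cl I$ of $\cl B\otimes_{\tau}\cl A$, already used earlier in this section), and to pass to completions throughout; since completion preserves complete order isomorphisms and complete isometries, this produces exactly the inclusions (1), (2), (3).

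I expect the only step requiring genuine care to be the identification of $\tau_0$ in the three cases, and in particular the verification that the maximal operator system tensor product $\cl S\otimes_{\max}\cl A$ is the structure induced from $C^*_u(\cl S)\otimes_{\max}\cl A$ — this is where Lemma~\ref{l_cintom} does the real work — together with the routine but not entirely trivial point that $I(\cl S)$ may be used as a unital C*-algebra over $\cl S$. Once $\tau_0$ is pinned down, the remainder is a mechanical substitution into Proposition~\ref{p_514} plus the two elementary observations about completely biproximinal kernels and unital complete order isomorphisms recorded above.
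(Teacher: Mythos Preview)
Your proposal is correct and follows precisely the approach implicit in the paper: apply Proposition~\ref{p_514} in each of the three cases, after recognizing $\otimes_{\min}$, $\otimes_{\linj}$, and $\otimes_{\max}$ on $\cl S\otimes\cl A$ as the operator system structures $\tau_0$ induced from the appropriate C*-algebra tensor products on $\cl B$, $I(\cl S)$, and $C^*_u(\cl S)$ respectively. The paper states the corollary without proof, treating it as an immediate consequence of Proposition~\ref{p_514}, the preceding corollary on complete biproximinality, and the identification $\cl B\bar{\otimes}\cl I=\cl B\hat{\otimes}_{\max}\cl I$ noted just before the statement; your write-up simply spells out these details, including the use of Lemma~\ref{l_cintom} to pin down $\tau_0$ in case (3).
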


We recall that every C*-algebra is exact with respect to $\max$
(see \cite[Chapter 17]{p}), that is, if $\cl A$ and $\cl B$ are C*-algebras and $\cl I\subset \cl A$ is an ideal then we have a bijective unital $*$-homomorphism
$$
\frac{\cl B \hat{\otimes}_{\max} \cl A}{\cl B \bar{\otimes} \cl I} \longrightarrow \cl B  \hat{\otimes}_{\max} \cl A /\cl I.
$$
By using this result we can obtain further (1-)exactness properties for operator systems.

\begin{cor}\label{me}
Let $\cl S$ be an operator system and $\cl I$ be an ideal in a C*-algebra $\cl A$. Then the following maps
$$
\frac{\cl S \hat{\otimes}_{\linj} \cl A}{\cl S \bar{\otimes} \cl I}
\longrightarrow \cl S  \hat{\otimes}_{\linj} \cl A /\cl I  \;\;\;  \mbox{ and } \;\;\;
\frac{\cl S \hat{\otimes}_{\max} \cl A}{\cl S \bar{\otimes} \cl I} \longrightarrow \cl S  \hat{\otimes}_{\max} \cl A /\cl I
$$
are bijective unital complete order isomorphisms.
\end{cor}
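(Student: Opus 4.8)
The plan is to reduce both identities to the exactness of the maximal tensor product on the category of C*-algebras, which was recalled just above, by factoring the quotient through a suitable ambient C*-algebra, and then to read off the statement about the operator system quotient via Lemma \ref{comp}.

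Fix $\alpha\in\{\linj,\max\}$ and set $\cl C=I(\cl S)$ if $\alpha=\linj$ and $\cl C=C_u^*(\cl S)$ if $\alpha=\max$. For every C*-algebra $\cl D$ the operator system $\cl S\hat{\otimes}_{\alpha}\cl D$ is, completely order isomorphically, the closure of $\cl S\otimes\cl D$ inside the C*-algebra $\cl C\hat{\otimes}_{\max}\cl D$: for $\alpha=\linj$ this is the defining relation $\cl S\otimes_{\linj}\cl D\subseteq_{coi}I(\cl S)\otimes_{\max}\cl D$, and for $\alpha=\max$ it is the fact, also underlying the corollaries above, that $\cl S\otimes_{\max}\cl D$ is the operator subsystem $\cl S\otimes\cl D$ of $C_u^*(\cl S)\otimes_{\max}\cl D$ (see \cite{kptt}). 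Applying Proposition \ref{p_514} with $\cl B=\cl C$ and the maximal C*-tensor norm on $\cl C\otimes\cl A$, we obtain that $\cl S\bar{\otimes}\cl I$ is a kernel in $\cl S\hat{\otimes}_{\alpha}\cl A$ and that the natural map $\iota:(\cl S\hat{\otimes}_{\alpha}\cl A)/(\cl S\bar{\otimes}\cl I)\to(\cl C\hat{\otimes}_{\max}\cl A)/(\cl C\hat{\otimes}_{\max}\cl I)$ is a unital complete order isomorphism onto its range. Since $\cl S\bar{\otimes}\cl I$ is a kernel, functoriality of $\alpha$ together with Proposition \ref{p_unive} provides a unital completely positive map $\Phi:(\cl S\hat{\otimes}_{\alpha}\cl A)/(\cl S\bar{\otimes}\cl I)\to\cl S\hat{\otimes}_{\alpha}(\cl A/\cl I)$ induced by $\id_{\cl S}\otimes q$, and it suffices to show that $\Phi$ is bijective with completely positive inverse.

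Now comes the diagram chase. Because $\max$ is exact for C*-algebras, the canonical $*$-homomorphism $\Xi:(\cl C\hat{\otimes}_{\max}\cl A)/(\cl C\hat{\otimes}_{\max}\cl I)\to\cl C\hat{\otimes}_{\max}(\cl A/\cl I)$ is a $*$-isomorphism, in particular a complete order isomorphism, with $\Xi(c\otimes a+\cl C\hat{\otimes}_{\max}\cl I)=c\otimes q(a)$. Hence $\Xi\circ\iota$ is a complete order isomorphism onto its range, which is the closure inside $\cl C\hat{\otimes}_{\max}(\cl A/\cl I)$ of the image of $\cl S\otimes\cl A$, namely of $\{s\otimes q(a):s\in\cl S,\ a\in\cl A\}=\cl S\otimes(\cl A/\cl I)$. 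By the first paragraph that closure is precisely the image $j(\cl S\hat{\otimes}_{\alpha}(\cl A/\cl I))$ under the canonical coi inclusion $j:\cl S\hat{\otimes}_{\alpha}(\cl A/\cl I)\hookrightarrow\cl C\hat{\otimes}_{\max}(\cl A/\cl I)$. On the dense subspace $\cl S\otimes\cl A$ one checks $\Xi\circ\iota=j\circ\Phi$, so this identity holds everywhere; since $j$ is injective it follows that $\Phi$ is a bijection onto $\cl S\hat{\otimes}_{\alpha}(\cl A/\cl I)$, and applying Lemma \ref{comp} to the factorization $j\circ\Phi$ of the complete order isomorphism $\Xi\circ\iota$ yields that $\Phi$ itself is a complete order isomorphism. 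As $\Phi$ is unital, this proves the corollary in both cases.

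The one step requiring real care — and the reason for the two particular choices of $\cl C$ — is the identification in the first paragraph of $\cl S\hat{\otimes}_{\alpha}(\cl A/\cl I)$ with the closed operator subsystem of $\cl C\hat{\otimes}_{\max}(\cl A/\cl I)$ generated by $\cl S\otimes(\cl A/\cl I)$: for $\linj$ this is built into the definition of the tensor product, while for $\max$ one must invoke the description of $\cl S\otimes_{\max}\cl D$ as an operator subsystem of $C_u^*(\cl S)\otimes_{\max}\cl D$ from \cite{kptt}. Everything else is formal bookkeeping with Proposition \ref{p_514}, the exactness of $\max$ on C*-algebras, and Lemma \ref{comp}.
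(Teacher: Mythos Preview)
Your proof is correct and follows essentially the same approach as the paper: embed $\cl S\hat\otimes_\alpha\cl A$ into $\cl C\hat\otimes_{\max}\cl A$ with $\cl C=I(\cl S)$ or $C_u^*(\cl S)$, invoke the exactness of $\max$ on C*-algebras, and identify the resulting subspace of $\cl C\hat\otimes_{\max}(\cl A/\cl I)$ with $\cl S\hat\otimes_\alpha(\cl A/\cl I)$ via the defining inclusions. The paper compresses all of this into the single displayed chain of inclusions and the phrase ``can be proved similarly,'' whereas you spell out the maps $\iota,\Xi,j,\Phi$, verify $\Xi\circ\iota=j\circ\Phi$, and close with Lemma~\ref{comp}; the extra bookkeeping is sound but not a different argument.
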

\begin{proof}
The statement concerning $\linj$ follows from the inclusions
$$
\frac{\cl S \hat{\otimes}_{\linj} \cl A}{\cl S \bar{\otimes} \cl I} \subseteq
\frac{I(\cl S) \hat{\otimes}_{\max} \cl A}{I(\cl S){\hat{\otimes}_{\max}}\cl I} =
I(\cl S) \hat{\otimes}_{\max} \cl A/\cl I \supseteq \cl S \hat{\otimes}_{\linj} \cl A/\cl I.
$$
The statement concerning $\max$ can be proved similarly.
\end{proof}

\section{WEP and ($\linj, \max$)-nuclearity}

There is a natural way to extend the definition of the weak expectation property for C*-algebras to operator spaces and to operator systems.  In the case of operator spaces, this was done by Pisier \cite{p} and used in \cite{pa2}.  In this section we
study the weak expectation property in the category of operator systems and relate it to the
following property of an operator system $\cl S$:
whenever $\cl S_1$ and $\cl T$ are operator systems with $\cl S \subseteq \cl S_1$, the inclusion
$$
\cl S \otimes_{\max} \cl T \subseteq \cl S_1 \otimes_{\max} \cl T
$$
is a complete order isomorphism.
In the case of C*-algebras, these two properties are equivalent, as follows from
the work of Lance \cite{La}, which we will review below.

\begin{lemma}\label{coi}
The following properties of an operator system $\cl S$ are equivalent:
\begin{enumerate}
\item $\cl S$ is $(\linj,\max)$-nuclear;

\item for any operator systems $\cl S_1$ and $\cl T$ with $\cl S \subseteq \cl S_1$, we have
$$
\cl S \otimes_{\max} \cl T \subseteq_{coi} \cl S_1 \otimes_{\max} \cl T.
$$
\end{enumerate}
\end{lemma}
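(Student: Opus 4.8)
The plan is to prove both implications by exploiting the defining relation $\cl S \otimes_{\linj} \cl T \subseteq_{coi} I(\cl S) \otimes_{\max} \cl T$ together with the injectivity of the injective envelope, and the fact that $\max$ is itself an operator system tensor product with good functoriality properties. For the implication (2)$\Rightarrow$(1), I would simply apply the hypothesis with $\cl S_1 = I(\cl S)$: then $\cl S \otimes_{\max} \cl T \subseteq_{coi} I(\cl S) \otimes_{\max} \cl T$, and the latter space is, by definition, exactly $\cl S \otimes_{\linj} \cl T$. Since for a general functorial tensor product we always have a unital completely positive map $\cl S \otimes_{\max} \cl T \to \cl S \otimes_{\linj} \cl T$ (induced by the identity on the algebraic tensor product, because $\max$ is the largest functorial operator system tensor product), combining this with the complete order embedding above and Lemma \ref{comp} shows the identity map is a complete order isomorphism, i.e. $\cl S$ is $(\linj,\max)$-nuclear.

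For the harder direction (1)$\Rightarrow$(2), let $\cl S \subseteq \cl S_1$ and $\cl T$ be arbitrary operator systems. By injectivity of $I(\cl S)$, the inclusion $\cl S \hookrightarrow I(\cl S)$ extends to a unital completely positive map $\theta : \cl S_1 \to I(\cl S)$; since $\theta$ restricts to the identity on $\cl S$, functoriality of $\max$ gives a unital completely positive map $\theta \otimes \id_{\cl T} : \cl S_1 \otimes_{\max} \cl T \to I(\cl S) \otimes_{\max} \cl T$ which restricts (on $\cl S \otimes \cl T$) to the canonical map $\cl S \otimes_{\max} \cl T \to I(\cl S) \otimes_{\max} \cl T$. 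Now consider the chain of unital completely positive maps
$$
\cl S \otimes_{\max} \cl T \longrightarrow \cl S_1 \otimes_{\max} \cl T \longrightarrow I(\cl S) \otimes_{\max} \cl T,
$$
where the first map is induced by the inclusion $\cl S \subseteq \cl S_1$ (functoriality of $\max$) and the second is $\theta \otimes \id_{\cl T}$. The composition is the canonical map $\cl S \otimes_{\max} \cl T \to I(\cl S) \otimes_{\max} \cl T = \cl S \otimes_{\linj} \cl T$, which by hypothesis (1) — exactly as in the (2)$\Rightarrow$(1) argument above, using that $(\linj,\max)$-nuclearity makes the identity $\cl S \otimes_{\max} \cl T \to \cl S \otimes_{\linj} \cl T$ a complete order isomorphism and that $\cl S \otimes_{\linj}\cl T \subseteq_{coi} I(\cl S)\otimes_{\max}\cl T$ by definition of $\linj$ — is a complete order isomorphism onto its range. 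By Lemma \ref{comp}, the first map $\cl S \otimes_{\max} \cl T \to \cl S_1 \otimes_{\max} \cl T$ is therefore a complete order isomorphism onto its range, which is precisely the assertion $\cl S \otimes_{\max} \cl T \subseteq_{coi} \cl S_1 \otimes_{\max} \cl T$.

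I expect the only genuinely delicate point to be verifying that the canonical surjection $\cl S \otimes_{\max} \cl T \to \cl S \otimes_{\linj}\cl T$ is always unital completely positive (so that $(\linj,\max)$-nuclearity is the statement that this map is a complete order isomorphism, rather than just bijective) — this is where one uses that $\linj$ is a \emph{functorial} operator system tensor product and that $\max$ dominates every such tensor product; both facts are from \cite[Section 7]{kptt} and are recorded in the preliminaries. Everything else is a diagram chase through Lemma \ref{comp} together with the defining property $\cl S \otimes_{\linj} \cl T \subseteq_{coi} I(\cl S) \otimes_{\max} \cl T$ and injectivity of $I(\cl S)$; no computation with matrix cones is needed.
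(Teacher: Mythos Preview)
Your proposal is correct and follows essentially the same approach as the paper: for (1)$\Rightarrow$(2) you extend the inclusion $\cl S \hookrightarrow I(\cl S)$ to a unital completely positive map $\cl S_1 \to I(\cl S)$, factor the canonical map $\cl S\otimes_{\max}\cl T \to I(\cl S)\otimes_{\max}\cl T$ through $\cl S_1\otimes_{\max}\cl T$, and apply Lemma~\ref{comp}; for (2)$\Rightarrow$(1) you specialize to $\cl S_1 = I(\cl S)$ and compare with the defining embedding of $\linj$. The paper's proof is identical in substance, only slightly more terse (it observes directly that both $\cl S\otimes_{\max}\cl T$ and $\cl S\otimes_{\linj}\cl T$ sit as the same operator subsystem of $I(\cl S)\otimes_{\max}\cl T$, without invoking Lemma~\ref{comp} for that direction).
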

\begin{proof} (1)$\Rightarrow$(2)
Let $\cl S_1 \supseteq \cl S$ and $\cl T$ be given. Let $i: \cl S \rightarrow \cl S_1$ be the inclusion and let $\tilde{j}: \cl S_1 \rightarrow I(\cl S)$ be any unital completely positive extension of the inclusion $j: \cl S \rightarrow I(\cl S)$. We have the chain of completely
positive maps
$$
\cl S \otimes_{\linj} \cl T = \cl S \otimes_{\max} \cl T  \xrightarrow{i\otimes \id} \cl S_1 \otimes_{\max} \cl T \xrightarrow{\tilde{j}\otimes \id}  I(\cl S) \otimes_{\max} \cl T.
$$
By the definition of $\linj$,
$\cl S \otimes_{\linj} \cl T \subseteq_{coi}  I(\cl S) \otimes_{\max} \cl T,$ and hence
the composition of the two maps in the above diagram
is a complete order isomorphism. Lemma~\ref{comp} implies that $i\otimes \id$ is a complete order isomorphism.

\smallskip

(2)$\Rightarrow$(1) By the definition of $\linj$ and the hypothesis,
both
$\cl S \otimes_{\linj} \cl T$ and $\cl S \otimes_{\max} \cl T$
are operator subsystems of $I(\cl S) \otimes_{\max} \cl T$, and hence they are equal.
\end{proof}

We will shortly introduce the definition of the weak expectation property (WEP) for operator systems
and show that WEP implies $(\linj,\max)$-nuclearity.
We will need some preliminary results, as well as some notation concerning dual matrix ordered spaces
(see \cite[Section 4]{ptt}).
Suppose that $\cl S$ is a normed matrix ordered $*$-vector space. We equip its Banach space dual $\cl S^*$
with the involution $f\rightarrow f^*$, where $f^*(x) = \overline{f(x^*)}$, $x\in \cl S$.
The space $\cl S^*$ has a natural matrix order structure defined as follows:
$$
(f_{ij})\in M_n(\cl S^*)^+ \mbox{ if the map } \cl S \ni s \mapsto (f_{ij}(s)) \in M_n \mbox{ is completely positive.}
$$
We equip the second dual $\cl S^{**}$ of $\cl S$ with the matrix order structure dual to that of $\cl S^*$.
If $\cl S$ is an operator system then $\cl S^*$ does not necessarily have an order unit;
however, if $\cl S$ is a finite dimensional operator system then, by \cite[Theorem 4.4]{ce},
the dual $\cl S^*$ also has a (non-canonical) Archimedean matrix order unit and is hence an operator system.
In fact, any faithful state of $\cl S$ can serve as such a unit.
We also note that there exist infinite dimensional operator systems whose dual still has an order unit.

\begin{prop}\label{bidual}
Let $\cl S$ be an operator system.
Then the canonical inclusion $\cl S \hookrightarrow \cl S^{**}$ is a complete order isomorphism onto its range and $\cl S^{**}$ is an operator system whose Archimedean matrix order unit is the image $\hat{e}$ in $\cl S^{**}$
of the Archimedean matrix order unit $e$ of $\cl S$.
\end{prop}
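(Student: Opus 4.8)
The plan is to equip $\cl S^{**}$ with the matrix ordering dual to the canonical matrix ordering of $\cl S^*$, to check that the image $\hat e$ of $e$ is an Archimedean matrix order unit for this ordering, and to verify that the canonical embedding $\iota:\cl S\to\cl S^{**}$ (given by $\iota(s)(f)=f(s)$) is a complete order isomorphism onto its range. The whole argument rests on one identification: for each $n$, the Choi-Effros correspondence \cite{ce} identifies $M_n(\cl S^*)^+$ --- which by definition consists of the completely positive maps $\cl S\to M_n$ --- with the set of positive linear functionals on the operator system $M_n(\cl S)$. Dually, with respect to the pairing $\langle (F_{ij}),(g_{ij})\rangle=\sum_{i,j}F_{ij}(g_{ij})$ one checks that $(F_{ij})\in M_n(\cl S^{**})^+$ if and only if $\langle(F_{ij}),(g_{ij})\rangle\geq 0$ for all $(g_{ij})\in M_n(\cl S^*)^+$; this is a routine unravelling of the definition of the dual matrix ordering, using only the compatibility of the matrix ordering of $\cl S^*$ --- for one implication one compresses the ``$m=n$'' instance of complete positivity to the vector $\sum_k e_k\otimes e_k$ in $\bb C^n\otimes\bb C^n$, and for the other one tests against an arbitrary scalar compression $U^*(g_{kl})U$ of a positive element of $M_m(\cl S^*)$.

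Granting this, $\{M_n(\cl S^{**})^+\}_n$ is a matrix ordering: each $M_n(\cl S^{**})^+$ is a cone lying in $M_n(\cl S^{**})_h$ (the dual cone of a cone contained in the hermitian part sits in the hermitian part of the bidual); compatibility is the transpose, under the pairing, of the compatibility of the matrix ordering of $\cl S^*$; and properness $M_n(\cl S^{**})^+\cap(-M_n(\cl S^{**})^+)=\{0\}$ holds because the linear span of $M_n(\cl S^*)^+$ is all of $M_n(\cl S^*)$ --- every hermitian functional on the operator system $M_n(\cl S)$ is a difference of two positive functionals, so no nonzero element of $M_n(\cl S^{**})$ can pair to zero with the whole of $M_n(\cl S^*)^+$. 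Also $\hat e=\iota(e)$ is hermitian, since $\iota$ is $*$-linear and $e^*=e$.

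To see that $\hat e$ is an Archimedean matrix order unit, fix $n$ and a hermitian $F\in M_n(\cl S^{**})$; via the Banach space identification $M_n(\cl S^*)=(M_n(\cl S))^*$, let $r_0=\|F\|_n$ be the norm of the functional $g\mapsto\langle F,g\rangle$ on $M_n(\cl S^*)$. For $g\in M_n(\cl S^*)^+$, i.e.\ a positive functional on the operator system $(M_n(\cl S),e_n)$, one has $\|g\|=g(e_n)=\langle\hat e_n,g\rangle$, since positive functionals on an operator system attain their norm at the order unit (see, e.g., \cite{pa}). Hence for $r\geq r_0$ and every $g\in M_n(\cl S^*)^+$,
$$\langle r\hat e_n-F,\,g\rangle \;=\; r\langle\hat e_n,g\rangle-\langle F,g\rangle \;\geq\; (r-r_0)\langle\hat e_n,g\rangle \;\geq\; 0,$$
so $r\hat e_n-F\in M_n(\cl S^{**})^+$ and $\hat e_n$ is an order unit. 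The Archimedean property is immediate from the dual-cone description: if $r\hat e_n+F\in M_n(\cl S^{**})^+$ for every $r>0$, then $\langle F,g\rangle=\lim_{r\to 0^+}\langle r\hat e_n+F,g\rangle\geq 0$ for every $g\in M_n(\cl S^*)^+$, so $F\in M_n(\cl S^{**})^+$. Thus $(\cl S^{**},\{M_n(\cl S^{**})^+\}_n,\hat e)$ is an operator system.

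Finally, $\iota$ is unital and $*$-linear, so it only remains to see that $\iota^{(n)}$ and its inverse on the range preserve positivity. If $(s_{ij})\in M_n(\cl S)^+$ then for each $g\in M_n(\cl S^*)^+$, regarding $g$ as a positive functional on $M_n(\cl S)$ gives $\langle\iota^{(n)}(s_{ij}),g\rangle=\sum_{i,j}g_{ij}(s_{ij})=g\big((s_{ij})\big)\geq 0$, so $\iota^{(n)}(s_{ij})\in M_n(\cl S^{**})^+$ and $\iota$ is completely positive. Conversely, if $(s_{ij})\in M_n(\cl S)$ and $\iota^{(n)}(s_{ij})\in M_n(\cl S^{**})^+$ then $(s_{ij})$ is hermitian (the cone is in the hermitian part and $\iota^{(n)}$ is an injective $*$-map), and if $(s_{ij})\notin M_n(\cl S)^+$ then --- $M_n(\cl S)^+$ being a norm-closed convex cone in $M_n(\cl S)_h$, closed because $e_n$ is Archimedean --- Hahn-Banach separation yields a positive functional $\omega$ on $M_n(\cl S)$ with $\omega\big((s_{ij})\big)<0$; reading $\omega$ as an element of $M_n(\cl S^*)^+$, $\langle\iota^{(n)}(s_{ij}),\omega\rangle=\omega\big((s_{ij})\big)<0$ contradicts $\iota^{(n)}(s_{ij})\in M_n(\cl S^{**})^+$. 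Hence $(s_{ij})\in M_n(\cl S)^+$ and $\iota$ is a complete order isomorphism onto its range. The part needing the most care is the first paragraph --- pinning down what ``the dual matrix ordering of $\cl S^*$'' is, matching it to the dual-cone pairing, keeping the permutation identifications $M_{nm}\cong M_{mn}$ straight, and quoting the Choi-Effros correspondence in precisely the form used; everything after that is a short dual-cone and Hahn-Banach argument.
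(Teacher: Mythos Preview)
Your proof is correct and follows essentially the same approach as the paper's own proof: both use the dual-cone description of $M_n(\cl S^{**})^+$, the fact that a positive functional on an operator system attains its norm at the order unit to get the matrix order unit property, and the trivial limit argument for Archimedeanization. The paper's proof is much terser---it treats only the $n=1$ level explicitly, says ``similar arguments apply to all matricial levels,'' and dismisses the complete order isomorphism claim as ``straightforward''---whereas you have carefully spelled out the Choi--Effros identification $M_n(\cl S^*)^+\cong\{\text{positive functionals on }M_n(\cl S)\}$, the properness of the bidual cones via Jordan decomposition of hermitian functionals, and the Hahn--Banach separation for the inverse direction of the order embedding; these are exactly the details the paper omits.
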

\begin{proof}
It is straightforward to check that the inclusion is a complete order isomorphism.
We show that $\hat{e}$ is a Archimedean matrix order unit for $\cl S^{**}$.
Given a self-adjoint functional $F\in \cl S^{**}$ note that for any positive $f$ in $\cl S^*,$ we have
$$
(\|F\|\hat{e} + F)(f) = \|F\|f(e) + F(f) \geq \|F\|f(e) - \|F\|\|f\| \geq 0
$$
and
$$
(\|F\|\hat{e} - F)(f) = \|F\|f(e) - F(f) \geq 0
$$
since $f(e) = \|f\|$.
Thus, $\hat{e}$ is an order unit for $\cl S^{**}$.
To show that $\hat{e}$ is Archimedean, suppose that  $r\hat{e} + F \geq 0$ for every $r>0$. Then
$rf(e) + F(f) \geq 0$ for all $f\geq 0$, which
implies that $F(f)\geq 0$ for all $f\geq 0$; hence, $F\geq 0$.
Thus, $\hat{e}$ is an Archimedean order unit for $\cl S^{**}$.
Similar arguments apply to all matricial levels, and thus $\cl S^{**}$ is an operator system.
\end{proof}

The above proof shows that for self-adjoint functionals the usual norm
on $\cl S^{**}$
is greater than the order norm induced by the order unit $\hat{e}.$ In
fact, for self-adjoint functionals one can show that these norms are
equal.

The following lemma is easily checked; we omit its proof.

\begin{lemma}\label{l_eqwep}
Let $\cl S$ be an operator system, $\cl S^{**}$ be its bidual operator system and
$i:\cl S \rightarrow \cl S^{**}$ be the canonical inclusion. Then the following are equivalent:
\begin{enumerate}

\item the inclusion $i:\cl S \rightarrow \cl S^{**}$ extends to a completely positive map $\tilde{i}: I(\cl S)\rightarrow \cl S^{**}$;

\item for every operator system $\cl S\subseteq \cl T$, the map $i:\cl S \rightarrow \cl S^{**}$ extends to a completely positive map $\tilde{i}: \cl T\rightarrow \cl S^{**}$;

\item there exists an inclusion $\cl S\subseteq \cl B(H)$ such that the map $i:\cl S \rightarrow \cl S^{**}$ extends to a completely positive map $\tilde{i} : \cl B(H)\rightarrow \cl S^{**}$;

\item the inclusion $i:\cl S \rightarrow \cl S^{**}$ factors through an injective operator system by completely positive (equivalently, unital completely positive) maps, that is, there exist an injective operator system $\cl T$ and completely positive (equivalently, unital completely positive) maps $\phi_1: \cl S \rightarrow \cl T$ and $\phi_2: \cl T \rightarrow \cl S^{**}$ such that $i=\phi_2\circ \phi_1$.
\end{enumerate}
\end{lemma}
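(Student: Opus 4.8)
The plan is to prove this by a routine diagram chase, establishing the cycle $(1)\Rightarrow(2)\Rightarrow(3)\Rightarrow(1)$ and then the equivalence $(1)\Leftrightarrow(4)$ separately. The only external inputs needed are the two standard injectivity facts — that $\cl B(H)$ is injective (Arveson's extension theorem) and that the injective envelope $I(\cl S)$ is an injective operator system containing $\cl S$ as a \emph{unital} operator subsystem — together with Proposition \ref{bidual}, which guarantees that $\cl S^{**}$ is an operator system whose Archimedean matrix order unit is $\hat e = i(e)$ and that $i$ is a unital completely positive (indeed, completely order embedding) map. A preliminary observation streamlines everything: since $\cl S$ shares its unit $e$ with $I(\cl S)$, with $\cl B(H)$ for any operator system embedding $\cl S\subseteq\cl B(H)$, and with any operator system $\cl T\supseteq\cl S$, and since $i$ is unital with $i(e)=\hat e = 1_{\cl S^{**}}$, \emph{any} completely positive map extending $i$ automatically sends the unit to $1_{\cl S^{**}}$ and is therefore unital. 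So there is no need to track unitality of the extensions.

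For $(1)\Rightarrow(2)$: given a completely positive $\tilde i:I(\cl S)\to\cl S^{**}$ extending $i$ and an operator system $\cl T$ with $\cl S\subseteq\cl T$, use injectivity of $I(\cl S)$ to extend the inclusion $\cl S\hookrightarrow I(\cl S)$ to a completely positive map $\kappa:\cl T\to I(\cl S)$; then $\tilde i\circ\kappa:\cl T\to\cl S^{**}$ is completely positive and restricts to $i$ on $\cl S$. The implication $(2)\Rightarrow(3)$ is immediate upon choosing $\cl T=\cl B(H)$ for an embedding $\cl S\subseteq\cl B(H)$ furnished by the Choi--Effros theorem. For $(3)\Rightarrow(1)$: given $\cl S\subseteq\cl B(H)$ and a completely positive $\tilde i:\cl B(H)\to\cl S^{**}$ extending $i$, use injectivity of $\cl B(H)$ to extend the inclusion $\cl S\hookrightarrow\cl B(H)$ to a completely positive map $r:I(\cl S)\to\cl B(H)$; then $\tilde i\circ r:I(\cl S)\to\cl S^{**}$ is completely positive and extends $i$.

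Finally, for $(1)\Leftrightarrow(4)$: the implication $(1)\Rightarrow(4)$ is witnessed by the injective operator system $I(\cl S)$ itself, with $\phi_1$ the unital inclusion $\cl S\hookrightarrow I(\cl S)$ and $\phi_2=\tilde i$, which is unital by the preliminary observation. Conversely, given a factorization $\cl S\xrightarrow{\phi_1}\cl T\xrightarrow{\phi_2}\cl S^{**}$ through an injective operator system $\cl T$ by completely positive maps, embed $\cl S$ into $I(\cl S)$ and use injectivity of $\cl T$ to extend $\phi_1$ to a completely positive map $\tilde\phi_1:I(\cl S)\to\cl T$; then $\phi_2\circ\tilde\phi_1:I(\cl S)\to\cl S^{**}$ is a completely positive extension of $i$, which is $(1)$. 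This last argument also disposes of the parenthetical ``(equivalently, unital completely positive)'': a completely positive factorization yields $(1)$, which in turn yields the unital factorization through $I(\cl S)$ just described. I expect no genuine obstacle here; the only point requiring care is the normalization issue — whether the extension and intermediate maps may be taken unital — and, as noted above, it evaporates once one observes that $e$ is the common order unit and $i$ is unital.
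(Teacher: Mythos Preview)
Your proof is correct; the paper itself omits the proof entirely, stating only that the lemma ``is easily checked,'' and your routine diagram chase using injectivity of $I(\cl S)$ and $\cl B(H)$ is precisely the argument one would expect. The normalization remark (that any completely positive extension of the unital map $i$ is automatically unital) is a nice touch that cleanly handles the parenthetical in (4).
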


\begin{defn}\label{d_wepopsys}
We say that the operator system $\cl S$ has the weak expectation property (WEP) if it satisfies
any of the equivalent conditions of Lemma \ref{l_eqwep}.
\end{defn}

We note that condition (4) of Lemma \ref{l_eqwep}, with $\cl B(H)$ in the place of $\cl T$ and ``completely contractive''
replacing ``completely positive'' was used to define property 1-WEP for operator spaces by Pisier \cite[p. 269]{p}.

It is easy to see that a C*-algebra $\cl A$ has WEP in the classical
sense if and only if it satisfies the conditions of Definition \ref{d_wepopsys} (see \cite[Chapter 15]{p}).
Thus, Definition \ref{d_wepopsys} extends the classical notion of WEP to operator systems.

If $\cl S$ and $\cl T$ are operator systems and $f : \cl S\otimes\cl T\rightarrow \bb{C}$ is a linear functional,
let $\cl L_f : \cl S\rightarrow \cl T^*$ be the linear map given by $\cl L_f (x)(y) = f(x\otimes y)$.
It was shown by Lance \cite[Lemma 3.2]{La1} that $f$ is a positive functional on $\cl S\otimes_{\max}\cl T$ if and only if $\cl L_f$ is
a completely positive map.

\begin{lemma}\label{bidual-lem}
Let $\cl S$ and $\cl T$ be operator systems. Then
$\cl S \otimes_{\max}\cl T \subseteq_{coi} \cl S^{**} \otimes_{\max}\cl T.$
\end{lemma}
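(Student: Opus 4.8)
The plan is to establish the two required inclusions of cones at every matrix level. Since $\cl S \subseteq \cl S^{**}$ is a complete order isomorphism onto its range (Proposition \ref{bidual}) and $\max$ is functorial, the inclusion $\iota \otimes \id : \cl S \otimes_{\max} \cl T \to \cl S^{**} \otimes_{\max} \cl T$ is automatically unital and completely positive, so one direction is free. The content is the reverse: if $u \in M_n(\cl S \otimes \cl T)$ is positive in $\cl S^{**} \otimes_{\max} \cl T$, then $u$ is already positive in $\cl S \otimes_{\max} \cl T$. I would reduce to $n=1$ at the outset, since the matricial statement for $\cl S \otimes_{\max} \cl T$ and $\cl S^{**} \otimes_{\max} \cl T$ is the $n=1$ statement applied to $M_n(\cl S)$ and $M_n(\cl S)^{**} \cong M_n(\cl S^{**})$ with $M_n(\cl T)$; one should check that the natural identification $M_n(\cl S^{**}) = M_n(\cl S)^{**}$ is a complete order isomorphism, which is routine from the definition of the dual matrix order.

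The key tool is the Lance criterion quoted just before the statement: a linear functional $g$ on $\cl S \otimes \cl T$ is positive on $\cl S \otimes_{\max} \cl T$ if and only if the associated map $\cl L_g : \cl S \to \cl T^*$ is completely positive, and similarly for $\cl S^{**}$ in place of $\cl S$. So suppose $u \in (\cl S \otimes \cl T)_h$ fails to be positive in $\cl S \otimes_{\max} \cl T$; by the Lance characterization together with the fact that the $\max$-positive cone is the cone dual to the completely positive maps $\cl S \to \cl T^*$, there is a functional $g$ on $\cl S \otimes \cl T$ with $\cl L_g$ completely positive but $g(u) < 0$. The goal is to produce a functional $G$ on $\cl S^{**} \otimes \cl T$ with $\cl L_G : \cl S^{**} \to \cl T^*$ completely positive and $G(u) < 0$, which by the Lance criterion for $\cl S^{**} \otimes_{\max} \cl T$ shows $u \notin M_1(\cl S^{**} \otimes_{\max} \cl T)^+$, giving the contrapositive of what we want. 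Given $\cl L_g : \cl S \to \cl T^*$ completely positive, I would extend it to a completely positive map $\cl S^{**} \to \cl T^*$ — this is where the weak-* structure comes in: a completely positive $\cl S \to \cl T^*$ is in particular bounded into the dual Banach space $\cl T^*$, and one can take its bitranspose (or use weak-* density of $\cl S$ in $\cl S^{**}$ together with the fact that the positive cone of $\cl T^*$ is weak-* closed) to get a completely positive extension $\Lambda : \cl S^{**} \to \cl T^*$ with $\Lambda|_{\cl S} = \cl L_g$. Then the functional $G$ on $\cl S^{**}\otimes\cl T$ with $\cl L_G = \Lambda$ is $\max$-positive by Lance, and since $\Lambda$ extends $\cl L_g$ we have $G(x \otimes y) = \Lambda(x)(y) = \cl L_g(x)(y) = g(x \otimes y)$ for $x \in \cl S$, so $G$ restricts to $g$ on $\cl S \otimes \cl T$; in particular $G(u) = g(u) < 0$.

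The main obstacle I expect is the extension step $\cl L_g : \cl S \to \cl T^*$ to $\Lambda : \cl S^{**} \to \cl T^*$: one must verify that bitransposition (or the weak-* limit argument) genuinely preserves complete positivity at every matrix level, which amounts to checking that for each $n$ the map $(\cl L_g)^{**}$ restricted to $M_n(\cl S^{**})$ lands in the positive cone $M_n(\cl T^*)^+$, using that this cone is weak-* closed in $M_n(\cl T^*)$ and that $M_n(\cl S)$ is weak-* dense in $M_n(\cl S^{**})$. A clean way to package this is to note that $\cl T^*$, as the dual of an operator system, has a weak-* closed positive cone and the completely positive maps into it form a weak-* closed set of maps; then the Hahn–Banach / Goldstine argument goes through uniformly in $n$. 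Once that lemma is in hand the rest is bookkeeping with the Lance criterion. An essentially equivalent and perhaps slicker route avoids functionals entirely: observe that the inclusion $\cl S \hookrightarrow \cl S^{**}$ factors (after tensoring with $\cl T$ and using functoriality of $\max$) in a way that, combined with the canonical complete positivity of the identification, forces the cones to agree; but the Lance-criterion argument above is the most transparent, so that is the one I would write up.
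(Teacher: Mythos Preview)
Your proposal is correct and follows essentially the same route as the paper: both arguments use Lance's criterion to translate the cone question into the problem of extending a completely positive map $\cl L_g : \cl S \to \cl T^*$ to a completely positive map $\cl S^{**} \to \cl T^*$, and both reduce the matricial levels to the scalar case via associativity of $\max$. The paper's execution of the extension step is slightly cleaner than your sketch: it takes the second adjoint $\cl L_g^{**} : \cl S^{**} \to \cl T^{***}$ and then composes with the canonical completely positive projection $i^* : \cl T^{***} \to \cl T^*$ (the adjoint of the inclusion $\cl T \hookrightarrow \cl T^{**}$), which avoids the weak-$*$ approximation argument you outline and makes the complete positivity of the extension immediate.
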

\begin{proof}
By Proposition \ref{bidual}, the canonical embedding
$i : \cl T \rightarrow \cl T^{**}$ is completely positive; thus, its adjoint
$i^*:\cl T^{***}\rightarrow \cl T^*$ is also completely positive.
Suppose that $f$ is a state on $\cl S\otimes_{\max}\cl T$. By the previous paragraph,
$\cl L_f : \cl S \rightarrow \cl T^*$ is completely positive, and hence so is its second adjoint
$\cl L_f^{**} : \cl S^{**} \rightarrow \cl T^{***}$.
It follows that the map $i^*\circ \cl L_f^{**} : \cl S^{**} \rightarrow \cl T^*$ is completely positive.
Let $g$ be the state on $\cl S^{**} \otimes_{\max}\cl T$ corresponding to $i^*\circ \cl L_f^{**}$; it is easy to see that $g$ extends $f$.
We have thus shown that every state on $\cl S\otimes_{\max}\cl T$ extends to a state on $\cl S^{**} \otimes_{\max}\cl T$.
This easily implies that
$(\cl S \otimes_{\max}\cl T)^+ = (\cl S^{**} \otimes_{\max}\cl T)^+ \cap (\cl S \otimes\cl T)$.

To establish the same identity on the matricial levels,
note that $M_n(\cl S \otimes_{\max}\cl T) = M_n\otimes_{\max} \cl S \otimes_{\max}\cl T = \cl S \otimes_{\max} M_n(\cl T)$
by the associativity and commutativity of $\max$, and hence the claim follows from the previous paragraph.
\end{proof}

\begin{cor}
Let $\cl S$ and $\cl T$ be operator systems. Then
$\cl S \otimes_{\comm}\cl T \subseteq_{coi} \cl S^{**} \otimes_{\comm}\cl T.$
\end{cor}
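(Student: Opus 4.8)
The plan is to reduce everything to the maximal tensor product with the universal C*-algebra $C^*_u(\cl T)$, where Lemma~\ref{bidual-lem} already carries out the real work, and then transfer the conclusion back to the commuting tensor product using the cancellation principle of Lemma~\ref{comp}.

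Let $\iota:\cl S\to\cl S^{**}$ denote the canonical embedding, which is unital and completely positive by Proposition~\ref{bidual}. The first step is to check that the map
$$
f=\iota\otimes\id_{\cl T}:\cl S\otimes_{\comm}\cl T\longrightarrow \cl S^{**}\otimes_{\comm}\cl T
$$
is unital and completely positive; this is the inclusion we want to show is a complete order isomorphism onto its range. This follows since $\comm$ is a functorial operator system tensor product, but it can also be verified directly from the definition of the commuting cone: if $\Phi:\cl S^{**}\to\cl B(H)$ and $\psi:\cl T\to\cl B(H)$ are completely positive with commuting ranges, then $\Phi\circ\iota:\cl S\to\cl B(H)$ is completely positive with range contained in that of $\Phi$, hence commuting with the range of $\psi$, and $(\Phi\cdot\psi)\circ(f^{(n)}\text{-image})=((\Phi\circ\iota)\cdot\psi)^{(n)}$ on matricial levels, so positivity in $\cl S\otimes_{\comm}\cl T$ passes to $\cl S^{**}\otimes_{\comm}\cl T$.

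Next I would produce a chain of complete order embeddings into $\cl S^{**}\otimes_{\max}C^*_u(\cl T)$. On the one hand, Lemma~\ref{l_cintom}(3) applied to $\cl S^{**}$ and $\cl T$, combined with Lemma~\ref{l_cintom}(1), shows that the inclusion
$$
g:\cl S^{**}\otimes_{\comm}\cl T\longrightarrow \cl S^{**}\otimes_{\comm}C^*_u(\cl T)=\cl S^{**}\otimes_{\max}C^*_u(\cl T)
$$
is a unital complete order isomorphism onto its range. On the other hand, Lemma~\ref{l_cintom}(3) applied to $\cl S$ and $\cl T$, Lemma~\ref{l_cintom}(1), and Lemma~\ref{bidual-lem} with the operator system $C^*_u(\cl T)$ in place of $\cl T$ give
$$
\cl S\otimes_{\comm}\cl T\subseteq_{coi}\cl S\otimes_{\comm}C^*_u(\cl T)=\cl S\otimes_{\max}C^*_u(\cl T)\subseteq_{coi}\cl S^{**}\otimes_{\max}C^*_u(\cl T),
$$
so the composition $g\circ f$ (which agrees with this chain, since each map is the identity on elementary tensors) is a complete order isomorphism onto its range. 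Applying Lemma~\ref{comp} with $\cl R_1=\cl S\otimes_{\comm}\cl T$, $\cl R_2=\cl S^{**}\otimes_{\comm}\cl T$, $\cl R_3=\cl S^{**}\otimes_{\max}C^*_u(\cl T)$ then forces $f$ itself to be a complete order isomorphism onto its range, which is exactly the assertion $\cl S\otimes_{\comm}\cl T\subseteq_{coi}\cl S^{**}\otimes_{\comm}\cl T$.

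I do not anticipate a genuine obstacle: the corollary is essentially a formal consequence of Lemma~\ref{bidual-lem} together with the characterization of $\comm$ via $C^*_u$ in Lemma~\ref{l_cintom}. The only points needing minor care are confirming that $f$ is unital and completely positive and that the two factorizations through $\cl S^{**}\otimes_{\max}C^*_u(\cl T)$ compose to the same map, both of which are immediate once one tracks elementary tensors.
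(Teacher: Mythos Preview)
Your proof is correct and follows essentially the same approach as the paper: embed both $\cl S\otimes_{\comm}\cl T$ and $\cl S^{**}\otimes_{\comm}\cl T$ completely order isomorphically into $\cl S^{**}\otimes_{\max}C^*_u(\cl T)$ via Lemma~\ref{l_cintom} and Lemma~\ref{bidual-lem}, and conclude. The paper records this as a single commutative square and leaves the final step implicit, whereas you spell it out by invoking Lemma~\ref{comp} explicitly; the content is the same.
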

\begin{proof} Using Lemma \ref{bidual-lem} and Lemma \ref{l_cintom},
we obtain the following inclusions:
$$
\begin{array}{rcc}
\cl S \otimes_{\comm}\cl T & \subseteq & \cl S \otimes_{\max}C_u^*(\cl T) \\
 &  & \cap \\
\cl S^{**} \otimes_{\comm}\cl T & \subseteq & \cl S^{**} \otimes_{\max}C_u^*(\cl T).
\end{array}
$$
The claim follows.
\end{proof}

\begin{thm}\label{WEP}
Let $\cl S$ be an operator system. If $\cl S$ has WEP, then it is $(\linj,\max)$-nuclear.
\end{thm}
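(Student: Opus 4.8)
The plan is to reduce the statement to the criterion of Lemma~\ref{coi} and then feed into it the extension property guaranteed by WEP together with Lemma~\ref{bidual-lem}. By Lemma~\ref{coi}, it is enough to prove that for arbitrary operator systems $\cl S\subseteq \cl S_1$ and $\cl T$, the inclusion $\iota\otimes\id_{\cl T}:\cl S\otimes_{\max}\cl T\to \cl S_1\otimes_{\max}\cl T$ is a complete order isomorphism onto its range, i.e.\ $\cl S\otimes_{\max}\cl T\subseteq_{coi}\cl S_1\otimes_{\max}\cl T$.

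To do this, I would first invoke the WEP hypothesis in the form of condition~(2) of Lemma~\ref{l_eqwep}: the canonical embedding $i:\cl S\to\cl S^{**}$ extends to a completely positive map $\tilde{i}:\cl S_1\to\cl S^{**}$. Since $\cl S$ and $\cl S_1$ carry the same order unit and $\tilde{i}$ restricts to the unital map $i$ on $\cl S$, the map $\tilde{i}$ is itself unital. Tensoring with $\id_{\cl T}$ and using functoriality (T4) of $\max$, I obtain a chain of unital completely positive maps
$$
\cl S\otimes_{\max}\cl T \xrightarrow{\iota\otimes\id} \cl S_1\otimes_{\max}\cl T \xrightarrow{\tilde{i}\otimes\id} \cl S^{**}\otimes_{\max}\cl T ,
$$
whose composition is exactly $i\otimes\id$. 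By Lemma~\ref{bidual-lem}, this composition is a complete order isomorphism onto its range. Lemma~\ref{comp} then forces the first map $\iota\otimes\id$ to be a complete order isomorphism onto its range, which is precisely condition~(2) of Lemma~\ref{coi}; applying that lemma yields that $\cl S$ is $(\linj,\max)$-nuclear.

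I do not expect a genuine obstacle here, since all the needed machinery — the nuclearity criterion of Lemma~\ref{coi}, the extension property of WEP, the bidual inclusion for $\max$ in Lemma~\ref{bidual-lem}, and the cancellation Lemma~\ref{comp} — is already available. The only point requiring a moment's attention is verifying that the extension $\tilde{i}$ may be taken unital so that Lemma~\ref{comp} applies, and this is immediate from the fact that $\cl S_1$ shares its unit with $\cl S$.
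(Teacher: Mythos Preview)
Your proof is correct and follows essentially the same approach as the paper: both use the WEP extension of $i:\cl S\to\cl S^{**}$, functoriality of $\max$, Lemma~\ref{bidual-lem}, and the cancellation Lemma~\ref{comp}. The only cosmetic difference is that the paper works directly with the single superspace $I(\cl S)$ (using condition~(1) of Lemma~\ref{l_eqwep}) and then invokes the definition of $\linj$, whereas you route through the equivalent criterion of Lemma~\ref{coi} and handle an arbitrary $\cl S_1$ via condition~(2) of Lemma~\ref{l_eqwep}; the underlying argument is the same.
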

\begin{proof}
Let $\cl T$ be an operator system and let $j$ be the inclusion $\cl S \hookrightarrow I(\cl S)$. Since $\cl S$ has
WEP, the canonical embedding $i:\cl S \rightarrow \cl S^{**}$ extends to a unital completely positive map $\tilde{i}: I(\cl S)\rightarrow \cl S^{**}.$ Then
$$
\cl S \otimes_{\max} \cl T \xrightarrow{j\otimes \id} I(\cl S) \otimes_{\max} \cl T \xrightarrow{\tilde{i}\otimes \id}  \cl S^{**} \otimes_{\max} \cl T
$$
is a sequence of unital completely positive maps whose composition is a complete order isomorphism onto its range by Lemma~\ref{bidual-lem}.
By Lemma~\ref{comp}, $j\otimes\id$ is a complete order isomorphism onto its range.
By the definition of $\linj$, we have $\cl S \otimes_{\linj} \cl T \subseteq_{coi} I(\cl S) \otimes_{\max} \cl T$.
It follows that $\cl S \otimes_{\linj} \cl T = \cl S \otimes_{\max} \cl T$.
\end{proof}

We do not know whether the converse of Theorem \ref{WEP} holds true:

\begin{question}
Does $(\linj,\max)$-nuclearity imply WEP?
\end{question}

We will show that the converse of Theorem \ref{WEP} is
true if $\cl S$ is a C*-algebra or if it is a finite dimensional operator system.
For C*-algebras, this follows from Lance's characterization of the C*-algebras having WEP \cite[Theorem B]{La}.

\begin{thm}[Lance]\label{lance}
The following properties of a unital C*-algebra $\cl A$ are equivalent:
\begin{enumerate}
\item $\cl A$ has WEP;

\item for all unital C*-algebras $\cl A_1$ with $\cl A \subseteq \cl A_1$ and all unital C*-algebras $\cl B$ we have the C*-algebra inclusion
$$
\cl A \otimes_{\max} \cl B \subseteq \cl A_1 \otimes_{\max} \cl B;
$$

\item for all operator systems $\cl S_1$ with $\cl A \subseteq \cl S_1$ and all operator systems
$\cl T$, we have
$$
\cl A \otimes_{\max} \cl T \subseteq_{coi} \cl S_1 \otimes_{\max} \cl T.
$$
\end{enumerate}
\end{thm}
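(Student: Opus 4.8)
The plan is to run the cycle $(1)\Rightarrow(3)\Rightarrow(2)\Rightarrow(1)$, with the first two implications supplied by the operator system machinery already developed and the last one — which carries the real content — being Lance's classical result.

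For $(1)\Rightarrow(3)$: regard $\cl A$ as an operator system. Since the operator system WEP of Definition \ref{d_wepopsys} agrees with the classical WEP of a C*-algebra (as noted after that definition), Theorem \ref{WEP} shows that $\cl A$ is $(\linj,\max)$-nuclear, and Lemma \ref{coi} says that this is equivalent to the assertion that $\cl A\otimes_{\max}\cl T\subseteq_{coi}\cl S_1\otimes_{\max}\cl T$ for every operator system $\cl S_1\supseteq\cl A$ and every operator system $\cl T$. That is precisely statement $(3)$.

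For $(3)\Rightarrow(2)$: specialize $(3)$ to unital C*-algebras, taking $\cl S_1=\cl A_1$ and $\cl T=\cl B$, so that the inclusion map $\cl A\otimes_{\max}\cl B\to\cl A_1\otimes_{\max}\cl B$ is a unital complete order isomorphism onto its range. Since on unital C*-algebras the operator system maximal tensor product coincides, after completion, with the C*-algebraic maximal tensor product \cite{kptt}, and since the inclusion is already a $*$-homomorphism on the algebraic tensor product, the complete order isomorphism onto its range makes it isometric for the two C*-max norms and hence a $*$-isomorphism onto its range; this is exactly the C*-algebra inclusion $\cl A\otimes_{\max}\cl B\subseteq\cl A_1\otimes_{\max}\cl B$ of $(2)$.

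For $(2)\Rightarrow(1)$: this is Lance's theorem \cite[Theorem B]{La}, and it is the step where the work lies. One fixes a faithful unital representation $\cl A\subseteq\cl B(H)$, applies $(2)$ with $\cl A_1=\cl B(H)$ to see that $\cl A\otimes_{\max}\cl B$ sits C*-isometrically inside $\cl B(H)\otimes_{\max}\cl B$ for every unital C*-algebra $\cl B$, and from this — via GNS representations of states on these maximal tensor products together with a weak-$*$ compactness argument — extracts a unital completely positive map $\cl B(H)\to\cl A^{**}$ restricting to the inclusion, which is condition (3) of Lemma \ref{l_eqwep} and hence the WEP. The main obstacle is exactly this direction, since producing the weak expectation from the tensorial inclusion property is the substantive part of Lance's argument; accordingly, in the paper one would simply invoke \cite[Theorem B]{La} here rather than reproduce the proof.
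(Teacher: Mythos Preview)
Your proof is correct and follows essentially the same route as the paper: the paper establishes $(1)\Rightarrow(3)$ via Theorem~\ref{WEP} and Lemma~\ref{coi}, declares $(3)\Rightarrow(2)$ trivial, and cites \cite[Theorem~B]{La} for $(2)\Rightarrow(1)$. Your additional unpacking of $(3)\Rightarrow(2)$ and your sketch of Lance's argument for $(2)\Rightarrow(1)$ are both accurate but go beyond what the paper records.
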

\begin{proof}
The implication (3)$\Rightarrow$(2) is trivial, (2)$\Rightarrow$(1) is a part of \cite[Theorem B]{La}, while
(1)$\Rightarrow$(3) follows from Theorem~\ref{WEP} and Lemma~\ref{coi}.
\end{proof}

Note that condition (3) is a restatement of $(\linj,\max)$-nuclearity. Thus, Lance's result yields that for unital C*-algebras, WEP and
$(\linj,\max)$-nuclearity are equivalent \cite[Proposition 7.6]{kptt}.

Recall that a von Neumann algebra $\cl M$ has WEP if and only if it is injective (see \cite[Remark 15.2]{p}).
A similar result holds for operator systems; we include the proof for the convenience of the reader.

\begin{prop}\label{p_bidin}
A bidual operator system $\cl R$ has WEP if and only if $\cl R$ is injective. Consequently, in this case, $\cl R$ is a von Neumann algebra.
\end{prop}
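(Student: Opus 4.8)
The plan is to prove the two implications and then read off the von Neumann algebra statement. The direction ``injective $\Rightarrow$ WEP'' is immediate and holds for any operator system: if $\cl R$ is injective, then the identity map of $\cl R$ extends, along the inclusion $\cl R\subseteq I(\cl R)$, to a unital completely positive projection $E\colon I(\cl R)\to\cl R$; composing with the canonical embedding $i\colon\cl R\to\cl R^{**}$ gives a unital completely positive map $i\circ E\colon I(\cl R)\to\cl R^{**}$ extending $i$, which is exactly condition (1) of Lemma~\ref{l_eqwep}.

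For the converse, write $\cl R=\cl S^{**}$ and exploit the fact that, as a Banach space, $\cl R$ is the dual of $\cl S^{*}$ and hence is a completely positive unital retract of its own bidual $\cl R^{**}$. Concretely, I would use the canonical complete order embedding $i\colon\cl R\to\cl R^{**}$ from Proposition~\ref{bidual} together with the adjoint $\pi:=(j_{\cl S^{*}})^{*}$ of the canonical Banach-space embedding $j_{\cl S^{*}}\colon\cl S^{*}\to\cl S^{***}$. Since $\cl R^{**}=\cl S^{****}=(\cl S^{*})^{***}$, the map $\pi$ goes from $\cl R^{**}$ to $\cl R$, and the standard identity $j_{\cl S^{*}}^{*}\circ j_{\cl S^{**}}=\id_{\cl S^{**}}$ shows $\pi\circ i=\id_{\cl R}$. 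The map $\pi$ is completely positive because $j_{\cl S^{*}}$ is: an element $(f_{ij})\in M_n(\cl S^{*})^{+}$, represented by a completely positive map $\Phi\colon\cl S\to M_n$, maps under $j_{\cl S^{*}}$ to the element of $M_n(\cl S^{***})$ represented by $\Phi^{**}\colon\cl S^{**}\to M_n^{**}=M_n$, which is again completely positive; and the Banach-space adjoint of a completely positive map between matrix ordered $*$-vector spaces is completely positive, since its matricial positivity reduces to composing the given map with a completely positive map into $M_n$. Finally $\pi$ is unital: by Proposition~\ref{bidual} the Archimedean matrix order unit of $\cl R^{**}$ is $i(e_{\cl R})$, and $\pi(i(e_{\cl R}))=e_{\cl R}$.

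Now, since $\cl R$ has WEP, Lemma~\ref{l_eqwep}(1) provides a unital completely positive extension $\tilde{i}\colon I(\cl R)\to\cl R^{**}$ of $i$, and $E:=\pi\circ\tilde{i}\colon I(\cl R)\to\cl R$ is unital completely positive with $E|_{\cl R}=\id_{\cl R}$ (because $\tilde i|_{\cl R}=i$ and $\pi\circ i=\id_{\cl R}$). Thus $E$ is a unital completely positive projection of the injective operator system $I(\cl R)$ onto $\cl R$, whence $\cl R$ is injective: any unital completely positive $\phi\colon\cl V\to\cl R$ with $\cl V\subseteq\cl W$ extends to a map $\cl W\to I(\cl R)$ by injectivity of $I(\cl R)$, and composing with $E$ yields an extension into $\cl R$. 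For the last assertion, an injective operator system equipped with a Choi--Effros product is a unital (injective) C*-algebra carrying the same operator system structure, hence the same norm (\cite{ce}; see also \cite{pa}); since here this C*-algebra is isometrically the dual Banach space $\cl S^{**}$, Sakai's predual characterization of von Neumann algebras forces it to be a von Neumann algebra.

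The main obstacle I anticipate is the bookkeeping of the chain of duals $\cl S^{*},\cl S^{**},\cl S^{***},\cl S^{****}$ and the verification that the canonical projection $\pi$ is completely positive and unital for the dual matrix orderings, rather than merely norm-contractive; a secondary point is identifying the operator system norm on $\cl S^{**}$ with its ordinary Banach bidual norm so that Sakai's theorem applies verbatim.
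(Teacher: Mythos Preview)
Your proof is correct and follows essentially the same route as the paper: both directions hinge on the completely positive projection $\pi=(j_{\cl S^*})^*\colon\cl R^{**}\to\cl R$, and the paper's verification of injectivity (extending maps $\cl T_1\to\cl R$ through $\cl T_2\to I(\cl R)\to\cl R^{**}\to\cl R$) is just your construction of $E=\pi\circ\tilde i$ unpacked. Your treatment of the von Neumann algebra claim via Sakai's theorem is more explicit than the paper's, which simply invokes the Choi--Effros result that an injective operator system is completely order isomorphic to a C*-algebra and leaves the rest implicit.
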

\begin{proof}
If $\cl R$ is injective, then the inclusion of $\cl R$ into $\cl R^{**}$ is a completely positive map of
$I(\cl R)=\cl R$ into $\cl R^{**}$ and hence $\cl R$ has WEP.
Conversely, assume that $\cl R$ has WEP. We first observe that
if $\cl R= \cl S^{**}$ and $i:\cl S^*\hookrightarrow S^{***}$ is the inclusion, then its adjoint
$p : \cl S^{****}\rightarrow S^{**}$ is a completely positive projection from $\cl R^{**}$ onto $\cl R$.
Now given $\cl T_1 \subseteq \cl T_2$ and a completely positive map $\phi:\cl T_1 \rightarrow  \cl R$,
let $\tilde{\phi}: \cl T_2 \rightarrow  I(\cl R)$ be its extension. Since $\cl R$ has WEP,
there is a completely positive map $u : I(\cl R) \rightarrow \cl R^{**}$ extending the inclusion of $\cl R$ into $\cl R^{**}$.
Then $p\circ u\circ \tilde{\phi}$ is a completely positive map
from $\cl T_2$ into $\cl R$ and extends $\phi$. Hence, $\cl R$ is injective.
Finally, every injective operator system is completely order isomorphic to a C*-algebra by a result of Choi and Effros (see e.g. \cite[Theorem 15.2]{pa}.)
\end{proof}


Note that every finite dimensional operator system is a bidual operator system since the inclusion
$\cl S \hookrightarrow \cl S^{**}$ is surjective.
Recall also that the dual $\cl S^{*}$ is also an operator system as was pointed out before Lemma~\ref{bidual}.

For finite dimensional operator systems, we have the following characterizations of WEP.

\begin{thm}\label{fd}
Let $\cl S$ be a finite dimensional operator system. Then the following are equivalent:
\begin{enumerate}
\item $\cl S$ has WEP;

\item $\cl S$ is injective;

\item $\cl S$ is $(\linj,\max)$-nuclear;

\item $\cl S$ is $(\min,\max)$-nuclear;

\item $\cl S$ is completely order isomorphic to a C*-algebra;

\item $\cl S \otimes_{\linj} \cl S^*  =  \cl S \otimes_{\max} \cl S^*$.
\end{enumerate}
\end{thm}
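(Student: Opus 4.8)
The plan is to run the cycle $(1)\Leftrightarrow(2)\Leftrightarrow(5)$, $(1)\Rightarrow(3)$, $(2)\Rightarrow(4)\Rightarrow(3)\Rightarrow(6)\Rightarrow(2)$, so that all six statements become equivalent; every step except $(6)\Rightarrow(2)$ is short. Since a finite dimensional operator system is bidual (the canonical inclusion $\cl S\hookrightarrow\cl S^{**}$ is onto), $(1)\Leftrightarrow(2)$ is immediate from Proposition \ref{p_bidin}. For $(2)\Leftrightarrow(5)$: an injective operator system is completely order isomorphic to a C*-algebra by Choi--Effros, and conversely a finite dimensional C*-algebra $\bigoplus_j M_{n_j}$ is injective. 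The implication $(1)\Rightarrow(3)$ is Theorem \ref{WEP}. For $(2)\Rightarrow(4)$ I would invoke that a finite dimensional C*-algebra is nuclear and hence $(\min,\max)$-nuclear as an operator system (from \cite{kptt}). For $(4)\Rightarrow(3)$: given any operator system $\cl T$, the identity maps furnish unital completely positive maps $\cl S\otimes_{\min}\cl T\to\cl S\otimes_{\linj}\cl T\to\cl S\otimes_{\max}\cl T$ whose composite is, by $(4)$, a complete order isomorphism, so Lemma \ref{comp} forces the first, and hence also the second, arrow to be a complete order isomorphism and $\cl S\otimes_{\linj}\cl T=\cl S\otimes_{\max}\cl T$. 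Finally $(3)\Rightarrow(6)$ is the instance $\cl T=\cl S^{*}$, which is a legitimate operator system precisely because $\cl S$ is finite dimensional.

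The substantive step is $(6)\Rightarrow(2)$. Here I would exploit the canonical duality pairing $\beta\colon\cl S\times\cl S^{*}\to\bb C$, $\beta(s,f)=f(s)$. This pairing is jointly completely positive: for $(f_{k,l})\in M_m(\cl S^{*})^{+}$ the map $s\mapsto(f_{k,l}(s))$ is a completely positive map $\cl S\to M_m$, so applying it to a positive matrix over $\cl S$ yields a positive scalar matrix, which is exactly the joint positivity requirement. By the universal property of the maximal tensor product, $\beta$ linearizes to a completely positive functional $\theta\colon\cl S\otimes_{\max}\cl S^{*}\to\bb C$ with $\theta(s\otimes f)=f(s)$; taking the Archimedean order unit of $\cl S^{*}$ to be a faithful state of $\cl S$ makes $\theta$ a state. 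Invoking hypothesis $(6)$, $\theta$ is a state on $\cl S\otimes_{\linj}\cl S^{*}$, and by the definition of $\linj$ this is, via a complete order embedding, an operator subsystem of $I(\cl S)\otimes_{\max}\cl S^{*}$; hence $\theta$ extends to a state $\widetilde\theta$ on $I(\cl S)\otimes_{\max}\cl S^{*}$. By Lance's lemma $\cl L_{\widetilde\theta}\colon I(\cl S)\to(\cl S^{*})^{*}=\cl S^{**}=\cl S$ is completely positive, where the last identification uses finite dimensionality. A computation on the generators $s\otimes f$ shows $\cl L_{\widetilde\theta}$ is unital and restricts to the identity on $\cl S$; thus $\cl L_{\widetilde\theta}$ is a unital completely positive projection of the injective operator system $I(\cl S)$ onto $\cl S$. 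A completely positive retract of an injective operator system is injective, so $\cl S$ is injective, which is $(2)$.

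The part I expect to demand the most care is $(6)\Rightarrow(2)$, and within it the two places where finite dimensionality is genuinely used: it is needed for $\cl S^{*}$ to be an operator system at all (so that $\cl S\otimes_{\max}\cl S^{*}$ and $\cl S\otimes_{\linj}\cl S^{*}$ are defined and $(6)$ is meaningful), and for $\cl S^{**}=\cl S$ as operator systems (so that $\cl L_{\widetilde\theta}$ actually lands in $\cl S$ rather than merely in a larger bidual). The remaining verifications---joint complete positivity of $\beta$, that the linearization $\theta$ is a state, that states extend along complete order inclusions (because $\bb C$ is an injective operator system), and that $\cl L_{\widetilde\theta}$ is unital and fixes $\cl S$ pointwise---are routine bookkeeping. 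The one external input worth pinning down is that used in $(2)\Rightarrow(4)$, namely that a finite dimensional (hence nuclear) C*-algebra is $(\min,\max)$-nuclear as an operator system.
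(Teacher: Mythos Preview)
Your proof is correct and follows essentially the same approach as the paper: the cycle of easy implications is arranged slightly differently, but the substantive step $(6)\Rightarrow(2)$ (the paper does $(6)\Rightarrow(1)$, which amounts to the same thing) is carried out identically, via the Lance correspondence between the identity map $\cl S\to\cl S^{**}$ and a positive functional on $\cl S\otimes_{\max}\cl S^*$, extended through the inclusion $\cl S\otimes_{\linj}\cl S^*\subseteq_{coi} I(\cl S)\otimes_{\max}\cl S^*$ and then converted back to a completely positive map $I(\cl S)\to\cl S$ fixing $\cl S$.
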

\begin{proof}
Since $\cl S$ is finite dimensional operator system it is a bidual operator system.
Hence, by Proposition \ref{p_bidin}, (1)$\Leftrightarrow$(2)$\Rightarrow$(5).
On the other hand, any finite dimensional C*-algebra is nuclear and so it is (min,max)-nuclear as an operator system \cite[Proposition 5.15]{kptt}. Thus, (5)$\Rightarrow$(4).
(In fact, for the same reason it is easy to show (5) implies all the other conditions.)
The implications (4)$\Rightarrow$(3)$\Rightarrow$(6) are trivial.
Hence we only need to show that (6) implies (1).

Since the map $\id : \cl S \rightarrow \cl S = \cl S^{**}$ is completely positive,
the paragraph before Lemma \ref{bidual-lem} yields
a positive linear functional $f : \cl S \otimes_{\max} \cl S^* \rightarrow \mathbb{C}$ corresponding to $\id$. Note that
$$
\cl S \otimes_{\max} \cl S^* = \cl S \otimes_{\linj} \cl S^* \subseteq I(\cl S) \otimes_{\max} \cl S^*
$$
by the left injectivity of $\linj$.
Thus, $f$ extends to a positive linear functional  $\tilde{f}$ on $I(\cl S) \otimes_{\max} \cl S^*$. Let $\varphi: I(\cl S) \rightarrow \cl S^{**}=\cl S$ be the completely positive map that corresponds to $\tilde{f}$. Clearly, $\varphi$ extends $\id$.
We showed that there is a completely positive map from $I(\cl S)$ to $\cl S$ fixing $\cl S$ elementwise; thus, $\cl S$ has WEP.
\end{proof}





\section{DCEP and $(\linj,\comm)$-nuclearity}

Since the maximal operator system tensor product and the commuting
tensor product, $\comm,$ are both extensions of the maximal C*-algebra
tensor product, one expects the families of
$(\linj, \max)$-nuclear operator systems and $(\linj,\comm)$-nuclear
operator systems to be different, but to coincide for C*-algebras. Since,
for C*-algebras, these are both characterized by WEP, one expects that
WEP should split into two different properties in the operator
system category. In this section we examine $(\linj,\comm)$-nuclearity and
prove that it is characterized by a {\it double commutant expectation
  property (DCEP).}

It is known that a C*-algebra $\cl A$ has
WEP if and only if every faithful representation $\pi: \cl A \to \cl
B(H)$ can be extended to a unital completely positive map from $I(\cl
A)$ into the double commutant $\pi(\cl A)^{\prime \prime}$ of $\pi(\cl
A)$. We will prove that the DCEP for an operator system $\cl S$ is also equivalent to the
following property:
for every operator system $\cl S_1$ with $\cl S \subseteq \cl S_1$ and every operator system,
$\cl T$ we have a complete order inclusion
$$
\cl S \otimes_{\comm} \cl T \subseteq \cl S_1 \otimes_{\comm} \cl T.
$$
We will exhibit an example that shows that such operator systems do not have to be (el,max)-nuclear or have the WEP.
Finally, we relate these ideas to Kirchberg's work on WEP and the Kirchberg Conjecture.

Let $\alpha$ and $\beta$ be operator system tensor products and $\cl S$ and $\cl T$ be operator systems.
When we write $\cl S\otimes_{\alpha = \beta}\cl T$, we will mean that the identity map on $\cl S\otimes\cl T$ is a
complete order isomorphism between
$\cl S\otimes_{\alpha} \cl T$ and $\cl S\otimes_{\beta}\cl T$.

\begin{thm}\label{elc1}
The following properties of an operator system $\cl S$ are equivalent:
\begin{enumerate}
\item $\cl S$ is $(\linj,\comm)$-nuclear, that is, for every operator system $\cl T$ we have that
$$
\cl S \otimes_{\linj} \cl T = \cl S \otimes_{\comm} \cl T;
$$

\item for any operator system $\cl S_1$ with $\cl S \subseteq \cl S_1$ and any operator system $\cl T$ we have
$$
\cl S \otimes_{\comm} \cl T \subseteq_{coi} \cl S_1 \otimes_{\comm} \cl T;
$$

\item for every operator system $\cl S_1$ with $\cl S \subseteq \cl S_1$ and any C*-algebra $\cl B$ we have
$$
\cl S \otimes_{\comm = \max} \cl B \subseteq_{coi} \cl S_1 \otimes_{\comm = \max} \cl B;
$$

\item there exists an inclusion $\cl S\subseteq \cl B(H) $ such that for every C*-algebra $\cl B$ we have
$$
\cl S \otimes_{\comm=\max} \cl B \subseteq_{coi} \cl B(H) \otimes_{\comm=\max} \cl B;
$$

\item there exists an injective operator system $\cl A$ with $\cl S \subseteq \cl A$ such that for every operator system $\cl T$ we have
$$
\cl S \otimes_{\comm=\max} \cl T \subseteq_{coi} \cl A \otimes_{\comm=\max} \cl T.
$$
\end{enumerate}
\end{thm}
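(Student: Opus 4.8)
\emph{Proof proposal.} The plan is to run the cyclic chain $(1)\Rightarrow(2)\Rightarrow(3)\Rightarrow(4)\Rightarrow(5)\Rightarrow(1)$, following the template of the $(\linj,\max)$ case (Lemma~\ref{coi} and Theorem~\ref{WEP}) but systematically exploiting two features of $\comm$: it is functorial, and $\cl R\otimes_{\comm}\cl A=\cl R\otimes_{\max}\cl A$ whenever $\cl A$ is a C*-algebra (Lemma~\ref{l_cintom}(1)), together with the reduction $\cl R\otimes_{\comm}\cl T\subseteq_{coi}\cl R\otimes_{\comm}C_u^*(\cl T)$ (Lemma~\ref{l_cintom}(3)). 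Throughout, the injective envelope $I(\cl S)$ is a C*-algebra and any injective operator system is a C*-algebra (as in the proof of Proposition~\ref{p_bidin}), so $\comm$ may be traded for $\max$ the moment one of the tensor factors is $I(\cl S)$, some $\cl B(H)$, or an injective operator system.

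For $(1)\Rightarrow(2)$, fix $\cl S\subseteq\cl S_1$ and $\cl T$. Using injectivity of $I(\cl S)$, extend the inclusion $j:\cl S\hookrightarrow I(\cl S)$ to a unital completely positive map $\tilde\jmath:\cl S_1\to I(\cl S)$, and write $i:\cl S\hookrightarrow\cl S_1$. Functoriality of $\comm$ yields unital completely positive maps
$$\cl S\otimes_{\comm}\cl T\xrightarrow{i\otimes\id}\cl S_1\otimes_{\comm}\cl T\xrightarrow{\tilde\jmath\otimes\id}I(\cl S)\otimes_{\comm}\cl T=I(\cl S)\otimes_{\max}\cl T,$$
whose composition is $j\otimes\id$. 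By hypothesis $\cl S\otimes_{\comm}\cl T=\cl S\otimes_{\linj}\cl T$, and by the definition of $\linj$ the latter sits completely order isomorphically inside $I(\cl S)\otimes_{\max}\cl T$; hence the composition is a complete order isomorphism onto its range, and Lemma~\ref{comp} forces $i\otimes\id$ to be one as well. The implications $(2)\Rightarrow(3)$ and $(3)\Rightarrow(4)$ are immediate: $(3)$ is the special case of $(2)$ with $\cl T=\cl B$ a C*-algebra (so $\comm=\max$), and $(4)$ follows from $(3)$ by taking $\cl S_1=\cl B(H)$ for any faithful unital representation $\cl S\subseteq\cl B(H)$.

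For $(4)\Rightarrow(5)$, take the injective operator system to be $\cl A=\cl B(H)$ from $(4)$. Given an arbitrary operator system $\cl T$, chain Lemma~\ref{l_cintom}(3), Lemma~\ref{l_cintom}(1) and $(4)$:
$$\cl S\otimes_{\comm}\cl T\subseteq_{coi}\cl S\otimes_{\comm}C_u^*(\cl T)=\cl S\otimes_{\max}C_u^*(\cl T)\subseteq_{coi}\cl B(H)\otimes_{\max}C_u^*(\cl T)=\cl B(H)\otimes_{\comm}C_u^*(\cl T),$$
while also $\cl B(H)\otimes_{\comm}\cl T\subseteq_{coi}\cl B(H)\otimes_{\comm}C_u^*(\cl T)$. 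Since the cones of $\cl S\otimes_{\comm}\cl T$ and of $\cl B(H)\otimes_{\comm}\cl T$ are both obtained by intersecting the cones of $\cl B(H)\otimes_{\comm}C_u^*(\cl T)$ with the respective subspaces of $M_n(\cl B(H)\otimes C_u^*(\cl T))$, and $M_n(\cl S\otimes\cl T)\subseteq M_n(\cl B(H)\otimes\cl T)$, it follows that $\cl S\otimes_{\comm}\cl T\subseteq_{coi}\cl B(H)\otimes_{\comm}\cl T$, which is $(5)$. Finally, for $(5)\Rightarrow(1)$, let $\cl A\supseteq\cl S$ be the injective operator system from $(5)$; being injective it is a C*-algebra, so $\cl A\otimes_{\comm}\cl T=\cl A\otimes_{\max}\cl T$, and by injectivity of $\cl A$ we may extend $\cl S\hookrightarrow\cl A$ to a unital completely positive map $\phi:I(\cl S)\to\cl A$. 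Functoriality of $\comm$ and the definition of $\linj$ already make the identity on $\cl S\otimes\cl T$ completely positive from $\cl S\otimes_{\comm}\cl T$ onto $\cl S\otimes_{\linj}\cl T$. For the reverse, take $u\in M_n(\cl S\otimes\cl T)$ positive in $\cl S\otimes_{\linj}\cl T$, i.e.\ positive in $I(\cl S)\otimes_{\max}\cl T$; then $(\phi\otimes\id)(u)=u$ (since $\phi$ fixes $\cl S$) is positive in $\cl A\otimes_{\max}\cl T$, hence, by $(5)$, positive in $\cl S\otimes_{\comm}\cl T$. Thus $\cl S\otimes_{\linj}\cl T=\cl S\otimes_{\comm}\cl T$ as matrix ordered spaces.

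The only genuinely new point relative to the WEP argument is the passage from C*-algebra coefficients to arbitrary operator system coefficients in $(4)\Rightarrow(5)$, handled by the universal-C*-algebra reduction $\cl R\otimes_{\comm}\cl T\subseteq_{coi}\cl R\otimes_{\comm}C_u^*(\cl T)$; the rest is bookkeeping, the main thing to keep straight being which ambient operator system each $\subseteq_{coi}$ refers to, and the principle that once a factor is a C*-algebra (or an injective operator system) one silently replaces $\comm$ by $\max$. I do not expect a real obstacle beyond this.
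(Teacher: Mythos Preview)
Your proof is correct and follows essentially the same approach as the paper: both rely on functoriality of $\comm$, Lemma~\ref{comp}, the reduction $\cl R\otimes_{\comm}\cl T\subseteq_{coi}\cl R\otimes_{\comm}C_u^*(\cl T)$ from Lemma~\ref{l_cintom}, and the identification $\comm=\max$ when one factor is a C*-algebra. The only organizational differences are that you run a single cycle $(1)\Rightarrow\cdots\Rightarrow(5)\Rightarrow(1)$ while the paper proves the implications in a different order (in particular deducing $(1)$ from $(4)$ rather than from $(5)$, and treating $(2)\Leftrightarrow(3)$ separately), and that in $(1)\Rightarrow(2)$ you route through $I(\cl S)$ where the paper routes through $\cl B(H)$; your direct argument for $(5)\Rightarrow(1)$ via a map $\phi:I(\cl S)\to\cl A$ is in fact slightly cleaner than the paper's detour through $(4)$.
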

\begin{proof}
The implications (2)$\Rightarrow$(3)$\Rightarrow$(4) are trivial.

\smallskip

(3)$\Rightarrow$(2)
Note that
$$
\begin{array}{rcc}
\cl S \otimes_{\comm}\cl T & \subseteq_{coi} & \cl S \otimes_{\comm=\max}C_u^*(\cl T) \\
 &  & \cap \\
\cl S_1 \otimes_{\comm}\cl T & \subseteq_{coi} & \cl S_1 \otimes_{\comm=\max}C_u^*(\cl T),
\end{array}
$$
where the inclusion on the right hand side holds by assumption.

\smallskip

(4)$\Rightarrow$(5)
Let $\cl T$ be an operator system. By Lemma \ref{l_cintom} and the assumption, we have
$$\cl S\otimes_{\comm}\cl T\subseteq_{coi} \cl S\otimes_{\max} C^*_u(\cl T)\subseteq_{coi} \cl B(H)\otimes_{\max} C^*_u(\cl T).$$
Again by Lemma \ref{l_cintom}, we have
$$\cl B(H)\otimes_{\comm} \cl T\subseteq_{coi}\cl B(H)\otimes_{\max} C^*_u(\cl T).$$
It follows that $\cl S\otimes_{\comm}\cl T\subseteq_{coi} \cl B(H)\otimes_{\comm} \cl T$ and (5)
follows from the fact that $\cl B(H)$ is an injective operator system.

\smallskip

(5)$\Rightarrow$(4) Let $\cl A\subseteq \cl B(H)$ be an injective operator system satisfying (5),
$\phi : \cl B(H)\rightarrow \cl A$ be a completely positive projection and $\cl B$ be a C*-algebra.
Suppose that $u\in (\cl B(H)\otimes_{\max}\cl B)^+\cap (\cl S\otimes\cl B)$. Then
$$u = (\phi\otimes \id)(u) \in (\cl A\otimes_{\max}\cl B)^+\cap (\cl S\otimes\cl B) = (\cl S\otimes_{\max}\cl B)^+.$$
Using the identification $M_n(\cl S\otimes_{\max}\cl B) = \cl S\otimes_{\max} M_n(\cl B)$, we obtain (4).

\smallskip

(4)$\Rightarrow$(1) Assume that $\cl S\subseteq \cl B(H)$ and let $\cl T$ be any operator system.
Using the assumption, Lemmas \ref{l_cintom} and \ref{cminel lem}, the left injectivity of $\linj$ and the proof of the implication
(5)$\Rightarrow$(4),
we obtain the following complete order inclusions and identities:
$$
\begin{array}{ccccc}
\cl S \otimes_{\linj}\cl T & \subseteq & \cl B(H) \otimes_{\linj=\max}\cl T & \subseteq & \cl B(H)\otimes_{\comm=\max}C_u^*(\cl T) \\
 &  & & & \shortparallel \\
\cl S \otimes_{\comm}\cl T & \subseteq & \cl B(H) \otimes_{\comm=\max}\cl T & \subseteq & \cl B(H) \otimes_{\comm=\max}C_u^*(\cl T).
\end{array}
$$
It follows that $\cl S \otimes_{\linj}\cl T = \cl S \otimes_{\comm}\cl T$.

\smallskip

(1)$\Rightarrow$(2)
First note that for any inclusion $\cl S\subseteq \cl B(H) $ and any operator system $\cl T$ we have
by Lemma \ref{cminel lem} that
$$
\cl S \otimes_{\comm} \cl T = \cl S \otimes_{\linj} \cl T \subseteq \cl B(H) \otimes_{\linj = \max} \cl T.
$$
Let $\cl S_1$ be an operator system containing $\cl S$ and $j: \cl S \hookrightarrow \cl S_1$
be the inclusion map
and $\tilde{i}:\cl S_1 \rightarrow \cl B(H)$ be the unital completely positive extension of the inclusion map
$i : \cl S \hookrightarrow \cl B(H)$. Then
$$
\cl S \otimes_{\comm} \cl T \xrightarrow{j\otimes \id }  \cl S_1 \otimes_{\comm} \cl T  \xrightarrow{ \tilde{i}\otimes \id }
\cl B(H) \otimes_{\comm=\max} \cl T
$$
is a sequence of completely positive maps with the property that their composition is a complete order isomorphism
onto its range. So by Lemma~\ref{comp} $j\otimes\id$ is a complete order isomorphism onto its range and (2) follows.
\end{proof}

In \cite{kptt},
the tensor products $\min,$ $\max,$ $\comm,$ $\rinj$ and $\linj$ were introduced.
It is not difficult to show that we have the following order ($\alpha\leq\beta$ means that the matricial cones
of $\beta$ are contained in the corresponding matricial cones of $\alpha$):
$$
\min\;\; \leq\;\; \linj,\rinj\;\;\leq \;\; \comm \;\; \leq \max.
$$
This means that if $\cl S$ and $\cl T$ are operator systems and $\cl S \otimes_{\linj}\cl T = \cl S \otimes_{\max}\cl T$ then necessarily $\cl S \otimes_{\linj}\cl T = \cl S \otimes_{\comm}\cl T $.
Consequently, $(\linj,\max)$-nuclearity implies $(\linj,\comm)$-nuclearity.
We point out that the converse is not true. Indeed, let $\cl S $ be the
operator subsystem of $M_3$ consisting of all 3 by 3 matrices whose
$(1,3)$ and $(3,1)$ entries are equal to 0. In \cite[Theorem 5.16]{kptt} it was shown that $\cl S$ is $(\min,\comm)$-nuclear but not $(\min,\max)$-nuclear. Consequently, $\cl S$
is $(\linj,\comm)$-nuclear but not $(\linj,\max)$-nuclear.
Since $\cl S$ is finite dimensional, Theorem~\ref{fd} implies
that it does not have WEP (note that this can also be seen directly).

Our next aim is to introduce the Double Commutant Expectation Property (DCEP) and to prove that it is equivalent to
$(\linj,\comm)$-nuclearity.
For a C*-algebra $\cl A$ and a subset $X\subseteq \cl A$, we let
$X^{\prime} = \{a\in \cl A : xa = ax, \mbox{ for all }x\in \cl S\}$ be the commutant of $X$ in $\cl A$
and $X^{\prime \prime} = (X')'$ be its double commutant in $\cl A$.
If an operator system $\cl S$ is a subsystem of two operator systems $\cl S_1$ and $\cl S_2$, we say that
a map $\phi : \cl S_1\rightarrow\cl S_2$ fixes $\cl S$ if $\phi(x) = x$ for every $x\in \cl S$.

\begin{defn}\label{dcep-defn}
We say that an operator system $\cl S$ has the {\bf double commutant
expectation property (DCEP)} provided that
 for every completely order isomorphic inclusion $\cl S \subseteq \cl B(H)$, there exists a
completely positive map $\varphi: \cl B(H) \rightarrow \cl S^{\prime \prime}$ fixing $\cl S.$
\end{defn}

A unital C*-algebra $\cl A$ has WEP if and only if it has DCEP.
Indeed, this can be shown directly using Arveson's commutant lifting
theorem for completely positive maps \cite[Theorem~1.3.1]{Ar}(see also
\cite[Theorem~12.7]{pa}). We do not give this proof here, since this
equivalence also follows from the next theorem.

\begin{thm}\label{dcep-thm}
The following properties of an operator system $\cl S$ are equivalent:
\begin{enumerate}
\item $\cl S$ has the DCEP;

\item for every inclusion $\cl S\subseteq \cl B(H)$, there exists a
completely positive map $\varphi: I(\cl S) \rightarrow \cl S^{\prime \prime}$ fixing $\cl S$;

\item there exists an injective C*-algebra $\cl B$ with $\cl S \subseteq \cl B$ such that for every
inclusion $\cl S \subseteq \cl B(H)$, there exists a completely positive map
$\varphi: \cl B \rightarrow \cl S^{\prime \prime}$ fixing $\cl S$ (where $\cl S''$ is computed in $\cl B(H)$);

\item for every injective C*-algebra $\cl A$ with $\cl S \subseteq \cl A$,
there exists a completely positive map $\varphi: \cl A \rightarrow \cl
S^{\prime \prime}$ fixing $\cl S;$

\item $\cl S$ is $(\linj,\comm)$-nuclear.
\end{enumerate}
\end{thm}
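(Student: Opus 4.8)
The plan is to establish the cycle $(1)\Rightarrow(4)\Rightarrow(3)\Rightarrow(1)$, the separate equivalence $(1)\Leftrightarrow(2)$, and finally $(5)\Rightarrow(2)$ and $(1)\Rightarrow(5)$; together these give all five equivalences. The implications among $(1)$--$(4)$ are purely formal: in each of them one is given two injective objects $\cl R$ and $\cl R'$, each containing $\cl S$, together with (from the hypothesis) a completely positive map $\cl R\to\cl S''$ fixing $\cl S$, and one wants such a map out of $\cl R'$; since $\cl R$ is injective the inclusion $\cl S\hookrightarrow\cl R$ extends to a unital completely positive map $\cl R'\to\cl R$ fixing $\cl S$, and composing gives what is wanted. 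Choosing $(\cl R,\cl R')=(\cl B(H),I(\cl S))$ yields $(1)\Rightarrow(2)$, the pair $(I(\cl S),\cl B(H))$ yields $(2)\Rightarrow(1)$, the pair $(\cl B(H),\cl A)$ with $\cl A$ an arbitrary injective C*-algebra containing $\cl S$ yields $(1)\Rightarrow(4)$, and $(\cl B,\cl B(H))$ yields $(3)\Rightarrow(1)$; finally $(4)\Rightarrow(3)$ is immediate, since $\cl B(H_0)$ for a faithful representation $\cl S\subseteq\cl B(H_0)$ is an injective C*-algebra containing $\cl S$.

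For $(5)\Rightarrow(2)$, given an inclusion $\cl S\subseteq\cl B(H)$ I would put $\cl N=\cl S'\subseteq\cl B(H)$, a von Neumann algebra. The inclusions of $\cl S$ and of $\cl N$ into $\cl B(H)$ have commuting ranges, so they linearise to a unital completely positive map $\mu:\cl S\otimes_{\comm}\cl N\to\cl B(H)$ with $\mu(s\otimes b)=sb$. By $(5)$ and the definition of $\linj$, $\cl S\otimes_{\comm}\cl N=\cl S\otimes_{\linj}\cl N\subseteq_{coi}I(\cl S)\otimes_{\max}\cl N$, so by injectivity of $\cl B(H)$ the map $\mu$ extends to a unital completely positive $\tilde\mu:I(\cl S)\otimes_{\max}\cl N\to\cl B(H)$. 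Set $\varphi=\tilde\mu(\,\cdot\otimes 1):I(\cl S)\to\cl B(H)$; then $\varphi$ fixes $\cl S$. Since $\tilde\mu$ restricts on the C*-subalgebra $1\otimes\cl N$ of $I(\cl S)\otimes_{\max}\cl N$ to the faithful $*$-representation $b\mapsto b$, that subalgebra lies in the multiplicative domain of $\tilde\mu$, so $b\,\varphi(y)=\tilde\mu\big((1\otimes b)(y\otimes 1)\big)=\tilde\mu\big((y\otimes 1)(1\otimes b)\big)=\varphi(y)\,b$ for all $y\in I(\cl S)$, $b\in\cl N$. Hence $\varphi(I(\cl S))\subseteq\cl N'=\cl S''$, which is $(2)$.

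The substantive implication is $(1)\Rightarrow(5)$. As $\linj\le\comm$, it suffices to show, for every operator system $\cl T$, that $M_n(\cl S\otimes_{\linj}\cl T)^+\subseteq M_n(\cl S\otimes_{\comm}\cl T)^+$; using $\cl S\otimes_{\linj}\cl T\subseteq_{coi}I(\cl S)\otimes_{\max}\cl T$, I fix $u\in M_n(I(\cl S)\otimes_{\max}\cl T)^+\cap M_n(\cl S\otimes\cl T)$ and a commuting completely positive pair $\phi:\cl S\to\cl B(K)$, $\psi:\cl T\to\cl B(K)$, and must verify $(\phi\cdot\psi)^{(n)}(u)\ge 0$. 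By Lemma \ref{l_cintom}(2), $\cl S\otimes_{\comm}\cl T$ embeds completely order isomorphically into the C*-algebra $C^*_u(\cl S)\otimes_{\max}C^*_u(\cl T)$, so by injectivity of $\cl B(K)$ the map $\phi\cdot\psi$ extends to a unital completely positive $\Theta$ on $C^*_u(\cl S)\otimes_{\max}C^*_u(\cl T)$; take a Stinespring dilation $\Theta=V^*\Pi(\cdot)V$ on a Hilbert space $\hat K\supseteq K$, with $\Pi$ a faithful $*$-representation (enlarge $\hat K$ by a faithful representation if necessary). Then $\Pi$ restricts to commuting $*$-representations $\sigma_0$ of $C^*_u(\cl S)$ and $\tau_0$ of $C^*_u(\cl T)$, with $\sigma_0$ faithful; thus $\sigma_0$ identifies $\cl S$ with an operator subsystem of $\cl B(\hat K)$ whose double commutant $\sigma_0(\cl S)''$ is contained in $\tau_0(C^*_u(\cl T))'$. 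Apply $(1)$ to this copy of $\cl S$ to obtain a completely positive $E:\cl B(\hat K)\to\sigma_0(\cl S)''$ fixing $\sigma_0(\cl S)$; extending $\sigma_0|_{\cl S}$ to $\bar\sigma_0:I(\cl S)\to\cl B(\hat K)$ by injectivity and composing gives $\hat\phi_0=E\circ\bar\sigma_0:I(\cl S)\to\sigma_0(\cl S)''\subseteq\tau_0(C^*_u(\cl T))'$, a unital completely positive map agreeing with $\sigma_0$ on $\cl S$. Since $\hat\phi_0(I(\cl S))$ commutes with $\tau_0(\cl T)$, the pair formed by $\hat\phi_0$ and the restriction of $\tau_0$ to $\cl T$ is a commuting completely positive pair, so $\hat\phi_0\cdot\tau_0$ is completely positive on $I(\cl S)\otimes_{\comm}\cl T=I(\cl S)\otimes_{\max}\cl T$ (Lemma \ref{l_cintom}(1)); hence $(\hat\phi_0\cdot\tau_0)^{(n)}(u)\ge0$. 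Finally $(\hat\phi_0\cdot\tau_0)(u)=\Pi(u)$ because $\hat\phi_0$ and $\sigma_0$ agree on $\cl S$, and $V^*\Pi(u)V=\Theta(u)=(\phi\cdot\psi)(u)$; therefore $(\phi\cdot\psi)^{(n)}(u)=(V^{(n)})^*(\hat\phi_0\cdot\tau_0)^{(n)}(u)\,V^{(n)}\ge0$, as required.

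The main obstacle is precisely the step just used in $(1)\Rightarrow(5)$: a commuting completely positive pair $(\phi,\psi)$ on $(\cl S,\cl T)$ need not extend to a commuting pair with $\phi$ enlarged to $I(\cl S)$, because the von Neumann algebra $\psi(\cl T)'$ into which $\phi$ takes values is in general not injective. Passing instead to a common Stinespring dilation of the pair — so that $\phi$ becomes the $V$-compression of an honest faithful subrepresentation of $\cl S$, whose double commutant now does sit inside the appropriate commutant — is what makes the double commutant expectation usable; arranging the faithfulness of the dilation and checking that $V^*\Pi(u)V$ still computes $(\phi\cdot\psi)^{(n)}(u)$ is the delicate bookkeeping.
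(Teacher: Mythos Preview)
Your argument has a genuine gap in the ``formal'' implication $(1)\Rightarrow(4)$. In $(4)$ the double commutant $\cl S''$ is computed inside the injective C*-algebra $\cl A$ itself, whereas in $(1)$ it is computed inside $\cl B(H)$. Your template produces, from DCEP, a map $\cl A\to\cl B(H)\to\cl S''_{\cl B(H)}$; this does not land in $\cl S''_{\cl A}$, and there is no general reason it should. (The same issue makes your claim that $(4)\Rightarrow(3)$ is ``immediate'' misleading: what is truly immediate is $(4)\Rightarrow(1)$, by taking $\cl A=\cl B(H)$.) The fix is easy and already implicit in your own work: your $(5)\Rightarrow(2)$ argument, run verbatim with an arbitrary injective C*-algebra $\cl A\supseteq\cl S$ in place of $I(\cl S)$ and with $\cl A$ as the target of the extension, yields $(5)\Rightarrow(4)$ directly; this is exactly how the paper handles it. With that change, the cycle $(2)\Rightarrow(5)\Rightarrow(4)\Rightarrow(1)\Rightarrow(2)\Leftrightarrow(3)$ closes.

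On the substantive implications your $(5)\Rightarrow(2)$ is the same multiplicative-domain argument the paper gives for $(5)\Rightarrow(4)$. Your $(1)\Rightarrow(5)$ is correct but more laborious than the paper's $(2)\Rightarrow(5)$. The paper avoids checking arbitrary commuting pairs altogether: it chooses once and for all a faithful representation of $C^*_u(\cl S)\otimes_{\max}C^*_u(\cl T)$ on a Hilbert space $H$, so that the product map $\gamma:\cl S\otimes_{\comm}\cl T\to\cl B(H)$ is already a complete order embedding (Lemma \ref{l_cintom}); one application of DCEP to this single inclusion $\cl S\subseteq\cl B(H)$ then gives a completely positive factorisation $\cl S\otimes_{\linj}\cl T\hookrightarrow I(\cl S)\otimes_{\max}\cl T\to\cl B(H)$ whose image is exactly $\gamma(\cl S\otimes_{\comm}\cl T)$, so the identity $\linj\to\comm$ is completely positive. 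Your route---extend $\phi\cdot\psi$, Stinespring-dilate faithfully, apply DCEP in the dilated space, then compress back---reaches the same conclusion, and the bookkeeping you flag (faithfulness, $V^*\Pi V$ recovering $\phi\cdot\psi$) is handled correctly; it is just that all of this is absorbed by the single canonical representation in the paper's approach.
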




Since the $\comm$
and $\max$ tensor products coincide if one of the operator systems is
a C*-algebra, we have that a unital C*-algebra is
$(\linj,\comm)$-nuclear if and only if it is
$(\linj,\max)$-nuclear. Thus, combining the equivalences of
Theorem~\ref{lance} with Theorem~\ref{dcep-thm}, we see that a unital
C*-algebra has WEP if and only if it has DCEP. However, the seven dimensional operator system introduced before Definition~\ref{dcep-defn}
has DCEP but not WEP, so these two ways of extending the C*-algebra
definition to the operator system setting are distinct.

\begin{proof}[Proof of Theorem \ref{dcep-thm}]
We will prove Theorem \ref{dcep-thm} by establishing
the chain
$$\mbox{(2)}\Rightarrow \mbox{(5)} \Rightarrow
\mbox{(4)} \Rightarrow \mbox{(1)} \Rightarrow \mbox{(2)} \Leftrightarrow \mbox{(3)}.$$

\noindent (2)$\Rightarrow$(5).
Fix an operator system $\cl T$. First note that we can find a Hilbert space $H$ such that
$\cl B(H)$ contains $\cl S$ and $\cl T$ as operator subsystems,
$\cl S\subseteq \cl T'$ and the map
$\gamma : \cl S \otimes_{\comm} \cl T \rightarrow \cl B(H)$ given by $\gamma(x\otimes y) = xy$
is a complete order isomorphism.
Indeed, to achieve this, we may represent $C^*_u(\cl S)\otimes_{\max} C_u^*(\cl T)$ faithfully on a Hilbert space $H$, and
use the fact that
$\cl S \otimes_{\comm} \cl T \subseteq_{coi} C^*_u(\cl S)\otimes_{\max} C_u^*(\cl T)$
(see Lemma \ref{l_cintom}).
Since $\cl S\subseteq \cl B(H)$, by assumption there exists a completely positive map
$\varphi: I(\cl S) \rightarrow \cl S^{\prime \prime}$ fixing $\cl S$. Clearly,
$\cl S^{\prime \prime}$ commutes with $\cl T$,
so the map $\tilde{\varphi} : I(\cl S)\otimes_{\max} \cl T\rightarrow \cl B(H)$
given by $\tilde{\varphi}(x\otimes y) = \varphi(x)y$,
is completely positive. Hence, we have the following diagram
(where the inclusion map $j$ arises from the definition of $\linj$):
$$
\cl S\otimes_{\linj}\cl T \xrightarrow{\;j\;} I(\cl S)\otimes_{\max} \cl T
\xrightarrow{\;\tilde{\varphi}\;} \cl B(H) \hookleftarrow \cl S \otimes_{\comm}\cl T.
$$
Clearly, $\tilde{\varphi}\circ j$ is completely positive; on the other hand,
its image in $\cl B(H)$ is completely order isomorphic to $\cl S\otimes_{\comm}\cl T$.
It follows that the identity from
$\cl S\otimes_{\linj}\cl T$ to $\cl S\otimes_{\comm}\cl T$ is completely positive.
However, the identity map is trivially completely positive from
$\cl S\otimes_{\comm}\cl T$ to $\cl S\otimes_{\linj}\cl T$, hence $\cl S\otimes_{\linj} \cl T = \cl S\otimes_{\comm}\cl T$.

\smallskip

\noindent (5)$\Rightarrow$(4).
Let $\cl A$ be an injective C*-algebra with $\cl S\subseteq \cl A$.
Consider the map
$\phi: \cl S\otimes_{\linj=\comm=\max}\cl S^{\prime} \rightarrow \cl A$
given by $\phi(x\otimes y) = xy$
(note that $\cl S^{\prime}$ is a C*-algebra and hence $\comm$ and $\max$ coincide).
Clearly, $\phi$ is a unital completely positive map. Since the image space $\cl A$ is injective and
$\linj$ is a left injective tensor product,
$\phi$ extends to a unital completely positive map
$\tilde{\phi} : \cl A \otimes_{\linj=\max} \cl S^{\prime}\rightarrow\cl A$
(note that $\linj$ and $\max$ here coincide since $\cl A$ is injective).
Thus the following diagram commutes:
$$
\begin{array}{ccc}
\cl S\otimes_{\linj=\comm=\max}\cl S^{\prime} & \xrightarrow{\phi} & \cl A\\
\cap & \tilde{\phi} \nearrow     & \\
\cl A \otimes_{\linj=\max} \cl S^{\prime}. & &
\end{array}
$$
The restriction of $\tilde{\phi}$ to $\cl S^{\prime}$ is a unital $*$-homomorphism.
By Choi's theorem on multiplicative domains \cite{Ch}(see also \cite[Theorem~3.18]{pa})
$\tilde{\phi}$ is an $\cl S^{\prime}$-bimodule map.
Let $\psi: \cl A \rightarrow \cl A$ be the map given by
$\psi(a) = \tilde{\phi}(a\otimes 1)$. Clearly, $\psi$ is a unital completely positive map that fixes $\cl S.$
Also, if $y\in \cl S'$ then
$$y\psi(a) = y\tilde{\phi}(a\otimes 1) = \tilde{\phi}(a\otimes y) = \tilde{\phi}(a\otimes 1)y = \psi(a)y.$$
Hence, the image of $\psi$ is in
$\cl S^{\prime \prime}$ and so $\cl S$ satisfies (4).

\smallskip

\noindent (4)$\Rightarrow$(1) is trivial.

\smallskip

\noindent (1)$\Rightarrow$(2).
Given $\cl S\subseteq \cl B(H)$, let $i : \cl S \rightarrow \cl B(H)$ be the inclusion map
and let $\varphi: \cl B(H) \rightarrow \cl S^{\prime \prime}$
be the completely positive map fixing $\cl S.$ By the injectivity of $\cl B(H)$, the map
$i$ extends to a completely positive map $\tilde{i} : I(\cl S) \rightarrow \cl B(H)$.
Now $\varphi\circ \tilde{i}: I(\cl S) \rightarrow \cl S^{\prime \prime}$ has the desired property.

\smallskip

\noindent
(2)$\Rightarrow$(3) is trivial.

\smallskip

\noindent
(3)$\Rightarrow$(2). Given an inclusion $\cl S\subseteq \cl B(H)$, let
$\varphi: \cl B \rightarrow \cl S^{\prime \prime}$ be a completely positive map fixing $\cl S$.
The inclusion map $i:\cl S \rightarrow \cl B$ extends to a completely positive map $\tilde{i}:I(\cl S) \rightarrow \cl B$.
The map $\varphi\circ \tilde{i}: I(\cl S) \rightarrow \cl S^{\prime \prime}$ clearly satisfies the desired property.
\end{proof}

We now turn our attention to Kirchberg's characterization of the C*-algebras
having WEP (equivalently, DCEP). We will see that a similar
characterization of $(\linj,\comm)$-nuclearity (equivalently, DCEP) also holds for operator systems.  We start by recalling some definitions. If $X$ is a set let $F_X$ be the (discrete) free group generated by $X$ and
$C^*(F_X)$ be the full C*-algebra of $F_X$ (see \cite{p} for a brief introduction to group C*-algebras).
When $X$ is countably infinite then we will write $F_\infty$ in the place of $F_X$.
We also say that $F$ is free group if $F=F_X$ for some set $X$.

We recall an important theorem of Kirchberg's \cite{ki} which we will use in the sequel.

\begin{thm}[Kirchberg]
\label{th_kir}
For any free group $F$ and any Hilbert space $H$, we have
$$
C^*(F) \otimes_{\min} \cl B(H) = C^*(F) \otimes_{\max} \cl B(H).
$$
\end{thm}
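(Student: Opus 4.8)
The plan is to reproduce the architecture of Kirchberg's proof \cite{ki}; I do not expect a genuine shortcut. Everything rests on two facts: (a) for a free group $F$ the full $C^*$-algebra $C^*(F)$ has the \emph{local lifting property} (LLP) --- every unital completely positive map from $C^*(F)$ into a quotient $C^*$-algebra $\cl D/\cl J$ restricts, on every finite-dimensional operator subsystem, to a map that lifts to a unital completely positive map into $\cl D$; and (b) $\cl B(H)$ is injective, so it has the weak expectation property of Definition~\ref{d_wepopsys} trivially. Granting (a) and (b), the statement is the case $\cl A = C^*(F)$, $\cl B = \cl B(H)$ of Kirchberg's principle that an LLP $C^*$-algebra $\cl A$ and a WEP $C^*$-algebra $\cl B$ satisfy $\cl A \otimes_{\min} \cl B = \cl A \otimes_{\max} \cl B$. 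A preliminary reduction lets one assume $F$ finitely generated: for $F = F_X$ one has $C^*(F_X) = \varinjlim_Y C^*(F_Y)$ over the finite $Y \subseteq X$, the connecting $*$-monomorphisms admit conditional expectations coming from the group surjections $F_X \to F_Y$ and are therefore isometric for both $\otimes_{\min}$ and $\otimes_{\max}$, and both tensor products commute with $C^*$-inductive limits.

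For (a) I would first lift $\phi$ on the generating operator system. Write $C^*(F_n) = C^*(u_1, \dots, u_n)$ with $u_1, \dots, u_n$ the free unitary generators, and set $\cl W = \sspp\{1, u_1, u_1^*, \dots, u_n, u_n^*\}$. Given a $*$-epimorphism $q : \cl D \to \cl D/\cl J$ and a unital completely positive map $\phi : C^*(F_n) \to \cl D/\cl J$, each $\phi(u_i)$ is a unitary, hence a contraction, so --- a $*$-quotient of $C^*$-algebras being a complete metric quotient map of operator spaces --- it lifts to a contraction $a_i \in \cl D$. The matrices
\[
W_i = \begin{pmatrix} a_i & (1 - a_ia_i^*)^{1/2}\\ (1 - a_i^*a_i)^{1/2} & -a_i^* \end{pmatrix} \in M_2(\cl D)
\]
are all unitary, and for every $i$ the element $a_i$ is the compression of $W_i$ to the \emph{same} $(1,1)$-corner. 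Hence the $(1,1)$-corner of the unital $*$-homomorphism $\pi : C^*(F_n) \to M_2(\cl D)$ determined by $u_i \mapsto W_i$ --- which exists by the universal property of $C^*(F_n)$, the $W_i$ being unitary --- is a unital completely positive map on $C^*(F_n)$ whose restriction $\psi_0$ to $\cl W$ sends $u_i$ to $a_i$; thus $q \circ \psi_0 = \phi|_{\cl W}$. Promoting this local lifting from $\cl W$ to an arbitrary finite-dimensional subsystem --- which may involve long words in the generators, so the argument above does not apply verbatim --- is the delicate part of (a), which I would import from Kirchberg.

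For the implication from (a) and (b) to the theorem I would prove $\|\cdot\|_{\max} \le \|\cdot\|_{\min}$ on $C^*(F) \otimes \cl B(H)$ directly. A faithful representation of $C^*(F) \otimes_{\max} \cl B(H)$ on a Hilbert space $L$ is the same as a pair of commuting representations $\sigma : C^*(F) \to \cl B(L)$ and $\theta : \cl B(H) \to \cl B(L)$, and I must bound $\|(\sigma \cdot \theta)(u)\|$ by $\|u\|_{\min}$ for $u \in C^*(F) \otimes \cl B(H)$. Split $L = L_0 \oplus L_1$ according to whether the compact operators act as $0$ (on $L_0$) or nondegenerately (on $L_1$) under $\theta$. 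On $L_1 \cong H \otimes M$ one has $\theta(\cl B(H)) = \cl B(H) \otimes 1_M$, so by commutation $\sigma = 1_H \otimes \sigma_1$ for a representation $\sigma_1$ of $C^*(F)$ on $M$; thus $(\sigma \cdot \theta)(u)|_{L_1}$ is, up to the tensor flip, the image of $u$ under $\sigma_1 \otimes \id_{\cl B(H)}$, a representation of $C^*(F) \otimes_{\min} \cl B(H)$, and its norm is at most $\|u\|_{\min}$. On $L_0$ the representation $\theta$ factors through the Calkin algebra; here one uses the LLP of $C^*(F)$ to lift the finitely many Calkin-images of the $\cl B(H)$-legs of $u$ back to $\cl B(H)$ compatibly with $\sigma$, and the injectivity of $\cl B(H)$ to assemble those lifts into a completely positive map absorbing the Calkin part, bounding the norm again by $\|u\|_{\min}$. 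A supremum over representations then finishes the proof.

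The main obstacle --- and essentially the whole content of Kirchberg's theorem --- is this $L_0$ (Calkin) analysis, together with the passage in (a) to all finite-dimensional subsystems. The difficulty is structural: $C^*(F)$ is the prototypical non-exact $C^*$-algebra, so one cannot push quotients through $\otimes_{\min}$, and it is exactly the weak expectation property of $\cl B(H)$ that rescues the estimate at the point where a naive argument would demand exactness. In the present paper the theorem is invoked as a black box, cited from \cite{ki}.
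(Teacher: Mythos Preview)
You are right that the paper does not prove this theorem; it is quoted from \cite{ki} and used as a black box throughout. So there is no ``paper's proof'' to compare against --- your proposal is an attempted sketch of an external argument, and you say as much in your last sentence.

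Your overall architecture (local lifting for $C^*(F)$ together with injectivity/WEP of $\cl B(H)$, plus the reduction to finitely generated $F$ via conditional expectations) is the standard one and is sound at that level. Two concrete problems with the details, however:

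\smallskip

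\noindent (i) A small slip: for a unital completely positive $\phi$ the images $\phi(u_i)$ are contractions, not unitaries. Your Halmos-dilation step only needs contractions, so the argument survives, but the sentence ``each $\phi(u_i)$ is a unitary'' is false as written. (It would be true if $\phi$ were a $*$-homomorphism, and in that case your construction gives a \emph{global} ucp lift --- this is in fact the ingredient actually used in most direct proofs.)

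\smallskip

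\noindent (ii) The $L_0$ (Calkin) analysis does not work as you describe it. You say one ``uses the LLP of $C^*(F)$ to lift the finitely many Calkin-images of the $\cl B(H)$-legs of $u$ back to $\cl B(H)$.'' But the LLP of $C^*(F)$ is a statement about lifting completely positive maps \emph{out of} $C^*(F)$ through a quotient of the \emph{target}; it gives you no mechanism for lifting elements of $Q(H)$ back to $\cl B(H)$. On $L_0$ what you actually face is a representation of $C^*(F)\otimes_{\max} Q(H)$, and bounding it by the $\min$-norm on $C^*(F)\otimes\cl B(H)$ runs straight into the non-exactness of $C^*(F)$; the LLP of $C^*(F)$ does not dissolve that obstruction in the way your sentence suggests. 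Kirchberg's actual proof does not proceed via the normal/singular decomposition of $\theta$ at all.

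\smallskip

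Finally, the step you explicitly defer --- promoting the lift from the generating system $\cl W$ to an arbitrary finite-dimensional operator subsystem of $C^*(F_n)$ --- is in many presentations \emph{derived from} the present theorem (via Kirchberg's tensor characterisation of LLP) rather than proved ahead of it, so ``importing from Kirchberg'' at that point risks circularity. What can be proved directly, and what suffices for the theorem, is the global ucp lifting of $*$-\emph{homomorphisms} $C^*(F_n)\to\cl D/\cl J$ (where your Halmos trick really does give $q\circ\psi=\phi$ on all of $C^*(F_n)$, because then the $q(W_i)$ are block-diagonal); combining that with the universal property of $C^*(F_n)$ and injectivity of $\cl B(H)$ is how the argument is usually completed.
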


\begin{lemma}\label{kir}
Let $\cl S$ be an operator system. Then the following are equivalent:
\begin{enumerate}
\item $\cl S \otimes_{\min} C^*(F) = \cl S \otimes_{\max} C^*(F) $ for every free group $F$;

\item $\cl S \otimes_{\max} C^*(F) \subseteq_{coi} \cl B(H) \otimes_{\max} C^*(F) $ for some inclusion $\cl S\subseteq_{coi} \cl B(H)$
and for every free group $F$;

\item $\cl S \otimes_{\min} C^*(F_\infty) = \cl S \otimes_{\max} C^*(F_\infty)$;

\item $\cl S \otimes_{\max} C^*(F_\infty) \subseteq_{coi} \cl B(H) \otimes_{\max} C^*(F_\infty) $ for some inclusion $\cl S\subseteq_{coi} \cl B(H).$
\end{enumerate}
\end{lemma}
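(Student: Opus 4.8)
The plan is to close the cycle of implications $(1)\Rightarrow(2)\Rightarrow(4)\Rightarrow(3)\Rightarrow(1)$. Here $(2)\Rightarrow(4)$ is trivial, being the specialization of (2) to $F=F_\infty$; the links $(1)\Rightarrow(2)$ and $(4)\Rightarrow(3)$ are short consequences of Kirchberg's Theorem~\ref{th_kir} and the injectivity of $\min$; and the only genuinely new ingredient enters in $(3)\Rightarrow(1)$, where equality for the single group $F_\infty$ has to be bootstrapped to all free groups.

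For $(1)\Rightarrow(2)$ I would fix any complete order embedding $\cl S\subseteq\cl B(H)$ and, for an arbitrary free group $F$, chain
\[
\cl S\otimes_{\max}C^*(F)\;=\;\cl S\otimes_{\min}C^*(F)\;\subseteq_{coi}\;\cl B(H)\otimes_{\min}C^*(F)\;=\;\cl B(H)\otimes_{\max}C^*(F),
\]
the first equality being hypothesis (1), the inclusion being injectivity of $\min$, and the last equality being Theorem~\ref{th_kir} (recall that for C*-algebras the operator-system $\min$ and $\max$ tensor products coincide with the C*-algebraic ones). The composite is the natural inclusion, so (2) holds. For $(4)\Rightarrow(3)$ I would run the same idea on cones: given $u$ positive in $\cl S\otimes_{\min}C^*(F_\infty)$ at some matricial level, injectivity of $\min$ places $u$ in the positive cone of $\cl B(H)\otimes_{\min}C^*(F_\infty)=\cl B(H)\otimes_{\max}C^*(F_\infty)$ by Theorem~\ref{th_kir}, and the coi hypothesis (4) then pulls $u$ back into the positive cone of $\cl S\otimes_{\max}C^*(F_\infty)$; since $\min\le\max$ gives the reverse containment of cones automatically, this yields $\cl S\otimes_{\min}C^*(F_\infty)=\cl S\otimes_{\max}C^*(F_\infty)$.

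The main step is $(3)\Rightarrow(1)$. I would argue at the level of cones: fix $n$ and $u\in M_n(\cl S\otimes C^*(F_X))$ positive in $\cl S\otimes_{\min}C^*(F_X)$, and show $u$ is positive in $\cl S\otimes_{\max}C^*(F_X)$. Writing $u$ as a finite sum $\sum_k s_k\otimes a_k$ and using that each $a_k\in C^*(F_X)$ is a norm-limit of finitely supported elements of the group ring, one sees that $u$ lies in $M_n(\cl S\otimes C^*(F_Y))$ for some \emph{countable} $Y\subseteq X$. Now $F_Y$ is a free factor of both $F_X$ and $F_\infty$; hence the canonical $*$-homomorphisms $C^*(F_Y)\hookrightarrow C^*(F_X)$ and $C^*(F_Y)\hookrightarrow C^*(F_\infty)$ are unital complete order embeddings, and the latter admits a unital completely positive (indeed $*$-homomorphic) retraction $\rho\colon C^*(F_\infty)\to C^*(F_Y)$ — namely the map sending the complementary free generators to $1$ — which restricts to the identity on $C^*(F_Y)$. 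Injectivity of $\min$ applied first to $C^*(F_Y)\hookrightarrow C^*(F_X)$ and then to $C^*(F_Y)\hookrightarrow C^*(F_\infty)$ shows $u$ is positive in $\cl S\otimes_{\min}C^*(F_\infty)$; hypothesis (3) makes $u$ positive in $\cl S\otimes_{\max}C^*(F_\infty)$; functoriality of $\max$ applied to $\id_{\cl S}\otimes\rho$, together with $\rho|_{C^*(F_Y)}=\id$ and $u\in M_n(\cl S\otimes C^*(F_Y))$, makes $u=(\id\otimes\rho)(u)$ positive in $\cl S\otimes_{\max}C^*(F_Y)$; and functoriality of $\max$ applied to $\id_{\cl S}\otimes(C^*(F_Y)\hookrightarrow C^*(F_X))$ finally makes $u$ positive in $\cl S\otimes_{\max}C^*(F_X)$. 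Since $\min\le\max$ always holds, we conclude $\cl S\otimes_{\min}C^*(F_X)=\cl S\otimes_{\max}C^*(F_X)$, i.e.\ (1).

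I expect the obstacle to be bookkeeping rather than ideas: verifying that a generic element of $C^*(F_X)$ involves only countably many generators (a routine approximation argument), and recording the elementary fact that a free factor $G$ of a free group $F$ yields a complete order embedding $C^*(G)\hookrightarrow C^*(F)$ admitting a unital retraction. All the substantive input is Kirchberg's Theorem~\ref{th_kir} together with injectivity of $\min$ and functoriality of $\max$.
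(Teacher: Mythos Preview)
Your proof is correct and uses essentially the same ingredients as the paper: Kirchberg's Theorem~\ref{th_kir}, injectivity of $\min$, functoriality of $\max$, and the retraction $C^*(F_Y)\hookrightarrow C^*(F_X)$ coming from free-factor projections. The paper organizes the argument slightly differently---it proves $(1)\Leftrightarrow(2)$ and $(3)\Leftrightarrow(4)$ separately and then handles $(3)\Rightarrow(1)$ by first recording the general complete order inclusion $\cl S\otimes_{\max}C^*(F_X)\subseteq_{coi}\cl S\otimes_{\max}C^*(F_Y)$ for $X\subseteq Y$ (via the retraction and Lemma~\ref{comp}), treating countable $X$ directly, and then disposing of uncountable $X$ by a contradiction argument phrased in terms of norms rather than cones. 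Your version is a bit more streamlined: you run one cycle $(1)\Rightarrow(2)\Rightarrow(4)\Rightarrow(3)\Rightarrow(1)$ and, in $(3)\Rightarrow(1)$, treat all free groups uniformly by reducing an arbitrary matrix-cone element to a countable $Y\subseteq X$ at the outset and pushing it through $C^*(F_\infty)$ via the retraction. Both arguments are the same in substance; yours avoids the countable/uncountable case split at the cost of making the ``passage to a countable subset'' explicit.
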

\begin{proof}
(1)$\Rightarrow$(2). Using Kirchberg's Theorem \ref{th_kir}, we have
$$
\cl S \otimes_{\max} C^*(F) = \cl S \otimes_{\min} C^*(F) \subseteq \cl B(H) \otimes_{\min} C^*(F) = \cl B(H) \otimes_{\max} C^*(F).
$$

\smallskip

(2)$\Rightarrow$(1). We have
$$
\cl S \otimes_{\max} C^*(F) \subseteq_{coi} \cl B(H) \otimes_{\max} C^*(F)
\ \mbox{ and } \
\cl S \otimes_{\min} C^*(F) \subseteq_{coi} \cl B(H) \otimes_{\min} C^*(F).
$$
Since the operator systems on the right hand side coincide (Theorem \ref{th_kir}), (1) follows.

From the equivalence (1)$\Leftrightarrow$(2), it follows that (3)$\Leftrightarrow$(4). Since (1)$\Rightarrow$(3) is trivial,
it remains to show the implication (3)$\Rightarrow$(1). Note that if $X\subseteq Y$ are two
sets then $C^*(F_X)\subseteq C^*(F_Y)$ and there is a unital completely positive projection from $C^*(F_Y)$ onto $C^*(F_X)$
which is the left inverse of the inclusion $C^*(F_X)\subseteq C^*(F_Y)$. So, by using Lemma~\ref{comp} and the functoriality of max,
it is easy to show that
$$
\cl S \otimes_{\max} C^*(F_X) \subseteq_{coi} \cl S \otimes_{\max} C^*(F_Y).
$$
If $X$ is a countable set then $\cl S \otimes_{\max} C^*(F_X) \subseteq_{coi} \cl S \otimes_{\max} C^*(F_\infty)$, and a similar
inclusion holds for $\min$; thus, the result follows from our assumption.
Now let $X$ be an uncountable set.
If $\cl S \otimes_{\min} C^*(F_X) \neq \cl S \otimes_{\max} C^*(F_X) $
then we can find an element $u\in \cl S \otimes C^*(F_X)$
of the form $u = \sum_{i\in I} x_i \otimes \delta_i$, where $\delta_i$ is the element of $C^*(F_X)$ corresponding to the
generator $i\in X$, $x_i\in \cl S$ and $I\subseteq F_X$, such that
$\|u\|_{\min} \neq \|u\|_{\max}$. However, $I$ must be countable, and this leads to the inequality
$\cl S \otimes_{\min} C^*(F_I) \neq \cl S \otimes_{\max} C^*(F_I) $ which contradicts our assumption.
The proof is complete.
\end{proof}

Kirchberg has shown \cite{ki_wep} that a C*-algebra $\cl A$
has WEP if and only if $\cl A \otimes_{\min} C^*(F_\infty) = \cl A \otimes_{\max} C^*(F_\infty)$.
The following theorem is an operator system version of this result.

\begin{thm}\label{th_tenfr}
Let $\cl S$ be an operator system. Then $\cl S$ is $(\linj,\comm)$-nuclear
if and only if there exists an inclusion $\cl S\subseteq_{coi} \cl B(H)$ such that
\begin{equation}\label{eq_kir}
\cl S \otimes_{\max} C^*(F_\infty) \subseteq_{coi} \cl B(H) \otimes_{\max} C^*(F_\infty).
\end{equation}
Consequently, the statements in Theorem~\ref{elc1}, DCEP, and the statements in Lemma~\ref{kir} are all equivalent.
\end{thm}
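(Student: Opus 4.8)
The \emph{consequently} clause will follow automatically once the displayed equivalence is proved: Theorem \ref{dcep-thm} already identifies $(\linj,\comm)$-nuclearity with the DCEP and with each of the conditions of Theorem \ref{elc1}, and \eqref{eq_kir} is literally condition (4) of Lemma \ref{kir}, hence is equivalent to the other three conditions there. So the plan is to prove the two implications in the equivalence. The forward implication is immediate: if $\cl S$ is $(\linj,\comm)$-nuclear, then by Theorem \ref{elc1}(4) there is an inclusion $\cl S\subseteq\cl B(H)$ with $\cl S\otimes_{\comm=\max}\cl B\subseteq_{coi}\cl B(H)\otimes_{\comm=\max}\cl B$ for every C*-algebra $\cl B$, and specialising to $\cl B=C^*(F_\infty)$ yields \eqref{eq_kir}.

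For the converse, assume \eqref{eq_kir}. By Lemma \ref{kir} (and the proof of (1)$\Rightarrow$(2) there, which uses only Kirchberg's Theorem \ref{th_kir} and the injectivity of $\min$), we may fix an inclusion $\cl S\subseteq\cl B(H)$ such that $\cl S\otimes_{\max}C^*(F)\subseteq_{coi}\cl B(H)\otimes_{\max}C^*(F)$ for every free group $F$. I would then verify condition (4) of Theorem \ref{elc1} for this inclusion. Let $\cl B$ be an arbitrary unital C*-algebra and write $\cl B=C^*(F)/\cl I$ for a free group $F$ and an ideal $\cl I\subseteq C^*(F)$, as in the proof of Theorem \ref{th_1exact}. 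By the exactness of $\max$ (Corollary \ref{me}, applied to $\cl S$ and to $\cl B(H)$) there are canonical complete order isomorphisms
$$
\cl S\hat{\otimes}_{\max}\cl B\cong\frac{\cl S\hat{\otimes}_{\max}C^*(F)}{\cl S\bar{\otimes}\cl I},\qquad \cl B(H)\hat{\otimes}_{\max}\cl B\cong\frac{\cl B(H)\hat{\otimes}_{\max}C^*(F)}{\cl B(H)\bar{\otimes}\cl I}.
$$
Since $\cl S\otimes_{\max}C^*(F)\subseteq_{coi}\cl B(H)\otimes_{\max}C^*(F)$, the operator system structure induced on $\cl S\otimes C^*(F)$ by its inclusion into $\cl B(H)\otimes_{\max}C^*(F)$ is precisely the $\max$ structure, so Proposition \ref{p_514}, applied with ambient C*-algebra $\cl B(H)$, second C*-algebra $C^*(F)$, ideal $\cl I$ and $\tau=\max$, gives the complete order embedding
$$
\frac{\cl S\otimes_{\max}C^*(F)}{\cl S\bar{\otimes}\cl I}\subseteq_{coi}\frac{\cl B(H)\otimes_{\max}C^*(F)}{\cl B(H)\bar{\otimes}\cl I}.
$$
Combining this with the two displayed isomorphisms (after passing to completions) yields $\cl S\otimes_{\max}\cl B\subseteq_{coi}\cl B(H)\otimes_{\max}\cl B$ for every unital C*-algebra $\cl B$, which is condition (4) of Theorem \ref{elc1}; hence $\cl S$ is $(\linj,\comm)$-nuclear.

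Two routine matters I would not belabour: the interchange between completed and uncompleted tensor products in the $\subseteq_{coi}$ statements, which is harmless since $\cl S\otimes_{\tau}\cl T$ sits in $\cl S\hat{\otimes}_{\tau}\cl T$ as a dense operator subsystem through a complete order embedding for any functorial $\tau$; and the standard fact that every unital C*-algebra is a quotient of a full free group C*-algebra. The crux is the converse direction, and within it the step of pushing $\cl S\otimes_{\max}C^*(F)\subseteq_{coi}\cl B(H)\otimes_{\max}C^*(F)$ down to the quotient by $\cl I$ while preserving the complete order embedding; this is exactly where the exactness of $\max$ for operator systems (Corollary \ref{me}) and the kernel-identification Proposition \ref{p_514} do the real work, with Kirchberg's Theorem \ref{th_kir} supplying the initial coincidence of $\min$ and $\max$ on $\cl B(H)\otimes C^*(F)$ that makes the induced structure on $\cl S\otimes C^*(F)$ equal to $\max$.
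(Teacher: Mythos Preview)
Your proof is correct and follows essentially the same strategy as the paper's: both directions verify condition (4) of Theorem~\ref{elc1}, and for the converse both write an arbitrary unital C*-algebra $\cl B$ as $C^*(F)/\cl I$ and use the exactness of $\max$ (Corollary~\ref{me}) together with Lemma~\ref{kir}. The only difference is in the quotient-embedding step: the paper first passes to $\min$ via Lemma~\ref{kir}(1), invokes Theorem~\ref{equ-quo} to get $\frac{\cl S\hat{\otimes}_{\min}C^*(F)}{\cl S\bar{\otimes}\cl I}\subseteq_{coi}\frac{\cl B(H)\hat{\otimes}_{\min}C^*(F)}{\cl B(H)\bar{\otimes}\cl I}$, and then converts back to $\max$ using Kirchberg's Theorem~\ref{th_kir}, whereas you stay with $\max$ throughout and apply Proposition~\ref{p_514} directly (with $\tau=\max$, $\tau_0=\max$). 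Since Proposition~\ref{p_514} is precisely the general-$\tau$ version of Theorem~\ref{equ-quo}, your route is marginally more direct but not materially different.
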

\begin{proof}
Suppose that $\cl S$ is $(\linj,\comm)$-nuclear.
By Theorem~\ref{elc1}, there exists an inclusion $\cl S\subseteq \cl B(H)$ such that for every
C*-algebra $\cl B$ we have
$$\cl S \otimes_{\max} \cl B \subseteq_{coi} \cl B(H) \otimes_{\max} \cl B.$$
In particular, letting $\cl B = C^*(F_\infty)$ we obtain (\ref{eq_kir}).

To prove the converse, note that Lemma~\ref{kir} and the assumption  yield
the equality $\cl S \otimes_{\max} C^*(F) = \cl S \otimes_{\min} C^*(F) $ for any free group $F$.
Let $\cl B$ be a C*-algebra and let $F$ be a free group and $\cl I\subseteq C^*(F)$ be an ideal
such that $\cl B = C^*(F)/\cl I$. Set $\cl C = C^*(F)$. Using Corollary~\ref{me} and Theorem \ref{th_kir}, we have
$$
\cl S\hat{\otimes}_{\max} \cl B = \cl S\hat{\otimes}_{\max} (\cl C/\cl I) =
\frac{\cl S\hat{\otimes}_{\max} \cl C}{\cl S \bar{\otimes} \cl I}
= \frac{\cl S\hat{\otimes}_{\min} \cl C}{\cl S \bar{\otimes} \cl I}
$$
and
$$
\frac{\cl S\hat{\otimes}_{\min} \cl C}{\cl S\bar{\otimes} \cl I} \subseteq_{coi}
\frac{\cl B(H)\hat{\otimes}_{\min} \cl C}{\cl B(H)\bar{\otimes} \cl I} =
\frac{\cl B(H)\hat{\otimes}_{\max} \cl C}{\cl B(H) \bar{\otimes}\cl I} =
\cl B(H)\hat{\otimes}_{\max} (\cl C/\cl I) =
\cl B(H)\hat{\otimes}_{\max} \cl B.
$$
Thus, $\cl S$ satisfies (4) of Theorem~\ref{elc1} and so $\cl S$ is (el,c)-nuclear.
\end{proof}



\section{LLP and $(\min,\rinj)$-nuclearity}\label{s_llp}

In this section we study the $(\min,\rinj)$-nuclear operator systems. We will see that a C*-algebra is
$(\min,\rinj)$-nuclear if and only if it has the local lifting property (LLP)
(we refer the reader to \cite[Chapter 16]{p} for an introduction to
the LLP).
We recall \cite{kptt} that the operator system tensor product $\rinj$ of two operator systems
$\cl S$ and $\cl T$ is defined by the inclusion $\cl S \otimes_{\rinj} \cl T \subseteq_{coi} \cl S \otimes_{\max} I(\cl T)$.
Informally, $\rinj$ is the \lq\lq flip'' of the asymmetric tensor product $\linj$.
Note that an operator system $\cl S$ is $(\min,\rinj)$-nuclear if and only if we have the complete order inclusion
$\cl S \otimes_{\min} \cl T \subseteq_{coi} \cl S \otimes_{\max} I(\cl T)$ for every operator system $\cl T$.

\begin{thm}\label{llp}
The following properties of an operator system $\cl S$ are equivalent:
\begin{enumerate}
\item $\cl S$ is $(\min,\rinj)$-nuclear;

\item $\cl S$ is C*-$(\min\mbox{-}\rinj)$-nuclear, that is, $\cl S \otimes_{\min}\cl B = \cl S \otimes_{\rinj}\cl B$ for every C*-algebra $\cl B$;

\item $\cl S \otimes_{\min} \cl B(H) = \cl S \otimes_{\max} \cl B(H)$ for every Hilbert space $H$;

\item $\cl S \otimes_{\min}\cl T = \cl S \otimes_{\max}\cl T$ for every operator system $\cl T$ having WEP;

\item $\cl S \otimes_{\min}\cl T = \cl S \otimes_{\rinj}\cl T$ for every  finite dimensional operator system $\cl T$.
\end{enumerate}
If $\cl S$ is a C*-algebra then all of the above are also equivalent to:

\  {\rm (6)} $\cl S$ has the LLP.
\end{thm}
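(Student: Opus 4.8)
\medskip
\noindent\textbf{Proof plan.}
The plan is to establish the cycle $(1)\Rightarrow(2)\Rightarrow(3)\Rightarrow(1)$, the side equivalences $(3)\Leftrightarrow(4)$ and $(1)\Leftrightarrow(5)$, and then, when $\cl S$ is a unital C*-algebra, to recognize $(3)$ as the classical Kirchberg characterization of the LLP, which yields $(6)\Leftrightarrow(3)$. The implications $(1)\Rightarrow(2)$, $(1)\Rightarrow(5)$ and $(4)\Rightarrow(3)$ cost nothing: the first two because unital C*-algebras and finite dimensional operator systems are operator systems, and the third because $\cl B(H)$ is injective and hence has WEP, so $(3)$ is the instance $\cl T=\cl B(H)$ of $(4)$.

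For $(2)\Rightarrow(3)$ I would use that $\cl B(H)$ is injective, so $I(\cl B(H))=\cl B(H)$ and hence $\cl S\otimes_{\rinj}\cl B(H)=\cl S\otimes_{\max}\cl B(H)$ directly from the definition of $\rinj$; applying $(2)$ to the C*-algebra $\cl B(H)$ then gives $(3)$. The two implications that convert a WEP hypothesis into a coi-embedding of a maximal tensor product are where Theorem~\ref{WEP} and Lemma~\ref{coi} do the work. For $(3)\Rightarrow(4)$: embed an operator system $\cl T$ having WEP as a subsystem of some $\cl B(H)$; by Theorem~\ref{WEP}, $\cl T$ is $(\linj,\max)$-nuclear, so Lemma~\ref{coi} gives $\cl S\otimes_{\max}\cl T\subseteq_{coi}\cl S\otimes_{\max}\cl B(H)$, while injectivity of $\min$ gives $\cl S\otimes_{\min}\cl T\subseteq_{coi}\cl S\otimes_{\min}\cl B(H)$; a short matricial cone chase through $(3)$ (applied to $\cl B(H)$) then forces $\cl S\otimes_{\min}\cl T=\cl S\otimes_{\max}\cl T$. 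For $(3)\Rightarrow(1)$: given any operator system $\cl T$, pass to its injective envelope $I(\cl T)$, which is injective and hence trivially has WEP, hence is $(\linj,\max)$-nuclear; fixing an inclusion $I(\cl T)\subseteq\cl B(K)$, Lemma~\ref{coi} yields $\cl S\otimes_{\max}I(\cl T)\subseteq_{coi}\cl S\otimes_{\max}\cl B(K)$, and for $u\in M_n(\cl S\otimes\cl T)$ one then chases $\min$-positivity in $\cl S\otimes\cl B(K)$ up to $\max$-positivity there via $(3)$, down to $\max$-positivity in $\cl S\otimes I(\cl T)$, and finally to $\rinj$-positivity in $\cl S\otimes\cl T$ through the defining embedding of $\rinj$; the reverse direction is automatic from $\min\le\rinj$. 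For $(5)\Rightarrow(1)$, the key observation is that every element of the algebraic tensor product $M_n(\cl S\otimes\cl T)$ already lies in $M_n(\cl S\otimes\cl T_0)$ for some finite dimensional operator subsystem $\cl T_0\subseteq\cl T$, so that injectivity of $\min$, hypothesis $(5)$ for $\cl T_0$, and right injectivity of $\rinj$ together carry $\min$-positivity to $\rinj$-positivity.

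Finally, when $\cl S=\cl A$ is a unital C*-algebra, condition $(3)$ states precisely that $\cl A\otimes_{\min}\cl B(H)=\cl A\otimes_{\max}\cl B(H)$ for every Hilbert space $H$, which is Kirchberg's characterization of the LLP (see \cite[Chapter~16]{p}); this delivers $(6)\Leftrightarrow(3)$. I expect the main obstacle to be exactly this last step, which leans on the substantial C*-algebraic theory of Kirchberg; within $(1)$--$(5)$ everything is a diagram chase resting on Theorem~\ref{WEP} and Lemma~\ref{coi}, with $(3)\Rightarrow(1)$ the most delicate, since it forces an arbitrary operator system through its injective envelope and requires keeping track of the cone inclusions $\cl S\otimes\cl T\subseteq\cl S\otimes I(\cl T)\subseteq\cl S\otimes\cl B(K)$ at every matricial level.
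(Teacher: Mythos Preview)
Your proposal is correct and follows essentially the same logical skeleton as the paper: the trivial implications, the passage through $\cl B(H)$ for $(3)$, the use of Theorem~\ref{WEP} to upgrade WEP to $(\linj,\max)$-nuclearity for $(4)$, the finite-dimensional reduction for $(5)$, and Kirchberg's theorem for $(6)$ are all exactly what the paper does.

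The one place where the paper is noticeably more economical is $(3)\Rightarrow(1)$. You route through the injective envelope $I(\cl T)$, invoke that injective operator systems have WEP, apply Theorem~\ref{WEP} and Lemma~\ref{coi} to get $\cl S\otimes_{\max}I(\cl T)\subseteq_{coi}\cl S\otimes_{\max}\cl B(K)$, and then chase cones. The paper instead simply embeds $\cl T\subseteq\cl B(H)$ and uses the \emph{right injectivity of $\rinj$} directly: this gives $\cl S\otimes_{\rinj}\cl T\subseteq_{coi}\cl S\otimes_{\rinj}\cl B(H)=\cl S\otimes_{\max}\cl B(H)$ (the last equality because $\cl B(H)$ is injective), while $\cl S\otimes_{\min}\cl T\subseteq_{coi}\cl S\otimes_{\min}\cl B(H)$; by $(3)$ the two ambient systems coincide, so the subsystems coincide. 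Your detour through $I(\cl T)$ and Lemma~\ref{coi} is valid but unnecessary---the right injectivity of $\rinj$ already does all the work in one line. Similarly, for $(4)$ the paper argues $(1)\Rightarrow(4)$ by the clean ``flip'' observation $\cl T\otimes_{\linj}\cl S=\cl T\otimes_{\max}\cl S\ \Leftrightarrow\ \cl S\otimes_{\rinj}\cl T=\cl S\otimes_{\max}\cl T$, which is a bit slicker than your $(3)\Rightarrow(4)$ via Lemma~\ref{coi}, though both are fine.
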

\begin{proof}
The implications (1)$\Rightarrow$(2)$\Rightarrow$(3) are trivial.

\smallskip

(3)$\Rightarrow$(1). Let $\cl T$ be an operator system and assume that $\cl T \subseteq \cl B(H)$. Then
$$
\cl S \otimes_{\min}\cl T \subseteq \cl S \otimes_{\min} \cl B(H)
$$
$$
\cl S \otimes_{\rinj}\cl T \subseteq \cl S \otimes_{\rinj=\max} \cl B(H)
$$
where the second inclusion follows from right injectivity of $\rinj$ (see the remark before
\cite[Theorem 7.9]{kptt}).
By assumption, the operator systems on the right hand sides are equal, and so (1) follows.

\smallskip

(4)$\Rightarrow$(3) holds because $\cl B(H)$ has WEP
(indeed; every injective operator system has WEP by Definition \ref{d_wepopsys} and Lemma \ref{l_eqwep}).

\smallskip

(1)$\Rightarrow$(4).
Recall that if $\cl T$ has WEP then it is $(\linj,\max)$-nuclear by Theorem~\ref{WEP}, and hence
$\cl T \otimes_{\linj}\cl S = \cl T \otimes_{\max}\cl S$.
It follows that
$\cl S \otimes_{\rinj}\cl T = \cl S \otimes_{\max}\cl T$ and now (4) follows by the assumption.

\smallskip

(1)$\Rightarrow$(5) is trivial.

\smallskip

(5)$\Rightarrow$(1). Suppose that (5) holds and that
$\cl S$ is not $(\min,\rinj)$-nuclear.
Then there exists an operator system $\cl T$ such that $\cl S \otimes_{\min}\cl T \neq \cl S \otimes_{\rinj}\cl T$.
Thus, there exists $n\in \bb{N}$ and an element $u\in M_n(S \otimes\cl T)$
such that $u\in M_n(S \otimes_{\min}\cl T)^+$ but $u\not\in M_n(S \otimes_{\rinj}\cl T)^+$.
Since we have the identifications $M_n(S \otimes_{\min}\cl T)^+ = (M_n(\cl S)\otimes_{\min}\cl T)^+$
and $M_n(S \otimes_{\rinj}\cl T)^+ = (M_n(\cl S)\otimes_{\rinj}\cl T)^+$, we may assume that $n = 1$.
Let $\cl T_0$ be a finite dimensional operator subsystem in $\cl T$ with $u\in \cl T_0$.

By the right injectivity of $\rinj$ and the injectivity of $\min$,
we have that $u\in (\cl S \otimes_{\min}\cl T_0)^+$ but $u\not\in(\cl S \otimes_{\rinj}\cl T_0)^+$,
a contradiction. Thus, (1) is established.

Finally, assume that $\cl S$ is completely order isomorphic to a C*-algebra.
By a result of Kirchberg \cite{ki}, a C*-algebra $\cl A$
has the LLP if and only if
$\cl A \otimes_{\min} \cl B(H) = \cl A \otimes_{\max} \cl B(H)$ for every Hilbert space $H$.
It now follows that (3) and (6) are equivalent.
\end{proof}

We now give an analogue of the LLP that is more appropriate
to our setting and parallels Ozawa's definition of 1-OLLP for operator spaces. We
shall shortly obtain an analogue of Ozawa's theorem \cite{oz}, which gives a tensor
product characterization of the operator spaces possessing 1-OLLP.

\begin{defn} Let $\cl S$ be an operator system, $\cl A$ be a unital
  C*-algebra, $\cl I$ be an ideal of $\cl A$, $q : \cl A \to \cl A/\cl I$ be the quotient map
  and $\phi: \cl S \to \cl A/\cl I$ be a unital completely positive map. We say that $\phi$ {\bf lifts
  locally,} if for every finite dimensional operator subsystem $\cl
  S_0 \subseteq \cl S,$ there exists a completely positive map
  $\psi: \cl S_0 \to \cl A,$ such that $q \circ \psi(x) = \phi(x)$ for
  every $x \in \cl S_0$.
  We say that an operator system $\cl S$ has the {\bf operator system local lifting property (OSLLP)}
  provided that for every unital C*-algebra $\cl A$ and every ideal $\cl I\subseteq \cl A$,
  every unital completely positive map $\phi: \cl S \to \cl A/\cl I$ lifts locally.
\end{defn}

\begin{remark} {\rm The local lifting $\psi$
  can, equivalently, be assumed to be a unital completely positive
  map. To see this, first suppose that $\psi$ is only a completely
  positive lifting. Then $\psi(e) = 1 +h$ where $1$ is the identity of
  $\cl A$ and $h=h^* \in \cl I.$  Choose $k=k^* \in \cl I^+,$ such that $1+h+k$ is
  positive and invertible (for example, take $k = h^-$) and let $s : \cl S \to \bb C$ be any
  state. Then
\[ \psi_1(x) = (1+h+k)^{-1/2}(\psi(x) + s(x) k)(1+h+k)^{-1/2} \]
  is a unital completely positive map that still lifts $\phi$ on $\cl
  S_0.$ This observation shows that there is a certain rigidity in the
  completely positive setting that eliminates the need for a
  definition of $\lambda$-OSLLP, where $\lambda >1.$}
\end{remark}

For the results that follow, it will be important to recall that the dual of a
finite dimensional operator system is again an operator system \cite[Theorem 4.4]{ce}.
Also recall that if $\cl S$ is a finite dimensional vector space, then there is a
canonical identification between $\cl S \otimes \cl T$ and the space of all linear maps
from $\cl S^*$ into $\cl T$ given by identifying the element $u = \sum x_i \otimes t_i \in \cl S\otimes\cl T$
with the map $L_u : \cl S^*\rightarrow \cl T$ defined by $L_u(f) = \sum f(x_i)t_i.$

\begin{lemma}\label{l_idofcp}
Let $\cl S$ and $\cl T$ be operator systems, with $\cl
  S$ finite dimensional.
  Suppose that $(u_{i,j}) \in M_n(\cl S  \otimes \cl T)$. Then
  $(u_{i,j}) \in M_n(\cl S \otimes_{\min} \cl T)^+$ if and only if the map $(L_{u_{i,j}}) : \cl S^* \to
  M_n(\cl T)$, given by $(L_{u_{i,j}}) (f) = (L_{u_{i,j}}(f))$, is completely positive.
\end{lemma}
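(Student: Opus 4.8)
The plan is to reduce the matricial statement to the scalar one ($n=1$) using the standard identification $M_n(\cl S\otimes_{\min}\cl T) = \cl S\otimes_{\min} M_n(\cl T)$ (min is associative and injective), so it suffices to show: for $u\in\cl S\otimes\cl T$, one has $u\in(\cl S\otimes_{\min}\cl T)^+$ iff $L_u:\cl S^*\to\cl T$ is completely positive. Here $\cl S^*$ is an operator system because $\cl S$ is finite dimensional (Choi--Effros, \cite[Theorem 4.4]{ce}), so ``completely positive'' makes sense. The key external input is the well-known description of the minimal tensor product: $(\cl S\otimes_{\min}\cl T)^+$ consists of those $u$ such that $(\phi\otimes\psi)(u)\geq 0$ for all unital completely positive $\phi:\cl S\to M_p$, $\psi:\cl T\to M_q$ and all $p,q$; equivalently (since states separate and by taking direct sums / using that $\cl T\subseteq\cl B(K)$) such that $(\phi\otimes\mathrm{id}_{\cl T})(u)\in M_p(\cl T)^+$ for every unital completely positive $\phi:\cl S\to M_p$ and every $p$.

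First I would unwind the identification $\cl S\otimes\cl T\cong \mathrm{CB}(\cl S^*,\cl T)$ (equivalently, linear maps, since $\cl S$ is finite dimensional) explicitly on elements: if $u=\sum_i x_i\otimes t_i$ then $L_u(f)=\sum_i f(x_i)t_i$, and conversely a ucp $\phi:\cl S\to M_p$ corresponds under the analogous identification $\cl S\otimes M_p\cong\mathrm{L}(\cl S^*,M_p)$ to an element $\hat\phi = \sum_i x_i\otimes\phi(x_i)\cdots$; more usefully, I would use that the positive cone $M_p(\cl S^*)^+$ is by definition the set of ucp-type maps: $(f_{k\ell})\in M_p(\cl S^*)^+$ iff $s\mapsto(f_{k\ell}(s))$ is completely positive $\cl S\to M_p$. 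Then $L_u$ is completely positive means: for every $p$ and every $(f_{k\ell})\in M_p(\cl S^*)^+$, the matrix $\big(L_u(f_{k\ell})\big)\in M_p(\cl T)^+$. The map $\cl S\to M_p$ given by $s\mapsto (f_{k\ell}(s))$ is exactly the ``$\phi$'' appearing in the min-cone description, and a direct computation shows $\big(L_u(f_{k\ell})\big) = (\phi\otimes\mathrm{id}_{\cl T})(u)$ under the obvious identifications. So the two conditions---$L_u$ completely positive, and $(\phi\otimes\mathrm{id}_{\cl T})(u)\geq 0$ for all ucp $\phi:\cl S\to M_p$---are literally the same condition after bookkeeping.

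The remaining point, and the place requiring a little care, is to match this with the intrinsic definition of $(\cl S\otimes_{\min}\cl T)^+$ from the excerpt, namely the cone arising from an embedding $\cl S\otimes\cl T\subseteq\cl B(H\otimes K)$ with $\cl S\subseteq\cl B(H)$, $\cl T\subseteq\cl B(K)$. For this I would invoke property (T3) of an operator system structure (so $\phi\otimes\mathrm{id}$ is positive on $\min$ when $\phi:\cl S\to M_p$ is ucp, using that $M_p(\cl T)$ is an operator system and functoriality/associativity of min) to get one inclusion of cones, and for the reverse I would use that states on $\cl S$ (equivalently ucp maps into $M_p$, $p\geq 1$) together with a faithful representation of $\cl T$ detect positivity in $\cl S\otimes_{\min}\cl T$: if $(\phi\otimes\mathrm{id})(u)\geq 0$ for all such $\phi$, then compressing the representation $\cl S\subseteq\cl B(H)$ to finite-dimensional subspaces and using that $\cl S$ is finite dimensional (so vector states of $\cl B(H)$ restrict to a separating family of states, hence ucp maps $\cl S\to M_p$ approximate evaluation against arbitrary vectors) shows $u$ is positive in $\cl B(H\otimes K)$. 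I expect this last matching of the abstract min-cone with the ``ucp into $M_p$'' description to be the main obstacle, though it is essentially folklore and follows from the results of \cite{kptt} and \cite{ce} already cited; the rest is routine diagram/identification chasing.
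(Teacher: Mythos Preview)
Your proposal is correct and takes essentially the same approach as the paper, which simply cites the characterization of $M_n(\cl S\otimes_{\min}\cl T)^+$ from \cite[Section~4]{kptt}; you have unpacked that characterization in detail. The identification of complete positivity of $L_u$ with the condition $(\phi\otimes\id)(u)\geq 0$ for all completely positive $\phi:\cl S\to M_p$, together with the description of the $\min$-cone via such maps, is precisely the content being invoked.
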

\begin{proof} This result follows by the characterization of the
  positive elements of $M_n(\cl S \otimes_{\min} \cl T)^+$ given in
  \cite[Section~4]{kptt}.
\end{proof}

\begin{thm}\label{th_kiv}
Let $\cl S$ be an operator system. Then the following are
  equivalent:
\begin{enumerate}
\item $\cl S$ has the OSLLP;
\item $\cl S \otimes_{\min} \cl B(H) = \cl S \otimes_{\max} \cl B(H)$ for
  every Hilbert space $H.$
\end{enumerate}
\end{thm}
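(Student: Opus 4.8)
The plan is to prove the two implications separately, in each case reducing to a finite dimensional operator subsystem $\cl S_0\subseteq\cl S$: the OSLLP is manifestly local, and whether an element $u$ of $\cl S\otimes\cl B(H)$ that lies in $\cl S_0\otimes\cl B(H)$ is positive in $\cl S\otimes_{\min}\cl B(H)$ or in $\cl S\otimes_{\max}\cl B(H)$ can be decided inside $\cl S_0$ (for $\min$ by injectivity, for $\max$ because only finitely many elements of $\cl S$ occur). So both statements are equivalent to statements about finite dimensional $\cl S_0$, and Lemma~\ref{l_idofcp}, the identification of completely positive maps out of $\cl S_0$ with positive elements of $\cl S_0^{*}\otimes_{\min}(\cdot)$, is the translation device (this mirrors Ozawa's treatment of $1$-OLLP for operator spaces).

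For $(1)\Rightarrow(2)$: given $u\in M_n(\cl S\otimes_{\min}\cl B(H))^+$, pick finite dimensional $\cl S_0\subseteq\cl S$ with $u\in M_n(\cl S_0\otimes\cl B(H))$, so $u\in M_n(\cl S_0\otimes_{\min}\cl B(H))^+$ by injectivity of $\min$. Write $C_u^*(\cl S)=C^*(F)/\cl J$ for a free group $F$ (every unital C*-algebra is such a quotient) with quotient map $\pi$; the inclusion $\cl S\hookrightarrow C_u^*(\cl S)=C^*(F)/\cl J$ is unital completely positive, so the OSLLP supplies a unital completely positive $\psi:\cl S_0\to C^*(F)$ with $\pi\circ\psi=\iota|_{\cl S_0}$, where $\iota:\cl S_0\hookrightarrow C_u^*(\cl S)$. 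Since $\min$ is functorial, $(\psi\otimes\id_{\cl B(H)})^{(n)}(u)\in M_n\bigl(C^*(F)\otimes_{\min}\cl B(H)\bigr)^+$, and by Kirchberg's Theorem~\ref{th_kir} this equals $M_n\bigl(C^*(F)\otimes_{\max}\cl B(H)\bigr)^+$. Applying the $*$-homomorphism $\pi\otimes\id:C^*(F)\otimes_{\max}\cl B(H)\to C_u^*(\cl S)\otimes_{\max}\cl B(H)$ and using $\pi\psi=\iota|_{\cl S_0}$ gives $u\in M_n\bigl(C_u^*(\cl S)\otimes_{\max}\cl B(H)\bigr)^+$. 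Finally a routine argument with Lemma~\ref{l_cintom} and Lemma~\ref{comp} (the inclusion $\cl S\hookrightarrow C_u^*(\cl S)$ is respected by $\comm=\max$, because every commuting pair of maps on $(\cl S,\cl B(H))$ extends through the universal C*-algebra of $\cl S$) yields $\cl S\otimes_{\max}\cl B(H)\subseteq_{coi}C_u^*(\cl S)\otimes_{\max}\cl B(H)$; hence $u\in M_n(\cl S\otimes_{\max}\cl B(H))^+$, as required.

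For $(2)\Rightarrow(1)$, fix a unital C*-algebra $\cl A$, an ideal $\cl I$, $q:\cl A\to\cl A/\cl I$, a unital completely positive $\phi:\cl S\to\cl A/\cl I$, and finite dimensional $\cl S_0\subseteq\cl S$; by the Remark following the definition of the OSLLP it suffices to produce a unital completely positive $\psi:\cl S_0\to\cl A$ with $q\circ\psi=\phi|_{\cl S_0}$. Extend $\phi$ to a $*$-homomorphism $\hat\phi:C_u^*(\cl S)\to\cl A/\cl I$; passing to the fibered-product C*-algebra $\cl D=\{(a,c)\in\cl A\oplus C_u^*(\cl S):q(a)=\hat\phi(c)\}$, whose projection $\beta:\cl D\to C_u^*(\cl S)$ is onto with $\ker\beta\cong\cl I$, reduces the task to finding a unital completely positive local section of $\beta$ over $\cl S_0\subseteq\cl S\subseteq C_u^*(\cl S)$ (compose with $\cl D\to\cl A$). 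Using Lemma~\ref{l_idofcp}, this amounts to lifting, positively along $\cl S_0^{*}\otimes_{\min}\cl D\to\cl S_0^{*}\otimes_{\min}C_u^*(\cl S)$, the element $v_0\in\cl S_0^{*}\otimes C_u^*(\cl S)$ corresponding to the inclusion $\cl S_0\hookrightarrow C_u^*(\cl S)$. The main obstacle — and where hypothesis (2) is really used — is exactly this lifting: one invokes the max-exactness of Corollary~\ref{me} (so that $v_0$ lifts \emph{approximately} through $\cl S_0^{*}\hat\otimes_{\max}\cl D$ once $v_0$ is known to be positive in $\cl S_0^{*}\otimes_{\max}C_u^*(\cl S)$), together with hypothesis (2) rewritten via Theorem~\ref{llp} as $(\min,\rinj)$-nuclearity of $\cl S$ — a property which passes to $\cl S_0$, since for WEP $\cl W$ one has $\cl S_0\otimes_{\max}\cl W=\cl S_0\otimes_{\linj}\cl W\subseteq_{coi}\cl S\otimes_{\linj}\cl W=\cl S\otimes_{\max}\cl W$ by Theorem~\ref{WEP} and left injectivity of $\linj$, whence $\cl S_0\otimes_{\min}\cl T=\cl S_0\otimes_{\rinj}\cl T$ for all $\cl T$ — to identify the relevant $\min$ and $\max$ structures and conclude that the approximate max-lifts are in fact approximate completely positive lifts of $v_0$ into $\cl D$. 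A final perturbation using a quasi-central approximate unit for $\ker\beta\cong\cl I$, exactly as in the Remark, promotes these approximate lifts to an exact unital completely positive section $\sigma:\cl S_0\to\cl D$, completing the proof.
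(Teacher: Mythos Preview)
Your argument for $(1)\Rightarrow(2)$ is essentially identical to the paper's and is correct.

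For $(2)\Rightarrow(1)$ there is a genuine gap. The pullback reduction is fine, and translating the lifting problem for $\cl S_0\hookrightarrow C_u^*(\cl S)$ into a question about the element $v_0\in\cl S_0^*\otimes_{\min}C_u^*(\cl S)$ is correct. The difficulty is your claim that $v_0$ is positive in $\cl S_0^*\otimes_{\max}C_u^*(\cl S)$, which you need in order to invoke max-exactness (Corollary~\ref{me}) for $\cl S_0^*\otimes\cl D$. Your justification is flawed. First, the identity ``$\cl S_0\otimes_{\max}\cl W=\cl S_0\otimes_{\linj}\cl W$'' for $\cl W$ with WEP is not what Theorem~\ref{WEP} gives: WEP of $\cl W$ yields $\cl W\otimes_{\linj}\cl R=\cl W\otimes_{\max}\cl R$, which after flipping is $\cl R\otimes_{\rinj}\cl W=\cl R\otimes_{\max}\cl W$; but $\rinj$ is right injective, not left injective, so you cannot chain $\cl S_0\otimes_{\rinj}\cl W\subseteq_{coi}\cl S\otimes_{\rinj}\cl W$. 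Second, even if $(\min,\rinj)$-nuclearity did pass to $\cl S_0$, it would give information about $\cl S_0\otimes_{\min}(\cdot)=\cl S_0\otimes_{\rinj}(\cdot)$, not about $\cl S_0^*\otimes_{\max}(\cdot)$; and since $\rinj\le\comm\le\max$, positivity in $\rinj$ does not upgrade to positivity in $\max$. So the key hypothesis for your max-exactness step is unproven.

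The paper's proof sidesteps this entirely. It embeds $\cl S_0^*\subseteq\cl B(H)$ and applies hypothesis (2) \emph{directly} there: the element $u=v_0$ corresponding to $\cl S_0\hookrightarrow\cl S$ lies in $\cl S_0^*\otimes_{\min}\cl S\subseteq_{coi}\cl B(H)\otimes_{\min}\cl S=\cl B(H)\otimes_{\max}\cl S$. One then pushes forward by $\id\otimes\phi$ into $\cl B(H)\hat\otimes_{\max}(\cl A/\cl I)$, uses max-exactness at the level of $\cl B(H)$, passes across to the $\min$ quotient, and then uses the crucial restriction result Theorem~\ref{equ-quo} (the inclusion $(\cl S_0^*\hat\otimes_{\min}\cl A)/(\cl S_0^*\bar\otimes\cl I)\subseteq_{coi}(\cl B(H)\hat\otimes_{\min}\cl A)/(\cl B(H)\bar\otimes\cl I)$) together with complete order proximinality (Proposition~\ref{p_comprox}) to lift back inside $\cl S_0^*\hat\otimes_{\min}\cl A$. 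The point is that hypothesis~(2) is a statement about $\cl B(H)$ tensored with $\cl S$, and the proof arranges matters so that it is applied in exactly that form; your approach tries to force a statement about $\cl S_0^*\otimes_{\max}(\cdot)$, which hypothesis~(2) simply does not provide.
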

\begin{proof} (1)$\Rightarrow$(2). Let $F$ be a free group and $I$
be an ideal of the C*-algebra $C^*(F)$ of $F$ such that $C^*_u(\cl S)= C^*(F)/I$. Write $q:C^*(F) \to
  C^*_u(\cl S)$ for the quotient map and set $\cl A = C^*(F)$.
  Given $t \in (\cl S \otimes_{\min} \cl B(H))^+,$ choose a
  finite dimensional operator subsystem $\cl S_0 \subseteq \cl S,$ such that $t \in (\cl S_0 \otimes_{\min} \cl B(H))^+.$
  Since $\cl S$ has OSLLP, there exists a completely positive map $\psi : \cl S_0 \to C^*(F)$ such that $q \circ \psi$ coincides with the
  inclusion of $\cl S_0$ into $C^*_u(\cl S).$ Consider the following
  completely positive maps:
\begin{multline*} \cl S_0 \otimes_{\min} \cl B(H) \xrightarrow{\psi \otimes \id} C^*(F)
\otimes_{\min} \cl B(H) = C^*(F) \otimes_{\max} \cl B(H)\\ \xrightarrow{q \otimes
  \id} C^*_u(\cl S) \otimes_{\max} \cl B(H) \mbox{}_{coi}\supseteq \cl S
\otimes_{\comm} \cl B(H) = \cl S \otimes_{\max} \cl B(H),\end{multline*}
where the last identity and inclusion follow from Lemma \ref{l_cintom}.
This diagram shows that $t \in (\cl S \otimes_{\max} \cl B(H))^+.$ Thus, the identity map is
 a positive map from $\cl S \otimes_{\min}\cl  B(H)$ to $\cl S
 \otimes_{\max} \cl B(H).$  A similar argument shows that it is completely positive and the implication follows.

\smallskip

(2)$\Rightarrow$(1). Let $\phi: \cl S \to \cl A/\cl I$ be a
completely positive map into a quotient C*-algebra, and fix a finite dimensional
operator subsystem $\cl S_0 \subseteq \cl S.$ Let $H$ be a Hilbert space such that
$\cl S_0^* \subseteq \cl B(H).$  The completely positive
inclusion map of $\cl S_0$ into $\cl S$ corresponds to a positive
element $u \in \cl S_0^* \otimes_{\min} \cl S \subseteq \cl B(H)
\otimes_{\min} \cl S = \cl B(H) \otimes_{\max} \cl S.$  By the exactness of
$\max$ (see Proposition \ref{me}),
we have the following chain of maps

\begin{multline*} \cl B(H) \hat{\otimes}_{\max} \cl S \xrightarrow{\id \otimes
    \phi} \cl B(H) \hat{\otimes}_{\max} (\cl A/\cl I) = \frac{\cl B(H) \hat{\otimes}_{\max}
    \cl A}{\cl B(H)\bar{\otimes} \cl I} \xrightarrow{\pi} \frac{\cl B(H)
    \hat{\otimes}_{\min} \cl A}{\cl B(H)\bar{\otimes} \cl I}, \end{multline*}
where $\pi$ is the map arising from the natural surjection
$\cl B(H) \hat{\otimes}_{\max}\cl A \rightarrow\cl B(H) \hat{\otimes}_{\min}\cl A$.
By Theorem~\ref{equ-quo},
we have that
\[ \frac{\cl S_0^* \hat{\otimes}_{\min} \cl A}{\cl S_0^*\bar{\otimes} \cl I}
\subseteq_{coi} \frac{\cl B(H) \hat{\otimes}_{\min} \cl A}{\cl B(H) \bar{\otimes} \cl I}.\]
Thus, the image $v$ of $u$ under this chain of maps is a positive
element in $\frac{\cl B(H) \hat{\otimes}_{\min} \cl A}{\cl B(H) \bar{\otimes} \cl I}$.
By Proposition~\ref{p_comprox},
$\cl S_0^* \bar{\otimes} \cl I$ is completely order
proximinal in $\cl S_0^* \hat{\otimes}_{\min} \cl A.$ Thus, $v$
can be lifted to a positive element $w$ of
$\cl S_0^* \hat{\otimes}_{\min} \cl A.$ The element $w$ corresponds to a completely
positive map $\psi$ from $\cl S_0$ into $\cl A.$  Diagram chasing
shows that $\psi$ is the desired completely positive lifting of $\phi.$

\end{proof}

Combining Theorems~\ref{llp} and \ref{th_kiv}, we see that we can add
the OSLLP to the list of properties characterizing
$(\min,\rinj)$-nuclearity. We present a property stronger than OSLLP
in the following proposition.

\begin{prop} Let $\cl S$ be an operator system. If $\cl S_0^*$ is
  1-exact for every finite
  dimensional operator subsystem $\cl S_0 \subseteq \cl S,$ then $\cl
  S$ has the OSLLP.
\end{prop}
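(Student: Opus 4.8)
The plan is to use the tensor-product characterization of OSLLP from Theorem~\ref{th_kiv}, namely that $\cl S$ has the OSLLP if and only if $\cl S\otimes_{\min}\cl B(H)=\cl S\otimes_{\max}\cl B(H)$ for every Hilbert space $H$. So it suffices to prove that if $\cl S_0^*$ is $1$-exact for every finite dimensional operator subsystem $\cl S_0\subseteq\cl S$, then this equality of tensor products holds. As in the proof of the implication (5)$\Rightarrow$(1) of Theorem~\ref{llp}, it is enough to work locally: given $u\in (\cl S\otimes_{\min}\cl B(H))^+$, choose a finite dimensional operator subsystem $\cl S_0\subseteq\cl S$ with $u\in (\cl S_0\otimes_{\min}\cl B(H))^+$; by the injectivity of $\min$ and the right injectivity of $\rinj$ (together with $\cl B(H)$ being injective so that $\rinj$, $\comm$ and $\max$ all agree on the $\cl B(H)$ factor), it suffices to show $u\in(\cl S_0\otimes_{\max}\cl B(H))^+$. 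Thus the whole problem reduces to: a finite dimensional operator system $\cl S_0$ with $\cl S_0^*$ $1$-exact satisfies $\cl S_0\otimes_{\min}\cl B(H)=\cl S_0\otimes_{\max}\cl B(H)$.

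To handle the finite dimensional case, I would exploit the duality between $\cl S_0$ and $\cl S_0^*$ and the identification (recalled before Lemma~\ref{l_idofcp}) of elements of $\cl S_0\otimes\cl T$ with linear maps $\cl S_0^*\to\cl T$. By Lemma~\ref{l_idofcp}, an element $(u_{i,j})\in M_n(\cl S_0\otimes\cl B(H))$ lies in $M_n(\cl S_0\otimes_{\min}\cl B(H))^+$ exactly when the associated map $\cl S_0^*\to M_n(\cl B(H))$ is completely positive. On the other hand, the corresponding statement for $\max$ should translate, via Lance's criterion (the paragraph before Lemma~\ref{bidual-lem}: $f$ is positive on $\cl S_0\otimes_{\max}\cl B(H)$ iff $\cl L_f:\cl S_0\to\cl B(H)^*$ is completely positive) and the finite-dimensionality of $\cl S_0$, into a statement that the map $\cl S_0^*\to M_n(\cl B(H))$ factors appropriately or extends to $C_u^*(\cl S_0^*)$ or to an injective envelope. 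The key input is then $1$-exactness of $\cl S_0^*$: by Theorem~\ref{th_1exact}, $\cl S_0^*$ being $1$-exact is equivalent to $\cl S_0^*$ being $(\min,\linj)$-nuclear, i.e. $\cl S_0^*\otimes_{\min}\cl R=\cl S_0^*\otimes_{\linj}\cl R$ for all operator systems $\cl R$; taking $\cl R=\cl B(H)$ and using $\cl S_0^*\otimes_{\linj}\cl B(H)\subseteq_{coi} I(\cl S_0^*)\otimes_{\max}\cl B(H)$, together with the injectivity of $\cl B(H)$ to dispose of the injective-envelope factor, should give the equality of the min and max norms after dualizing back to $\cl S_0\otimes\cl B(H)$.

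Concretely, I would argue: a completely positive map $\varphi:\cl S_0^*\to M_n(\cl B(H))$ (coming from $u\in M_n(\cl S_0\otimes_{\min}\cl B(H))^+$) is, by Lemma~\ref{l_idofcp}, exactly a positive element of $M_n(\cl S_0^{**}\otimes_{\min}\cl B(H))=M_n(\cl S_0\otimes_{\min}\cl B(H))$; using $1$-exactness of $\cl S_0^*$ one upgrades the relevant positivity from $\min$ to $\linj$, hence (since $\cl B(H)$ is injective, $\linj=\max$ on this factor) to $\max$, which by Lance's criterion re-expresses as $u\in M_n(\cl S_0\otimes_{\max}\cl B(H))^+$. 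The main obstacle I anticipate is getting the bookkeeping of the three dualizations right — passing between $\cl S_0\otimes_{\bullet}\cl B(H)$ and complete positivity of maps $\cl S_0^*\to\cl B(H)$ for the three tensor norms $\min$, $\linj=\max$ simultaneously, and making sure the matricial levels are handled (via $M_n(\cl S_0\otimes_{\tau}\cl B(H))=\cl S_0\otimes_{\tau}M_n(\cl B(H))$ and the fact that $M_n(\cl B(H))\cong\cl B(H^n)$ is again injective). Once the $n=1$ case is pinned down the matricial case is routine, and the reduction to finite dimensions via Theorem~\ref{th_kiv} and the argument of Theorem~\ref{llp}(5)$\Rightarrow$(1) is straightforward.
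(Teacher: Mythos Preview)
Your reduction via Theorem~\ref{th_kiv} to the finite dimensional statement ``$\cl S_0^*$ 1-exact $\Rightarrow$ $\cl S_0\otimes_{\min}\cl B(H)=\cl S_0\otimes_{\max}\cl B(H)$'' is fine, but the core ``dualize back'' step has a genuine gap. The $(\min,\linj)$-nuclearity of $\cl S_0^*$ is a statement about tensor products of the form $\cl S_0^*\otimes_{\bullet}\cl R$; your element $u$, however, lives in $\cl S_0\otimes\cl B(H)$, and the correspondence $u\leftrightarrow L_u:\cl S_0^*\to\cl B(H)$ from Lemma~\ref{l_idofcp} does not convert 1-exactness of $\cl S_0^*$ into any usable statement about $u$. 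Moreover, Lance's criterion (the paragraph before Lemma~\ref{bidual-lem}) characterizes positive \emph{functionals} on $\cl S\otimes_{\max}\cl T$ via complete positivity of $\cl L_f:\cl S\to\cl T^*$; it does \emph{not} say that $u\in(\cl S_0\otimes_{\max}\cl B(H))^+$ is equivalent to $L_u:\cl S_0^*\to\cl B(H)$ being completely positive (that is the $\min$ characterization), nor to any condition that 1-exactness of $\cl S_0^*$ would verify. So the sentence ``using 1-exactness of $\cl S_0^*$ one upgrades the relevant positivity from $\min$ to $\linj$ \ldots\ which by Lance's criterion re-expresses as $u\in M_n(\cl S_0\otimes_{\max}\cl B(H))^+$'' does not go through as written, and I do not see a routine repair within this framework.

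The paper's argument avoids this difficulty by not passing through Theorem~\ref{th_kiv} at all: it verifies the OSLLP directly from its definition. Given $\phi:\cl S\to\cl A/\cl I$ and finite dimensional $\cl S_0\subseteq\cl S$, the restriction $\phi_0$ is identified via Lemma~\ref{l_idofcp} with a positive element of $\cl S_0^*\otimes_{\min}(\cl A/\cl I)$ --- now the element \emph{is} in $\cl S_0^*\otimes_{\bullet}$ something, so 1-exactness of $\cl S_0^*$ applies in its original quotient form: $\cl S_0^*\otimes_{\min}(\cl A/\cl I)\cong(\cl S_0^*\otimes_{\min}\cl A)/(\cl S_0^*\otimes\cl I)$. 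Complete order proximinality (Proposition~\ref{p_comprox}) then lifts the positive element to $\cl S_0^*\otimes_{\min}\cl A$, and Lemma~\ref{l_idofcp} converts it back into the desired completely positive lifting $\cl S_0\to\cl A$. The point is that working with an arbitrary quotient $\cl A/\cl I$ rather than with $\cl B(H)$ places the element on the $\cl S_0^*$ side of the tensor product, where the exactness hypothesis is directly usable.
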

\begin{proof} Given a C*-algebra $\cl A$, an ideal $\cl I\subseteq \cl A$, a unital completely positive map
 $\phi : \cl S \to \cl A/\cl I$ and a finite dimensional operator subsystem $\cl S_0 \subseteq \cl S,$
 we need to show that the restriction $\phi_0$ of $\phi$ to $\cl S_0$ lifts to a completely positive map.
By Lemma \ref{l_idofcp}, $\phi_0$ can be identified
  with a positive element of $\cl S_0^* \otimes_{\min} (\cl A/\cl I),$ while
  any lifting of $\phi_0$ can be identified with a positive element of $\cl S_0^*
  \otimes_{\min} \cl A.$

Since $\cl S_0^*$ is 1-exact, $\cl S_0^* \otimes_{\min} (\cl A/\cl I)$ is completely order
isomorphic to the operator system quotient
\[\frac{\cl S_0^* \otimes_{\min} \cl A}{\cl S_0^* \otimes \cl I}. \]
Hence, by the complete proximinality of $\cl S_0^*\otimes \cl I$ (Proposition~\ref{p_comprox}),
the positive element of the quotient corresponding to $\phi_0$
can be lifted to a positive element of $\cl S_0^* \otimes_{\min} \cl A$ corresponding to a completely positive lifting
of $\phi_0$.
\end{proof}

\section{An Operator System Approach to the Kirchberg Conjecture}

A famous conjecture of Kirchberg \cite{ki} states that $C^*(F_{\infty})$
has the WEP. We shall refer to this as the Kirchberg Conjecture (KC).
One equivalent formulation of the KC is that
every C*-algebra with the LLP has the WEP. The work of several authors has
lead to various operator space equivalences of the KC. See the book of
Pisier \cite[Chapter~16]{p} for an excellent summary of all these results. In this section
we obtain various operator system equivalences of the KC.

Recall that DCEP (equivalently, $(\linj,\comm)$-nuclearity) of an operator system is a property which coincides with WEP when restricted to C*-algebras. Similarly, the properties in Theorem~\ref{llp} coincide with LLP when the operator system is
in fact a C*-algebra.
It is hence natural to seek an analogue of the KC for operator systems
that involves the DCEP and OSLLP.

 The next theorem gives one operator system version of the KC.

\begin{thm}\label{oskc}
The following are equivalent:
\begin{enumerate}
\item $C^*(F_\infty)$ has WEP;

\item every $(\min,\rinj)$-nuclear operator system is $(\linj,\comm)$-nuclear;

\item for any $(\min,\rinj)$-nuclear operator system $\cl S$, one has
$$
\cl S \otimes_{\min} \cl S = \cl S \otimes_{\comm} \cl S;
$$
\item every operator system with the OSLLP has the DCEP.
\end{enumerate}
\end{thm}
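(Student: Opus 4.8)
The plan is to prove the cyclic chain $(1)\Rightarrow(2)\Rightarrow(3)\Rightarrow(1)$ and, separately, the equivalence $(2)\Leftrightarrow(4)$. The equivalence $(2)\Leftrightarrow(4)$ is a matter of dictionary: by Theorems~\ref{llp} and~\ref{th_kiv} an operator system is $(\min,\rinj)$-nuclear if and only if it has the OSLLP, and by Theorem~\ref{dcep-thm} it is $(\linj,\comm)$-nuclear if and only if it has the DCEP, so substituting these two equivalences into statement~(2) produces statement~(4) verbatim.

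For $(1)\Rightarrow(2)$: assume $C^*(F_\infty)$ has WEP (classical WEP of this C*-algebra is the same as WEP as an operator system, Definition~\ref{d_wepopsys}) and let $\cl S$ be $(\min,\rinj)$-nuclear. By Theorem~\ref{llp}, tensoring $\cl S$ with a WEP operator system collapses $\min$ and $\max$; applied to $C^*(F_\infty)$ this gives $\cl S\otimes_{\min}C^*(F_\infty)=\cl S\otimes_{\max}C^*(F_\infty)$. Fixing any embedding $\cl S\subseteq\cl B(H)$, injectivity of $\min$ yields $\cl S\otimes_{\min}C^*(F_\infty)\subseteq_{coi}\cl B(H)\otimes_{\min}C^*(F_\infty)$, and Kirchberg's Theorem~\ref{th_kir} identifies the latter with $\cl B(H)\otimes_{\max}C^*(F_\infty)$. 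Hence $\cl S\otimes_{\max}C^*(F_\infty)\subseteq_{coi}\cl B(H)\otimes_{\max}C^*(F_\infty)$, which is precisely condition~(\ref{eq_kir}); by Theorem~\ref{th_tenfr}, $\cl S$ is $(\linj,\comm)$-nuclear.

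For $(2)\Rightarrow(3)$: let $\cl S$ be $(\min,\rinj)$-nuclear. First I would observe, using only $(\min,\rinj)$-nuclearity, that $\cl S\otimes_{\min}\cl S=\cl S\otimes_{\linj}\cl S$. Since $I(\cl S)$ is injective it has WEP, so Theorem~\ref{llp} gives $I(\cl S)\otimes_{\min}\cl S=I(\cl S)\otimes_{\max}\cl S$; injectivity of $\min$ then gives $\cl S\otimes_{\min}\cl S\subseteq_{coi}I(\cl S)\otimes_{\min}\cl S=I(\cl S)\otimes_{\max}\cl S$, while the definition of $\linj$ gives $\cl S\otimes_{\linj}\cl S\subseteq_{coi}I(\cl S)\otimes_{\max}\cl S$; two operator system structures on $\cl S\otimes\cl S$ that embed completely order isomorphically into the same operator system must coincide. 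Now (2) says $\cl S$ is $(\linj,\comm)$-nuclear, so $\cl S\otimes_{\linj}\cl S=\cl S\otimes_{\comm}\cl S$; combining the two identities yields $\cl S\otimes_{\min}\cl S=\cl S\otimes_{\comm}\cl S$, which is~(3). For $(3)\Rightarrow(1)$: apply~(3) with $\cl S=C^*(F_\infty)$, which is legitimate because $C^*(F_\infty)$ has the LLP (Kirchberg~\cite{ki}) and hence is $(\min,\rinj)$-nuclear by Theorem~\ref{llp}; since $\comm=\max$ when one factor is a C*-algebra (Lemma~\ref{l_cintom}), statement~(3) becomes $C^*(F_\infty)\otimes_{\min}C^*(F_\infty)=C^*(F_\infty)\otimes_{\max}C^*(F_\infty)$, which by Kirchberg's characterization of WEP (the result of~\cite{ki_wep} recalled before Theorem~\ref{th_tenfr}) is equivalent to $C^*(F_\infty)$ having WEP.

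I expect the one delicate point to be the identity $\cl S\otimes_{\min}\cl S=\cl S\otimes_{\linj}\cl S$ used in $(2)\Rightarrow(3)$: it is tempting to try to compare $\linj$ and $\rinj$ directly, but these tensor products are not comparable in general, so one genuinely has to route through the WEP of the injective envelope $I(\cl S)$ and the ``WEP collapses $\min$ and $\max$'' criterion of Theorem~\ref{llp}. Everything else is bookkeeping: all the hard analysis is already carried by Kirchberg's theorems and by Theorems~\ref{llp}, \ref{th_kiv}, \ref{dcep-thm} and~\ref{th_tenfr} established earlier in the paper.
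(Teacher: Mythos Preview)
Your proof is correct and follows the same architecture as the paper's: the cycle $(1)\Rightarrow(2)\Rightarrow(3)\Rightarrow(1)$ together with the dictionary $(2)\Leftrightarrow(4)$, and your arguments for $(1)\Rightarrow(2)$, $(3)\Rightarrow(1)$ and $(2)\Leftrightarrow(4)$ are essentially identical to the paper's (you are just a bit more explicit in unpacking Lemma~\ref{kir} and in noting that $C^*(F_\infty)$ must be checked to be $(\min,\rinj)$-nuclear before invoking~(3)).

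The one place where you diverge is $(2)\Rightarrow(3)$. You establish $\cl S\otimes_{\min}\cl S=\cl S\otimes_{\linj}\cl S$ by routing through the WEP of $I(\cl S)$ and Theorem~\ref{llp}(4), and then combine with $(\linj,\comm)$-nuclearity. The paper does it more cheaply with a flip: from $(\linj,\comm)$-nuclearity one has $\cl S\otimes_{\linj}\cl S=\cl S\otimes_{\comm}\cl S$; applying the flip map (which carries $\linj$ to $\rinj$ and fixes the symmetric tensor product $\comm$) yields $\cl S\otimes_{\rinj}\cl S=\cl S\otimes_{\comm}\cl S$; now $(\min,\rinj)$-nuclearity gives $\cl S\otimes_{\min}\cl S=\cl S\otimes_{\rinj}\cl S$. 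So the ``delicate point'' you flag is in fact avoidable: one does not need to compare $\linj$ and $\rinj$, only to use that they are flips of one another and that $\comm$ is symmetric. Your detour through $I(\cl S)$ is perfectly valid, just slightly longer.
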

\begin{proof}
(2)$\Rightarrow$(1). The C*-algebra $C^*(F_\infty)$ has LLP
(see Theorems \ref{th_kir} and \ref{th_kiv}), and hence by
Theorem \ref{llp} it is $(\min,\rinj)$-nuclear. By assumption, $C^*(F_\infty)$ is
$(\linj,\comm)$-nuclear and hence it has WEP (see Theorem \ref{WEP} and \cite[Proposition 7.6]{kptt}).

\smallskip

(1)$\Rightarrow$(2). Let $\cl S$ be a $(\min,\rinj)$-nuclear operator system.
By Theorem \ref{llp}, $\cl S \otimes_{\min} \cl T =  \cl S \otimes_{\max} \cl T$ for any operator system $\cl T$ having WEP. In particular, $\cl S \otimes_{\min} C^*(F_{\infty}) =  \cl S \otimes_{\max} C^*(F_{\infty})$.
By Lemma \ref{kir} and Theorem \ref{th_tenfr}, $\cl S$ is $(\linj,\comm)$-nuclear.

\smallskip

(2)$\Rightarrow$(3). Suppose that $\cl S$ is $(\min,\rinj)$-nuclear. Then it is also $(\linj,\comm)$-nuclear, and
hence $\cl S\otimes_{\linj}\cl S = \cl S\otimes_{\comm}\cl S$. After flipping, we obtain
$\cl S\otimes_{\rinj}\cl S = \cl S\otimes_{\comm}\cl S$ and hence we have that
$\cl S\otimes_{\min}\cl S = \cl S\otimes_{\comm}\cl S$. Hence (3) holds.

\smallskip

(3)$\Rightarrow$(1). The implication follows
from the fact that KC is equivalent to the equality
$$
C^*(F_\infty) \otimes_{\min} C^*(F_\infty) = C^*(F_\infty) \otimes_{\max} C^*(F_\infty)
$$
and the fact that $\comm$ and $\max$ coincide for C*-algebras (Lemma
\ref{l_cintom} (1)).

Finally, the equivalence of (2) and (4) follows from the fact that an
operator system is $(\min, \rinj)$-nuclear if and only if it has the
OSLLP by Theorem~\ref{th_kiv} and Theorem~\ref{llp} combined with the
fact that an operator system is $(\linj, \comm)$-nuclear if and only
if it has the DCEP by Theorem~\ref{dcep-thm}.
\end{proof}

\begin{defn}
We will say that an operator subsystem $\cl S \subseteq \cl A$ of a
unital C*-algebra {\bf contains enough
unitaries} if the unitaries in $\cl S$ generate $\cl A$ as a
C*-algebra.
\end{defn}

We will see that such an operator system contains considerable information about the C*-algebra.
The following lemma is an analog of \cite[Proposition 13.6]{p} in the operator system setting.

\begin{lemma}\label{uniqueextlem}
Let $\cl S \subseteq \cl A$ contain enough unitaries and let $\phi:\cl S \rightarrow \cl B$ be
a unital completely positive map. Suppose that $\{u_\alpha\}$ is a collection of unitaries in $\cl S$ which generate $\cl A.$ If $\phi(u_\alpha)$ is a unitary in $\cl B$ for every $\alpha$ then $\phi$ extends to a unital $*$-homomorphism from $\cl A$ to $\cl B$.
\end{lemma}

\begin{proof}
This is another application of Choi's theory of multiplicative
domains \cite{Ch}. Since $1= \phi(u_{\alpha}^*u_{\alpha}) =
\phi(u_{\alpha})^*\phi(u_{\alpha}),$ each $u_{\alpha}$ belongs to the
multiplicative domain of $\phi,$ and since these elements generate
$\cl A,$ $\phi$ is a $*$-homomorphism.
\end{proof}

\begin{lemma}
Suppose $\cl A$ and $\cl B$ are unital C*-algebras and the operator system $\cl S\subseteq \cl A$ contains enough unitaries.
If $\cl A \otimes_{\tau} \cl B$ is a C*-algebra tensor product then $\cl S \otimes \cl B$ contains enough unitaries in $\cl A \otimes{_\tau} \cl B$.
\end{lemma}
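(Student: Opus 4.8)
The plan is to exhibit an explicit collection of unitaries in $\cl S \otimes \cl B \subseteq \cl A \otimes_{\tau} \cl B$ whose generated C*-algebra is all of $\cl A \otimes_{\tau} \cl B$. Let $\{u_\alpha\}$ be a collection of unitaries in $\cl S$ that generates $\cl A$ as a C*-algebra; such a collection exists by hypothesis. First I would observe that the simplest candidates are the elementary tensors $u_\alpha \otimes v$, where $v$ ranges over the unitaries of $\cl B$. Each such element lies in $\cl S \otimes \cl B$ (since $u_\alpha \in \cl S$), and each is a unitary in $\cl A \otimes_{\tau} \cl B$: indeed $(u_\alpha \otimes v)^*(u_\alpha \otimes v) = u_\alpha^* u_\alpha \otimes v^* v = 1 \otimes 1$, and the product is a $*$-homomorphic image of an elementary tensor in the algebraic tensor product, so this computation is valid in any C*-algebra tensor product completion.

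Next I would identify the C*-subalgebra $\cl C$ of $\cl A \otimes_{\tau} \cl B$ generated by $\{u_\alpha \otimes v : \alpha, \ v \text{ unitary in } \cl B\}$. On the one hand, fixing a single unitary $v_0 \in \cl B$ (say $v_0 = 1$), the elements $u_\alpha \otimes 1$ generate a copy of the C*-algebra generated by the $u_\alpha$'s inside $\cl A$, namely all of $\cl A \otimes 1$, because $\tau$ is a C*-cross norm and the inclusion $\cl A \hookrightarrow \cl A \otimes_{\tau} \cl B$, $a \mapsto a \otimes 1$, is an isometric $*$-homomorphism. On the other hand, $\cl C$ contains $1 \otimes v$ for every unitary $v$ in $\cl B$ (take $\alpha$ with, say, $u_\alpha = 1$ if $1 \in \{u_\alpha\}$; more carefully, $\cl C$ contains $(u_\alpha \otimes 1)^*(u_\alpha \otimes v) = 1 \otimes v$ for any fixed $\alpha$). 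Since the unitaries of a unital C*-algebra span it (every element is a linear combination of at most four unitaries), the $1 \otimes v$ generate $1 \otimes \cl B$. Thus $\cl C \supseteq (\cl A \otimes 1) \cup (1 \otimes \cl B)$.

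Finally I would conclude: the C*-algebra generated by $\cl A \otimes 1$ together with $1 \otimes \cl B$ inside $\cl A \otimes_{\tau} \cl B$ contains all elementary tensors $a \otimes b = (a \otimes 1)(1 \otimes b)$, hence contains the dense $*$-subalgebra $\cl A \otimes \cl B$, hence equals $\cl A \otimes_{\tau} \cl B$. Therefore $\cl C = \cl A \otimes_{\tau} \cl B$, and the unitaries $\{u_\alpha \otimes v\}$ of $\cl S \otimes \cl B$ generate $\cl A \otimes_{\tau} \cl B$, which is precisely the assertion that $\cl S \otimes \cl B$ contains enough unitaries in $\cl A \otimes_{\tau} \cl B$. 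I expect the only mild subtlety to be the bookkeeping in the previous step — ensuring that one genuinely has $1 \otimes v \in \cl C$ without assuming $1 \in \{u_\alpha\}$, which is handled by the identity $1 \otimes v = (u_\alpha \otimes 1)^*(u_\alpha \otimes v)$ — and recalling that unitaries span a unital C*-algebra; neither is a real obstacle.
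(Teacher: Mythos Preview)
Your proposal is correct and follows the same approach as the paper: exhibit the elementary tensors $u_\alpha \otimes v$ (with $u_\alpha$ unitary in $\cl S$ and $v$ unitary in $\cl B$) as a generating set of unitaries for $\cl A \otimes_\tau \cl B$. The paper dismisses the verification with ``it is easy to see,'' whereas you have spelled out exactly why these tensors generate, including the small observation $(u_\alpha \otimes 1)^*(u_\alpha \otimes v) = 1 \otimes v$ to get $1 \otimes \cl B$ without assuming $1 \in \{u_\alpha\}$.
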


\begin{proof}
Let $\{u_{\alpha}\}_{\alpha}$ be the collection of unitaries in $\cl
S$ and $\{v_{i}\}_i$ be the set of unitaries in $\cl B$. Then it is
easy to see that $\{u_{\alpha}\otimes v_i\}\subseteq \cl S \otimes \cl
B$ generates $\cl A \otimes_{\tau} \cl B$ as a C*-algebra.
\end{proof}

\begin{prop}{\label{unitarysystem}}
Let the operator system $\cl S \subseteq \cl A$ contain enough
unitaries in $\cl A$ and let $\cl B$ be a unital C*-algebra. Then
\begin{enumerate}
\item $\cl S \otimes_{\min} \cl B \subseteq \cl A \otimes_{\max} \cl B \;\;\;\Longrightarrow \;\;\; \cl A \otimes_{\min} \cl B = \cl A \otimes_{\max} \cl B$.

\item $\cl S \otimes_{\min} \cl B \subseteq \cl A \otimes_{\linj} \cl B \;\;\;\Longrightarrow \;\;\; \cl A \otimes_{\min} \cl B = \cl A \otimes_{\linj} \cl B$.

\item $\cl S \otimes_{\linj} \cl B \subseteq \cl A \otimes_{\max} \cl B \;\;\;\Longrightarrow \;\;\; \cl A \otimes_{\linj} \cl B = \cl A \otimes_{\max} \cl B$.
\end{enumerate}
\end{prop}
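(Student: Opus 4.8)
The plan is to prove all three implications by one device: realize the ``target'' C*-algebra on a Hilbert space so as to split off commuting representations of $\cl A$ (or of $I(\cl A)$) and of $\cl B$, transport the hypothesised embedding of $\cl S\otimes_{\alpha}\cl B$ into $\cl B(H)$, extend it by injectivity of $\cl B(H)$, and then use Choi's multiplicative domain theorem together with the ``enough unitaries'' hypothesis to upgrade the extension to a $*$-homomorphism; a final application of Lemma~\ref{comp} then identifies the two operator system structures. Throughout I read ``$\subseteq$'' in the hypotheses as ``$\subseteq_{coi}$''. Fix unitaries $\{u_\lambda\}$ in $\cl S$ that generate $\cl A$, and let $\{v_i\}$ be the unitaries of $\cl B$; by the lemma immediately preceding the proposition, $\{u_\lambda\otimes v_i\}$ generates the relevant C*-algebra tensor product.

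For (1): represent $\cl A\otimes_{\max}\cl B$ faithfully on $H$, obtaining commuting unital $*$-representations $\pi\colon\cl A\to\cl B(H)$ and $\rho\colon\cl B\to\cl B(H)$ with $a\otimes b\mapsto\pi(a)\rho(b)$. Composing the hypothesised embedding with this representation gives a unital complete order embedding $\cl S\otimes_{\min}\cl B\hookrightarrow\cl B(H)$, $s\otimes b\mapsto\pi(s)\rho(b)$; since $\min$ is left injective, $\cl S\otimes_{\min}\cl B\subseteq_{coi}\cl A\otimes_{\min}\cl B$, so injectivity of $\cl B(H)$ extends this to a unital completely positive map $\Psi\colon\cl A\otimes_{\min}\cl B\to\cl B(H)$. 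Each $u_\lambda\otimes v_i$ is a unitary of the C*-algebra $\cl A\otimes_{\min}\cl B$ with $\Psi(u_\lambda\otimes v_i)=\pi(u_\lambda)\rho(v_i)$ again unitary, hence lies in the multiplicative domain of $\Psi$ \cite{Ch}; as that domain is a C*-subalgebra containing a generating set, $\Psi$ is a $*$-homomorphism, and comparing it with $\pi$ on $\cl A\otimes 1$ and with $\rho$ on $1\otimes\cl B$ shows $\Psi(a\otimes b)=\pi(a)\rho(b)$. Therefore $\Psi$ composed with the canonical unital completely positive map $\cl A\otimes_{\max}\cl B\to\cl A\otimes_{\min}\cl B$ (completely positive because $\min\le\max$) equals the original faithful representation of $\cl A\otimes_{\max}\cl B$, a complete order embedding; Lemma~\ref{comp} forces the canonical map to be a complete order isomorphism onto its range, and being bijective it identifies $\cl A\otimes_{\min}\cl B$ with $\cl A\otimes_{\max}\cl B$.

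Part (2) runs verbatim, except that the faithful representation is taken of $I(\cl A)\otimes_{\max}\cl B$, which contains $\cl A\otimes_{\linj}\cl B$ completely order isomorphically by the definition of $\linj$; here $\pi$ is the restriction to $\cl A$ of a $*$-representation of $I(\cl A)$, the relevant C*-algebra is still $\cl A\otimes_{\min}\cl B$, and the canonical map $\cl A\otimes_{\linj}\cl B\to\cl A\otimes_{\min}\cl B$ (completely positive since $\min\le\linj$) plays the role of the last arrow, so Lemma~\ref{comp} gives $\cl A\otimes_{\min}\cl B=\cl A\otimes_{\linj}\cl B$. For (3) the multiplicative domain argument must be carried out inside $I(\cl A)\otimes_{\max}\cl B$, since $\cl A\otimes_{\linj}\cl B$ need not be a C*-algebra; one uses that $\cl A$ embeds in its injective envelope $I(\cl A)$ as a C*-subalgebra (the Choi--Effros product on $I(\cl A)$ restricts to that of $\cl A$), so $\{u_\lambda\otimes v_i\}$ consists of unitaries of $I(\cl A)\otimes_{\max}\cl B$ generating the C*-subalgebra $\cl C_0=\overline{\cl A\otimes\cl B}$. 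Representing $\cl A\otimes_{\max}\cl B$ faithfully on $H$, extending the embedding $\cl S\otimes_{\linj}\cl B\hookrightarrow\cl B(H)$ (using left injectivity of $\linj$ and injectivity of $\cl B(H)$) first to $\cl A\otimes_{\linj}\cl B$ and then to $I(\cl A)\otimes_{\max}\cl B$, the multiplicative domain step shows this last extension restricts on $\cl C_0$ to a $*$-homomorphism sending $a\otimes b\mapsto\pi(a)\rho(b)$; composing with the canonical map $\cl A\otimes_{\max}\cl B\to\cl A\otimes_{\linj}\cl B$ and applying Lemma~\ref{comp} once more yields $\cl A\otimes_{\linj}\cl B=\cl A\otimes_{\max}\cl B$.

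The crux is the multiplicative domain step: it is precisely the ``enough unitaries'' hypothesis (through the preceding lemma) that provides a generating family of unitaries in the relevant C*-algebra, while the commuting-representation structure forces their images to be unitaries, so that the completely positive extension becomes multiplicative where it matters. The only genuinely new ingredient beyond (1) is, in (3), the passage to $I(\cl A)\otimes_{\max}\cl B$ and the observation that $\cl A\hookrightarrow I(\cl A)$ is a unital $*$-monomorphism; everything else is bookkeeping with Lemma~\ref{comp} and the injectivity of $\cl B(H)$, $\min$ and $\linj$.
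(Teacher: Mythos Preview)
Your proof is correct and follows essentially the same idea as the paper's: both rest on Choi's multiplicative-domain theorem applied to the generating unitaries $u_\lambda\otimes v_i$, which the preceding lemma guarantees generate the relevant C*-algebra. The paper packages this step into Lemma~\ref{uniqueextlem} and applies it directly to the inclusion $\cl S\otimes_{\min}\cl B\hookrightarrow\cl A\otimes_{\max}\cl B$, concluding in one line that the resulting $*$-homomorphism $\cl A\otimes_{\min}\cl B\to\cl A\otimes_{\max}\cl B$ must be the identity; it then declares (2) and (3) analogous without further comment.

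Your route is slightly more roundabout but also more explicit: you first realize the target on a Hilbert space, use injectivity of $\cl B(H)$ to produce the needed UCP extension (a step the paper's Lemma~\ref{uniqueextlem} glosses over), run the multiplicative-domain argument there, and then invoke Lemma~\ref{comp} to pull the conclusion back. This has the advantage of making transparent where the extension comes from, and your treatment of (3)---passing to $I(\cl A)\otimes_{\max}\cl B$ because $\cl A\otimes_{\linj}\cl B$ need not itself be a C*-algebra---is a genuine clarification of something the paper leaves implicit. The trade-off is brevity: the paper's direct appeal to Lemma~\ref{uniqueextlem} yields the identity $*$-homomorphism immediately, without the detour through $\cl B(H)$ and Lemma~\ref{comp}.
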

\begin{proof}
The proofs of all the assertions are based on the same idea so will show only (1). Let $\{u_{\alpha}\}_{\alpha}$ be the collection of unitaries in $\cl S$ and $\{v_{i}\}_i$ be the set of unitaries in $\cl B$. So $\{u_{\alpha}\otimes v_i\}\subseteq \cl S \otimes \cl B $ generates $ \cl A \otimes_{\min} \cl B $ and the inclusion $\cl S \otimes_{\min} \cl B \hookrightarrow \cl A \otimes_{\max} \cl B$ is
a unital completely positive map that maps $u_{\alpha}\otimes v_i$ to a unitary.
By Lemma \ref{uniqueextlem} it has an extension that is a unital $*$-homomorphism from $\cl A \otimes_{\min} \cl B$ into
$\cl A \otimes_{\max} \cl B$. This map has to be identity so the result follows.
\end{proof}

Before stating the following corollary we remind the reader that for an operator system $1$-exactness and $(\min,\linj)$-nuclearity coincide and for a C*-algebra being exact in the classical sense is same as being 1-exact as an operator system. Similarly, $(\linj,\comm)$-nuclearity
is equivalent to DCEP and for a C*-algebra WEP and DCEP coincide.

\begin{cor}{\label{enoghunitar}}
Assume that the operator system $\cl S \subseteq \cl A$ contains enough unitaries.
\begin{enumerate}
 \item If $\cl S$ is $(\min,\comm)$-nuclear then $\cl A$ is nuclear.

 \item If $\cl S$ is $(\min,\linj)$-nuclear then $\cl A$ is exact.

 \item If $\cl S$ is $(\linj,\comm)$-nuclear then $\cl A$ has WEP.
\end{enumerate}
\end{cor}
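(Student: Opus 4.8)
The plan is to derive all three parts from Proposition~\ref{unitarysystem}, in each case reducing the assertion to a single complete order embedding. Concretely, for (1), (2), (3) it suffices to show that for every unital C*-algebra $\cl B$ one has, respectively,
$$
\cl S\otimes_{\min}\cl B\subseteq_{coi}\cl A\otimes_{\max}\cl B,\qquad \cl S\otimes_{\min}\cl B\subseteq_{coi}\cl A\otimes_{\linj}\cl B,\qquad \cl S\otimes_{\linj}\cl B\subseteq_{coi}\cl A\otimes_{\max}\cl B;
$$
these are exactly the hypotheses of Proposition~\ref{unitarysystem}\,(1), (2), (3), whose conclusions are $\cl A\otimes_{\min}\cl B=\cl A\otimes_{\max}\cl B$, $\cl A\otimes_{\min}\cl B=\cl A\otimes_{\linj}\cl B$, and $\cl A\otimes_{\linj}\cl B=\cl A\otimes_{\max}\cl B$, respectively.

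To obtain these embeddings I would argue uniformly. First, since $\comm$ agrees with $\max$ whenever one factor is a C*-algebra (Lemma~\ref{l_cintom}\,(1)), the hypothesis in (1) gives $\cl S\otimes_{\min}\cl B=\cl S\otimes_{\max}\cl B$ and the hypothesis in (3) gives $\cl S\otimes_{\linj}\cl B=\cl S\otimes_{\max}\cl B$ for every unital C*-algebra $\cl B$, while in (2) the hypothesis already gives $\cl S\otimes_{\min}\cl B=\cl S\otimes_{\linj}\cl B$. In each case write $\sigma\le\rho$ for the relevant pair, namely $(\min,\max)$, $(\min,\linj)$, $(\linj,\max)$, and recall $\min\le\linj\le\comm\le\max$. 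By functoriality of $\rho$ the inclusion $\cl S\subseteq\cl A$ induces a unital completely positive map $f:\cl S\otimes_{\rho}\cl B\to\cl A\otimes_{\rho}\cl B$, which, by the nuclearity equalities just recorded, we view as a unital completely positive map $\cl S\otimes_{\sigma}\cl B\to\cl A\otimes_{\rho}\cl B$. Composing with the canonical unital completely positive map $g:\cl A\otimes_{\rho}\cl B\to\cl A\otimes_{\sigma}\cl B$ (which exists because $\sigma\le\rho$) produces the canonical map $\cl S\otimes_{\sigma}\cl B\to\cl A\otimes_{\sigma}\cl B$; since $\min$ is injective and $\linj$ is left injective, this composite is a complete order isomorphism onto its range, so Lemma~\ref{comp} shows $f$ is as well, which is precisely the desired embedding.

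With the embeddings in hand, Proposition~\ref{unitarysystem} yields the three equalities above for every unital C*-algebra $\cl B$. The equality $\cl A\otimes_{\min}\cl B=\cl A\otimes_{\max}\cl B$ for all such $\cl B$ is nuclearity of $\cl A$, proving (1). The equality $\cl A\otimes_{\min}\cl B=\cl A\otimes_{\linj}\cl B$ for all such $\cl B$ says $\cl A$ is C*-$(\min,\linj)$-nuclear, hence $1$-exact by Theorem~\ref{th_1exact} and therefore exact as a C*-algebra, proving (2). For (3), fix an inclusion $\cl A\subseteq\cl B(H)$; applying Lemma~\ref{cminel lem} with $\cl T=C^*(F_\infty)$ and using the case $\cl B=C^*(F_\infty)$ of the equality just obtained gives
$$
\cl A\otimes_{\max}C^*(F_\infty)=\cl A\otimes_{\linj}C^*(F_\infty)\subseteq_{coi}\cl B(H)\otimes_{\max}C^*(F_\infty),
$$
which is exactly the criterion in Theorem~\ref{th_tenfr}. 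Hence $\cl A$ is $(\linj,\comm)$-nuclear, equivalently has the DCEP, and therefore, being a C*-algebra, has the WEP.

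The only step that is not a formal manipulation is this last passage in (3): bootstrapping from the C*-algebra-level identity $\cl A\otimes_{\linj}\cl B=\cl A\otimes_{\max}\cl B$ to the genuinely global DCEP, which is where Kirchberg's theorem enters through Theorem~\ref{th_tenfr}. Everything else reduces to functoriality of the tensor products, the order relations $\min\le\linj\le\comm\le\max$, the injectivity of $\min$ and the left injectivity of $\linj$, and repeated use of Lemma~\ref{comp}.
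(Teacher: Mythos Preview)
Your argument is correct and follows the paper's approach almost exactly: all three parts reduce to verifying the hypotheses of Proposition~\ref{unitarysystem}, and your uniform Lemma~\ref{comp} argument (functoriality of $\rho$, then projecting down via $\sigma\le\rho$, then invoking injectivity of $\min$ or left injectivity of $\linj$) is precisely how the paper obtains the required inclusions, though the paper is terser and in fact Proposition~\ref{unitarysystem} only needs the inclusion to be unital completely positive, not a complete order embedding.

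The one genuine difference is in the endgame of (3). Once $\cl A\otimes_{\linj}\cl B=\cl A\otimes_{\max}\cl B$ for all unital C*-algebras $\cl B$ is established, the paper simply cites \cite[Proposition~7.6]{kptt} to conclude that C*-$(\linj,\max)$-nuclearity is equivalent to WEP. You instead specialize to $\cl B=C^*(F_\infty)$, use Lemma~\ref{cminel lem} to embed $\cl A\otimes_{\max}C^*(F_\infty)$ into $\cl B(H)\otimes_{\max}C^*(F_\infty)$, and then invoke Theorem~\ref{th_tenfr} to get DCEP, hence WEP for the C*-algebra $\cl A$. Your route is slightly longer but has the virtue of staying entirely within the machinery developed in this paper (Kirchberg's theorem is already imported as Theorem~\ref{th_kir}), whereas the paper's route requires an external citation. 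Both are valid.
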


\begin{proof}
(1) If $\cl S$ is $(\min,\comm)$-nuclear then the condition in Proposition
\ref{unitarysystem}(1) will be automatically satisfied for every
unital C*-algebra $\cl B$. Thus, $\cl A$ is nuclear and (1) follows.

\smallskip

(2) Suppose that $\cl S$ is $(\min,\linj)$-nuclear, and let $\cl B$ be a C*-algebra. Then
$\cl S\otimes_{\min} \cl B = \cl S\otimes_{\linj} \cl B$ and since $\linj$ is left injective,
we have that $\cl S\otimes_{\linj} \cl B \subseteq \cl A\otimes_{\linj} \cl B$. Thus,
$\cl S\otimes_{\min} \cl B  \subseteq \cl A\otimes_{\linj} \cl B$. It now follows from
Proposition \ref{unitarysystem} (2) that $\cl A\otimes_{\min} \cl B  = \cl A\otimes_{\linj} \cl B$.
Thus, $\cl A$ is C*-$(\min,\linj)$-nuclear, which by Theorem \ref{th_1exact} implies that $\cl A$ is exact.

\smallskip

(3) Suppose that $\cl S$ is $(\linj,\comm)$-nuclear, and let $\cl B$
be a C*-algebra. Let $\tau$ be the operator system structure on $\cl
S \otimes \cl B$ arising from its inclusion into $\cl A
\otimes_{\max} \cl B$. We will first show that $\linj\leq \tau \leq
\comm$. Let $i : \cl S\rightarrow \cl A$ be the inclusion map. By
the functoriality of $\comm$, the map $i\otimes \id : \cl S
\otimes_{\comm} \cl B \rightarrow \cl A\otimes_{\max} \cl B$ is
completely positive; thus, $\tau \leq \comm$. On the other hand,
$\id\otimes \id: \cl A\otimes_{\max} \cl B \rightarrow \cl
A\otimes_{\linj} \cl B$ is completely positive, by the maximality of
$\max$. Since $\cl A\otimes_{\linj} \cl B$ (resp. $\cl
A\otimes_{\max} \cl B$) contains completely order isomorphically the
operator system $\cl S\otimes_{\linj} \cl B$ (resp. $\cl
S\otimes_{\tau} \cl B$), we have that $\linj\leq \tau$. Now, since
$\cl S$ is $(\linj,\comm)$-nuclear, the condition in Proposition
\ref{unitarysystem} (3) is satisfied and consequently $\cl A$ is
C*-$(\linj,\max)$-nuclear, or equivalently, by \cite[Proposition
7.6]{kptt}, $\cl A$ has WEP.
\end{proof}

Let $\cl S_\infty$ be the smallest closed operator subsystem in the full C*-algebra $C^*(F_{\infty})$ containing all the generators, that is,
$$
\cl S_{\infty} = \overline{span}\{g_{i}^*, e, g_{i}: i=1,2,3,\dots\},
$$
where $g_i,\;\;i=1,2,3,\dots$, are the free unitary generators of $ C^*(F_{\infty})$. We define $\cl
S_n$ similarly, that is, $\cl S_n$ is the $2n+1$ dimensional operator system in the full C*-algebra $ C^*(F_{n}) $ containing the  unitary generators. Both $\cl S_\infty$ and $\cl S_n$ contain enough unitaries in the corresponding C*-algebras. We wish to study these operator systems to explore the  properties of $ C^*(F_{\infty})$ and $ C^*(F_{n})$.

\begin{prop}{\label{char. lemm.}}
Let $\alpha$ be $\infty$ or a natural number $n$. Let $\cl T$ be an
operator system and  let $f:\{g_{i}\}_{i=1}^{\alpha} \rightarrow \cl T$ be a function such that $\|f(g_i)\|\leq 1$ for every $i$.
Then there is a unique, unital completely positive map
$\phi:\cl S_\alpha \rightarrow \cl T$ with $\phi(g_i) = f(g_i)$ for
every $i$.
\end{prop}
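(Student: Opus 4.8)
The plan is to realize the desired completely positive map $\phi$ as a restriction of a $*$-homomorphism on the group C*-algebra, using the universal property of $C^*(F_\alpha)$ together with the fact that the only constraint on a unitary-valued function out of a free group is that its values be unitaries. First I would reduce to the case where $\cl T$ is concretely represented as an operator subsystem of some $\cl B(K)$; this is harmless since complete positivity, unitality, and norm bounds are all preserved and reflected under a complete order embedding (Choi--Effros). So assume $\cl T\subseteq\cl B(K)$.

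The key construction is a dilation step. Given $f(g_i)\in\cl B(K)$ with $\|f(g_i)\|\le 1$, each $f(g_i)$ is a contraction, so by the standard unitary dilation (the Halmos $2\times 2$ dilation, or Sz.-Nagy dilation) there is a Hilbert space $H\supseteq K$ and a unitary $U_i\in\cl B(H)$ with $P_K U_i|_K = f(g_i)$, and moreover one can arrange $P_K U_i^k|_K = f(g_i)^k$ is not needed here — we only need the first power, so the plain $\left(\begin{smallmatrix} f(g_i) & (1-f(g_i)f(g_i)^*)^{1/2}\\ (1-f(g_i)^*f(g_i))^{1/2} & -f(g_i)^*\end{smallmatrix}\right)$ dilation on $H=K\oplus K$ suffices, and we may use a common $H$ for all $i$ by taking $H=K\oplus K$ once if $\alpha=n$, or $H=K\oplus \ell^2(\bb N,K)$-type padding if $\alpha=\infty$ (only finitely many coordinates are ever used at once, but a single $H=K\oplus K$ still works since each dilation lives there). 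Now, since $F_\alpha$ is free on $\{g_i\}$, the assignment $g_i\mapsto U_i$ extends uniquely to a group homomorphism $F_\alpha\to\cl U(H)$, hence to a unital $*$-homomorphism $\pi:C^*(F_\alpha)\to\cl B(H)$ by the universal property of the full group C*-algebra.

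Next I would compress: let $V:K\hookrightarrow H$ be the isometric inclusion and define $\psi:C^*(F_\alpha)\to\cl B(K)$ by $\psi(x)=V^*\pi(x)V$. This is unital completely positive (a compression of a $*$-homomorphism) and satisfies $\psi(g_i)=V^*U_iV=f(g_i)$ for every $i$. Restricting $\psi$ to the operator subsystem $\cl S_\alpha=\overline{\mathrm{span}}\{g_i^*,e,g_i\}$ gives a unital completely positive map $\phi:\cl S_\alpha\to\cl B(K)$ with $\phi(g_i)=f(g_i)$; since $\phi(\cl S_\alpha)$ lies in the closed span of $\{e,f(g_i),f(g_i)^*\}\subseteq\cl T$, in fact $\phi$ maps into $\cl T$ as required. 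Uniqueness is immediate: any two unital completely positive (hence self-adjoint-preserving and linear) maps agreeing on $e$ and on each $g_i$ agree on $g_i^*$ and hence on the dense subspace $\mathrm{span}\{e,g_i,g_i^*\}$, and by continuity on all of $\cl S_\alpha$.

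The only point requiring care — and the step I expect to be the mild obstacle — is handling the $\alpha=\infty$ case uniformly: one wants a single Hilbert space $H$ and a single $*$-homomorphism $\pi$ serving all countably many generators at once. This is not actually an obstacle, since the $2\times2$ dilation of each contraction $f(g_i)$ lives in $\cl B(K\oplus K)$ for the \emph{same} $K\oplus K$, so $H=K\oplus K$ works for all $i$ simultaneously and freeness of $F_\infty$ then gives $\pi$ on all of $C^*(F_\infty)$ directly; no inductive limit is needed. The rest is routine.
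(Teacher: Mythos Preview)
Your proof is correct and follows essentially the same approach as the paper: dilate each contraction $f(g_i)$ to a unitary, invoke the universal property of $C^*(F_\alpha)$ to get a $*$-homomorphism, and compress back to $K$. Your use of the explicit Halmos $2\times 2$ dilation on the common space $K\oplus K$ is a minor streamlining over the paper's Sz.-Nagy dilation followed by a direct-sum construction, and you also spell out the uniqueness argument, which the paper's proof leaves implicit.
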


In other words, every contractive function defined from generators to an operator system extends uniquely to a
unital completely positive map.

\begin{proof}
The proof is based on the fact that every contraction has a unitary dilation.
Let $\cl T \subseteq \cl B(H)$. So we have a set of contractions
$\{f(g_i)\}_i \subseteq \cl B(H)$. By Sz.-Nagy's dilation theorem,
there exists a Hilbert space $K_i = H\oplus H_i$ and a unitary $u_i$ in $\cl B(K_i)$ such that
$f(g_i) = P_H u_i |_H$. Now let $K = H\oplus (\oplus_{i} H_i)$ and $U_i$ is the operator in $\cl B(K)$ defined as $u_i$ on $H\oplus H_i$
and identity on $(H\oplus H_i)^{\perp}$. Clearly, $\{U_i\} \subseteq \cl B(K)$ and
$P_H U_i |_H = f(g_i)$ for every $i$. Let $\tilde{f}:\{g_i\} \rightarrow \cl B(K) $ be given by $\tilde{f}(g_i) = U_i$.
By the universal property of the free group $F_{\alpha}$, $\tilde{f}$
extends to a group homomorphism $\rho$ from $F_{\alpha}$ into the unitary group of $\cl B(K)$.
Similarly, the universal property of the full C*-algebra $C^*(F_\alpha)$ ensures that there exists a unital $*$-homomorphism $\pi: C^*(F_\alpha)\rightarrow \cl B(K) $ with $\pi(g_i) = \rho(g_i)$.
Note that $\phi = P_H \pi(\cdot)|_H$ is a unital completely positive map from
$C^*(F_\alpha)$ to $\cl B(H)$ such that $\phi(g_i) = f(g_i)$ for each $i$.
Now the restriction of $\phi$ to $\cl S_\alpha$
clearly has the same property and its image stays in $\cl T$. 
\end{proof}

It is important to observe that the property given in Proposition \ref{char. lemm.} characterizes the operator system
$\cl S_\alpha$, $\alpha = 1,2,\dots,\infty$:

\begin{lemma}
Let $\cl S$ be a norm complete operator system and let
$\{s_i\}_{i=1}^\alpha$ be a set in the unit ball of $\cl S$ such that
every function $f:\{s_i\}_{i=1}^\alpha \rightarrow \cl T$ with
$\|f(s_i)\|\leq 1$ extends uniquely to a unital completely positive
map from $\cl S$ into $\cl T.$
Then the unique unital completely positive extension $\psi$ of
$f: \{s_i\}_{i=1}^\alpha \rightarrow \cl S_\alpha$ given by $f(s_i) = g_i$ is a bijective complete order isomorphism.
\end{lemma}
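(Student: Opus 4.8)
The plan is to exhibit mutually inverse unital completely positive maps between $\cl S$ and $\cl S_\alpha$. The map $\psi : \cl S \to \cl S_\alpha$ is already given: it is the unique unital completely positive extension of $f(s_i) = g_i$, which exists by the hypothesis on $\cl S$. For the reverse direction, I would apply Proposition~\ref{char. lemm.} to the contractive function $\{g_i\}_{i=1}^\alpha \to \cl S$ sending $g_i \mapsto s_i$ (note $\|s_i\| \le 1$ by hypothesis); this yields a unital completely positive map $\phi : \cl S_\alpha \to \cl S$ with $\phi(g_i) = s_i$. The heart of the argument is then to check that $\phi$ and $\psi$ are inverse to one another.

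First I would consider $\phi \circ \psi : \cl S \to \cl S$. This is a unital completely positive map satisfying $(\phi \circ \psi)(s_i) = \phi(g_i) = s_i$ for every $i$. On the other hand, the identity map $\id_{\cl S}$ is a unital completely positive map agreeing with $f$ on the $s_i$'s (trivially). By the \emph{uniqueness} clause in the hypothesis on $\cl S$ — applied with $\cl T = \cl S$ and the function $f : s_i \mapsto s_i$ — the unital completely positive extension is unique, so $\phi \circ \psi = \id_{\cl S}$. Symmetrically, I would consider $\psi \circ \phi : \cl S_\alpha \to \cl S_\alpha$, a unital completely positive map with $(\psi \circ \phi)(g_i) = \psi(s_i) = g_i$ for all $i$. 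Proposition~\ref{char. lemm.} asserts that the unital completely positive map $\cl S_\alpha \to \cl S_\alpha$ sending $g_i \mapsto g_i$ is unique, and since $\id_{\cl S_\alpha}$ is such a map, we conclude $\psi \circ \phi = \id_{\cl S_\alpha}$.

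Thus $\psi$ is a bijection with inverse $\phi$, and both $\psi$ and $\psi^{-1} = \phi$ are unital completely positive; hence $\psi$ is a (bijective, unital) complete order isomorphism. The main point to be careful about is the invocation of the two uniqueness statements: the equality $\psi\circ\phi = \id_{\cl S_\alpha}$ requires the uniqueness half of Proposition~\ref{char. lemm.}, while $\phi\circ\psi = \id_{\cl S}$ requires the uniqueness built into the hypothesis on $\cl S$. One should also note why $\phi\circ\psi$ and $\psi\circ\phi$ are legitimate competitors: both are unital and completely positive as compositions of such maps, and they genuinely take values in $\cl S$ (resp. $\cl S_\alpha$) rather than some larger space, since $\phi$ and $\psi$ were constructed with codomains $\cl S$ and $\cl S_\alpha$ respectively. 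The norm-completeness of $\cl S$ is used implicitly only insofar as it is part of the standing hypothesis guaranteeing the extension property; no further analytic input is needed, so I do not anticipate a serious obstacle beyond bookkeeping of the universal properties.
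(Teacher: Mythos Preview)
Your argument is correct and uses the same ingredients as the paper: construct the companion map $\phi:\cl S_\alpha\to\cl S$ via Proposition~\ref{char. lemm.} and exploit the two uniqueness clauses. The paper's write-up is organized slightly differently---it only checks $\psi\circ\phi=\id_{\cl S_\alpha}$, deduces from Lemma~\ref{comp} that $\phi$ is a complete order embedding, and then argues surjectivity of $\phi$ by observing that the uniqueness hypothesis forces $\cl S=\overline{\sspp}\{e,s_i,s_i^*\}$---whereas you verify both $\phi\circ\psi=\id_{\cl S}$ and $\psi\circ\phi=\id_{\cl S_\alpha}$ directly; your route is a touch cleaner but not materially different.
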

\begin{proof}
Let $\phi:\cl S_{\alpha} \rightarrow \cl S$ be the unique
completely positive extension of the map $h: \{g_i\}_{i=1}^\alpha \rightarrow \cl S$ given by $h(g_i) = s_i$. It is easy to see that
$$
\cl S_\alpha \xrightarrow{\phi} \cl S \xrightarrow{\psi} \cl S_{\alpha}
$$
is a sequence of unital completely positive maps whose composition is the
identity on $\cl S_{\alpha}$. So the first map has to be complete
order isomorphism. We will show that it is surjective. Let $\cl S_0 =
\overline{span}\{e,s_i,s_i^*\}_i$. By the assumption every unital completely positive
map from $\cl S_0$ extends uniquely to $\cl S$. This means that $\cl S_0 = \cl S$ and thus $\phi$ is surjective.
\end{proof}

\begin{prop}{\label{snhaslp}}
$\cl S_\alpha$ has the OSLLP, for every $\alpha = 1,2,\dots,\infty$.
\end{prop}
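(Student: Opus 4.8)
The plan is to establish the OSLLP directly from its definition, with Proposition~\ref{char. lemm.} doing essentially all of the work. Fix a unital C*-algebra $\cl A$, an ideal $\cl I\subseteq\cl A$ with quotient map $q:\cl A\to\cl A/\cl I$, and a unital completely positive map $\phi:\cl S_\alpha\to\cl A/\cl I$. Instead of lifting $\phi$ on each finite-dimensional operator subsystem separately, I would produce a single \emph{global} unital completely positive lift $\psi:\cl S_\alpha\to\cl A$ with $q\circ\psi=\phi$; restricting $\psi$ to any finite-dimensional operator subsystem $\cl S_0\subseteq\cl S_\alpha$ then immediately gives the completely positive local lift required by the definition.

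To build $\psi$, observe first that for each generator $g_i$ the element $\phi(g_i)$ lies in the closed unit ball of $\cl A/\cl I$, since $g_i$ is a unitary and $\phi$ is unital (hence contractive). Because the quotient map of C*-algebras carries the closed unit ball of $\cl A$ onto the closed unit ball of $\cl A/\cl I$, I can choose for each $i$ a contraction $a_i\in\cl A$ with $q(a_i)=\phi(g_i)$. Since $\|a_i\|\le 1$ for every $i$, Proposition~\ref{char. lemm.} (applied with target operator system $\cl T=\cl A$) yields a unique unital completely positive map $\psi:\cl S_\alpha\to\cl A$ with $\psi(g_i)=a_i$ for all $i$. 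Then $q\circ\psi:\cl S_\alpha\to\cl A/\cl I$ is unital completely positive and satisfies $(q\circ\psi)(g_i)=q(a_i)=\phi(g_i)$ for every $i$, so the uniqueness assertion of Proposition~\ref{char. lemm.} (now with target $\cl T=\cl A/\cl I$) forces $q\circ\psi=\phi$. This finishes the argument.

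The only external input is the standard fact that a C*-algebra quotient map is a complete metric surjection, i.e.\ that every contraction in $\cl A/\cl I$ lifts to a contraction in $\cl A$; this follows from the coincidence of the quotient C*-norm with the quotient Banach-space norm together with a routine approximation, or by lifting the positive element $\left(\begin{smallmatrix} 1 & \phi(g_i)\\ \phi(g_i)^* & 1\end{smallmatrix}\right)$ of $M_2(\cl A/\cl I)$ to a positive element of $M_2(\cl A)$. I do not anticipate a genuine obstacle: all of the real content — namely, that contractive data on the free unitary generators always assemble into a unital completely positive map on $\cl S_\alpha$ — is already contained in Proposition~\ref{char. lemm.}, which was proved via Sz.-Nagy's dilation theorem and the universal property of $C^*(F_\alpha)$. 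One could alternatively derive the statement from the tensor characterization of OSLLP in Theorem~\ref{th_kiv} combined with Kirchberg's Theorem~\ref{th_kir}, but that route would in addition require the less transparent inclusion $\cl S_\alpha\otimes_{\max}\cl B(H)\subseteq_{coi} C^*(F_\alpha)\otimes_{\max}\cl B(H)$, so the direct lifting argument seems preferable.
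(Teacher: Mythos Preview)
Your argument is correct and is essentially identical to the paper's proof: both lift each $\phi(g_i)$ to a contraction $a_i\in\cl A$ using proximinality of $\cl I$ in $\cl A$, and then invoke Proposition~\ref{char. lemm.} to produce a global unital completely positive lift $\psi:\cl S_\alpha\to\cl A$. The only cosmetic difference is that you justify $q\circ\psi=\phi$ via the uniqueness clause of Proposition~\ref{char. lemm.}, whereas the paper simply declares the lifting ``clear''.
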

\begin{proof}
Let $\cl I$ be an ideal in a unital C*-algebra $ \cl A$ and let
$\phi: \cl S_\alpha \rightarrow \cl A/\cl I$ be a unital completely positive map.
Since $\cl I\subseteq \cl A$ is proximinal, for each $i$
there exists $a_i$ in $\cl A$ such that $a_i+\cl I = \phi(g_i)$ and
$\|a_i\| = \|\phi(g_i)\|_{\cl A/\cl I} \le 1.$
Let $\tilde{\phi}: \cl S_\alpha \rightarrow \cl A$ be the unique unital completely positive
extension of the function $f$ given by $f(g_i) = a_i$ given by
Proposition~\ref{char. lemm.}. Clearly, $\tilde{\phi}$ is a completely positive lifting of $\phi$.
\end{proof}

We now examine the nuclearity properties of $\cl S_{\infty}$ and of
the finite dimensional
operator systems $\cl S_{n}.$ Let $i_n:\cl
S_n\rightarrow \cl S_\infty$ be the unique unital completely positive
extension of the function $f:\{g_1,...,g_n\}\rightarrow \cl S_\infty$
given by sending
$g_i$ to itself for $i=1,...,n$ and let $p_n: \cl S_\infty\rightarrow \cl S_n$ be the
unique unital completely positive extension of
$h:\{g_1,g_2,...\}\rightarrow \cl S_n$ given by $h(g_i) = g_i$ for
$i=1,...,n$ and $h(g_i) =0$ for $i >n.$ Note that
$$
\cl S_n\xrightarrow{i_n} \cl S_\infty \xrightarrow{p_n}\cl S_n
$$
is a sequence of unital completely positive maps such that $p_n\circ
i_n$ is the identity on $\cl S_n$. This means that $i_n$ is an
complete order isomorphism. Consequently $\cl S_\infty$ contains $\cl
S_n$ completely order isometrically in a natural way and there is projection from $ \cl S_\infty$ onto $\cl S_n$.

\begin{lemma} The sequence of maps
$$
\cl S_\infty \xrightarrow{p_n}\cl S_n \xrightarrow{i_n} \cl S_\infty
$$
converges to the identity on $\cl S_{\infty}$ in the point-norm topology.
\end{lemma}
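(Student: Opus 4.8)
The plan is a routine $\varepsilon/3$ argument, resting on two observations: a uniform bound on the maps $i_n\circ p_n$, and the fact that on a dense spanning subset of $\cl S_\infty$ these maps eventually act as the identity. First I would note that each $i_n$ and each $p_n$ is, by construction, unital and completely positive, so the composition $i_n\circ p_n:\cl S_\infty\to\cl S_\infty$ is unital and completely positive, hence completely contractive; in particular $\|i_n\circ p_n\|\le 1$ for every $n$. This uniform boundedness is what will let me pass from convergence on a dense subspace to convergence everywhere.

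Next I would compute the action of $i_n\circ p_n$ on the generating set $\{e,g_j,g_j^*:j\ge 1\}$. Since unital positive maps between operator systems are self-adjoint, we have $p_n(g_j^*)=p_n(g_j)^*$ and $i_n(g_j^*)=i_n(g_j)^*$; combining this with the defining formulas for $p_n$ and $i_n$, the composition $i_n\circ p_n$ fixes $e$, fixes $g_j$ and $g_j^*$ for $1\le j\le n$, and sends $g_j$ and $g_j^*$ to $0$ for $j>n$. Consequently, if $y\in\sspp\{e,g_j,g_j^*:j\ge 1\}$ is a finite linear combination involving only $g_1,\dots,g_N$, then $(i_n\circ p_n)(y)=y$ for every $n\ge N$; that is, on this dense subspace the sequence is eventually constant and trivially converges to the identity.

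Finally I would assemble the estimate. Given $x\in\cl S_\infty$ and $\varepsilon>0$, by the very definition of $\cl S_\infty$ there is a finite combination $y\in\sspp\{e,g_j,g_j^*:j\ge 1\}$ with $\|x-y\|<\varepsilon/3$; choose $N$ so that $y$ involves only $g_1,\dots,g_N$. For all $n\ge N$,
\[
\|(i_n\circ p_n)(x)-x\|\le\|(i_n\circ p_n)(x-y)\|+\|(i_n\circ p_n)(y)-y\|+\|y-x\|<\frac{\varepsilon}{3}+0+\frac{\varepsilon}{3}<\varepsilon,
\]
using $\|i_n\circ p_n\|\le 1$ for the first term and $(i_n\circ p_n)(y)=y$ for the second. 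Hence $(i_n\circ p_n)(x)\to x$ in norm for every $x\in\cl S_\infty$, i.e.\ the sequence converges to the identity in the point-norm topology. I do not anticipate any genuine obstacle; the only points deserving a word of care are that $p_n$ is well defined and $*$-preserving (guaranteed by Proposition~\ref{char. lemm.} together with self-adjointness of unital positive maps) and that $\sspp\{e,g_j,g_j^*:j\ge 1\}$ is norm-dense in $\cl S_\infty$, which is precisely its definition.
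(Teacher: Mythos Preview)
Your proof is correct. It differs from the paper's argument: the paper invokes the fact that $\sspp\{e,g_i,g_i^*\}$ is isometrically a copy of $\ell_1$ (citing \cite[Exercise~14.3]{pa} and \cite[Theorem~9.6.1]{p}), writes a general $s\in\cl S_\infty$ as $\sum\beta_ig_i^*+\beta e+\sum\theta_ig_i$ with $\ell_1$ coefficients, and then bounds $\|s-i_np_n(s)\|$ directly by the tail $\sum_{i>n}|\beta_i|+\sum_{i>n}|\theta_i|$. Your route is more elementary---it avoids the $\ell_1$ identification entirely and uses only the uniform contractivity of $i_n\circ p_n$ together with density of finite combinations---while the paper's approach yields an explicit quantitative estimate on the rate of convergence. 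Both are perfectly valid; yours is arguably cleaner for the bare qualitative statement being proved.
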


\begin{proof} We use the fact that the span of the generators is
  isometrically isomorphic to $\ell_1.$ See for example
  \cite[Exercise~14.3]{pa} or \cite[Theorem~9.6.1]{p}.
Fix
$$
s = \sum_{i=1}^{\infty}\beta_ig_i^* + \beta e + \sum_{i=1}^{\infty}\theta_ig_i.
$$
Note that
$$
i_np_n(s) = \sum_{i=1}^{n}\beta_ig_i^* + \beta e + \sum_{i=1}^{n}\theta_ig_i.
$$
So $s-i_np_n(s) = \sum_{i=n+1}^{\infty}\beta_ig_i^* + \sum_{i=n+1}^{\infty}\theta_ig_i $. Since both sum stays in the $\ell_1$ portion of the space we have
$$
\|s-i_np_n(s)\| \leq \sum_{i=n+1}^\infty|\beta_i| + \sum_{i=n+1}^\infty|\theta_i| \xrightarrow{n} 0.
$$
\end{proof}

\begin{lemma}{\label{inlusionsn}}
Let $\cl T$ be an operator system and let $\tau$ be a functorial operator system tensor product. Then $$
\cl S_n \otimes_\tau \cl T \subseteq_{coi} \cl S_\infty \otimes_\tau \cl T.
$$
\end{lemma}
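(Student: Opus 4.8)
The plan is to deduce the statement directly from the splitting $p_n \circ i_n = \id_{\cl S_n}$, the functoriality of $\tau$, and Lemma \ref{comp}. Recall that, as observed immediately before this statement, the maps $i_n : \cl S_n \to \cl S_\infty$ and $p_n : \cl S_\infty \to \cl S_n$ are unital completely positive and satisfy $p_n \circ i_n = \id_{\cl S_n}$, so $i_n$ realizes $\cl S_n$ as an operator subsystem of $\cl S_\infty$, and it is this identification that is implicit in the notation $\cl S_n \otimes_\tau \cl T \subseteq_{coi} \cl S_\infty \otimes_\tau \cl T$.

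First I would apply the functoriality property (T4) to the pairs $(i_n,\id_{\cl T})$ and $(p_n,\id_{\cl T})$, obtaining unital completely positive maps
\[
i_n \otimes \id_{\cl T} : \cl S_n \otimes_\tau \cl T \longrightarrow \cl S_\infty \otimes_\tau \cl T
\quad\text{and}\quad
p_n \otimes \id_{\cl T} : \cl S_\infty \otimes_\tau \cl T \longrightarrow \cl S_n \otimes_\tau \cl T ,
\]
where unitality is clear since $i_n$ and $p_n$ are unital. Their composition is $(p_n \otimes \id_{\cl T}) \circ (i_n \otimes \id_{\cl T}) = (p_n \circ i_n) \otimes \id_{\cl T} = \id_{\cl S_n \otimes \cl T}$, which is trivially a complete order isomorphism onto its range.

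Next I would invoke Lemma \ref{comp} with $\cl R_1 = \cl S_n \otimes_\tau \cl T$, $\cl R_2 = \cl S_\infty \otimes_\tau \cl T$, $\cl R_3 = \cl S_n \otimes_\tau \cl T$, $f = i_n \otimes \id_{\cl T}$ and $g = p_n \otimes \id_{\cl T}$: since $g \circ f$ is a complete order isomorphism onto its range, so is $f$. As $f = i_n \otimes \id_{\cl T}$ is precisely the embedding identifying $\cl S_n \otimes \cl T$ inside $\cl S_\infty \otimes \cl T$, this is exactly the assertion $\cl S_n \otimes_\tau \cl T \subseteq_{coi} \cl S_\infty \otimes_\tau \cl T$.

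There is essentially no serious obstacle in this argument; the only point that deserves a word of care is that $i_n \otimes \id_{\cl T}$ is injective and the operator system structure it induces on its image coincides with the one inherited as a subspace of $\cl S_\infty \otimes_\tau \cl T$, but this is exactly what is packaged in the phrase \emph{complete order isomorphism onto its range}, so nothing further needs to be checked.
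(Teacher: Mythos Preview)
Your proposal is correct and follows essentially the same approach as the paper: both use functoriality of $\tau$ to obtain the unital completely positive maps $i_n\otimes\id_{\cl T}$ and $p_n\otimes\id_{\cl T}$, observe that their composition is the identity on $\cl S_n\otimes_\tau\cl T$, and conclude (explicitly in your case via Lemma~\ref{comp}) that $i_n\otimes\id_{\cl T}$ is a complete order isomorphism onto its range.
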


\begin{proof}
$ \cl S_n \otimes_\tau \cl T \xrightarrow{i_n \otimes id}  \cl
S_\infty \otimes_\tau \cl T  \xrightarrow{p_n \otimes id} \cl S_n
\otimes_\tau \cl T$ is a sequence of unital completely positive maps
and the composition is identity on the first space. So the first map
is a complete order inclusion.
\end{proof}

\begin{thm}{\label{finiteinf}}
Let $\tau_1$ and $\tau_2$ be functorial operator system tensor
products with $\tau_1 \leq  \tau_2$. Then $\cl S_\infty$ is
$(\tau_1,\tau_2)$-nuclear if and only if $\cl S_n$ is $(\tau_1,
\tau_2)$-nuclear for every $n$.
\end{thm}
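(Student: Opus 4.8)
The plan is to prove the two implications separately. The forward direction ($\cl S_\infty$ being $(\tau_1,\tau_2)$-nuclear $\Rightarrow$ every $\cl S_n$ is) is immediate from Lemma \ref{inlusionsn}: for a fixed operator system $\cl T$ and $j=1,2$, the map $i_n\otimes\id$ identifies $\cl S_n\otimes_{\tau_j}\cl T$ completely order isomorphically with the subspace $i_n(\cl S_n)\otimes\cl T$ of $\cl S_\infty\otimes_{\tau_j}\cl T$ carrying the inherited matrix cones. Since $\cl S_\infty\otimes_{\tau_1}\cl T$ and $\cl S_\infty\otimes_{\tau_2}\cl T$ have the same matrix cones by hypothesis, so do these two subspaces, hence so do $\cl S_n\otimes_{\tau_1}\cl T$ and $\cl S_n\otimes_{\tau_2}\cl T$; as $\cl T$ is arbitrary, $\cl S_n$ is $(\tau_1,\tau_2)$-nuclear.

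For the reverse direction I would fix an operator system $\cl T$, an integer $k$, and an element $u\in M_k(\cl S_\infty\otimes_{\tau_1}\cl T)^+$; since $\tau_1\leq\tau_2$ it suffices to show $u\in M_k(\cl S_\infty\otimes_{\tau_2}\cl T)^+$. Set $u_n=((i_np_n)\otimes\id)^{(k)}(u)$. Because $\tau_1$ is functorial, $p_n\otimes\id:\cl S_\infty\otimes_{\tau_1}\cl T\to\cl S_n\otimes_{\tau_1}\cl T$ is completely positive, so $(p_n\otimes\id)^{(k)}(u)\in M_k(\cl S_n\otimes_{\tau_1}\cl T)^+=M_k(\cl S_n\otimes_{\tau_2}\cl T)^+$ by the finite-dimensional hypothesis; because $\tau_2$ is functorial, $i_n\otimes\id:\cl S_n\otimes_{\tau_2}\cl T\to\cl S_\infty\otimes_{\tau_2}\cl T$ is completely positive, and therefore $u_n\in M_k(\cl S_\infty\otimes_{\tau_2}\cl T)^+$ for every $n$. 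It then remains to pass to the limit: writing $u=\sum_{l=1}^m A_l\otimes t_l$ with $A_l\in M_k(\cl S_\infty)$ and $t_l\in\cl T$, one has $u-u_n=\sum_l\bigl(A_l-(i_np_n)^{(k)}(A_l)\bigr)\otimes t_l$, and since $i_np_n\to\id$ in the point-norm topology (the lemma just before Lemma \ref{inlusionsn}) while the bilinear map $M_k(\cl S_\infty)\times\cl T\to M_k(\cl S_\infty\otimes_{\tau_2}\cl T)$ is bounded (this follows from axiom (T2), using that $\|\cdot\|_{\tau_2}\leq\|\cdot\|_{\max}$ and that $\|a\otimes b\|_{\max}\leq\|a\|\,\|b\|$ on elementary tensors), one gets $\|u-u_n\|\to 0$ in the operator system $M_k(\cl S_\infty\otimes_{\tau_2}\cl T)$. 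The positive cone of an operator system is closed in its norm — an easy consequence of the Archimedean property of the matrix order unit — so $u\in M_k(\cl S_\infty\otimes_{\tau_2}\cl T)^+$. Since $k$ and $\cl T$ are arbitrary, $\cl S_\infty$ is $(\tau_1,\tau_2)$-nuclear.

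The main obstacle is exactly this reverse direction: one cannot simply restrict to finitely many generators, because a general element of $\cl S_\infty$, and hence of $\cl S_\infty\otimes\cl T$, is an infinite absolutely convergent combination of the $g_i$. The resolution is to approximate $u$ by its ``truncations'' $u_n$, which do live over the finite-dimensional $\cl S_n$, verify positivity of each $u_n$ in the $\tau_2$-structure by the completely positive factorization $\cl S_n\xrightarrow{i_n}\cl S_\infty\xrightarrow{p_n}\cl S_n$ together with the finite-dimensional hypothesis, and then recover $u$ from the $u_n$ by closedness of the cone. Functoriality (T4) of both tensor products is what makes the two factorization steps legitimate, and compatibility (T2) is what controls the elementary-tensor norms in the limiting step.
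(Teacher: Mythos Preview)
Your proof is correct and follows essentially the same approach as the paper's: the forward direction via Lemma~\ref{inlusionsn}, and the reverse direction via the factorization $\cl S_\infty\otimes_{\tau_1}\cl T\xrightarrow{p_n\otimes\id}\cl S_n\otimes_{\tau_1}\cl T=\cl S_n\otimes_{\tau_2}\cl T\xrightarrow{i_n\otimes\id}\cl S_\infty\otimes_{\tau_2}\cl T$ together with point-norm convergence of $i_np_n$ to the identity. The paper condenses your limiting argument into the single remark ``the composition converges to the identity in the point-norm topology (first look at elementary tensors and use the cross-norm property), so the identity is completely positive''; you have simply unpacked this by fixing a positive $u$, writing it as a finite sum of elementary tensors, and invoking closedness of the cone.
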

\begin{proof}
Using the maps $i_n$ and $p_n,$ we see that
$$
\cl S_n \otimes_{\tau_1} \cl T \subseteq_{coi} \cl S_\infty \otimes_{\tau_1} \cl T.
$$
and
$$
\cl S_n \otimes_{\tau_2} \cl T \subseteq_{coi} \cl S_\infty \otimes_{\tau_2} \cl T.
$$
Thus,
$(\tau_1, \tau_2)$-nuclearity of $\cl S_\infty$ implies the same
property for $\cl S_n$ for every $n$.

Now we will show the converse. Consider the sequence of unital
completely positive maps
$$
\cl S_\infty \otimes_{\tau_1} \cl T \xrightarrow{p_n\otimes id} \cl S_n\otimes_{\tau_1} \cl T = \cl S_n \otimes_{\tau_2} \cl T \xrightarrow{i_n\otimes id} \cl S_\infty\otimes_{\tau_2} \cl T.
$$
The composition of these maps converges to the identity map from  $\cl
S_\infty \otimes_{\tau_1} \cl T$ to $\cl S_\infty \otimes_{\tau_2} \cl
T$ in the point norm toplogy. (First look at the elementary tensors
and use cross norm property.) This clearly implies that the
identity is a unital completely positive map, and the proof is complete.
\end{proof}

\begin{thm}
The following are equivalent:
\begin{enumerate}
 \item $C^*(F_\infty)$ has WEP,

 \item $\cl S_\infty$ is $(\linj,\comm)$-nuclear,

 \item $\cl S_n$ is $(\linj,\comm)$-nuclear for every natural number $n$.
\end{enumerate}
\end{thm}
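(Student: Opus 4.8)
The plan is to close a short cycle among the three conditions, since essentially all of the work has already been done in the earlier sections. The equivalence (2)$\Leftrightarrow$(3) is immediate from Theorem~\ref{finiteinf}: the tensor products $\linj$ and $\comm$ are both functorial and satisfy $\linj\leq\comm$, so applying that theorem with $\tau_1=\linj$ and $\tau_2=\comm$ shows that $\cl S_\infty$ is $(\linj,\comm)$-nuclear if and only if $\cl S_n$ is $(\linj,\comm)$-nuclear for every $n$. Thus the real content of the statement is the equivalence of $(\linj,\comm)$-nuclearity of $\cl S_\infty$ with the WEP of $C^*(F_\infty)$, i.e.\ with the Kirchberg Conjecture.

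For (1)$\Rightarrow$(2) I would first recall that $\cl S_\infty$ has the OSLLP by Proposition~\ref{snhaslp}, and hence is $(\min,\rinj)$-nuclear by Theorems~\ref{th_kiv} and~\ref{llp}. Now if $C^*(F_\infty)$ has the WEP, condition (2) of Theorem~\ref{oskc} applies and every $(\min,\rinj)$-nuclear operator system---in particular $\cl S_\infty$---is $(\linj,\comm)$-nuclear. (One could instead argue directly, bypassing Theorem~\ref{oskc}: the OSLLP gives $\cl S_\infty\otimes_{\min}\cl B(H)=\cl S_\infty\otimes_{\max}\cl B(H)$ by Theorem~\ref{th_kiv}; since $C^*(F_\infty)$ has the WEP, Theorem~\ref{llp}(4) yields $\cl S_\infty\otimes_{\min}C^*(F_\infty)=\cl S_\infty\otimes_{\max}C^*(F_\infty)$; then Lemma~\ref{kir} and Theorem~\ref{th_tenfr} give $(\linj,\comm)$-nuclearity of $\cl S_\infty$. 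Since this is merely a fragment of the proof of Theorem~\ref{oskc}, citing that theorem is cleaner.)

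For (2)$\Rightarrow$(1) the point is that $\cl S_\infty$ contains enough unitaries in $C^*(F_\infty)$: the free unitary generators $g_i$ lie in $\cl S_\infty$ and generate $C^*(F_\infty)$ as a C*-algebra. Hence Corollary~\ref{enoghunitar}(3) applies verbatim, so the $(\linj,\comm)$-nuclearity of $\cl S_\infty$ forces $C^*(F_\infty)$ to have the WEP. Combined with (2)$\Leftrightarrow$(3), this establishes the equivalence of all three statements.

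I do not expect a genuine obstacle here, since the heavy lifting is done by Theorems~\ref{finiteinf}, \ref{oskc}, \ref{th_kiv}, \ref{llp}, \ref{th_tenfr}, Lemma~\ref{kir} and Corollary~\ref{enoghunitar}. The only items requiring any care are checking that the hypotheses of Theorem~\ref{finiteinf} (functoriality of $\linj$ and $\comm$, and the inequality $\linj\leq\comm$) and of Corollary~\ref{enoghunitar} (that $\cl S_\infty$ contains enough unitaries in $C^*(F_\infty)$) are in force---both of which are either noted explicitly in the text or entirely routine.
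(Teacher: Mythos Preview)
Your proposal is correct and follows essentially the same route as the paper: (2)$\Leftrightarrow$(3) via Theorem~\ref{finiteinf}, (2)$\Rightarrow$(1) via Corollary~\ref{enoghunitar}(3), and (1)$\Rightarrow$(2) via Proposition~\ref{snhaslp} together with Theorem~\ref{oskc}. The only cosmetic difference is that the paper phrases the last step as ``OSLLP $\Rightarrow$ DCEP'' rather than ``$(\min,\rinj)$-nuclear $\Rightarrow$ $(\linj,\comm)$-nuclear,'' but by Theorems~\ref{llp}, \ref{th_kiv} and~\ref{dcep-thm} these are the same implication.
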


\begin{proof}
Clearly (2) and (3) are equivalent by Theorem~\ref{finiteinf}. Since
unitaries in $\cl S_\infty$ generate $C^*(F_\infty)$, by
Corollary~\ref{enoghunitar}, (2) implies (1).

Finally assume (1). By Proposition~\ref{snhaslp} $\cl S_{\infty}$ has the OSLLP, so by
Theorem~\ref{oskc}, $\cl S_{\infty}$ has DCEP, i.e., is $(\linj,\comm)$-nuclear.
\end{proof}




Recall that KC is equivalent to the identity $C^*(F_\infty ) \otimes_{\min} C^*(F_\infty ) = C^*(F_\infty ) \otimes_{\max} C^*(F_\infty )$.
\begin{thm}
The following are equivalent:
\begin{enumerate}
 \item $C^*(F_\infty)$ has WEP.

 \item $\cl S_\infty \otimes_{\min} \cl S_\infty = \cl S_\infty \otimes_{\comm} \cl S_\infty $

 \item $\cl S_n \otimes_{\min} \cl S_n = \cl S_n \otimes_{\comm} \cl S_n $ for every $n$.
\end{enumerate}
\end{thm}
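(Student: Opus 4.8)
The plan is to deduce $(2)\Leftrightarrow(3)$ from the argument of Theorem~\ref{finiteinf}, to obtain $(1)\Rightarrow(2)$ quickly from the lifting property of $\cl S_\infty$, and to put the real work into $(2)\Rightarrow(1)$. Throughout I use the reformulation, recalled just above, of the Kirchberg Conjecture as the identity $C^*(F_\infty)\otimes_{\min}C^*(F_\infty)=C^*(F_\infty)\otimes_{\max}C^*(F_\infty)$.

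For $(2)\Leftrightarrow(3)$ I would run the proof of Theorem~\ref{finiteinf} with the test system taken to be $\cl S_n$ itself (and similarly $\cl S_\infty$). The only ingredients needed are: the maps $i_n\otimes i_n:\cl S_n\otimes_{\tau}\cl S_n\to\cl S_\infty\otimes_{\tau}\cl S_\infty$ are complete order embeddings for $\tau\in\{\min,\comm\}$, which follows from Lemma~\ref{inlusionsn} applied in each variable (using that both tensor products are symmetric and functorial); the maps $p_n\otimes p_n$ are unital and completely positive; $p_n\circ i_n=\id$; and $i_n\circ p_n\to\id$ in the point--norm topology. Then $(2)\Rightarrow(3)$ follows by composing $\cl S_n\otimes_{\comm}\cl S_n\xrightarrow{i_n\otimes i_n}\cl S_\infty\otimes_{\comm}\cl S_\infty=\cl S_\infty\otimes_{\min}\cl S_\infty\xrightarrow{p_n\otimes p_n}\cl S_n\otimes_{\min}\cl S_n$, whose composition is the identity; and $(3)\Rightarrow(2)$ follows by taking a point--norm limit of the unital completely positive maps $(i_n p_n)\otimes(i_n p_n):\cl S_\infty\otimes_{\comm}\cl S_\infty\to\cl S_\infty\otimes_{\min}\cl S_\infty$, positive cones being norm closed. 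For $(1)\Rightarrow(2)$, recall that $\cl S_\infty$ has the OSLLP by Proposition~\ref{snhaslp}, hence is $(\min,\rinj)$-nuclear by Theorems~\ref{th_kiv} and~\ref{llp}; if $C^*(F_\infty)$ has WEP, then by Theorem~\ref{oskc} every $(\min,\rinj)$-nuclear operator system $\cl S$ satisfies $\cl S\otimes_{\min}\cl S=\cl S\otimes_{\comm}\cl S$, and we apply this with $\cl S=\cl S_\infty$.

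The crux is $(2)\Rightarrow(1)$. Here I would argue that the canonical unital completely positive map $\Phi:\cl S_\infty\otimes_{\comm}\cl S_\infty\to C^*(F_\infty)\otimes_{\max}C^*(F_\infty)$ (which exists by functoriality of $\comm$, since $\comm=\max$ on C*-algebras) is a complete order embedding. Granting this, (2) gives $\cl S_\infty\otimes_{\min}\cl S_\infty\subseteq_{coi}C^*(F_\infty)\otimes_{\max}C^*(F_\infty)$ through the natural map. Now $\cl S_\infty\otimes\cl S_\infty$ contains the elements $g_i\otimes e$ and $e\otimes g_j$, which are unitaries both in $C^*(F_\infty)\otimes_{\max}C^*(F_\infty)$ and in $C^*(F_\infty)\otimes_{\min}C^*(F_\infty)$, and which generate each of these C*-algebras; so the two-variable version of the proof of Proposition~\ref{unitarysystem}(1), via Lemma~\ref{uniqueextlem}, forces $C^*(F_\infty)\otimes_{\min}C^*(F_\infty)=C^*(F_\infty)\otimes_{\max}C^*(F_\infty)$, which is the Kirchberg Conjecture. (One may also phrase the conclusion via Corollary~\ref{enoghunitar}(3): the embedding of $\Phi$ together with (2) shows $\cl S_\infty$ is $(\linj,\comm)$-nuclear, and $\cl S_\infty$ contains enough unitaries in $C^*(F_\infty)$.)

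I expect the embedding claim for $\Phi$ to be the main obstacle, and it is not formal: since the free generators of $C^*(F_\infty)$ are only contractions, not unitaries, in $C_u^*(\cl S_\infty)$, one has $C_u^*(\cl S_\infty)\neq C^*(F_\infty)$, so $\Phi$ being a complete order embedding does not follow from Lemma~\ref{l_cintom}(2). Unwinding the definitions, the claim is equivalent to the dilation statement that any pair of families of contractions on a Hilbert space $K$, each contraction in one family doubly commuting with each contraction in the other, admits a simultaneous dilation to unitary families on some $\widetilde K\supseteq K$ with the analogous cross-commutation, in such a way that compression to $K$ respects the relevant products. This is a two-variable analogue of the dilation used in the proof of Proposition~\ref{char. lemm.}, and I would prove it by iterating that construction, dilating each family in turn while keeping the cross-commutation intact.
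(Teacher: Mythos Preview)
Your plan for $(2)\Leftrightarrow(3)$ and $(1)\Rightarrow(2)$ matches the paper. The issue is $(2)\Rightarrow(1)$: you are making it harder than it needs to be by insisting that $\Phi:\cl S_\infty\otimes_{\comm}\cl S_\infty\to C^*(F_\infty)\otimes_{\max}C^*(F_\infty)$ be a complete order \emph{embedding}. You do not need that; you only need $\Phi$ to be unital completely positive, and this is automatic by functoriality of $\comm$ together with $\comm=\max$ on C*-algebras. Once $\Phi$ is unital completely positive, hypothesis (2) gives a unital completely positive map
\[
\cl S_\infty\otimes_{\min}\cl S_\infty \;=\; \cl S_\infty\otimes_{\comm}\cl S_\infty \;\xrightarrow{\ \Phi\ }\; C^*(F_\infty)\otimes_{\max}C^*(F_\infty),
\]
and from here your own endgame applies verbatim: $\cl S_\infty\otimes_{\min}\cl S_\infty\subseteq_{coi} C^*(F_\infty)\otimes_{\min}C^*(F_\infty)$ contains enough unitaries (the $g_i\otimes g_j$, equivalently the $g_i\otimes e$ and $e\otimes g_j$), $\Phi$ sends them to unitaries, and Lemma~\ref{uniqueextlem} yields the $*$-homomorphism from the $\min$ to the $\max$ tensor product.

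This is precisely the paper's argument, phrased slightly differently: the paper lets $\tau$ be the operator system structure on $\cl S_\infty\otimes\cl S_\infty$ induced from $C^*(F_\infty)\otimes_{\max}C^*(F_\infty)$ and observes $\min\le\tau\le\comm$; the right-hand inequality is exactly ``$\Phi$ is completely positive'', and under (2) one gets $\tau=\min$. So your proposed two-variable dilation theorem---simultaneously dilating cross doubly-commuting contractive families to cross-commuting unitary families---is not needed here. That dilation statement is a genuinely nontrivial extra claim (and your sketch ``iterate the construction, dilating each family in turn while keeping the cross-commutation intact'' would need real work, essentially a commutant-lifting argument at each step), whereas the proof goes through without it.
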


\begin{proof} To see that (3) implies (2) note that the sequence of
  unital completely positive maps
$$
\cl S_\infty \otimes_{\min} \cl S_\infty \xrightarrow{p_n\otimes p_n}  \cl S_n \otimes_{\min} \cl S_n = \cl S_n \otimes_{c} \cl S_n \xrightarrow{i_n\otimes i_n} \cl S_\infty \otimes_{c} \cl S_\infty
$$
approximate the identity in the point norm topology. (First consider
the elementary tensors and use the cross property of norms.) Thus,
$\cl S_\infty \otimes_{\min} \cl S_\infty \xrightarrow{id} \cl S_\infty
\otimes_{c} \cl S_\infty $ is completely positive. This proves that (3) implies (2).

To show that (2) implies (1), note first that $\cl S_\infty \otimes_{\min} \cl S_\infty$
contains enough unitaries in $C^*(F_\infty ) \otimes_{\min}
C^*(F_\infty )$, namely the set $\{g_i\otimes g_j\}_{i,j \in
  \mathbb{Z}}$ where $g_0$ represents $e$ and $g_{-n} = g_{n}^*$. Let
$\tau$ be the operator system tensor product on $\cl S_\infty
\otimes \cl S_\infty$ arising from the inclusion into $C^*(F_\infty)
\otimes_{\max} C^*(F_\infty )$. Clearly, $\min \leq \tau \leq
\comm$. Our assumption now implies the inclusion
$$
\cl S_\infty \otimes_{\min} \cl S_\infty \hookrightarrow C^*(F_\infty ) \otimes_{\max} C^*(F_\infty).
$$
Note that this inclusion maps $\{g_i\otimes g_j\}_{i,j \in \mathbb{Z}}$ to unitaries. So it extends to a unital $*$-homomorphism from $C^*(F_\infty ) \otimes_{\min} C^*(F_\infty ) $ to $ C^*(F_\infty ) \otimes_{\max} C^*(F_\infty )$. This shows that (2) implies (1).

(1) implies (3) by Theorem~\ref{oskc}.
\end{proof}

\end{document}